\DeclareRobustCommand{\lyxsout}[1]{\ifx\\#1\else\sout{#1}\fi}
\numberwithin{equation}{section}
\numberwithin{figure}{section}
\theoremstyle{plain}
\newtheorem{thm}{\protect\theoremname}
\theoremstyle{definition}
\newtheorem{defn}[thm]{\protect\definitionname}
\theoremstyle{plain}
\newtheorem{lem}[thm]{\protect\lemmaname}
\theoremstyle{remark}
\newtheorem{rem}[thm]{\protect\remarkname}
\theoremstyle{plain}
\newtheorem{cor}[thm]{\protect\corollaryname}
\theoremstyle{definition}
\newtheorem{example}[thm]{\protect\examplename}
\setlist[enumerate]{leftmargin=*,label=(\roman*),align=left}
\newcommand{\xyR}[1]{ \makeatletter
\xydef@\xymatrixrowsep@{#1} \makeatother} % end of \xyR
\newcommand{\xyC}[1]{ \makeatletter
\xydef@\xymatrixcolsep@{#1} \makeatother} % end of \xyC
\newcommand{\ra}{\longrightarrow}
\newcommand{\xra}[1]{\xrightarrow{\ \ #1\ \ }} % extendible arrow
\newcommand{\field}[1]{\mathbb{#1}}
\newcommand{\R}{\field{R}} % reals
\newcommand{\N}{\field{N}} % naturals
\newcommand{\Z}{\ensuremath{\mathbb{Z}}} % integers
\newcommand{\D}{\mathcal{D}}
\newcommand{\eps}{\varepsilon} % for the sake of brevity only
\renewcommand{\phi}{\varphi}
\newcommand{\diff}[1]{\ifmmode\mathchoice{\hbox{\rm d}#1}  % displaystyle
 {\hbox{\rm d}#1}  % normal 
 {\scalebox{0.75}{$\hbox{\rm d}#1$}}  % scriptstyle 
 {\scalebox{0.35}{$\hbox{\rm d}#1$}}  % scriptscriptstyle
 \fi} % dt,dx,... for integrals
\newcommand{\abs}[2][\empty]{\ifx#1\empty\left|#2\right|%
\else#1\vert #2 #1\vert\fi}% optional arg=size
\newcommand{\Coo}{\mbox{\ensuremath{\mathcal{C}}}^{\infty}} % C infinity
\DeclareMathOperator*{\Man}{{\bf Man}} % category of smooth manifolds
\DeclareMathOperator{\Set}{{\bf Set}} % category of Sets
\newcommand{\Gcinf}{\mathcal{G}\cinfty} % generalized smooth functions
\newcommand{\st}[1]{{#1^\circ}} % standard part map
\newcommand{\Rtil}{\widetilde \R} % real Colombeau generalized number
\newcommand{\gs}{\mathcal{G}^s} % special Colombeau algebra
\newcommand{\ns}{\mathcal{N}^s} % negligible nets for the special CA
\newcommand{\thick}{\text{\rm th}} % thickening
\newcommand{\sint}[1]{\langle#1\rangle} % strong internal set
\newcommand{\Eball}{B^{{\scriptscriptstyle \text{\rm E}}}} % ordinary Euclidean ball
\newcommand{\Fball}{B^{{\scriptscriptstyle \text{\rm F}}}} % ball in Fermat top
\newcommand{\sintF}[1]{\sint{#1}_{\scriptscriptstyle\text{\rm F}}}
\newcommand{\nrst}[1]{{#1}^\bullet} % near standard points in #1
\newcommand{\csp}[1]{{\text{\rm c}}({#1})}
\newcommand{\fcmp}{\Subset_{\text{f}}}
\newcommand{\frontRise}[2]{\ifmmode\mathchoice{{\vphantom{#1}}^{\scalebox{0.6}{$#2$}}}  % displaystyle
 {{\vphantom{#1}}^{\scalebox{0.56}{$#2$}}}  % normal 
 {{\vphantom{#1}}^{\scalebox{0.47}{$#2$}}}  % scriptstyle 
 {{\vphantom{#1}}^{\scalebox{0.35}{$#2$}}}\fi} % scriptscriptstyle 
\newcommand{\RC}[1]{\frontRise{\R}{#1}\Rtil}
\newcommand{\frontRiseDown}[3]{\ifmmode\mathchoice{{\vphantom{#1}}^{\scalebox{0.6}{$#2$}}_{\scalebox{0.6}{$#3$}}}  % displaystyle
 {{\vphantom{#1}}^{\scalebox{0.56}{$#2$}}_{\scalebox{0.56}{$#3$}}}  % normal 
 {{\vphantom{#1}}^{\scalebox{0.47}{$#2$}}_{\scalebox{0.47}{$#3$}}}  % scriptstyle 
 {{\vphantom{#1}}^{\scalebox{0.35}{$#2$}}_{\scalebox{0.35}{$#3$}}}\fi} % scriptscriptstyle 
\newcommand{\RCud}[2]{\frontRiseDown{\R}{#1}{#2}\Rtil}
\newcommand{\rcrho}{\RC{\rho}}
\newcommand{\rti}{\RC{\rho}}
\newcommand{\gsf}{\frontRise{\mathcal{G}}{\rho}\mathcal{GC}^{\infty}}
\newcommand{\Ogsf}{\frontRise{\mathcal{O}}{\rho}\mathcal{OGC}^{\infty}}
\newcommand{\Sgsf}{\frontRise{\mathcal{S}}{\rho}\mathcal{SGC}^{\infty}}
\newcommand{\gluable}{\frontRise{\mathcal{G}}{\rho}\mathcal{G}\ell^{\infty}}
\newcommand{\Tgsf}{\frontRise{\text{T}}{\rho}\text{T}\mathcal{GC}^{\infty}}
\newcommand{\hyperN}[1]{\frontRise{\N}{#1}\widetilde{\N}}
\newcommand{\hyperNrho}{\hyperN{\rho}}
\newcommand{\nint}{\text{ni}}
\newcommand{\hyperlimarg}[3]{\mathchoice{\frontRise{\lim}{\raisebox{-0.05em}{$#1\hspace{-0.67em}$}}\lim_{#3\in \hyperN{#2}\,}}
{\frontRise{\lim}{#1\hspace{-0.25em}}\lim_{#3\in \hyperN{#2}\,}}
{\frontRise{\lim}{#1\hspace{-0.25em}}\lim_{#3\in \hyperN{#2}\,}}
{\frontRise{\lim}{#1\hspace{-0.25em}}\lim_{#3\in \hyperN{#2}\,}}}
\newcommand{\hyperlim}[2]{\hyperlimarg{#1}{#2}{n}}
\newcommand{\hyperlimrho}{\lim_{n\in \hyperN{\rho}\,}}
\newcommand{\subzero}{\subseteq_{0}}
\newcommand{\dom}[1]{\text{dom}(#1)}
\newcommand{\GI}{\frontRise{\mathcal{G}}{\rho}\mathcal{GI}} % generalized integrable functions
\newcommand{\ptind}{\displaystyle \mathop {\ldots\ldots\,}} % marks with smth over. Usage: \ptind^{...}
\newcommand{\then}{\quad \Longrightarrow \quad} % implication
\newcommand{\CC}{\mathbb C}
\newcommand{\supp}{\mbox{stsupp}}
\newcommand{\cinfty}{{\mathcal C}^\infty}
\newcommand{\vphi}{\varphi}
\newcommand{\comp}{\Subset}
\newcommand{\esm}{{\mathcal E}_M}
\newcommand{\Om}{\Omega}
\newcommand{\restr}[2]{{{#1}|_{#2}}}
\newcommand{\sse}{\subseteq}
\newcommand{\co}[1]{{#1}^{c}}%complement
\newcommand{\inv}[1]{{#1}^{-1}}%inverse of a map
\newcommand{\norm}[2][\empty]{\ifx#1\empty\left\Vert#2\right\Vert%
\else#1\Vert #2 #1\Vert\fi}%optional arg=size
\providecommand{\corollaryname}{Corollary}
\providecommand{\definitionname}{Definition}
\providecommand{\examplename}{Example}
\providecommand{\lemmaname}{Lemma}
\providecommand{\remarkname}{Remark}
\providecommand{\theoremname}{Theorem}
\begin{document}

\title[The Grothendieck topos of generalized smooth functions I]{A Grothendieck topos of generalized functions I: basic theory}
\author{Paolo Giordano \and Michael Kunzinger \and Hans Vernaeve}
\thanks{P.\ Giordano has been supported by grants P33538-N, P30407, P25311,
P25116 and P26859 of the Austrian Science Fund FWF}
\address{\textsc{Wolfgang Pauli Institute, Vienna, Austria}}
\email{paolo.giordano@univie.ac.at}
\thanks{M.\ Kunzinger has been supported by grant P30233 of the Austrian
Science Fund FWF}
\address{\textsc{University of Vienna, Austria}}
\email{michael.kunzinger@univie.ac.at}
\thanks{H.\ Vernaeve has been supported by grant 1.5.129.18N of the Research
Foundation Flanders FWO}
\address{\textsc{Ghent University, Belgium}}
\email{Hans.Vernaeve@UGent.be}
\subjclass[2020]{46-XX, 46Fxx, 30Gxx, 46Txx, 46F30, 26E30, 58A03}
\keywords{Generalized functions, Nonlinear functional analysis, Non-Archimedean
analysis, Topos.}
\begin{abstract}
The main aim of the present work is to arrive at a mathematical theory
close to the historically original conception of generalized functions,
i.e.~set theoretical functions defined on, and with values in, a
suitable ring of scalars and sharing a number of fundamental properties
with smooth functions, in particular with respect to composition and
nonlinear operations. This is how they are still used in informal
calculations in Physics. We introduce a category of generalized functions
as smooth set-theoretical maps on (multidimensional) points of a ring
of scalars containing infinitesimals and infinities. This category
extends Schwartz distributions. The calculus of these generalized
functions is closely related to classical analysis, with point values,
composition, non-linear operations and the generalization of several
classical theorems of calculus. Finally, we extend this category of
generalized functions into a Grothendieck topos of sheaves over a
concrete site. This topos hence provides a suitable framework for
the study of spaces and functions with singularities. In this first
paper, we present the basic theory; subsequent ones will be devoted
to the resulting theory of ODE and PDE. 
\end{abstract}

\maketitle
\tableofcontents{}

\section{\label{sec:Intro}Introduction: foundations of generalized functions
as set-theoretical maps}

The aim of the present work is to lay the foundations for a new approach
to the theory of generalized functions, so-called \emph{generalized
smooth functions} (GSF). In developing such a theory, various objectives
can be pursued, and our motivations mainly come from applications
in mathematical physics and nonlinear singular differential equations,
where the need for such a nonlinear theory is well-known (see, e.g.,
\cite{GMS09,SV06,Gsp09,Col92,Ba-Zo90,Raj82,Mik66,Bo-Sh59,Gon-Sca56}
for applications in mathematical physics, \cite{Sch54,Sch,Hor83,Eva10,GKOS,Col92}
for differential equations, and references therein).

In particular, our aim is to arrive at a mathematical theory close
to the historically original conception of generalized function, \cite{Dir26,Lau89,Kat-Tal12}:
in essence, the idea of authors such as Dirac, Cauchy, Poisson, Kirchhoff,
Helmholtz, Kelvin and Heaviside (who informally worked with ``numbers''
which also comprise infinitesimals and infinite scalars) was to view
generalized functions as certain types of smooth set-theoretical maps
obtained from ordinary smooth maps by introducing a dependence on
suitable infinitesimal or infinite parameters. We call this idea the
Cauchy-Dirac approach to generalized functions. For example, the density
of a Cauchy-Lorentz distribution with an infinitesimal scale parameter
was used by Cauchy to obtain classical properties which nowadays are
attributed to the Dirac delta, \cite{Kat-Tal12}. More generally,
in the GSF approach, generalized functions are seen as set-theoretical
functions defined on, and attaining values in, a suitable non-Archimedean
ring of scalars containing infinitesimals and infinities, as well
as sharing essential properties of ordinary smooth functions. In the
present work, we will develop this point of view, and prove that it
generalizes the mentioned Cauchy-Dirac approach. In our view, the
main benefits of this theory lie in a clarification of a number of
foundational issues in the theory of generalized functions, namely: 
\begin{enumerate}
\item GSF include all Schwartz distributions, see Thm.~\ref{thm:embeddingD'},
and Colombeau generalized functions, see \cite{GKV}. 
\item They allow nonlinear operations on generalized functions, Sec.~\ref{sec:GSF},
and to compose them unrestrictedly, Thm.~\ref{thm:GSFcategory}. 
\item GSF are simpler than standard approaches as they allow us to treat
generalized functions more closely to classical smooth functions.
In particular, they allow us to prove a number of analogues of fundamental
theorems of classical analysis: e.g., mean value theorem, intermediate
value theorem, extreme value theorem, Taylor's theorem, see Sec.~\ref{sec:Some-classical-theorems},
local and global inverse function theorem, \cite{GK17}, integrals
via primitives, Sec.~\ref{sec:Integral-calculus}, multidimensional
integrals, Sec.~\ref{sec:n-dimIntegral}, theory of compactly supported
functions, \cite{GK15}. Therefore, this approach to generalized functions
results in a flexible and rich framework which allows both the formalization
of calculations appearing in physics and the development of new applications
in mathematics and mathematical physics. 
\item Several results of the classical theory of calculus of variations
can be developed for GSF: the fundamental lemma, second variation
and minimizers, higher order Euler-Lagrange equations, D’Alembert
principle in differential form, a weak form of the Pontryagin maximum
principle, necessary Legendre condition, Jacobi fields, conjugate
points and Jacobi’s theorem, Noether’s theorem, see \cite{LeLuGi17,FGBL}. 
\item The closure with respect to composition leads to a solution concept
of differential equations close to the classical one. In the second
and third article of this series, we will introduce a non-Archimedean
version of the Banach fixed point theorem that is well suited for
spaces of GSF, a Picard-Lindelöf theorem for both ODE and PDE, results
about the maximal set of existence, Gronwall theorem, flux properties,
continuous dependence on initial conditions, full compatibility with
classical smooth solutions, etc., see \cite{Lu-Gi16,GiLu}.%
\begin{comment}
update with results on PDE 
\end{comment}
\end{enumerate}
Moreover, we think that a satisfactory theory of generalized functions
as used in mathematical physics should also provide an extension to
function spaces, possibly in a Cartesian closed category or, better,
in a topos. The use of a Cartesian closed category as a useful framework
for mathematics and mathematical physics can be motivated in several
ways: 
\begin{enumerate}
\item It is well known that a non trivial problem of the category $\Man$
of smooth manifolds is the absence of closure properties with respect
to interesting categorical operations such as the construction of
function spaces $\Man(M,N)$, subspaces, equalizers, etc., see \cite{Ba-Ho11,Be98,Che82,Fr-Kr88,Gio10c,Gio10e,Igl,KM,Koc,Lav,Mo-Re,Sou70,Sou81,Sou84}.
The search for a Cartesian closed category embedding $\Man$ is the
most widespread approach to solving this problem. 
\item In physics, the necessity to use infinite-dimensional spaces frequently
appears. A classical example is the space $\Man(M,N)$ of all smooth
mappings between two smooth manifolds $M$ and $N$, or some of its
subspaces, e.g.~the space of all the diffeomorphisms of a smooth
manifold. Typically, we are interested in infinite dimensional Lie
groups, because they appear, e.g., in the study of both compressible
and incompressible fluids, in magnetohydrodynamics, in plasma-dynamics
or in electrodynamics (see e.g.~\cite{Ab-Ma-Ra} and references therein).
It is also well established (see e.g.~\cite{Fr-Kr88,Gio10c,Gio09})
that Cartesian closedness is a desirable condition in the calculus
of variations. 
\item The convenient setting, \cite{KM,Fr-Kr88}, is the most advanced theory
of smooth spaces extending the theory of Banach manifolds. Some applications
of this notion to classical field theory can be found in \cite{Ab-Ma}.
In addition, several other approaches to a new notion of smooth space
have been motivated by problems of physics. For example, the notion
of diffeological space has been used in \cite{Sou70,Sou81,Sou84},
starting also from a variant of \cite{Che82}, to study quantization
of coadjoint orbits in infinite dimensional groups of diffeomorphisms.
Diffeological spaces form a Cartesian closed, complete, co-complete
quasi-topos, \cite{Igl,Ba-Ho11,Ko-Re04}. 
\end{enumerate}
For these reasons, we close this work by embedding our category of
generalized functions into a Grothendieck topos, see Sec.~\ref{sec:Gtopos}.

Finally, a theory of generalized functions for mathematical physics
frequently appears coupled with a theory of actual infinitesimals
and infinities (see e.g.~\cite{Hos06,Ko-Re04,Ko-Re03,GKOS}). This
is natural, since informal descriptions of these functions used in
many calculations in physics employ a language including infinitesimals
or infinities. Historically, it has turned out that approaches requiring
a substantial background knowledge in mathematical logic are only
reluctantly accepted by some physicists and mathematicians. Therefore,
even if sometimes they appear less powerful, theories that do not
need such knowledge (\cite{Gio10a,Sha99,C1}) are more easily accepted.
In the following section, we introduce the non-Archimedean ring of
scalars in a very natural way, without requiring any notions from
mathematical logic or ultrafilter set theory.

The structure of the paper is as follows: we first introduce the new
ring of scalars and its natural topology in Sec.~\ref{sec:The-ring-of-scalars};
in Sec.~\ref{sec:GSF} we define the notion of GSF and prove that
GSF are always continuous; we present the embedding of Schwartz distributions
and prove the closure of GSF with respect to composition (e.g.~we
study and graphically represent $\delta\circ\delta$); in Sec.~\ref{sec:Differential-calculus},
\ref{sec:Integral-calculus}, \ref{sec:Some-classical-theorems} we
study the differential calculus, the (1-dimensional) integral calculus,
and several related classical theorems. In Sec.~\ref{sec:n-dimIntegral},
we introduce multidimensional integration, with related convergence
theorems. Sheaf properties for GSF defined on different types of domains
are proved in Sec.~\ref{sec:Sheaf-properties}: they e.g.~allow
one to glue GSF defined on infinitesimal domains to get a global GSF
defined on a finite or even on an unbounded domain. Finally, in Sec.~\ref{sec:Gtopos}
we construct the Grothendieck topos of generalized functions, including
a full introduction of all the necessary preliminaries. Throughout
the paper, several theorems will treat the connections of notions
related to GSF to the corresponding classical notions, in case the
latter can at least be formulated. Even if other papers about GSF
already appeared in the literature (see \cite{GKV,GK17,LeLuGi17}),
this is the first one where all these basic results (and several others)
are presented with the related proofs.

The paper is completely self-contained: only a basic knowledge of
Schwartz distribution theory and the concepts of category, functor
and natural transformation are needed.

\section{\label{sec:The-ring-of-scalars}The ring of scalars and its topologies}

Exactly as real numbers can be seen as equivalence classes of sequences
$(q_{n})_{n\in\N}$ of rationals\footnote{In the naturals $\N=\{0,1,2,\ldots\}$ we include zero.},
it is very natural to consider a non-Archimedean extension of $\R$
defined by a quotient ring $\mathcal{R}/\sim$, where $\mathcal{R}\subseteq\R^{I}$.
Here $\mathcal{R}$ is a subalgebra of nets $(x_{\eps})_{\eps\in I}\in\R^{I}$
defined on a directed set $(I,\le)$, and with pointwise algebraic
operations. For simplicity and for historical reasons, instead of
$I=\N$, we consider $I:=(0,1]$, corresponding to $\eps\to0^{+}$,
$\eps\in I$, but any other directed set can be used instead of $I$.
In this work, we will denote $\eps$-dependent nets simply by $(x_{\eps}):=(x_{\eps})_{\eps\in I}$,
and the corresponding equivalence class simply by $[x_{\eps}]:=[(x_{\eps})]_{\sim}\in\mathcal{R}/\sim$.
We aim at constructing the quotient ring $\Rtil:=\mathcal{R}/\sim$
so that it contains infinitesimals and infinities. The following observation
points to a natural way of achieving this goal. Let us assume that
$[z_{\eps}]=0\in\Rtil$ and $[J_{\eps}]\in\Rtil$ is generated by
an infinite net $(J_{\eps})$, i.e.~such that $\lim_{\eps\to0^{+}}\left|J_{\eps}\right|=+\infty$.
Then we would have 
\begin{align}
[z_{\eps}]\cdot[J_{\eps}] & =0\cdot[J_{\eps}]=0\nonumber \\
 & =[z_{\eps}\cdot J_{\eps}].\label{eq:zeroInf}
\end{align}
Finally, let us assume that 
\begin{equation}
\forall[w_{\eps}]\in\Rtil:\ [w_{\eps}]=0\ \Rightarrow\ \lim_{\eps\to0^{+}}w_{\eps}=0.\label{eq:infinitesimalsAreAccessible}
\end{equation}
Under these assumptions, \eqref{eq:zeroInf} yields $\lim_{\eps\to0^{+}}z_{\eps}\cdot J_{\eps}=0$,
and hence 
\begin{equation}
\exists\eps_{0}\in I\,\forall\eps\in(0,\eps_{0}]:\ |z_{\eps}|\le\left|J_{\eps}^{-1}\right|.\label{eq:ColNec}
\end{equation}
Consequently, \emph{the nets $(z_{\eps})$ representing $0$, i.e.~such
that $(z_{\eps})\sim0$, must be dominated by the reciprocals of every
infinite number $[J_{\eps}]\in\Rtil$}. It is not hard to prove that
if every infinite net $(J_{\eps})$ is in the subalgebra $\mathcal{R}$,
then \eqref{eq:ColNec} implies that the equivalence relation $\sim$
must be trivial: 
\begin{equation}
\exists\eps_{0}\in I\,\forall\eps\in(0,\eps_{0}]:\ z_{\eps}=0.\label{eq:Schmieden-Laugwitz}
\end{equation}
This situation corresponds to the Schmieden-Laugwitz model, \cite{Sc-La}.

If we do not want to have the trivial model \eqref{eq:Schmieden-Laugwitz},
we can hence either negate the natural property \eqref{eq:infinitesimalsAreAccessible}
(this is the case of nonstandard analysis; see \cite{CKKR} for more
details) or to restrict the class of all the nets $(J_{\eps})$ generating
infinite numbers in $\Rtil$. Since we want to start from a subalgebra
$\mathcal{R}\subseteq\R^{I}$, a \emph{first} natural idea is to consider
the following class of infinite nets 
\begin{equation}
\mathcal{I}:=\left\{ (\eps^{-a})\mid a\in\R_{>0}\right\} .\label{eq:polynomialInfinities}
\end{equation}
and hence to consider the subalgebra $\mathcal{R}\subseteq\R^{I}$
containing nets $(b_{\eps})\in\R^{I}$ bounded by some $(J_{\eps})\in\mathcal{I}$.
This idea is generalized in the following definition, where we take
exactly \eqref{eq:ColNec} as the widest possible definition of $(z_{\eps})\sim0$: 
\begin{defn}
\label{def:RCGN}Let $\rho=(\rho_{\eps})\in(0,1]^{I}$ be a net such
that $(\rho_{\eps})\to0$ as $\eps\to0^{+}$ (in the following, such
a net will be called a \emph{gauge}), then 
\begin{enumerate}
\item $\mathcal{I}(\rho):=\left\{ (\rho_{\eps}^{-a})\mid a\in\R_{>0}\right\} $
is called the \emph{asymptotic gauge} generated by $\rho$. 
\item If $\mathcal{P}(\eps)$ is a property of $\eps\in I$, we use the
notation $\forall^{0}\eps:\,\mathcal{P}(\eps)$ to denote $\exists\eps_{0}\in I\,\forall\eps\in(0,\eps_{0}]:\,\mathcal{P}(\eps)$.
We can read $\forall^{0}\eps$ as \emph{``for $\eps$ small''}. 
\item We say that a net $(x_{\eps})\in\R^{I}$ \emph{is $\rho$-moderate},
and we write $(x_{\eps})\in\R_{\rho}$ if 
\[
\exists(J_{\eps})\in\mathcal{I}(\rho):\ x_{\eps}=O(J_{\eps})\text{ as }\eps\to0^{+},
\]
i.e., if 
\[
\exists N\in\N\,\forall^{0}\eps:\ |x_{\eps}|\le\rho_{\eps}^{-N}.
\]
\item Let $(x_{\eps})$, $(y_{\eps})\in\R^{I}$, then we say that $(x_{\eps})\sim_{\rho}(y_{\eps})$
if 
\[
\forall(J_{\eps})\in\mathcal{I}(\rho):\ x_{\eps}=y_{\eps}+O(J_{\eps}^{-1})\text{ as }\eps\to0^{+},
\]
that is if 
\[
\forall n\in\N\,\forall^{0}\eps:\ |x_{\eps}-y_{\eps}|\le\rho_{\eps}^{n}.
\]
This is a congruence relation on the ring $\R_{\rho}$ of moderate
nets with respect to pointwise operations, and we can hence define
\[
\RC{\rho}:=\R_{\rho}/\sim_{\rho},
\]
which we call \emph{Robinson-Colombeau ring of generalized numbers},
\cite{Rob73,C1}. 
\item In particular, if the gauge $\rho=(\rho_{\eps})$ is non-decreasing,
then we say that $\rho$ is a \emph{monotonic gauge}. Clearly, considering
a monotonic gauge narrows the class of moderate nets: e.g.~if $\lim_{\eps\to\frac{1}{k}}x_{\eps}=+\infty$
for all $k\in\N_{>0}$, then $(x_{\eps})\notin\R_{\rho}$ for any
monotonic gauge $\rho$.
\end{enumerate}
\end{defn}

In the following, $\rho$ will always denote a net as in Def.~\ref{def:RCGN},
even if we will sometimes omit the dependence on the infinitesimal
$\rho$, when this is clear from the context. We will see below that
we can choose $\rho$ e.g.~depending on the class of differential
equations we need to solve for the generalized functions we are going
to introduce.

We can also define an order relation on $\RC{\rho}$ by writing $[x_{\eps}]\le[y_{\eps}]$
if there exists $(z_{\eps})\in\R^{I}$ such that $(z_{\eps})\sim_{\rho}0$
(we then say that $(z_{\eps})$ is \emph{$\rho$-negligible}) and
$x_{\eps}\le y_{\eps}+z_{\eps}$ for $\eps$ small. Equivalently,
we have that $x\le y$ if and only if there exist representatives
$[x_{\eps}]=x$ and $[y_{\eps}]=y$ such that $x_{\eps}\le y_{\eps}$
for all $\eps$. The following result follows directly from the previous
definitions: 
\begin{thm}
\label{thm:CGN-Ring}$\RC{\rho}$ is a partially ordered ring. The
real numbers $r\in\R$ are embedded in $\RC{\rho}$ by viewing them
as constant nets $[r]\in\RC{\rho}$. 
\end{thm}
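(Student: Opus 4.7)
The plan is to verify in turn: (i) that $\R_\rho$ is a subring of $\R^I$ under pointwise operations so that the already noted congruence property of $\sim_\rho$ yields a quotient ring $\RC{\rho}$; (ii) that the relation $\le$ is well defined and partial-ordered, and is equivalent to the ``pointwise for all $\eps$'' formulation; (iii) that the constant-net map is a ring embedding preserving the order. Each piece should be a short check once the right representatives are chosen.

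For (i), I would first observe that the asymptotic gauge $\mathcal{I}(\rho)$ absorbs sums and products: since $\rho_\eps \to 0^+$, we have $2 \le \rho_\eps^{-1}$ for $\eps$ small, so $|x_\eps| \le \rho_\eps^{-N}$, $|y_\eps|\le \rho_\eps^{-M}$ eventually implies $|x_\eps \pm y_\eps| \le \rho_\eps^{-(\max(N,M)+1)}$ and $|x_\eps y_\eps|\le \rho_\eps^{-(N+M)}$. Hence $\R_\rho$ is a unital subring of $\R^I$. As $\sim_\rho$ has just been declared a ring congruence on $\R_\rho$, the quotient $\RC{\rho}$ is a commutative ring with unit $[1]$ and zero $[0]$.

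For (ii), well-definedness of $\le$ under change of representatives is immediate because the correction $(x'_\eps - x_\eps) + (y_\eps - y'_\eps)$ is $\rho$-negligible and can be absorbed into the witness $(z_\eps)$; reflexivity uses $z_\eps = 0$; transitivity concatenates the two witnesses. Antisymmetry: if simultaneously $x_\eps \le y_\eps + z_\eps$ and $y_\eps \le x_\eps + w_\eps$ for $\eps$ small with $(z_\eps), (w_\eps) \sim_\rho 0$, then $|x_\eps - y_\eps| \le |z_\eps| + |w_\eps|$, which is $\rho$-negligible, giving $[x_\eps] = [y_\eps]$. Compatibility of $\le$ with $+$ and with multiplication by nonnegative elements is inherited from the pointwise case on $\R$. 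The equivalence with the ``$x_\eps \le y_\eps$ for all $\eps$'' formulation is the only slightly non-routine point; I would handle it by a truncation trick: given $x_\eps \le y_\eps + z_\eps$ for $\eps \le \eps_0$, set $\tilde{x}_\eps := \min(x_\eps,\, y_\eps + z_\eps)$ and $\tilde{y}_\eps := y_\eps + z_\eps$. Then $|\tilde{x}_\eps| \le |x_\eps| + |y_\eps + z_\eps|$ shows $(\tilde{x}_\eps)$ is moderate, $\tilde{x}_\eps = x_\eps$ for $\eps \le \eps_0$ shows $[\tilde{x}_\eps] = x$, the negligibility of $(z_\eps)$ shows $[\tilde{y}_\eps] = y$, and $\tilde{x}_\eps \le \tilde{y}_\eps$ holds for \emph{every} $\eps$ by construction.

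For (iii), constant nets are bounded, hence trivially $\rho$-moderate (any $N \ge 0$ works), so $r \mapsto [(r)_\eps]$ is well defined. It preserves $+$, $\cdot$, and $\le$ pointwise. Injectivity is the only content: if $r \ne s$ in $\R$, then $|r-s|$ is a fixed strictly positive real that cannot satisfy $|r-s| \le \rho_\eps^n$ for all $n$ and $\eps$ small, so $(r)_\eps \not\sim_\rho (s)_\eps$. The main (and only genuine) obstacle in the whole argument is the equivalence of the two formulations of the order, which the $\min$ truncation above resolves by promoting an eventual inequality to a pointwise one at the cost of switching to equivalent moderate representatives.
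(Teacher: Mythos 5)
Your proof is correct and takes the only natural route: the paper gives no proof at all (it states that the theorem ``follows directly from the previous definitions''), and your direct verification---closure of $\R_\rho$ under the ring operations, the order axioms for $\le$ together with the $\min$-truncation argument for the equivalence of the two formulations of the order, and injectivity of the constant-net embedding---supplies exactly the details the paper omits. The only (trivial) slip is the parenthetical claim that ``any $N\ge 0$ works'' for moderateness of a constant net $r$: for $|r|>1$ one needs $N\ge 1$ and $\eps$ small, though the surrounding claim that bounded nets are moderate is of course correct.
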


Although the order $\le$ is not total, we still have the possibility
to define the infimum $[x_{\eps}]\wedge[y_{\eps}]:=[\min(x_{\eps},y_{\eps})]$,
and analogously the supremum function $[x_{\eps}]\vee[y_{\eps}]:=\left[\max(x_{\eps},y_{\eps})\right]$
and the absolute value $|[x_{\eps}]|:=[|x_{\eps}|]\in\RC{\rho}$.
Henceforth, we will also use the customary notation $\RC{\rho}^{*}$
for the set of invertible generalized numbers.

As in every non-Archimedean ring, we have the following 
\begin{defn}
\label{def:nonArchNumbs}Let $x\in\RC{\rho}$ be a generalized number,
then 
\begin{enumerate}
\item $x$ is \emph{infinitesimal} if $|x|\le r$ for all $r\in\R_{>0}$.
If $x=[x_{\eps}]$, this is equivalent to $\lim_{\eps\to0^{+}}x_{\eps}=0$.
We write $x\approx y$ if $x-y$ is infinitesimal, and $D_{\infty}:=\left\{ h\in\rti\mid h\approx0\right\} $
for the set of all infinitesimals. 
\item $x$ is \emph{infinite} if $|x|\ge r$ for all $r\in\R_{>0}$. If
$x=[x_{\eps}]$, this is equivalent to $\lim_{\eps\to0^{+}}\left|x_{\eps}\right|=+\infty$. 
\item $x$ is \emph{finite} if $|x|\le r$ for some $r\in\R_{>0}$. 
\end{enumerate}
\end{defn}

\noindent For example, setting $\diff{\rho}:=[\rho_{\eps}]\in\RC{\rho}$,
we have that $\diff{\rho}^{n}\in\RC{\rho}$, $n\in\N_{>0}$, is an
invertible infinitesimal, whose reciprocal is $\diff{\rho}^{-n}=[\rho_{\eps}^{-n}]$,
which is necessarily a positive infinite number. Of course, in the
ring $\RC{\rho}$ there exist generalized numbers which are not in
any of the three classes of Def.~\ref{def:nonArchNumbs}, like e.g.~$x_{\eps}=\frac{1}{\eps}\sin\left(\frac{1}{\eps}\right)$. 
\begin{defn}
~ 
\begin{enumerate}
\item If $\mathcal{P}\left\{ (x_{\eps})\right\} $ is a property of $(x_{\eps})\in\R_{\rho}^{n}$,
then we also use the abbreviations: 
\begin{align*}
\forall[x_{\eps}]\in X:\ \mathcal{P}\left\{ (x_{\eps})\right\} \quad & :\iff\quad\forall(x_{\eps})\in\R_{\rho}^{n}:\ [x_{\eps}]\in X\ \Rightarrow\ \mathcal{P}\left\{ (x_{\eps})\right\} \\
\exists[x_{\eps}]\in X:\ \mathcal{P}\left\{ (x_{\eps})\right\} \quad & :\iff\quad\exists(x_{\eps})\in\R_{\rho}^{n}:\ [x_{\eps}]\in X,\ \mathcal{P}\left\{ (x_{\eps})\right\} .
\end{align*}
For example, if $X=\{x\}\subseteq\rcrho^{n}$, then $\forall[x_{\eps}]=x:\ \mathcal{P}\left\{ (x_{\eps})\right\} $
means that the property holds for all representatives of $x$, and
$\exists[x_{\eps}]=x:\ \mathcal{P}\left\{ (x_{\eps})\right\} $ means
that the same property holds for some representative of $x$. 
\item Our notations for intervals are: $[a,b]:=\{x\in\RC{\rho}\mid a\le x\le b\}$,
$[a,b]_{\R}:=[a,b]\cap\R$, and analogously for segments $[x,y]:=\left\{ x+r\cdot(y-x)\mid r\in[0,1]\right\} \subseteq\RC{\rho}^{n}$
and $[x,y]_{\R^{n}}=[x,y]\cap\R^{n}$. 
\item \label{enu:subPoint} For subsets $J,K\subseteq I$ we write $K\subzero J$
if $0$ is an accumulation point of $K$ and $K\sse J$ (we read it
as: $K$ \emph{is co-final in $J$}). For any $J\subzero I$, the
constructions introduced so far can be repeated with nets $(x_{\eps})_{\eps\in J}$.
We indicate this by using the symbol $\rcrho^{n}|_{J}$. If $K\subzero J$,
$x\in\rcrho^{n}|_{J}$ and $x'\in\rcrho^{n}|_{K}$, then $x'$ is
called a \emph{subpoint} of $x$, denoted as $x'\subseteq x$, if
there exist representatives $(x_{\eps})_{\eps\in J}$, $(x'_{\eps})_{\eps\in K}$
of $x$, $x'$ such that $x'_{\eps}=x_{\eps}$ for all $\eps\in K$.
In this case we write $x'=x|_{K}$, $\dom{x'}:=K$, and the restriction
$(-)|_{K}:\rcrho^{n}\ra\rcrho^{n}|_{K}$ is a well defined operation.
In general, for $X\sse\rcrho^{n}$ we set $X|_{J}:=\{x|_{J}\in\rcrho^{n}|_{J}\mid x\in X\}$.
Finally note that 
\[
\left(\neg\forall^{0}\eps:\ \mathcal{P}\left\{ x_{\eps}\right\} \right)\ \iff\ \exists J\subzero I\,\forall\eps\in J:\ \neg\mathcal{P}\left\{ x_{\eps}\right\} .
\]
\end{enumerate}
\end{defn}

\subsection{Topologies on $\RC{\rho}^{n}$}

On the $\RC{\rho}$-module $\RC{\rho}^{n}$ we can consider the natural
extension of the Euclidean norm, i.e.~$|[x_{\eps}]|:=[|x_{\eps}|]\in\RC{\rho}$,
where $[x_{\eps}]\in\RC{\rho}^{n}$. Even if this generalized norm
takes values in $\RC{\rho}$, it shares some essential properties
with classical norms: 
\begin{align*}
 & |x|=x\vee(-x)\\
 & |x|\ge0\\
 & |x|=0\Rightarrow x=0\\
 & |y\cdot x|=|y|\cdot|x|\\
 & |x+y|\le|x|+|y|\\
 & ||x|-|y||\le|x-y|.
\end{align*}

\noindent It is therefore natural to consider on $\RC{\rho}^{n}$
topologies generated by balls defined by this generalized norm and
a set of radii: 
\begin{defn}
\label{def:setOfRadii}We say that $\mathfrak{R}$ is a \emph{set
of radii} if 
\begin{enumerate}
\item $\mathfrak{R}\subseteq\RC{\rho}_{\ge0}^{*}$ is a non-empty subset
of positive invertible generalized numbers. 
\item For all $r$, $s\in\mathfrak{R}$ the infimum $r\wedge s\in\mathfrak{R}$. 
\item $k\cdot r\in\mathfrak{R}$ for all $r\in\mathfrak{R}$ and all $k\in\R_{>0}$. 
\end{enumerate}
\noindent Moreover, if $\mathfrak{R}$ is a set of radii and $x$,
$y\in\RC{\rho}$, then: 
\begin{enumerate}[resume]
\item We write $x<_{\mathfrak{R}}y$ if $\exists r\in\mathfrak{R}:\ r\le y-x$. 
\item $B_{r}^{\mathfrak{R}}(x):=\left\{ y\in\RC{\rho}^{n}\mid\left|y-x\right|<_{\mathfrak{R}}r\right\} $
for any $r\in\mathfrak{R}$. 
\item $\Eball_{\rho}(x):=\{y\in\R^{n}\mid|y-x|<\rho\}$, for any $\rho\in\R_{>0}$,
denotes an ordinary Euclidean ball in $\R^{n}$. 
\end{enumerate}
\end{defn}

\noindent For example, $\RC{\rho}_{\ge0}^{*}$ and $\R_{>0}$ are
sets of radii. 
\begin{lem}
\label{lem:<R}Let $\mathfrak{R}$ be a set of radii and $x$, $y$,
$z\in\RC{\rho}$, then 
\begin{enumerate}
\item \label{enu:NoReflex}$\neg\left(x<_{\mathfrak{R}}x\right)$. 
\item \label{enu:transitive}$x<_{\mathfrak{R}}y$ and $y<_{\mathfrak{R}}z$
imply $x<_{\mathfrak{R}}z$. 
\item \label{enu:radiiArePositive}$\forall r\in\mathfrak{R}:\ 0<_{\mathfrak{R}}r$. 
\end{enumerate}
\end{lem}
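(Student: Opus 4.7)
\medskip

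The plan is to verify each item directly from Definition \ref{def:setOfRadii}, using only that (a) elements of $\mathfrak{R}$ are non-negative and invertible, (b) $\mathfrak{R}$ is closed under binary infimum, and (c) $\RC{\rho}$ is a partially ordered ring (Theorem \ref{thm:CGN-Ring}).

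For (iii), the proof is immediate: given $r \in \mathfrak{R}$, the element $r$ itself satisfies $r \le r - 0$, and since $r \in \mathfrak{R}$ this witnesses $0 <_{\mathfrak{R}} r$. For (i), assume for contradiction that $x <_{\mathfrak{R}} x$. Then there exists $r \in \mathfrak{R}$ with $r \le x - x = 0$. But $\mathfrak{R} \subseteq \RC{\rho}_{\ge 0}^{*}$ gives $r \ge 0$ and $r$ invertible, so by antisymmetry of the order we conclude $r = 0$, contradicting invertibility. (One can alternatively argue at the level of representatives: invertibility of $r=[r_\eps]$ forces $|r_\eps| \ge \rho_\eps^{N}$ for some $N$ and $\eps$ small, which together with $r_\eps \ge 0$ and $r \le 0$ yields the usual contradiction.)

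The main step is (ii). Suppose $x <_{\mathfrak{R}} y$ and $y <_{\mathfrak{R}} z$, witnessed by $r_1, r_2 \in \mathfrak{R}$ with $r_1 \le y - x$ and $r_2 \le z - y$. Set $r := r_1 \wedge r_2$. By Definition \ref{def:setOfRadii}(ii), $r \in \mathfrak{R}$. Since $r_2 \in \mathfrak{R} \subseteq \RC{\rho}_{\ge 0}^{*}$ we have $0 \le r_2 \le z - y$, hence $y - x \le z - x$; combining with $r \le r_1 \le y - x$ yields $r \le z - x$. This shows $x <_{\mathfrak{R}} z$.

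The only mild subtlety is in (i), where one must invoke that an element of $\mathfrak{R}$ is \emph{both} non-negative and invertible in order to rule out $r \le 0$; otherwise the argument is entirely formal and relies on no properties of $\RC{\rho}$ beyond its structure as a partially ordered ring and the closure axioms imposed on a set of radii.
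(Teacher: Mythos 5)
Your proof is correct and follows essentially the same direct verification from Definition \ref{def:setOfRadii} as the paper; the only differences are cosmetic (in (i) you use antisymmetry plus non-invertibility of $0$ where the paper multiplies $r\le 0$ by $r^{-1}$ to get $1\le 0$, and in (ii) you use $z-y\ge r_2\ge 0$ to pass from $r\le y-x$ to $r\le z-x$ where the paper adds the two inequalities and uses $2(r\wedge s)\in\mathfrak{R}$). Both variants are valid, and yours in (ii) in fact needs one less closure axiom of $\mathfrak{R}$.
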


\begin{proof}
\ref{enu:NoReflex}: $x<_{\mathfrak{R}}x$ would imply $r\le0$ for
some $r\in\mathfrak{R}\subseteq\RC{\rho}_{\ge0}^{*}$. But then $r^{-1}r=1\le0$.

\noindent \ref{enu:transitive}: If $r\le y-x$ and $s\le z-y$ for
$r$, $s\in\mathfrak{R}$, then $2(r\wedge s)\le r+s\le z-x$.

\noindent \ref{enu:radiiArePositive}: In fact, we have $0<_{\mathfrak{R}}r$
if and only if $s\le r$ for some $s\in\mathfrak{R}$. 
\end{proof}
\noindent The relation $<_{\mathfrak{R}}$ has better topological
properties as compared to the usual strict order relation $x\le y$
and $x\ne y$ (a relation that we will therefore never use) because
of the following result: 
\begin{thm}
\label{thm:intersectionBalls}The set of balls $\left\{ B_{r}^{\mathfrak{R}}(x)\mid r\in\mathfrak{R},\ x\in\RC{\rho}^{n}\right\} $
generated by a set of radii $\mathfrak{R}$ is a base for a topology
on $\RC{\rho}^{n}$. 
\end{thm}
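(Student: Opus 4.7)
The plan is to verify the two standard axioms that characterize a base for a topology: (a) the collection of balls covers $\RC{\rho}^n$, and (b) for any two balls $B_r^{\mathfrak{R}}(x)$ and $B_s^{\mathfrak{R}}(y)$ and any $z$ in their intersection, some ball centered at $z$ is contained in both. Axiom (a) is immediate: since $\mathfrak{R}$ is nonempty, pick any $r\in\mathfrak{R}$; by Lem.~\ref{lem:<R}\ref{enu:radiiArePositive}, $0<_{\mathfrak{R}}r$, so $|x-x|=0<_{\mathfrak{R}}r$, hence $x\in B_r^{\mathfrak{R}}(x)$.

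For axiom (b), suppose $z\in B_r^{\mathfrak{R}}(x)\cap B_s^{\mathfrak{R}}(y)$. By the definition of $<_{\mathfrak{R}}$ there exist $r',s'\in\mathfrak{R}$ with $r'\le r-|z-x|$ and $s'\le s-|z-y|$. I would then set $t:=r'\wedge s'$, which belongs to $\mathfrak{R}$ by property (ii) of Def.~\ref{def:setOfRadii}, and claim that $B_t^{\mathfrak{R}}(z)\subseteq B_r^{\mathfrak{R}}(x)\cap B_s^{\mathfrak{R}}(y)$. The verification uses the triangle inequality for the generalized norm together with unpacking the definition of $<_{\mathfrak{R}}$: if $w\in B_t^{\mathfrak{R}}(z)$, pick $u\in\mathfrak{R}$ with $u\le t-|w-z|$; then
\[
|w-x|+u\le |w-z|+|z-x|+u\le t+|z-x|\le r'+|z-x|\le r,
\]
so $u\le r-|w-x|$ and hence $w\in B_r^{\mathfrak{R}}(x)$. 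The symmetric inequality using $s'$ in place of $r'$ yields $w\in B_s^{\mathfrak{R}}(y)$.

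The only slightly subtle point is that $<_{\mathfrak{R}}$ is not a total strict order, so one cannot simply take a "minimal positive distance" as in the classical case. The remedy is precisely the lattice property $r'\wedge s'\in\mathfrak{R}$, which supplies a single radius in $\mathfrak{R}$ dominated by both excess margins $r-|z-x|$ and $s-|z-y|$. Together with the triangle inequality for $|\cdot|$ (listed just before Def.~\ref{def:setOfRadii}), this makes the verification essentially formal, and no further properties of $\mathfrak{R}$ (such as closure under positive real scaling) are needed for this theorem—so the proof is short once the correct candidate radius $t=r'\wedge s'$ is identified.
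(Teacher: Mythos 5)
Your proof is correct and follows essentially the same route as the paper's: both arguments pick $\bar r\le r-|z-x|$ and $\bar s\le s-|z-y|$ in $\mathfrak{R}$, take the candidate radius $\bar r\wedge\bar s\in\mathfrak{R}$ supplied by property (ii) of Def.~\ref{def:setOfRadii}, and conclude via the triangle inequality for the generalized norm. Your inequality chain is a slightly more streamlined rendering of the same computation, and the explicit check of the covering axiom (which the paper leaves implicit) is a harmless addition.
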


\begin{proof}
It suffices to consider $z\in B_{r}^{\mathfrak{R}}(x)\cap B_{s}^{\mathfrak{R}}(y)$
and to prove that $B_{\nu}^{\mathfrak{R}}(z)\subseteq B_{r}^{\mathfrak{R}}(x)\cap B_{s}^{\mathfrak{R}}(y)$
for some $\nu\in\mathfrak{R}$. The proof is essentially a reformulation
of the classical proof in metric spaces. In fact, we have $\bar{r}\le r-|x-z|$
and $\bar{s}\le s-|y-z|$ for some $\bar{r}$, $\bar{s}\in\mathfrak{R}$.
Set $\nu:=\bar{r}\wedge\bar{s}\in\mathfrak{R}$. The inequality $|w-z|<_{\mathfrak{R}}\nu$
implies $\sigma\le\nu-|w-z|$ for some $\sigma\in\mathfrak{R}$. Therefore,
$|w-x|\le|w-z|+|z-x|\le\nu-\sigma+r-\bar{r}$ and thereby $\sigma\le\bar{r}+\sigma-\nu\le r-|w-x|$,
i.e.~$|w-x|<_{\mathfrak{R}}r$. This proves that $B_{\nu}^{\mathfrak{R}}(z)\subseteq B_{r}^{\mathfrak{R}}(x)$,
and the other inclusion follows analogously. 
\end{proof}
Henceforth, we will only consider the sets of radii $\RC{\rho}_{\ge0}^{*}$
and $\R_{>0}$. The topology generated in the former case is called
\emph{sharp topology}, whereas the latter is called \emph{Fermat topology}.
We will call \emph{sharply open set} any open set in the sharp topology,
and \emph{large open set} any open set in the Fermat topology; clearly,
the latter is coarser than the former. Let us note explicitly that
taking an infinitesimal radius $r\in\RC{\rho}_{\ge0}^{*}$ we can
consider infinitesimal neighborhoods of $x\in\RC{\rho}^{n}$ in the
sharp topology. Of course, this is not possible in the Fermat topology.
The existence of infinitesimal neighborhoods implies that the sharp
topology induces the discrete topology on $\R$, see \cite{GK13b}.
The necessity to consider infinitesimal neighborhoods occurs in any
theory containing continuous generalized functions which have infinite
derivatives. Indeed, from the mean value theorem Thm.~\ref{thm:classicalThms}.\ref{enu:meanValue}
below, we have $f(x)-f(x_{0})=f'(c)\cdot(x-x_{0})$ for some $c\in[x,x_{0}]$.
Therefore, we have $f(x)\in B_{r}(f(x_{0}))$, for a given $r\in\RC{\rho}_{>0}$,
if and only if $|x-x_{0}|\cdot|f'(c)|<r$, which yields an infinitesimal
neighborhood of $x_{0}$ in case $f'(c)$ is infinite; see \cite{GK13b,GK15}
for precise statements and proofs corresponding to this intuition.
By an innocuous abuse of language, we write $x<y$ instead of $x<_{\RC{\rho}_{\ge0}^{*}}y$
and $x<_{\R}y$ instead of $x<_{\R_{>0}}y$. For example, $\RC{\rho}_{\ge0}^{*}=\RC{\rho}_{>0}$.
We will simply write $B_{r}(x)$ to denote an open ball in the sharp
topology and $\Fball_{r}(x)$ for an open ball in the Fermat topology.
Proceeding by contradiction, it is not difficult to prove that the
sharp topology on $\rcrho^{n}$ is Hausdorff and that the set of all
the infinitesimals $D_{\infty}$ is a clopen set; moreover, as will
be proved more generally in \cite{Lu-Gi16}, this topology is also
Cauchy complete.

The following result is useful in dealing with positive and invertible
generalized numbers. 
\begin{lem}
\label{lem:mayer} Let $x\in\RC{\rho}$. Then the following are equivalent: 
\begin{enumerate}
\item \label{enu:positiveInvertible}$x$ is invertible and $x\ge0$, i.e.~$x>0$. 
\item \label{enu:strictlyPositive}For each representative $(x_{\eps})\in\R_{\rho}$
of $x$ we have $\forall^{0}\eps:\ x_{\eps}>0$. 
\item \label{enu:greater-i_epsTom}For each representative $(x_{\eps})\in\R_{\rho}$
of $x$ we have $\exists m\in\N\,\forall^{0}\eps:\ x_{\eps}>\rho_{\eps}^{m}$. 
\item \label{enu:There-exists-a}There exists a representative $(x_{\eps})\in\R_{\rho}$
of $x$ such that $\exists m\in\N\,\forall^{0}\eps:\ x_{\eps}>\rho_{\eps}^{m}$. 
\end{enumerate}
\end{lem}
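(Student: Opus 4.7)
The plan is to establish the equivalences via the cycle (i) $\Rightarrow$ (iv) $\Rightarrow$ (iii) $\Rightarrow$ (ii) $\Rightarrow$ (iv) $\Rightarrow$ (i), since three of these four implications are short and only (ii) $\Rightarrow$ (iv) requires a genuine construction.

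First I would dispose of the easy implications. For (iv) $\Rightarrow$ (iii) I will use that any two representatives of $x$ differ by a $\rho$-negligible net: if $(x_\eps) \in \R_\rho$ satisfies $x_\eps > \rho_\eps^m$ for $\eps$ small and $(x'_\eps) = (x_\eps + z_\eps)$ with $|z_\eps| \le \rho_\eps^{m+1}$ for small $\eps$, then $x'_\eps > \rho_\eps^m - \rho_\eps^{m+1} = \rho_\eps^m(1-\rho_\eps) > \rho_\eps^{m+1}$ once $\rho_\eps < 1/2$. The implication (iii) $\Rightarrow$ (ii) is immediate since $\rho_\eps^m > 0$. For (iv) $\Rightarrow$ (i), given a representative with $x_\eps > \rho_\eps^m$ for $\eps \le \eps_0$, I set $y_\eps := 1/x_\eps$ on that tail and $y_\eps := 0$ otherwise; then $|y_\eps| < \rho_\eps^{-m}$ shows $(y_\eps) \in \R_\rho$, the product $x_\eps y_\eps$ equals $1$ on the tail so $xy = 1$ in $\RC{\rho}$, and the chosen representative is pointwise non-negative on the tail, whence $x \ge 0$.

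For (i) $\Rightarrow$ (iv), let $y = [y_\eps]$ be the inverse of $x$, with $|y_\eps| \le \rho_\eps^{-N}$ for some $N$ and all small $\eps$. The equality $xy = 1$ means $x_\eps y_\eps = 1 + z_\eps$ with $(z_\eps) \sim_\rho 0$, so $|x_\eps y_\eps| \ge 1/2$ for $\eps$ small, giving $|x_\eps| \ge \frac{1}{2}\rho_\eps^N$. Using the equivalent formulation of $\le$ from the discussion preceding Thm.~\ref{thm:CGN-Ring}, pick a representative of $x$ with $x_\eps \ge 0$ for all $\eps$; then for small $\eps$ one has $x_\eps \ge \frac{1}{2}\rho_\eps^N > \rho_\eps^{N+1}$, which is exactly (iv).

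The main obstacle is (ii) $\Rightarrow$ (iv), and the argument I plan is by contradiction. Assume (iv) fails. Fix any representative $(x_\eps)$ (which, by (ii), we may take with $x_\eps > 0$ for small $\eps$). Negating (iv) for every $m$ yields a cofinal set $J_m \subzero I$ with $x_\eps \le \rho_\eps^m$ on $J_m$. I will extract a strictly decreasing sequence $\eps_m \in J_m$ with $\eps_m \to 0$ and define the perturbation
\[
z_\eps := \begin{cases} -x_\eps & \text{if } \eps = \eps_m \text{ for some } m, \\ 0 & \text{otherwise.} \end{cases}
\]
The key verification is that $(z_\eps)$ is $\rho$-negligible: for any $n \in \N$ and any $\eps \le \eps_n$, either $\eps$ is not one of the $\eps_m$ (so $z_\eps = 0$) or $\eps = \eps_m$ with $m \ge n$, in which case $|z_\eps| = x_{\eps_m} \le \rho_{\eps_m}^m \le \rho_{\eps_m}^n$ because $\rho_{\eps_m} \le 1$. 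Then $x'_\eps := x_\eps + z_\eps$ is another representative of $x$, yet $x'_{\eps_m} = 0$ on the cofinal set $\{\eps_m\}_m$, contradicting the hypothesis (ii) applied to $(x'_\eps)$. This closes the cycle and completes the plan.
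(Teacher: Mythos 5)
Your proof is correct and follows essentially the same route as the paper's: the two substantive ingredients — modifying a representative to vanish along a diagonal sequence $\eps_m\in J_m$ in order to contradict strict positivity, and inverting pointwise on a tail to get moderateness of $x^{-1}$ — are exactly the paper's arguments for (ii)$\Rightarrow$(iii) and (iii)$\Rightarrow$(i). The only differences are cosmetic: you arrange the implications in a different cycle and prove (i)$\Rightarrow$(iv) via the quantitative bound $|x_\eps|\ge\frac{1}{2}\rho_\eps^{N}$ coming from the moderateness of the inverse (just make sure this bound is applied to the same nonnegative representative you then use, or note that it transfers up to a negligible correction), where the paper instead derives (i)$\Rightarrow$(ii) by a contradiction argument.
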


\begin{proof}
\ref{enu:positiveInvertible} $\Rightarrow$ \ref{enu:strictlyPositive}:
Since $x$ is positive, we can find a representative $[x_{\eps}]=x$
such that $x_{\eps}\ge0$ for all $\eps$. But $x$ is also invertible,
so for all $\eps$ we can also write $x_{\eps}y_{\eps}=1+z_{\eps}$,
where $(z_{\eps})\sim_{\rho}0$ is a negligible net. By contradiction,
assume that $x_{\eps_{k}}\le0$ for each $k\in\N$, where $(\eps_{k})_{k\in\N}\to0^{+}$.
Then $x_{\eps_{k}}=0$ and hence $x_{\eps_{k}}y_{\eps_{k}}=0=1+z_{\eps_{k}}\to1$
for $k\to+\infty$, which is a contradiction.

\noindent \ref{enu:strictlyPositive} $\Rightarrow$ \ref{enu:greater-i_epsTom}:
Assume that there exists a representative $[x_{\eps}]=x$ such that
$x_{\eps_{k}}\le\rho_{\eps_{k}}^{k}$ for each $k\in\N$, where $(\eps_{k})_{k\in\N}\to0^{+}$
monotonically. We then define a $\rho$-moderate net by $\hat{x}_{\eps}:=0$
if $\eps=\eps_{k}$ and $\hat{x}_{\eps}:=x_{\eps}$ otherwise. For
each $n\in\N$, if $k$ is sufficiently big, we have $\left|x_{\eps_{k}}-\hat{x}_{\eps_{k}}\right|\le\rho_{\eps_{k}}^{k}\le\rho_{\eps_{k}}^{n}$.
This implies that $(x_{\eps})\sim_{\rho}(\hat{x}_{\eps})$. Therefore
$(\hat{x}_{\eps})$ is another representative of $x$, which contradicts
\ref{enu:strictlyPositive} by construction.

\noindent \ref{enu:greater-i_epsTom} $\Rightarrow$ \ref{enu:positiveInvertible}:
By assumption, $\lim_{\eps\to0^{+}}\rho_{\eps}=0^{+}$. This and \ref{enu:greater-i_epsTom}
yield that $x_{\eps}>\rho_{\eps}^{m}>0$ for $\eps$ small, say for
$\eps\le\eps_{0}$. Therefore, $0<y_{\eps}:=x_{\eps}^{-1}\le\rho_{\eps}^{-m}$
for $\eps\le\eps_{0}$ (and $y_{\eps}$ arbitrarily defined elsewhere)
is $\rho$-moderate and hence it is a representative of the inverse
of $x$.

Finally, \ref{enu:greater-i_epsTom} implies \ref{enu:There-exists-a}
for logical reasons, and \ref{enu:There-exists-a} implies \ref{enu:positiveInvertible}
because $\rho_{\eps}>0$. 
\end{proof}

\subsection{Open, closed and bounded sets generated by nets}

A natural way to obtain sharply open, closed and bounded sets in $\RC{\rho}^{n}$
is by using a net $(A_{\eps})$ of subsets $A_{\eps}\subseteq\R^{n}$.
We have two ways of extending the membership relation $x_{\eps}\in A_{\eps}$
to generalized points $[x_{\eps}]\in\RC{\rho}^{n}$ (cf.\ \cite{Ob-Ve,GKV}): 
\begin{defn}
\label{def:internalStronglyInternal}Let $(A_{\eps})$ be a net of
subsets of $\R^{n}$, then 
\begin{enumerate}
\item $[A_{\eps}]:=\left\{ [x_{\eps}]\in\RC{\rho}^{n}\mid\forall^{0}\eps:\,x_{\eps}\in A_{\eps}\right\} $
is called the \emph{internal set} generated by the net $(A_{\eps})$. 
\item Let $(x_{\eps})$ be a net of points of $\R^{n}$, then we say that
$x_{\eps}\in_{\eps}A_{\eps}$, and we read it as $(x_{\eps})$ \emph{strongly
belongs to $(A_{\eps})$}, if 
\begin{enumerate}[label=(\alph*)]
\item $\forall^{0}\eps:\ x_{\eps}\in A_{\eps}$. 
\item If $(x'_{\eps})\sim_{\rho}(x_{\eps})$, then also $x'_{\eps}\in A_{\eps}$
for $\eps$ small. 
\end{enumerate}
\noindent Moreover, we set $\sint{A_{\eps}}:=\left\{ [x_{\eps}]\in\RC{\rho}^{n}\mid x_{\eps}\in_{\eps}A_{\eps}\right\} $,
and we call it the \emph{strongly internal set} generated by the net
$(A_{\eps})$. 
\item We say that the internal set $K=[A_{\eps}]$ is \emph{sharply bounded}
if there exists $M\in\RC{\rho}_{>0}$ such that $K\subseteq B_{M}(0)$. 
\item Finally, we say that the $(A_{\eps})$ is a \emph{sharply bounded
net} if there exists $N\in\R_{>0}$ such that $\forall^{0}\eps\,\forall x\in A_{\eps}:\ |x|\le\rho_{\eps}^{-N}$. 
\end{enumerate}
\end{defn}

\noindent Therefore, $x\in[A_{\eps}]$ if there exists a representative
$[x_{\eps}]=x$ such that $x_{\eps}\in A_{\eps}$ for $\eps$ small,
whereas this membership is independent from the chosen representative
in case of strongly internal sets. An internal set generated by a
constant net $A_{\eps}=A\subseteq\R^{n}$ will simply be denoted by
$[A]$.

The following theorem (cf.\ \cite{Ob-Ve,GKV} for the case $\rho_{\eps}=\eps$)
shows that internal and strongly internal sets have dual topological
properties: 
\begin{thm}
\noindent \label{thm:strongMembershipAndDistanceComplement}For $\eps\in I$,
let $A_{\eps}\subseteq\R^{n}$ and let $x_{\eps}\in\R^{n}$. Then
we have 
\begin{enumerate}
\item \label{enu:internalSetsDistance}$[x_{\eps}]\in[A_{\eps}]$ if and
only if $\forall q\in\R_{>0}\,\forall^{0}\eps:\ d(x_{\eps},A_{\eps})\le\rho_{\eps}^{q}$.
Therefore $[x_{\eps}]\in[A_{\eps}]$ if and only if $[d(x_{\eps},A_{\eps})]=0\in\RC{\rho}$. 
\item \label{enu:stronglyIntSetsDistance}$[x_{\eps}]\in\sint{A_{\eps}}$
if and only if $\exists q\in\R_{>0}\,\forall^{0}\eps:\ d(x_{\eps},A_{\eps}^{\text{c}})>\rho_{\eps}^{q}$,
where $A_{\eps}^{\text{c}}:=\R^{n}\setminus A_{\eps}$. Therefore,
if $(d(x_{\eps},A_{\eps}^{\text{c}}))\in\R_{\rho}$, then $[x_{\eps}]\in\sint{A_{\eps}}$
if and only if $[d(x_{\eps},A_{\eps}^{\text{c}})]>0$. 
\item \label{enu:internalAreClosed}$[A_{\eps}]$ is sharply closed. 
\item \label{enu:stronglyIntAreOpen}$\sint{A_{\eps}}$ is sharply open. 
\item \label{enu:internalGeneratedByClosed}$[A_{\eps}]=\left[\text{\emph{cl}}\left(A_{\eps}\right)\right]$,
where $\text{\emph{cl}}\left(S\right)$ is the closure of $S\subseteq\R^{n}$. 
\item \label{enu:stronglyIntGeneratedByOpen}$\sint{A_{\eps}}=\sint{\text{\emph{int}\ensuremath{\left(A_{\eps}\right)}}}$,
where $\emph{int}\left(S\right)$ is the interior of $S\subseteq\R^{n}$. 
\end{enumerate}
\end{thm}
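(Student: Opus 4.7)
The plan is to first establish the distance characterisations~\ref{enu:internalSetsDistance} and~\ref{enu:stronglyIntSetsDistance}, and then to derive the topological statements~\ref{enu:internalAreClosed}--\ref{enu:stronglyIntGeneratedByOpen} from them by explicit ball-constructions together with the metric identities $d(\cdot,S)=d(\cdot,\mathrm{cl}(S))$ and $d(\cdot,S^{c})=d(\cdot,(\mathrm{int}(S))^{c})$.

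For~\ref{enu:internalSetsDistance}, one direction is immediate: if $x=[x'_{\eps}]$ with $x'_{\eps}\in A_{\eps}$ for $\eps$ small, then $d(x_{\eps},A_{\eps})\le|x_{\eps}-x'_{\eps}|$, and $\rho$-negligibility of $(x_{\eps}-x'_{\eps})$ delivers the desired bound. The converse uses a diagonal construction: from the hypothesis, choose $\eps_{k}\downarrow 0$ with $d(x_{\eps},A_{\eps})\le\rho_{\eps}^{k}$ for $\eps\le\eps_{k}$, and on each half-open interval $(\eps_{k+1},\eps_{k}]$ select $x'_{\eps}\in A_{\eps}$ with $|x_{\eps}-x'_{\eps}|\le d(x_{\eps},A_{\eps})+\rho_{\eps}^{k}\le 2\rho_{\eps}^{k}$ (extending $x'_{\eps}$ arbitrarily for $\eps>\eps_{1}$). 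The resulting net $(x'_{\eps})$ is $\rho$-moderate, equivalent to $(x_{\eps})$, and eventually lies in $A_{\eps}$, so $x\in[A_{\eps}]$. The concluding identity of~\ref{enu:internalSetsDistance} is then just the reformulation that the distance condition says precisely $[d(x_{\eps},A_{\eps})]=0$ in $\RC{\rho}$.

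For~\ref{enu:stronglyIntSetsDistance}, the forward direction is by contraposition. If $\forall q\,\exists J_{q}\subzero I$ with $d(x_{\eps},A_{\eps}^{c})\le\rho_{\eps}^{q}$ on $J_{q}$, pick $\eps_{k}\in J_{k}$ with $\eps_{k}\downarrow 0$ and choose $x'_{\eps_{k}}\in A_{\eps_{k}}^{c}$ within $2\rho_{\eps_{k}}^{k}$ of $x_{\eps_{k}}$, setting $x'_{\eps}=x_{\eps}$ elsewhere; the resulting net satisfies $(x'_{\eps})\sim_{\rho}(x_{\eps})$ but fails to lie in $A_{\eps}$ on the cofinal set $\{\eps_{k}\}$, contradicting strong belonging. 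The converse uses the reverse triangle inequality: if $d(x_{\eps},A_{\eps}^{c})>\rho_{\eps}^{q}$ eventually and $(x'_{\eps})\sim_{\rho}(x_{\eps})$, then $|x'_{\eps}-x_{\eps}|\le\rho_{\eps}^{q+1}$ eventually forces $d(x'_{\eps},A_{\eps}^{c})\ge\rho_{\eps}^{q}-\rho_{\eps}^{q+1}>0$, and hence $x'_{\eps}\in A_{\eps}$. The closing assertion of~\ref{enu:stronglyIntSetsDistance} follows from Lemma~\ref{lem:mayer}, which equates positive invertibility of $[d(x_{\eps},A_{\eps}^{c})]$ with precisely such an $\exists q$-bound.

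For~\ref{enu:internalAreClosed}, given $y\notin[A_{\eps}]$,~\ref{enu:internalSetsDistance} yields $q\in\R_{>0}$ and $J\subzero I$ with $d(y_{\eps},A_{\eps})>\rho_{\eps}^{q}$ on $J$, and I claim $B_{\diff{\rho}^{q+1}}(y)\cap[A_{\eps}]=\emptyset$: any $z$ in the intersection would satisfy $d(y_{\eps},A_{\eps})\le|y_{\eps}-z_{\eps}|+d(z_{\eps},A_{\eps})\le 2\rho_{\eps}^{q+1}$ eventually, forcing $1\le 2\rho_{\eps}$ on a cofinal set, impossible since $\rho_{\eps}\to 0$. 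Dually,~\ref{enu:stronglyIntAreOpen} is obtained by showing $B_{\diff{\rho}^{q+1}}(x)\subseteq\sint{A_{\eps}}$ via the reverse triangle inequality whenever $d(x_{\eps},A_{\eps}^{c})>\rho_{\eps}^{q}$ eventually. Finally,~\ref{enu:internalGeneratedByClosed} and~\ref{enu:stronglyIntGeneratedByOpen} follow immediately from~\ref{enu:internalSetsDistance} and~\ref{enu:stronglyIntSetsDistance} combined with the metric identities above. The main technical obstacle is the diagonal construction underlying~\ref{enu:internalSetsDistance} and~\ref{enu:stronglyIntSetsDistance}, where one must simultaneously secure $\rho$-moderateness, $\rho$-negligibility, and the required membership behaviour in $A_{\eps}$ (resp.~$A_{\eps}^{c}$) along a cofinal set of indices.
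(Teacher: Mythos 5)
Your proposal is correct and follows essentially the same route as the paper's own proof: the piecewise selection of $x'_{\eps}\in A_{\eps}$ on the intervals $(\eps_{k+1},\eps_{k}]$ for the converse of \ref{enu:internalSetsDistance}, the cofinal-sequence contradiction for the forward direction of \ref{enu:stronglyIntSetsDistance}, the (reverse) triangle-inequality ball arguments for \ref{enu:internalAreClosed} and \ref{enu:stronglyIntAreOpen}, and the reduction of \ref{enu:internalGeneratedByClosed}, \ref{enu:stronglyIntGeneratedByOpen} to the distance characterisations via $d(\cdot,S)=d(\cdot,\mathrm{cl}(S))$ and $(\mathrm{int}(S))^{c}=\mathrm{cl}(S^{c})$. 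The only differences are cosmetic (radius $\diff{\rho}^{q+1}$ versus $\tfrac{1}{2}\diff{\rho}^{q}$, and the explicit appeal to Lemma \ref{lem:mayer} for the final assertion of \ref{enu:stronglyIntSetsDistance}, which the paper leaves implicit).
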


\begin{proof}
\ref{enu:internalSetsDistance} $\mbox{\ensuremath{\Rightarrow}}$:
We have $x'_{\eps}\in A_{\eps}$ for some representative $[x'_{\eps}]=[x_{\eps}]\in[A_{\eps}]$.
But $d(x_{\eps},A_{\eps})\le|x_{\eps}-x'_{\eps}|+d(x'_{\eps},A_{\eps})$,
from which the conclusion follows.

\noindent \ref{enu:internalSetsDistance} $\Leftarrow$: Since the
net $\left(\inf_{a\in A_{\eps}}|x_{\eps}-a|\right)$ is $\rho$-negligible,
we can find a decreasing sequence $(\eps_{k})_{k\in\N}\downarrow0$
such that $\inf_{a\in A_{\eps}}|x_{\eps}-a|<\rho_{\eps}^{k}$ for
$\eps\le\eps_{k}$. Hence, for each $\eps\in(\eps_{k+1},\eps_{k}]_{\R}$
we can find $x'_{\eps}\in A_{\eps}$ such that $|x_{\eps}-x'_{\eps}|\le\rho_{\eps}^{k}$.
Therefore, $(x'_{\eps})$ is another representative of $[x_{\eps}]$
and $x'_{\eps}\in A_{\eps}$ for $\eps\le\eps_{0}$.

\noindent \ref{enu:stronglyIntSetsDistance}: Let $\left[x_{\eps}\right]\in\sint{A_{\eps}}$
and suppose to the contrary that there exists a sequence $\eps_{k}\downarrow0$
such that $d(x_{\eps_{k}},A_{\eps_{k}}^{\text{c}})\le\rho_{\eps_{k}}^{k}$
for all $k\in\N$. For each $k$, pick some $x_{k}'\in A_{\eps_{k}}^{\text{c}}$
with $|x_{k}'-x_{\eps_{k}}|<2\rho_{\eps_{k}}^{k}$ and choose $(x'_{\eps})\sim_{\rho}(x_{\eps})$
such that $x'_{\eps_{k}}=x'_{k}$ for all $k$. Then $x_{\eps_{k}}'\not\in A_{\eps_{k}}$
for all $k$, contradicting $x_{\eps}\in_{\eps}A_{\eps}$. Conversely,
let $d(x_{\eps},A_{\eps}^{\text{c}})>\rho_{\eps}^{q}$ for $\eps$
small. Then in particular, $x_{\eps}\in A_{\eps}$. Also, if $(x_{\eps}')\sim_{\rho}(x_{\eps})$
then $d(x_{\eps}',A_{\eps}^{\text{c}})>(1/2)\rho_{\eps}^{q}$ for
$\eps$ small, so $x_{\eps}'\in A_{\eps}$. Thus, $\left[x_{\eps}\right]\in\sint{A_{\eps}}$.

\noindent \ref{enu:internalAreClosed}: Let $x=[x_{\eps}]\in\RC{\rho}^{n}\setminus[A_{\eps}]$.
Then \ref{enu:internalSetsDistance} yields that $d(x_{\eps_{k}},A_{\eps_{k}})>\rho_{\eps_{k}}^{q}$
for some $q\in\R_{>0}$ and some sequence $(\eps_{k})_{k\in\N}\downarrow0$.
Set $r:=\frac{1}{2}\diff{\rho}^{q}$, then $y\in B_{r}(x)$ implies
that for some representative $[y_{\eps}]=y$ we have $d(y_{\eps_{k}},A_{\eps_{k}})\ge d(x_{\eps_{k}},A_{\eps_{k}})-\left|x_{\eps_{k}}-y_{\eps_{k}}\right|>\rho_{\eps_{k}}^{q}-\frac{1}{2}\rho_{\eps_{k}}^{q}$.
Thereby \ref{enu:internalSetsDistance} gives $y\notin[A_{\eps}]$.
This proves that $\RC{\rho}^{n}\setminus[A_{\eps}]$ is sharply open.

\noindent \ref{enu:stronglyIntAreOpen}: \ref{enu:stronglyIntSetsDistance}
yields that $\left[x_{\eps}\right]\in\sint{A_{\eps}}$ if and only
if $[x_{\eps}]$ is in the interior of $\sint{A_{\eps}}$ with respect
to the sharp topology.

\noindent \ref{enu:internalGeneratedByClosed}, \ref{enu:stronglyIntGeneratedByOpen}:
Directly from \ref{enu:internalSetsDistance} and \ref{enu:stronglyIntSetsDistance}. 
\end{proof}
For example, it is not hard to show that the closure in the sharp
topology of a ball of center $c=[c_{\eps}]$ and radius $r=[r_{\eps}]>0$
is 
\begin{equation}
\overline{B_{r}(c)}=\left\{ x\in\rti^{d}\mid\left|x-c\right|\le r\right\} =\left[\overline{\Eball_{r_{\eps}}(c_{\eps})}\right].\label{eq:closureBall}
\end{equation}
In fact, it suffices to prove these equalities for $c=0$, because
the translation $x\mapsto x-c$ is sharply continuous. If $(x_{n})$
is a sequence in $\{x\mid\left|x\right|\le r\}$ that converges to
$x_{0}$, then $\left|x_{0}\right|\le|x_{0}-x_{n}|+|x_{n}|\le|x_{0}-x_{n}|+r$.
Letting $n\to+\infty$, this shows that $\{x\mid\left|x\right|\le r\}$
is closed. Conversely, if $|x|\le r$, to prove that $x$ is an adherent
point of $B_{r}(0)$, we need to show that 
\[
\forall s\in\rti_{>0}\,\exists\bar{x}\in B_{r}(0)\cap B_{s}(x).
\]

\noindent Take $k\in\N$ such that $2\diff{\rho}^{k}\le\min(r,s)$,
and representatives $[x_{\eps}]=x$ and $[r_{\eps}]=r$ such that
$|x_{\eps}|\le r_{\eps}$ for small $\eps$. The point $\bar{x}_{\eps}:=x_{\eps}$
if $|x_{\eps}|<r_{\eps}-\rho_{\eps}^{k}$ and $\bar{x}_{\eps}:=x_{\eps}-\frac{x_{\eps}}{|x_{\eps}|}\rho_{\eps}^{k}$
otherwise satisfies the desired conditions. This proves the first
equality in \eqref{eq:closureBall}. The proof that $\overline{B_{r}(0)}\supseteq\left[\overline{\Eball_{r_{\eps}}(0)}\right]$
is easy. Vice versa, if $|\bar{x}_{\eps}|\le r_{\eps}+z_{\eps}$ for
some representatives $[\bar{x}_{\eps}]=x$ and $[z_{\eps}]=0$, then
setting $x_{\eps}:=\bar{x}_{\eps}$ if $|\bar{x}_{\eps}|\le r_{\eps}$
and $x_{\eps}:=\frac{\bar{x}_{\eps}}{\left|\bar{x}_{\eps}\right|}r_{\eps}$
otherwise gives another representative of $x$ that shows that $x\in\left[\overline{\Eball_{r_{\eps}}(0)}\right]$.

\noindent From \eqref{eq:closureBall} and Thm.~\eqref{thm:strongMembershipAndDistanceComplement},
it hence also follows that 
\begin{equation}
B_{r}(c)=\sint{\Eball_{r_{\eps}}(c_{\eps})}.\label{eq:ballStrgInt}
\end{equation}

\noindent In a similar way, it can be shown that for every $x$, $y\in\RC{\rho}$
\begin{equation}
y\geq x\Leftrightarrow y\in\overline{{\left\{ z\in\RC{\rho}\mid z>x\right\} }}.\label{eq: geq as a closure}
\end{equation}
Some relations between internal and strongly internal sets that we
will use below are listed in the following 
\begin{lem}
\noindent \label{Lem:OmegaEpsAroundBoundedAndCptSupp}Let $(\Omega_{\eps})$
be a net of subsets in $\R^{n}$ for all $\eps$, and $(B_{\eps})$
a sharply bounded net such that $[B_{\eps}]\subseteq\sint{\Omega_{\eps}}$,
then 
\begin{enumerate}
\item \label{enu:1.10}$\forall^{0}\eps:\ B_{\eps}\subseteq\Omega_{\eps}$. 
\item \label{enu:2.10}If each $B_{\eps}$ is closed, then $\exists S\in\N\,\forall[x_{\eps}]\in[B_{\eps}]\,\forall^{0}\eps:\ d(x_{\eps},\Omega_{\eps}^{c})\ge\rho_{\eps}^{S}$. 
\item \label{enu:ballInt}If $r=[r_{\eps}]\in\rcrho_{>0}$, $b=[b_{\eps}]\in\rcrho^{n}$
and $B_{r}(b)\subseteq\text{\emph{int}}\left([B_{\eps}]\right)$,
then $\forall^{0}\eps:\ \Eball_{r_{\eps}}(b_{\eps})\subseteq B_{\eps}$. 
\item \label{enu:reprStrInt}If $x\in\sint{A_{\eps}}\subseteq[B_{\eps}]$,
then $\forall[x_{\eps}]=x\,\forall^{0}\eps:\ x_{\eps}\in B_{\eps}$. 
\item \label{enu:int}If $(C_{\eps})$ is also sharply bounded and $[B_{\eps}]\subseteq\text{\emph{int}}\left([C_{\eps}]\right)$,
then there exists $S\in\N$ such that: 
\begin{enumerate}[label=(\alph*)]
\item \label{enu:a}$\forall^{0}\eps:\ \Eball_{\rho_{\eps}^{S}}(B_{\eps})\subseteq C{}_{\eps}$ 
\item \label{enu:b}$B_{\diff{\rho}^{S}}(B)\subseteq C$, 
\end{enumerate}
where, in general 
\[
B_{r}^{d}(B)=\left\{ x\mid d(x,B)<r\right\} =\bigcup_{b\in B}B_{r}^{d}(b)
\]
(in $\rcrho^{n}$, we also set $d(x,B):=\left[d(x_{\eps},B_{\eps})\right]\in\rcrho$). 
\item \label{enu:neigh}If each $B_{\eps}$ is closed, then there exists
a sharply bounded net $\left(B_{\eps}^{+}\right)$ of closed sets
such that $[B_{\eps}^{+}]\subseteq\sint{\Omega_{\eps}}$ is a sharp
neighborhood of $[B_{\eps}]$. 
\end{enumerate}
\end{lem}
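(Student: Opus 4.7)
The plan for all six items is a unified contradiction argument built on the distance characterisations of Thm.~\ref{thm:strongMembershipAndDistanceComplement}: internal sets correspond to ``$d(x_\eps,A_\eps)\to 0$ faster than every power of $\rho$'', whereas strongly internal sets correspond to ``$d(x_\eps,A_\eps^c)$ bounded below by some fixed power of $\rho$''. Throughout I would freely use Thm.~\ref{thm:strongMembershipAndDistanceComplement}.\ref{enu:internalGeneratedByClosed}--\ref{enu:stronglyIntGeneratedByOpen} to reduce, without loss of generality, to closed (resp.\ open) representatives of the underlying nets, sharp boundedness of $(B_\eps)$ to extend a partial net $(x_\eps)_{\eps\in J}$ defined only on a cofinal $J\subzero I$ to a global $\rho$-moderate net, and subpoint reasoning to locate the bad behaviour.

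For \ref{enu:1.10}, I would run a direct contradiction: assume $B_\eps\not\subseteq\Omega_\eps$ on some $J\subzero I$, pick $x_\eps\in B_\eps\setminus\Omega_\eps$ there, extend $(x_\eps)$ by sharp boundedness, and observe that $x=[x_\eps]\in[B_\eps]\subseteq\sint{\Omega_\eps}$ has $d(x_\eps,\Omega_\eps^c)=0$ on $J$, contradicting Thm.~\ref{thm:strongMembershipAndDistanceComplement}.\ref{enu:stronglyIntSetsDistance}. The serious work is \ref{enu:2.10}, where I need a single exponent $S$ uniform over all of $[B_\eps]$. I would proceed by diagonalisation: assuming failure, for each $k$ extract a representative $(x_\eps^{(k)})$ of a point of $[B_\eps]$ and a cofinal $J_k$ on which $d(x_\eps^{(k)},\Omega_\eps^c)<\rho_\eps^k$; then pick $\eps_k\downarrow 0$ with $\eps_k\in J_k$, use closedness of $B_\eps$ to switch to a representative whose $\eps_k$-component actually lies in $B_{\eps_k}$, and glue the diagonal into a single $\rho$-moderate net $(y_\eps)$ (filling other $\eps$ with an arbitrary bounded point of $B_\eps$). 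Then $y\in[B_\eps]\subseteq\sint{\Omega_\eps}$ yields a \emph{single} exponent $q$ from Thm.~\ref{thm:strongMembershipAndDistanceComplement}.\ref{enu:stronglyIntSetsDistance}, contradicting the diagonal estimate once $k>q$.

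Items \ref{enu:ballInt}--\ref{enu:neigh} should then fall out from \ref{enu:1.10}--\ref{enu:2.10} by appropriate constructions. For \ref{enu:ballInt}, I would use the shrinking trick $y_\eps^\ast:=b_\eps+(1-\rho_\eps^k/r_\eps)(y_\eps-b_\eps)$ applied to a putative bad point $y_\eps\in\Eball_{r_\eps}(b_\eps)\setminus B_\eps$: this places $y^\ast$ sharply strictly inside $B_r(b)\subseteq\text{int}([B_\eps])\subseteq[B_\eps]$, and combining the distance estimate from Thm.~\ref{thm:strongMembershipAndDistanceComplement}.\ref{enu:internalSetsDistance} with closedness forces a contradiction. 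For \ref{enu:reprStrInt}, the content is the representative-invariance of strong membership (Def.~\ref{def:internalStronglyInternal}) combined with $\sint{A_\eps}\subseteq[B_\eps]$: after reducing to open $A_\eps$ and closed $B_\eps$, this forces $A_\eps\subseteq B_\eps$ on the subpoints relevant to $x$. For \ref{enu:int}, applying \ref{enu:2.10} to the pair $(B_\eps,C_\eps)$ directly yields \ref{enu:a} once one observes that the hypothesis $[B_\eps]\subseteq\text{int}([C_\eps])$ translates, via sharp-ball neighbourhoods around points of $[B_\eps]$ and sharp boundedness of $(B_\eps)$, to $[B_\eps]\subseteq\sint{C_\eps}$; then \ref{enu:b} is the sharp-ball reformulation of \ref{enu:a}, using $B_{\diff{\rho}^S}(B)=[\Eball_{\rho_\eps^S}(B_\eps)]$. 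For \ref{enu:neigh}, the natural candidate is $B_\eps^+:=\overline{\Eball_{\rho_\eps^S/2}(B_\eps)}$ with $S$ supplied by \ref{enu:2.10}: sharp boundedness is inherited from $(B_\eps)$, closedness is automatic, a triangle inequality bounds $d(x,\Omega_\eps^c)\ge\rho_\eps^S/2$ for every $x\in B_\eps^+$ (whence $[B_\eps^+]\subseteq\sint{\Omega_\eps}$), and $[B_\eps^+]$ contains $B_{\diff{\rho}^{S}/2}(y)$ for every $y\in[B_\eps]$, making it a sharp neighbourhood of $[B_\eps]$.

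The chief obstacle is the diagonal argument in \ref{enu:2.10}: one must reconcile the existential character of ``$x^{(k)}\in[B_\eps]$'' (which supplies only \emph{some} representative lying in $B_\eps$) with the representative witnessing the small distance to $\Omega_\eps^c$, and the two need not coincide. A secondary subtlety, present in \ref{enu:ballInt} and \ref{enu:int}, is that $\text{int}([C_\eps])$ is in general strictly larger than $\sint{C_\eps}$, so the passage from sharp-topological interior to strong interior must be handled carefully; once this bridging step is in place, the remaining items reduce to routine bookkeeping with triangle inequalities and distance estimates.
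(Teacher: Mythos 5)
Your treatment of \ref{enu:1.10} coincides with the paper's, and for \ref{enu:2.10} your diagonal argument can be made to work (the repair you sketch for the ``chief obstacle'' --- using closedness of $B_{\eps}$ to move the distance-witnessing representative back into $B_{\eps}$ at the cost of a negligible error --- is the right one). But the paper dissolves that obstacle entirely: since each $B_{\eps}$ is closed and the net is sharply bounded, $B_{\eps}\Subset\R^{n}$, so one may pick $\bar{x}_{\eps}\in B_{\eps}$ realizing $d(B_{\eps},\Omega_{\eps}^{c})$; applying Thm.~\ref{thm:strongMembershipAndDistanceComplement}.\ref{enu:stronglyIntSetsDistance} to the \emph{single} generalized point $[\bar{x}_{\eps}]\in[B_{\eps}]$ produces the uniform $S$ in one stroke, and independence of the representative is automatic because changing representatives perturbs distances only negligibly. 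No diagonalization is needed. Your candidate for \ref{enu:neigh} is essentially the paper's (which takes $B_{\eps}^{+}$ as the closure of $\{x\in\Eball_{M_{\eps}}(0)\mid d(x,\Omega_{\eps}^{c})\ge\rho_{\eps}^{S+1}\}$) and is fine.

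The genuine gaps are in \ref{enu:ballInt}, \ref{enu:reprStrInt} and \ref{enu:int}. In \ref{enu:ballInt}, the shrinking trick places $y^{\ast}$ in $B_{r}(b)\subseteq[B_{\eps}]$, and Thm.~\ref{thm:strongMembershipAndDistanceComplement}.\ref{enu:internalSetsDistance} then only controls $d(y_{\eps}^{\ast},B_{\eps})$, hence $d(y_{\eps},B_{\eps})\lesssim\rho_{\eps}^{k}$; closedness of $B_{\eps}$ does not upgrade ``within arbitrarily small distance of $B_{\eps}$'' to ``in $B_{\eps}$'', which is what the conclusion demands, so your final contradiction is not reached (the paper instead argues via $\overline{B_{r}(b)}=\bigl[\overline{\Eball_{r_{\eps}}(b_{\eps})}\bigr]$ and sharp-closedness of $[B_{\eps}]$). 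In \ref{enu:reprStrInt} the inclusion ``$A_{\eps}\subseteq B_{\eps}$ on the relevant subpoints'' is neither true in general nor what is needed; the paper gets \ref{enu:reprStrInt} from \ref{enu:ballInt} plus openness of $\sint{A_{\eps}}$, via a sharp ball $B_{r}(x)\subseteq\sint{A_{\eps}}\subseteq[B_{\eps}]$. Most seriously, in \ref{enu:int} you reduce (a) to \ref{enu:2.10} through the bridge $[B_{\eps}]\subseteq\text{int}\left([C_{\eps}]\right)\Rightarrow[B_{\eps}]\subseteq\sint{C_{\eps}}$. This implication is false: $\text{int}\left([C_{\eps}]\right)$ depends on $(C_{\eps})$ only up to negligible Hausdorff perturbations, whereas $\sint{C_{\eps}}$ does not (e.g.\ for $C_{\eps}=\overline{\Eball_{2}(0)}\setminus\Eball_{\delta_{\eps}}(0)$ with $(\delta_{\eps})$ negligible one has $0\in\text{int}\left([C_{\eps}]\right)$ but $0\notin\sint{C_{\eps}}$). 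You flag this as a ``secondary subtlety \dots\ to be handled carefully'', but it is not bookkeeping: the implication simply fails, so the proposed reduction collapses. The paper proves (a) directly by a diagonal contradiction that manufactures a single $x\in[B_{\eps}]|_{J}\subseteq\text{int}\left([C_{\eps}]\right)|_{J}$ with $x_{\eps}\notin C_{\eps}$ and then applies \ref{enu:ballInt} to a sharp ball around $x$; an argument of that shape is what you would need to supply.
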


\begin{proof}
\noindent To prove \ref{enu:1.10}, let us assume, by contradiction,
that we can find sequences $(\eps_{k})_{k}$ and $(x_{k})_{k}$ such
that $\eps_{k}\downarrow0$ and $x_{k}\in B_{\eps_{k}}\setminus\Omega_{\eps_{k}}$.
Defining $x_{\eps}:=x_{k}$ for $\eps=\eps_{k}$, and $x_{\eps}\in B_{\eps}$
otherwise, we have that $x:=[x_{\eps}]$ is moderate since $(B_{\eps})$
is sharply bounded. Hence $x\in[B_{\eps}]$, but $x_{\eps_{k}}\notin\Omega_{\eps_{k}}$
by construction, hence $x\notin\sint{\Omega_{\eps}}$ by Def.~\ref{def:internalStronglyInternal},
which is impossible because $[B_{\eps}]\subseteq\sint{\Omega_{\eps}}$.

\noindent \ref{enu:2.10}: Assume that \ref{enu:1.10} holds for all
$\eps\le\eps_{0}$. If $B_{\eps}$ is closed, then $B_{\eps}\Subset\R^{n}$
because $(B_{\eps})$ is sharply bounded. We can therefore find a
point $\bar{x}_{\eps}\in B_{\eps}$ such that $d(\bar{x}_{\eps},\Omega_{\eps}^{c})=d(B_{\eps},\Omega_{\eps}^{c})$.
At $\bar{x}:=[\bar{x}_{\eps}]\in[B_{\eps}]$ property \ref{enu:stronglyIntSetsDistance}
of Thm.~\ref{thm:strongMembershipAndDistanceComplement} yields the
existence of some $S\in\N$ such that $d(\bar{x}_{\eps},\Omega_{\eps}^{c})\ge\rho_{\eps}^{S}$
for $\eps$ small. From this the conclusion follows because $d(x_{\eps},\Omega_{\eps}^{c})\ge d(B_{\eps},\Omega_{\eps}^{c})=d(\bar{x}_{\eps},\Omega_{\eps}^{c})$
if $x_{\eps}\in B_{\eps}$ for $\eps$ small. If $[x'_{\eps}]=[x_{\eps}]$
is any other representative, then claim \ref{enu:2.10} still holds
because $d(x_{\eps},x'_{\eps})$ is negligible.

\noindent \ref{enu:ballInt}: By contradiction, assume that for some
$J\subzero I$ we can find $x_{\eps}\in\Eball_{r_{\eps}}(b_{\eps})\setminus B_{\eps}$
for all $\eps\in J$. Therefore, $x:=\left[\left(x_{\eps}\right)_{\eps\in J}\right]\in\overline{B_{r}(B)}|_{J}$.
But the assumption $B_{r}(b)\subseteq\text{int}\left([B_{\eps}]\right)$
yields $\overline{B_{r}(b)}\subseteq[B_{\eps}]=:B$ and hence $x\in B|_{J}$,
which is impossible.

\noindent \ref{enu:reprStrInt}: Directly from the previous result
and Thm.~\ref{thm:strongMembershipAndDistanceComplement}.\ref{enu:stronglyIntAreOpen}.

\noindent \ref{enu:int}: We prove by contradiction that there exists
$S\in\N$ satisfying \ref{enu:a}; we will then show that this $S$
also works for \ref{enu:b}. So, assume that for all $s\in\N$ there
exists $J_{s}\subzero I$ and $x_{s\eps}\in\Eball_{\rho_{\eps}^{s}}(B_{\eps})\setminus C_{\eps}$
for all $\eps\in J_{s}$. We can hence find $\eps_{s}\in J_{s}$ such
that $\eps_{s}<\frac{1}{s}$ and $x_{s\eps_{s}}\in\Eball_{\rho_{\eps_{s}}^{s}}(B_{\eps_{s}})\setminus C_{\eps_{s}}$.
Choosing recursively these $\eps_{s}$, we can assume that $\eps_{s+1}<\eps_{s}$,
so that $(\eps_{s})_{s}\downarrow0$. Set $J:=\left\{ \eps_{s}\in J_{s}\mid s\in\N_{>0}\right\} \subzero I$.
For each $\eps\in J$, we can set $x_{\eps}:=x_{s\eps_{s}}$ for the
unique $s\in\N_{>0}$ such that $\eps=\eps_{s}$, so that $x\in\rcrho|_{J}$.
For all $\eps=\eps_{p}\in J$, if $\eps<\eps_{s}$, then $p>s$ because
$(\eps_{s})_{s}$ is strictly decreasing. Thereby 
\[
x_{\eps}=x_{\eps_{p}}\in\Eball_{\rho_{\eps_{p}}^{p}}(B_{\eps_{p}})\setminus C_{\eps_{p}}\subseteq\Eball_{\rho_{\eps}^{s}}(B_{\eps})\setminus C_{\eps}
\]
because $p>s$ and $\eps=\eps_{p}$. This proves that $\left(d(x_{\eps},B_{\eps})\right)_{\eps\in J}\sim_{\rho}0$
and hence that $x:=\left[\left(x_{\eps}\right)_{\eps\in J}\right]\in B|_{J}\subseteq\text{int}\left(C\right)|_{J}$.
Therefore, $B_{r}(x)\subseteq\text{int}\left(C|_{J}\right)$ for some
$r\in\rcrho_{>0}|_{J}$. Using \ref{enu:ballInt}, we get $\Eball_{r_{\eps}}(x_{\eps})\subseteq C_{\eps}$
for $\eps\in J$ sufficiently small, and hence $x_{\eps}\in C_{\eps},$
a contradiction. Now assume that $S\in\N$ satisfies \ref{enu:a}.
Then for all $x\in B_{\diff{\rho}^{S}}(B)$, we have $x\in B_{\diff{\rho}^{S}}(b)$
for some $b=[b_{\eps}]\in B$. Therefore, for all $[x_{\eps}]=x$
and $\eps$ small, we have $x_{\eps}\in\Eball_{r_{\eps}}(b_{\eps})\subseteq C_{\eps}$
using \ref{enu:a}.

\noindent \ref{enu:neigh}: To prove this property, it suffices to
consider an $M\in\rcrho_{>0}$ such that $[B_{\eps}]\in B_{M}(0)$
and to define 
\[
B_{\eps}^{+}:=\overline{\left\{ x\in\Eball_{M_{\eps}}(0)\mid d(x,\Omega_{\eps}^{c})\ge\rho_{\eps}^{S+1}\right\} }\Subset\R^{n},
\]
where $S\in\N$ comes from \ref{enu:2.10}. 
\end{proof}
Let $X=\sint{A_{\eps}}$ be a strongly internal set, $x$, $y\in X$
and both $K$, $K^{c}\subzero I$. Set $e_{K}:=[1_{K\eps}]\in\rcrho$,
where $1_{K\eps}:=1$ if $\eps\in K$ and $1_{K\eps}:=0$ otherwise.
Then $z:=x\cdot e_{K}+y\cdot e_{K^{c}}\in X$ and $z|_{K}\subseteq x$,
$z|_{K^{c}}\subseteq y$ (we then say that $X$ is closed with respect
to \emph{interleaving}; this property holds also for internal sets,
see \cite{Ob-Ve}). The same property does not hold if $x\in B_{r}(c)\setminus B_{s}(d)$
and $y\in B_{s}(d)\setminus B_{r}(c)$, so that $B_{r}(c)\cup B_{s}(d)$
is sharply open but is not strongly internal. The same kind of example
can be repeated e.g.~considering arbitrary unions of pairwise disjoint
balls.

To obtain large open sets starting from a net of subsets $A_{\eps}\subseteq\R^{n}$,
we can consider the analogue of $\sint{A_{\eps}}$ but using the radii
of the Fermat topology: 
\begin{defn}
\label{def:largeStrongInt}Let $(A_{\eps})$ be a net of subsets of
$\R^{n}$ and let $(x_{\eps})$, $(x'_{\eps})$ be nets of points
of $\R^{n}$. Then 
\begin{enumerate}
\item We write $(x_{\eps})\sim_{\text{F}}(x'_{\eps})$ to denote the property
$|x-x'|<r$ for all $r\in\R_{>0}$, i.e., $\lim_{\eps\to0^{+}}|x_{\eps}-x'_{\eps}|=0$. 
\item We say that $x_{\eps}\in_{\text{F}}A_{\eps}$, and we read it as $(x_{\eps})$
\emph{strongly belongs to $(A_{\eps})$ in the Fermat topology}, if 
\begin{enumerate}[label=(\alph*)]
\item $\forall^{0}\eps:\ x_{\eps}\in A_{\eps}$. 
\item If $(x'_{\eps})\sim_{\text{F}}(x_{\eps})$, then also $x'_{\eps}\in A_{\eps}$
for $\eps$ small. 
\end{enumerate}
\noindent Moreover, we set $\sintF{A_{\eps}}:=\left\{ [x_{\eps}]\in\RC{\rho}^{n}\mid x_{\eps}\in_{\text{F}}A_{\eps}\right\} $,
and we call it the \emph{strongly internal set} generated by the net
$(A_{\eps})$\emph{ in the Fermat topology}. 
\end{enumerate}
\end{defn}

The following result can be proved simply by generalizing the proof
of Thm\@.~\ref{thm:strongMembershipAndDistanceComplement}. 
\begin{thm}
\noindent \label{thm:strongMembershipAndDistanceComplementFermat}For
$\eps\in I$, let $A_{\eps}\subseteq\R^{n}$ and let $x_{\eps}\in\R^{n}$.
Then we have 
\begin{enumerate}
\item \label{enu:stronglyIntSetsDistanceFermat}$[x_{\eps}]\in\sintF{A_{\eps}}$
if and only if $\exists r\in\R_{>0}\,\forall^{0}\eps:\ d(x_{\eps},A_{\eps}^{\text{c}})>r$. 
\item \label{enu:stronglyIntAreOpenFermat}$\sintF{A_{\eps}}$ is a Fermat
open set. 
\end{enumerate}
\end{thm}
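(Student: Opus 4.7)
The plan is to mirror the proof of Theorem~\ref{thm:strongMembershipAndDistanceComplement}, replacing $\rho$-negligibility by the Fermat notion $\sim_{\text{F}}$ (i.e.~$|x_\eps - x'_\eps| \to 0$) and replacing powers $\rho_\eps^q$ by standard positive reals $r \in \R_{>0}$. The key observation is that $(x'_\eps) \sim_{\text{F}} (x_\eps)$ gives \emph{arbitrary} decay of $|x_\eps - x'_\eps|$, so all estimates carry over by substituting ``infinitesimal standard radius'' for ``$\diff\rho$-scaled radius''.

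For part \ref{enu:stronglyIntSetsDistanceFermat}, I would handle $(\Rightarrow)$ by contrapositive: suppose no uniform standard $r$ works, so for each $k \in \N_{>0}$ there is a cofinal set of $\eps$ where $d(x_\eps, A_\eps^{\text{c}}) \le 1/k$. Extract a monotone sequence $\eps_k \downarrow 0$ with this property, pick $x'_k \in A_{\eps_k}^{\text{c}}$ within distance $2/k$ of $x_{\eps_k}$, and define $(x'_\eps)$ by $x'_{\eps_k} := x'_k$ and $x'_\eps := x_\eps$ elsewhere. Then $(x'_\eps) \sim_{\text{F}} (x_\eps)$ since $|x_\eps - x'_\eps| \to 0$ as $\eps \to 0^+$, yet $x'_{\eps_k} \notin A_{\eps_k}$ for all $k$, contradicting condition (b) of strong Fermat membership. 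For $(\Leftarrow)$, if $d(x_\eps, A_\eps^{\text{c}}) > r$ for $\eps$ small, then trivially $x_\eps \in A_\eps$ for $\eps$ small; and given $(x'_\eps) \sim_{\text{F}} (x_\eps)$, the estimate $|x_\eps - x'_\eps| < r/2$ eventually yields $d(x'_\eps, A_\eps^{\text{c}}) > r/2 > 0$, so $x'_\eps \in A_\eps$, verifying Def.~\ref{def:largeStrongInt}.

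For part \ref{enu:stronglyIntAreOpenFermat}, given $x = [x_\eps] \in \sintF{A_\eps}$, pick $r \in \R_{>0}$ from part \ref{enu:stronglyIntSetsDistanceFermat}. I claim the Fermat ball $\Fball_{r/2}(x)$ lies inside $\sintF{A_\eps}$. Indeed, if $y = [y_\eps] \in \Fball_{r/2}(x)$, the inequality $|y-x| <_{\R} r/2$ yields some $s \in \R_{>0}$ with $|y-x| \le r/2 - s$ in $\rcrho$; choosing representatives and using that any $\rho$-negligible correction is in particular infinitesimal in the classical sense, one gets $|y_\eps - x_\eps| < r/2 - s/2$ for $\eps$ small. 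Then $d(y_\eps, A_\eps^{\text{c}}) \ge d(x_\eps, A_\eps^{\text{c}}) - |y_\eps - x_\eps| > r/2 + s/2 > r/2$, so $y \in \sintF{A_\eps}$ by part \ref{enu:stronglyIntSetsDistanceFermat} with witness $r' := r/2 \in \R_{>0}$.

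The only mildly delicate point is the interleaving construction in the $(\Rightarrow)$ direction of part \ref{enu:stronglyIntSetsDistanceFermat}: one must ensure the perturbed net $(x'_\eps)$ is still $\rho$-moderate (automatic here, since $x'_\eps$ differs from a moderate net only on a discrete sequence where we perturb by a bounded amount) and actually Fermat-close to $(x_\eps)$, which is immediate because the perturbation sizes go to $0$. Nothing beyond this requires care, since the Fermat topology behaves exactly like the sharp topology with $\R_{>0}$ in place of $\rcrho_{>0}$.
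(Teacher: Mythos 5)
Your proof is correct and is exactly the argument the paper intends: the paper gives no separate proof of this theorem but states that it follows "simply by generalizing the proof of Thm.~\ref{thm:strongMembershipAndDistanceComplement}", and your substitution of $\sim_{\text{F}}$ for $\sim_{\rho}$ and of standard radii $r\in\R_{>0}$ for the powers $\rho_{\eps}^{q}$ reproduces that generalization faithfully, including the interleaving construction in the forward direction of part (i) and the ball argument for part (ii).
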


Sharply bounded internal sets (which are always sharply closed by
Thm.~\ref{thm:strongMembershipAndDistanceComplement}~\ref{enu:internalAreClosed})
serve as compact sets for our generalized functions. We will show
this by proving for them an extreme value theorem (see Thm.~\ref{cor:extremeValues});
for a deeper study of this type of sets in the case $\rho=(\eps)$
see \cite{GK15}; in the same particular setting, the notion of sharp
topology was introduced in \cite{Biag,Sca98}; see also \cite{May08,GKOS}
for an analogue of Lem.~\ref{lem:mayer}; see \cite{Ob-Ve} for the
study of internal sets, and see \cite{GKV} for strongly internal
sets.

\section{\label{sec:GSF}Generalized functions as smooth set-theoretical maps}

\subsubsection{\label{subsec:DefSharpCont}Definition and sharp continuity}

Using the ring $\rti$, it is easy to consider a Gaussian with an
infinitesimal standard deviation. If we denote this probability density
by $f(x,\sigma)$, and if we set $\sigma=[\sigma_{\eps}]\in\RC{\rho}_{>0}$,
where $\sigma\approx0$, we obtain the net of smooth functions $(f(-,\sigma_{\eps}))_{\eps\in I}$.
This is the basic idea we are going to develop in the following definitions.
We will first introduce the notion of a net $(f_{\eps})$ defining
a generalized smooth function of the type $X\longrightarrow Y$, where
$X\subseteq\RC{\rho}^{n}$ and $Y\subseteq\RC{\rho}^{d}$. This is
a net of smooth functions $f_{\eps}\in\cinfty(\Omega_{\eps},\R^{d})$
that induces well-defined maps of the form $[\partial^{\alpha}f_{\eps}(-)]:\sint{\Omega_{\eps}}\ra\RC{\rho}^{d}$,
for every multi-index $\alpha\in\N^{n}$. 
\begin{defn}
\label{def:netDefMap}Let $X\subseteq\RC{\rho}^{n}$ and $Y\subseteq\RC{\rho}^{d}$
be arbitrary subsets of generalized points. Let $(\Omega_{\eps})$
be a net of open subsets of $\R^{n}$, and $(f_{\eps})$ be a net
of smooth functions, with $f_{\eps}\in\cinfty(\Omega_{\eps},\R^{d})$.
Then we say that 
\[
(f_{\eps})\textit{ defines a generalized smooth function}:X\longrightarrow Y
\]
if: 
\begin{enumerate}
\item \label{enu:dom-cod}$X\subseteq\sint{\Omega_{\eps}}$ and $[f_{\eps}(x_{\eps})]\in Y$
for all $[x_{\eps}]\in X$. 
\item \label{enu:partial-u-moderate}$\forall[x_{\eps}]\in X\,\forall\alpha\in\N^{n}:\ (\partial^{\alpha}f_{\eps}(x_{\eps}))\in\R_{\rho}^{d}$. 
\end{enumerate}
\noindent We recall that the notation 
\[
\forall[x_{\eps}]\in X:\ \mathcal{P}\{(x_{\eps})\}
\]
means 
\[
\forall(x_{\eps})\in\R_{\rho}^{n}:\ [x_{\eps}]\in X\ \Rightarrow\ \mathcal{P}\{(x_{\eps})\},
\]
i.e.~for all representatives $(x_{\eps})$ generating a point $[x_{\eps}]\in X$,
the property $\mathcal{P}\{(x_{\eps})\}$ holds. 
\end{defn}

\noindent A generalized smooth function (or map, in this paper these
terms are used as synonymous) is simply a function of the form $f=[f_{\eps}(-)]|_{X}$: 
\begin{defn}
\label{def:generalizedSmoothMap} Let $X\subseteq\RC{\rho}^{n}$ and
$Y\subseteq\RC{\rho}^{d}$ be arbitrary subsets of generalized points,
then we say that 
\[
f:X\longrightarrow Y\text{ is a \emph{generalized smooth function}}
\]
if $f\in\Set(X,Y)$ and there exists a net $f_{\eps}\in\cinfty(\Omega_{\eps},\R^{d})$
defining a generalized smooth map of type $X\longrightarrow Y$, in
the sense of Def.\ \ref{def:netDefMap}, such that 
\begin{equation}
\forall[x_{\eps}]\in X:\ f\left([x_{\eps}]\right)=\left[f_{\eps}(x_{\eps})\right].\label{eq:f-u-relations}
\end{equation}

\noindent We will also say that $f$ \emph{is defined by the net of
smooth functions} $(f_{\eps})$ or that the net $(f_{\eps})$ \emph{represents}
$f$. The set of all these generalized smooth functions (GSF) will
be denoted by $^{\rho}\Gcinf(X,Y)$ or simply by $\Gcinf(X,Y)$. 
\end{defn}

Let us note explicitly that definitions \ref{def:netDefMap} and \ref{def:generalizedSmoothMap}
state minimal logical conditions to obtain a set-theoretical map from
$X$ into $Y$ and defined by a net of smooth functions. In particular,
the following Thm.~\ref{thm:indepRepr} states that in equality \eqref{eq:f-u-relations}
we have independence from the representatives for all derivatives
$[x_{\eps}]\in X\mapsto[\partial^{\alpha}f_{\eps}(x_{\eps})]\in\RC{\rho}^{d}$,
$\alpha\in\N^{n}$. 
\begin{thm}
\label{thm:indepRepr}Let $X\subseteq\RC{\rho}^{n}$ and $Y\subseteq\RC{\rho}^{d}$
be arbitrary subsets of generalized points. Let $(\Omega_{\eps})$
be a net of open subsets of $\R^{n}$, and $(f_{\eps})$ be a net
of smooth functions, with $f_{\eps}\in\cinfty(\Omega_{\eps},\R^{d})$.
Assume that $(f_{\eps})$ defines a generalized smooth map of the
type $X\longrightarrow Y$, then 
\[
\forall\alpha\in\N^{n}\,\forall(x_{\eps}),(x'_{\eps})\in\R_{\rho}^{n}:\ [x_{\eps}]=[x'_{\eps}]\in X\ \Rightarrow\ (\partial^{\alpha}f_{\eps}(x_{\eps}))\sim_{\rho}(\partial^{\alpha}f_{\eps}(x'_{\eps}))
\]
\end{thm}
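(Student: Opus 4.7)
Fix $\alpha\in\N^n$ and two representatives $(x_\eps),(x'_\eps)\in\R_\rho^n$ with $[x_\eps]=[x'_\eps]=:x\in X$. The goal is to show that $|\partial^\alpha f_\eps(x_\eps)-\partial^\alpha f_\eps(x'_\eps)|\le \rho_\eps^m$ eventually, for every $m\in\N$. The strategy is the standard one: apply the classical mean value inequality on the segment $[x_\eps,x'_\eps]_{\R^n}$, bound the derivative on that segment using Def.\ \ref{def:netDefMap}.\ref{enu:partial-u-moderate}, and combine with the fact that $(x_\eps)\sim_\rho(x'_\eps)$ is negligible.

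First I have to secure that the segment actually lies in $\Omega_\eps$ for small $\eps$. Since $X\sse\sint{\Omega_\eps}$ and $x\in X$, Thm.~\ref{thm:strongMembershipAndDistanceComplement}.\ref{enu:stronglyIntSetsDistance} yields $q\in\R_{>0}$ with $d(x_\eps,\Omega_\eps^{\text{c}})>\rho_\eps^q$ for $\eps$ small. On the other hand, from $(x_\eps)\sim_\rho(x'_\eps)$ we have $|x_\eps-x'_\eps|\le\rho_\eps^{q+1}$ for $\eps$ small, so $[x_\eps,x'_\eps]_{\R^n}\sse\Omega_\eps$ eventually. The classical mean value inequality (applied componentwise to $\partial^\alpha f_\eps$) then gives
\begin{equation}
\bigl|\partial^\alpha f_\eps(x_\eps)-\partial^\alpha f_\eps(x'_\eps)\bigr|\le C\,|x_\eps-x'_\eps|\cdot M_\eps,\qquad M_\eps:=\max_{j=1,\dots,n}\ \sup_{\xi\in[x_\eps,x'_\eps]_{\R^n}}\bigl|\partial^{\alpha+e_j}f_\eps(\xi)\bigr|,\label{eq:plan-mv}
\end{equation}
for an absolute constant $C$ depending only on $n,d$.

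The main obstacle, and the one point requiring care, is that Def.~\ref{def:netDefMap}.\ref{enu:partial-u-moderate} only provides moderateness of derivatives evaluated at representatives of points of $X$; it does \emph{not} a priori give moderateness on a whole neighborhood. The key observation is that the segment in \eqref{eq:plan-mv} is itself a set of representatives of $x$. Indeed, by compactness of $[x_\eps,x'_\eps]_{\R^n}$ and continuity of $\partial^{\alpha+e_j}f_\eps$, for each $\eps$ and each $j$ I can pick $\xi^{(j)}_\eps\in[x_\eps,x'_\eps]_{\R^n}$ at which the inner supremum is attained. Then $|\xi^{(j)}_\eps-x_\eps|\le|x_\eps-x'_\eps|$ is $\rho$-negligible, so $(\xi^{(j)}_\eps)\sim_\rho(x_\eps)$ and hence $[\xi^{(j)}_\eps]=x\in X$. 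Applying Def.~\ref{def:netDefMap}.\ref{enu:partial-u-moderate} to each such representative of $x$ and to the multi-index $\alpha+e_j$, I obtain $N\in\N$ with $|\partial^{\alpha+e_j}f_\eps(\xi^{(j)}_\eps)|\le\rho_\eps^{-N}$ for $\eps$ small and all $j=1,\dots,n$; thus $M_\eps\le\rho_\eps^{-N}$ eventually.

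Finally, given $m\in\N$ arbitrary, the condition $(x_\eps)\sim_\rho(x'_\eps)$ gives $|x_\eps-x'_\eps|\le\rho_\eps^{m+N}$ for $\eps$ small, and substituting into \eqref{eq:plan-mv} yields
\[
\bigl|\partial^\alpha f_\eps(x_\eps)-\partial^\alpha f_\eps(x'_\eps)\bigr|\le C\rho_\eps^{m+N}\cdot\rho_\eps^{-N}=C\rho_\eps^m,
\]
for $\eps$ small. As $m$ was arbitrary, this proves $(\partial^\alpha f_\eps(x_\eps))\sim_\rho(\partial^\alpha f_\eps(x'_\eps))$, completing the argument.
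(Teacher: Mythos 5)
Your proof is correct and follows essentially the same route as the paper's: both arguments reduce the difference $\partial^\alpha f_\eps(x_\eps)-\partial^\alpha f_\eps(x'_\eps)$ to a first-order derivative evaluated at an intermediate point of the segment $[x_\eps,x'_\eps]_{\R^n}$, and both rest on the same key observation that this intermediate point is itself a representative of $x\in X$ (being at negligible distance from $x_\eps$), so that Def.~\ref{def:netDefMap}.\ref{enu:partial-u-moderate} supplies the needed moderateness bound. The only cosmetic difference is that the paper invokes the mean value theorem directly to produce the point $\xi_\eps$, whereas you use the mean value inequality and then select the maximizer of the derivative on the segment; the substance is identical.
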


\begin{proof}
Let $\alpha\in\N^{n}$ and $(x_{\eps})$, $(x'_{\eps})$ be two representatives
of the same point $x=[x_{\eps}]=[x'_{\eps}]\in X\subseteq\sint{\Omega_{\eps}}$.
Thm.\ \ref{thm:strongMembershipAndDistanceComplement}~\ref{enu:stronglyIntSetsDistance}
yields 
\begin{equation}
d(x_{\eps},\Omega_{\eps}^{c})>\rho_{\eps}^{q}\label{eq:dist}
\end{equation}
for some $q\in\R_{>0}$ and $\eps$ small. Thus $\Eball_{\rho_{\eps}^{q}}(x_{\eps})\subseteq\Omega_{\eps}$
for these values of $\eps$. Choose $r\in\R_{>0}$ sufficiently big
so that 
\begin{equation}
2\rho_{\eps}^{r}<\rho_{\eps}^{q}\label{eq:eps_r_q}
\end{equation}
for $\eps$ small. Since $(x_{\eps})\sim_{\rho}(x'_{\eps})$ we have
that 
\begin{equation}
x'_{\eps}\in\Eball_{\rho_{\eps}^{r}}(x_{\eps})\label{eq:x'EuclBall}
\end{equation}
for $\eps$ small, and the entire segment $[x_{\eps},x'_{\eps}]$
connecting $x_{\eps}$ and $x'_{\eps}$ lies in $\Eball_{\rho_{\eps}^{r}}(x_{\eps})$.
Suppose that \eqref{eq:dist}, \eqref{eq:eps_r_q} and \eqref{eq:x'EuclBall}
hold for $\eps\in(0,\eps_{0}]$. Fix $i\in\{1,\dots,d\}$ and set
$\mu_{\eps}(t):=\partial^{\alpha}f_{\eps}^{i}(x_{\eps}+t(x'_{\eps}-x_{\eps}))$
for $t\in[0,1]_{\R}$ and $\eps\in(0,\eps_{0}]$. By the classical
mean value theorem $\partial^{\alpha}f_{\eps}^{i}(x'_{\eps})-\partial^{\alpha}f_{\eps}^{i}(x_{\eps})=\mu_{\eps}(1)-\mu_{\eps}(0)=\mu_{\eps}'(\theta_{\eps})$
for some $\theta_{\eps}\in(0,1)$, and hence for all $\eps\in(0,\eps_{0}]$
we get 
\begin{equation}
\partial^{\alpha}f_{\eps}^{i}(x'_{\eps})-\partial^{\alpha}f_{\eps}^{i}(x_{\eps})=\sum_{k=1}^{n}\partial^{\alpha+e_{k}}f_{\eps}^{i}(\xi_{\eps})\cdot(x_{\eps}^{\prime k}-x_{\eps}^{k}),\label{eq:meanValueIndepRepr}
\end{equation}
where $\xi_{\eps}:=x_{\eps}+\theta_{\eps}(x'_{\eps}-x_{\eps})$ and
$e_{k}:=(0,\ptind^{k-1},0,1,0,\ldots,0)\in\N^{n}$. The generalized
point $[\xi_{\eps}]=[x_{\eps}]\in X$ since $(x'_{\eps})\sim_{\rho}(x_{\eps})$.
Therefore by Def.~\ref{def:netDefMap}~\ref{enu:partial-u-moderate}
we get that every derivative $\left(\partial^{\alpha+e_{k}}f_{\eps}^{i}(\xi_{\eps})\right)$
is $\rho$-moderate. From this and the equivalence $(x'_{\eps})\sim_{\rho}(x_{\eps})$,
equation \eqref{eq:meanValueIndepRepr} yields the conclusion $(\partial^{\alpha}f_{\eps}(x'_{\eps}))\sim_{\rho}(\partial^{\alpha}f_{\eps}(x_{\eps}))$. 
\end{proof}
\noindent Note that taking arbitrary subsets $X\subseteq\RC{\rho}^{n}$
in Def.\ \ref{def:netDefMap}, we can also consider GSF defined on
closed sets, like the set of all infinitesimals, or like a closed
interval $[a,b]\subseteq\RC{\rho}$. We can also consider GSF defined
at infinite generalized points. A simple case is the exponential map
\begin{equation}
e^{(-)}:[x_{\eps}]\in\left\{ x\in\RC{\rho}\mid\exists z\in\RC{\rho}_{>0}:\ x\le\log z\right\} \mapsto\left[e^{x_{\eps}}\right]\in\RC{\rho}.\label{eq:exp}
\end{equation}
The domain of this map depends on the infinitesimal net $\rho$. For
instance, if $\rho=(\eps)$ then all its points are bounded by generalized
numbers of the form $[-N\log\eps]$, $N\in\N$; whereas if $\rho=\left(e^{-\frac{1}{\eps}}\right)$,
all points are bounded by $[N\eps^{-1}]$, $N\in\N$. Another possibility
for the exponential function is to consider two gauges $\rho\ge\sigma$
and the subring of $\RC{\sigma}$ defined by 
\[
\RCud{\sigma}{\rho}:=\{x\in\RC{\sigma}\mid\exists N\in\N:\ |x|\le\diff{\rho}^{-N}\},
\]
where here we have set $\diff{\rho}:=[\rho_{\eps}]_{\sim_{\sigma}}\in\RC{\sigma}$.
If we have 
\begin{equation}
\forall N\in\N\,\exists M\in\N:\ \diff{\rho}^{-N}\le-M\log\diff{\sigma},\label{eq:expTwoGauges}
\end{equation}
then $e^{(-)}:[x_{\eps}]\in\RCud{\sigma}{\rho}\mapsto\left[e^{x_{\eps}}\right]\in\RC{\sigma}$
is well defined. For example, if $\sigma_{\eps}:=\exp\left(-\rho_{\eps}^{1/\eps}\right)$,
then $\sigma\le\rho$ and \eqref{eq:expTwoGauges} holds for $M=1$.
Note that the natural ring morphism $[x_{\eps}]_{\sim_{\sigma}}\in\RCud{\sigma}{\rho}\mapsto[x_{\eps}]_{\sim_{\rho}}\in\rcrho$
is surjective but generally not injective.

A first regularity property of GSF is the continuity with respect
to the sharp topology, as proved in the following 
\begin{thm}
\label{thm:GSF-continuity}Let $X\subseteq\RC{\rho}^{n}$, $Y\subseteq\RC{\rho}^{d}$
and $f_{\eps}\in\cinfty(\Omega_{\eps},\R^{d})$ be a net of smooth
functions that defines a GSF of the type $X\longrightarrow Y$. Then 
\begin{enumerate}
\item \label{enu:modOnEpsDepBall}$\forall[x_{\eps}]\in X\,\forall\alpha\in\N^{n}\,\exists q\in\R_{>0}\,\forall^{0}\eps:\ \sup_{y\in\Eball_{\rho_{\eps}^{q}}(x_{\eps})}\left|\partial^{\alpha}f_{\eps}(y)\right|\le\rho_{\eps}^{-q}.$ 
\item \label{enu:locLipSharp}For all $\alpha\in\N^{n}$, the GSF $g:[x_{\eps}]\in X\mapsto[\partial^{\alpha}f_{\eps}(x_{\eps})]\in\Rtil^{d}$
is locally Lipschitz in the sharp topology, i.e.~each $x\in X$ possesses
a sharp neighborhood $U$ such that $|g(x)-g(y)|\le L|x-y|$ for all
$x$, $y\in U$ and some $L\in\RC{\rho}$. 
\item \label{enu:GSF-cont}Each $f\in\gsf(X,Y)$ is continuous with respect
to the sharp topologies induced on $X$, $Y$. 
\item \label{enu:suffCondFermatCont}Assume that the GSF $f$ is locally
Lipschitz in the Fermat topology and that its Lipschitz constants
are always finite: $L\in\R$. Then $f$ is continuous in the Fermat
topology. 
\end{enumerate}
\end{thm}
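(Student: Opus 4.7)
\emph{Part (i).} I would argue by contradiction, exploiting a diagonal construction to produce a representative violating the moderateness hypothesis of Def.~\ref{def:netDefMap}~\ref{enu:partial-u-moderate}. Fix $x=[x_\eps]\in X$ and $\alpha\in\N^n$. Since $X\subseteq\sint{\Omega_\eps}$, Thm.~\ref{thm:strongMembershipAndDistanceComplement}~\ref{enu:stronglyIntSetsDistance} gives some $q_0\in\R_{>0}$ with $d(x_\eps,\Omega_\eps^c)>\rho_\eps^{q_0}$ for $\eps$ small, so any $\Eball_{\rho_\eps^q}(x_\eps)$ with $q>q_0$ lies in $\Omega_\eps$. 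Now suppose the conclusion fails: then for every $q\in\N$ with $q>q_0$ there is $J_q\subzero I$ and points $y_{q,\eps}\in\Eball_{\rho_\eps^q}(x_\eps)$ with $|\partial^\alpha f_\eps(y_{q,\eps})|>\rho_\eps^{-q}$ for $\eps\in J_q$. Choose a strictly decreasing sequence $\eps_q\downarrow 0$ with $\eps_q\in J_q$, and define $x'_\eps:=y_{q,\eps_q}$ if $\eps=\eps_q$ and $x'_\eps:=x_\eps$ otherwise. Since $|x'_{\eps_q}-x_{\eps_q}|\le\rho_{\eps_q}^q$, one has $(x'_\eps)\sim_\rho(x_\eps)$, hence $[x'_\eps]=x\in X$. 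But $|\partial^\alpha f_\eps(x'_\eps)|>\rho_\eps^{-q}$ at $\eps=\eps_q$ for every $q$, contradicting the moderateness of $(\partial^\alpha f_\eps(x'_\eps))$ required by Def.~\ref{def:netDefMap}.

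\emph{Part (ii).} The idea is to combine (i) with the classical mean value theorem applied componentwise along segments. Fix $x=[x_\eps]\in X$ and $\alpha\in\N^n$. Apply (i) to each of the multi-indices $\alpha+e_k$, $k=1,\dots,n$, and take a common $q\in\R_{>0}$ so that $|\partial^{\alpha+e_k}f_\eps(y)|\le\rho_\eps^{-q}$ on $\Eball_{\rho_\eps^q}(x_\eps)$ for $\eps$ small. Let $U:=B_{\diff{\rho}^{q+1}}(x)\cap X$, a sharp neighborhood of $x$. If $y,z\in U$ with representatives $(y_\eps)$, $(z_\eps)$, Lem.~\ref{Lem:OmegaEpsAroundBoundedAndCptSupp}~\ref{enu:reprStrInt} (together with $B_r(c)=\sint{\Eball_{r_\eps}(c_\eps)}$ in \eqref{eq:ballStrgInt}) ensures that $y_\eps,z_\eps\in\Eball_{\rho_\eps^q/2}(x_\eps)$ for $\eps$ small, so the segment $[y_\eps,z_\eps]$ lies in $\Eball_{\rho_\eps^q}(x_\eps)$. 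The scalar mean value theorem applied componentwise to $t\mapsto\partial^\alpha f_\eps^i(y_\eps+t(z_\eps-y_\eps))$ gives $|\partial^\alpha f_\eps(y_\eps)-\partial^\alpha f_\eps(z_\eps)|\le n\rho_\eps^{-q}|y_\eps-z_\eps|$, so $|g(y)-g(z)|\le L|y-z|$ with $L:=n\diff{\rho}^{-q}\in\RC{\rho}$.

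\emph{Parts (iii) and (iv).} Sharp continuity of $f$ follows at once from (ii) with $\alpha=0$: if $|g(y)-g(x)|\le L|y-x|$ on $U$, then for any sharp radius $s\in\RC{\rho}_{>0}$ the ball $B_{s\wedge L^{-1}s}(x)\cap U$ is mapped into $B_s(f(x))$, using $L\in\RC{\rho}_{\ge 0}^*$ and Lem.~\ref{lem:<R}. For (iv) the argument is identical but restricted to the Fermat radii $\R_{>0}$: given a Fermat-local Lipschitz neighborhood $\Fball_\delta(x)$ with constant $L\in\R$ and any $r\in\R_{>0}$, the Fermat ball $\Fball_{\min(\delta,r/(2L))}(x)$ is mapped into $\Fball_r(f(x))$, because $|y-x|<_\R r/(2L)$ yields $L|y-x|<_\R r/2<_\R r$ by multiplying the witness in $\R_{>0}$ by $L$.

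\emph{Main obstacle.} The only step that is not purely bookkeeping is the diagonalization in (i): one has to design the perturbation $(x'_\eps)$ so that it simultaneously (a) remains $\rho$-equivalent to $(x_\eps)$ — forcing the indices $\eps_q$ to be chosen along with the order $q$ of the perturbation — and (b) destroys moderateness of the $\alpha$-th derivative to arbitrary polynomial order. Once (i) is in place, (ii)--(iv) reduce to standard applications of the mean value theorem and the definition of $<_\mathfrak{R}$.
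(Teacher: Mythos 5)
Your proposal is correct and follows essentially the same route as the paper: the same diagonal/contradiction argument producing a perturbed representative that violates moderateness for (i), the same mean value theorem along segments applied to the derivatives $\partial^{\alpha+e_k}f_\eps$ for (ii), and the same specializations for (iii) and (iv). The minor differences (the exact Lipschitz constant, the explicit $\Fball_{r/(2L)}$ radius) are immaterial.
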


\begin{proof}
We first prove \ref{enu:modOnEpsDepBall} by contradiction, assuming
that for some $[x_{\eps}]\in X$ and some $\alpha$ there exists $(\eps_{k})_{k}\downarrow0$
and a sequence $(y_{k})_{k}$ of points in $\R^{n}$ such that $|x_{\eps_{k}}-y_{k}|<\rho_{\eps_{k}}^{k}$
but $|\partial^{\alpha}f_{\eps}(y_{k})|>\rho_{\eps_{k}}^{-k}$. Define
$x'_{\eps}:=y_{k}$ for $\eps=\eps_{k}$ and $x'_{\eps}:=x_{\eps}$
otherwise. Then $(x'_{\eps})\sim_{\rho}(x_{\eps})$ but $(\partial^{\alpha}f_{\eps}(x'_{\eps}))$
is not $\rho$-moderate, which contradicts Def.~\ref{def:netDefMap}
\ref{enu:partial-u-moderate}.

To prove \ref{enu:locLipSharp}, we apply \ref{enu:modOnEpsDepBall}
to each derivative $\partial^{\alpha+e_{k}}f_{\eps}$ to obtain 
\begin{equation}
\forall k=1,\ldots,n\,\exists q_{k}\in\R_{>0}\exists\eps_{k}\in I\,\forall\eps\in(0,\eps_{k}]:\ \sup_{y\in\Eball_{\rho_{\eps}^{q_{k}}}(x_{\eps})}|\partial^{\alpha+e_{k}}f_{\eps}(y)|\le\rho_{\eps}^{-q_{k}}.\label{eq:EballsDer}
\end{equation}
Set $q:=\max_{k=1,\ldots,n}q_{k}$, so that for $y$, $z\in B_{\text{d}\rho^{q}}(x)$
we get 
\begin{equation}
\exists\eps_{0}\,\forall\eps\in(0,\eps_{0}]:\ [y_{\eps},z_{\eps}]\subseteq\Eball_{\rho_{\eps}^{q}}(x_{\eps}).\label{eq:y-z-inEBall-r}
\end{equation}
For any $i\in\{1,\dots,d\}$ and $\eps$ small we can write 
\[
|\partial^{\alpha}f_{\eps}^{i}(y_{\eps})-\partial^{\alpha}f_{\eps}^{i}(z_{\eps})|=\left|\sum_{k=1}^{n}\partial^{\alpha+e_{k}}f_{\eps}^{i}(\zeta_{\eps})\cdot(y_{\eps}^{k}-z_{\eps}^{k})\right|
\]
where $\zeta_{\eps}:=y_{\eps}+\sigma_{\eps}(z_{\eps}-y_{\eps})$ for
some $\sigma_{\eps}\in(0,1)$. Therefore $\zeta_{\eps}\in\Eball_{\rho_{\eps}^{q}}(x_{\eps})\subseteq\Eball_{\rho_{\eps}^{q_{k}}}(x_{\eps})$
and \eqref{eq:EballsDer} implies 
\[
|\partial^{\alpha}f_{\eps}(y_{\eps})-\partial^{\alpha}f_{\eps}(z_{\eps})|\le d\sqrt{n}\rho_{\eps}^{-q}|y_{\eps}-z_{\eps}|.
\]

Property \ref{enu:GSF-cont} follows upon setting $\alpha=0$ in \ref{enu:locLipSharp}.
Property \ref{enu:suffCondFermatCont} follows directly from the definition
of locally Lipschitz function in the Fermat topology. In fact, we
have that $L|x-y|<r\in\R$ if $y\in\Fball_{r/L}(x)$, which is an
open ball in the Fermat topology because $L\in\R$. 
\end{proof}
In the following result, we show that the dependence of the domains
$\Omega_{\eps}$ on $\eps$ can be avoided since it does not lead
to a larger class of generalized functions. In spite of this possibility,
we preferred to formulate Def.~\ref{def:netDefMap} using $\eps$-dependent
domains because the extension of $f_{\eps}\in\cinfty(\Omega_{\eps},\R^{d})$
to the whole of $\R^{n}$ is not unique and hence introduces extrinsic
elements. 
\begin{lem}
\label{lem:fromOmega_epsToRn}Let $X\subseteq\RC{\rho}^{n}$ and $Y\subseteq\RC{\rho}^{d}$
be arbitrary subsets of generalized points, then $f:X\longrightarrow Y$
is a GSF if and only if there exists a net $v_{\eps}\in\cinfty(\R^{n},\R^{d})$
defining a generalized smooth map of type $X\longrightarrow Y$ such
that $f=[v_{\eps}(-)]|_{X}$. 
\end{lem}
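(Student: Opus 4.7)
The ``if'' direction is immediate: taking $\Omega_\eps := \R^n$ and $f_\eps := v_\eps$ in Def.~\ref{def:netDefMap} exhibits $f$ as a GSF. For the ``only if'' direction, the plan is a cutoff-and-extension argument. Given $(f_\eps)$ with $f_\eps \in \cinfty(\Omega_\eps,\R^d)$ defining $f : X \to Y$, I would construct smooth cutoffs $\chi_\eps \in \cinfty(\R^n,[0,1])$ supported inside $\Omega_\eps$ and identically equal to $1$ on a full Euclidean neighborhood of every $\eps$-representative of every point of $X$, then set $v_\eps := \chi_\eps f_\eps$ on $\Omega_\eps$ and $v_\eps := 0$ elsewhere. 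Since $\mathrm{supp}(v_\eps) \subseteq \Omega_\eps$, smoothness of $v_\eps$ on all of $\R^n$ is automatic, and the key point is that if $\chi_\eps \equiv 1$ on an open neighborhood of $x_\eps$ then $\partial^\alpha v_\eps(x_\eps) = \partial^\alpha f_\eps(x_\eps)$ for every $\alpha \in \N^n$; this will let moderation, codomain membership and~\eqref{eq:f-u-relations} transfer for free from $(f_\eps)$ to $(v_\eps)$.

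For the cutoffs themselves, I would start from a standard mollifier $\phi \in \cinfty(\R^n)$ with $\phi \geq 0$, $\int\phi = 1$ and $\mathrm{supp}(\phi) \subseteq \Eball_1(0)$, rescale it to $\phi_\eps$ with support in $\Eball_{\rho_\eps^{1/\eps}/2}(0)$, and set $\chi_\eps := \kar{K_\eps} * \phi_\eps$ with $K_\eps := \{x \in \R^n : d(x,\Omega_\eps^c) \geq 3\rho_\eps^{1/\eps}/2\}$ (taking $K_\eps := \R^n$ in the trivial case $\Omega_\eps = \R^n$). Standard mollifier arithmetic then yields $0 \leq \chi_\eps \leq 1$, $\chi_\eps \equiv 1$ on $\{d(\cdot,\Omega_\eps^c) \geq 2\rho_\eps^{1/\eps}\}$, and $\mathrm{supp}(\chi_\eps) \subseteq \{d(\cdot,\Omega_\eps^c) \geq \rho_\eps^{1/\eps}\} \subseteq \Omega_\eps$, as required.

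Finally, to verify the defining property of $(v_\eps)$, I would fix $[x_\eps] \in X$ and use $X \subseteq \sint{\Omega_\eps}$ together with Thm.~\ref{thm:strongMembershipAndDistanceComplement}.\ref{enu:stronglyIntSetsDistance} to produce $q \in \R_{>0}$ with $d(x_\eps,\Omega_\eps^c) > \rho_\eps^q$ for small $\eps$. Since $\rho_\eps^{1/\eps}$ is eventually smaller than every fixed power $\rho_\eps^q$, for $\eps$ sufficiently small one has $\rho_\eps^q > 2\rho_\eps^{1/\eps}$, and hence $\chi_\eps \equiv 1$ throughout an entire Euclidean ball around $x_\eps$; all derivatives of $v_\eps$ and $f_\eps$ therefore coincide at $x_\eps$. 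The only real subtlety is this choice of cutoff rate: picking a net like $\rho_\eps^{1/\eps}$ that decays faster than every power of $\rho_\eps$ is what allows a single cutoff to absorb every point of $\sint{\Omega_\eps}$ at once, despite the strong-membership parameter $q$ depending on the point.
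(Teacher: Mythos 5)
Your proposal is correct and follows essentially the same route as the paper: the paper also multiplies $f_\eps$ (extended by $0$) by a cutoff $\chi_\eps$ equal to $1$ on $\{x\in\Omega_\eps\mid d(x,\Omega_\eps^{\text{c}})>\rho_\eps^{1/\eps}\}$ and supported in $\Omega_\eps$, and then uses Thm.~\ref{thm:strongMembershipAndDistanceComplement} exactly as you do to see that every representative of a point of $X\subseteq\sint{\Omega_\eps}$ eventually lies where $\chi_\eps\equiv1$, so that all derivatives agree. The only difference is that you construct $\chi_\eps$ explicitly by mollifying a characteristic function, where the paper merely asserts its existence; your key observation that $\rho_\eps^{1/\eps}$ decays faster than every fixed power $\rho_\eps^q$ is precisely the point of the paper's choice of scale as well.
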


\begin{proof}
The stated condition is clearly sufficient. Conversely, assume that
$f:X\longrightarrow Y$ is defined by the net $f_{\eps}\in\cinfty(\Omega_{\eps},\R^{d})$.
For every $\eps\in I$ let $\Omega'_{\eps}:=\left\{ x\in\Omega_{\eps}\mid d(x,\Omega_{\eps}^{\text{c}})>\rho_{\eps}^{\frac{1}{\eps}}\right\} $,
$\Omega''_{\eps}:=\left\{ x\in\Omega_{\eps}\mid d(x,\Omega_{\eps}^{\text{c}})>\rho_{\eps/2}^{\frac{2}{\eps}}\right\} $
and choose $\chi_{\eps}\in\Coo(\Omega_{\eps})$ with $\text{supp}(\chi_{\eps})\subseteq\Omega''_{\eps}$
and $\chi_{\eps}=1$ in a neighborhood of $\Omega'_{\eps}$. Set $f_{\eps}:=0$
on $\R^{n}\setminus\Omega{}_{\eps}$ and $v_{\eps}:=\chi_{\eps}\cdot f_{\eps}$,
so that $v_{\eps}\in\cinfty(\R^{n},\R^{d})$. If $x=[x_{\eps}]\in X\subseteq\sint{\Omega_{\eps}}$,
then $x_{\eps}\in\Omega'_{\eps}\subseteq\Omega_{\eps}$ for $\eps$
small by Thm.~\ref{thm:strongMembershipAndDistanceComplement}, so
for all $\alpha\in\N^{n}$ we get $\partial^{\alpha}v_{\eps}(x_{\eps})=\partial^{\alpha}f_{\eps}(x_{\eps})$.
Therefore, $(v_{\eps})_{\eps}$ defines a GSF of the type $X\longrightarrow Y$
and clearly $f=[f_{\eps}(-)]|_{X}=[v_{\eps}(-)]|_{X}$. 
\end{proof}
Consider a GSF $f:X\ra Y$. We want to show that for a large class
of domains $X$, the function $f$ is uniquely determined by its values
on particularly well behaved points $x\in X$. These domains and these
points are introduced in the following 
\begin{defn}
\label{def:nearStd-Subpoint}~ 
\begin{enumerate}
\item \label{def:nearStdInf}Let $x\in\RC{\rho}^{n}$, then we say that
the point $x$ is \emph{near-standard} if there exists a representative
$(x_{\eps})$ of $x$ such that $\exists\lim_{\eps\to0^{+}}x_{\eps}=:\st{x}\in\R^{n}$
($\st{x}$ is called the standard part of $x$). Clearly, this limit
does not depend on the representative of $x$. 
\item If $\Omega\subseteq\R^{n}$, then $\nrst{\Omega}:=\left\{ x\in\rti^{n}\mid\exists\st{x}\in\Omega\right\} $. 
\item \label{enu:closedWrtSubpoints}We say that $X\subseteq\rcrho^{n}$
\emph{contains its converging subpoints} if for all $J\subzero I$
and all $x'\in X|_{J}$ which is near standard or infinite, there
exists some $x\in X$ with $x'\subseteq x$ and such that $\lim_{\eps\to0,\eps\in J}x'_{\eps}=\lim_{\eps\to0}x_{\eps}$. 
\end{enumerate}
\end{defn}

\begin{thm}
\label{thm:nearStdInfEquality}Let $X\subseteq\rcrho^{n}$, $Y\subseteq\RC{\rho}^{d}$,
and let $f:X\longrightarrow Y$ be a GSF. If $X$ contains its converging
subpoints and if $f(x)=0$ for all near-standard and for all infinite
points $x\in X$, then $f=0$. 
\end{thm}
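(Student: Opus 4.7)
The plan is to argue by contradiction, using a Bolzano--Weierstrass style extraction on a cofinal subset of $I$ to construct a near-standard or infinite subpoint at which $f$ still detects the alleged nonzero value.

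Suppose $f\neq 0$. Then there exist $x=[x_\eps]\in X$ with $f(x)\neq 0$, a defining net $(f_\eps)$ for $f$, and by a standard consequence of the definition of $\sim_\rho$ a number $q\in\N$ and a cofinal set $J\subzero I$ such that
\[
\forall\eps\in J:\ |f_\eps(x_\eps)|>\rho_\eps^q.
\]
The next step is to produce from $(x_\eps)_{\eps\in J}$ a cofinal $K\subzero J$ along which the representative either converges in $\R^n$ or blows up. I would distinguish two cases. If $(|x_\eps|)_{\eps\in J}$ is unbounded as $\eps\to 0^+$ restricted to $J$, extract a sequence $\eps_k\in J$ with $\eps_k\downarrow 0$ and $|x_{\eps_k}|\to+\infty$, and set $K:=\{\eps_k\mid k\in\N\}\subzero J$; then $x|_K$ is infinite in the sense of Def.~\ref{def:nearStd-Subpoint}. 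Otherwise, along some cofinal sequence $\eps_k\downarrow 0$ in $J$ the points $x_{\eps_k}$ are bounded in $\R^n$, and Bolzano--Weierstrass yields a subsequence $x_{\eps_{k_j}}\to x_0\in\R^n$; take $K:=\{\eps_{k_j}\mid j\in\N\}\subzero J$, so that $x|_K$ is near-standard with standard part $x_0$.

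Now I apply the hypothesis that $X$ contains its converging subpoints to $x'=x|_K$: there exists $\bar x\in X$ with $x'\subseteq\bar x$ and (in the near-standard case) $\st{\bar x}=x_0$, respectively (in the infinite case) $\bar x$ infinite. By the very definition of subpoint I may choose a representative $(\bar x_\eps)_{\eps\in I}$ of $\bar x$ with $\bar x_\eps=x_\eps$ for every $\eps\in K$. By Thm.~\ref{thm:indepRepr}, the value $f(\bar x)$ is then computed by $[f_\eps(\bar x_\eps)]$ with this specific representative. Since $\bar x$ is near-standard or infinite, the hypothesis gives $f(\bar x)=0$ in $\RC{\rho}^d$, which means $(f_\eps(\bar x_\eps))\sim_\rho 0$; in particular there is $\eps_0\in I$ with $|f_\eps(\bar x_\eps)|\le\rho_\eps^{q+1}$ for all $\eps\le\eps_0$. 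Restricting to $\eps\in K$ with $\eps\le\eps_0$ we get $|f_\eps(x_\eps)|=|f_\eps(\bar x_\eps)|\le\rho_\eps^{q+1}$, contradicting $|f_\eps(x_\eps)|>\rho_\eps^q$ since $\rho_\eps\to 0^+$.

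I expect the only delicate point to be the case split and the correct choice of representative for $\bar x$ so that the equality $f_\eps(\bar x_\eps)=f_\eps(x_\eps)$ holds on $K$; once that is set up, Thm.~\ref{thm:indepRepr} does the rest. No deep tool is needed beyond Bolzano--Weierstrass in $\R^n$, the characterization of $\sim_\rho$, and the subpoint formalism from Def.~\ref{def:nearStd-Subpoint}.
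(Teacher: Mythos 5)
Your proof is correct and follows essentially the same route as the paper's: argue by contradiction, use the negation of negligibility to get a cofinal set where $|f_\eps(x_\eps)|>\rho_\eps^q$, split into the bounded/unbounded cases and apply Bolzano--Weierstrass to produce a near-standard or infinite subpoint, then invoke the hypothesis that $X$ contains its converging subpoints to obtain a point $\bar x\in X$ agreeing with $x$ on a cofinal set, at which $f$ cannot vanish. Your explicit quantitative contradiction at the end ($\rho_\eps^{q+1}$ versus $\rho_\eps^{q}$ on $K$) merely spells out what the paper compresses into ``by construction, $f(y)\ne 0$.''
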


\begin{proof}
In fact, suppose that $f$ vanishes on every near-standard and every
infinite point belonging to $X$, but that $f(x)\ne0$ for some $x\in X$.
Let $(x_{\eps})$ be a representative of $x$. Then there exist $m\in\N$
and $(\eps_{k})_{k}\downarrow0$ such that $|f_{\eps_{k}}(x_{\eps_{k}})|>\rho_{\eps_{k}}^{m}$,
where $(f_{\eps})$ is a net that defines $f$. If $\left(x_{\eps_{k}}\right)_{k}$
is a bounded sequence, we can extract from it a convergent subsequence
$(x_{\eps_{k_{l}}})_{l}$. Setting $J:=\{k_{l}\mid l\in\N\}$, $x'=x|_{J}$
is a subpoint of $x$ and by assumption there exists some $y\in X$
that satisfies $y|_{J}=x'$ and additionally is near-standard, with
the same limit as $x'$. By construction, $f(y)\ne0$, a contradiction.
If, on the other hand, the sequence $(x_{\eps_{k}})_{k}$ is unbounded,
then we can extract a subsequence with $\lim_{l\to+\infty}|x_{\eps_{k_{l}}}|=+\infty$,
and can then proceed as above to construct an infinite point $y\in X$
at which $f(y)\ne0$. 
\end{proof}
Analogously, we can prove the following: 
\begin{thm}
\label{thm:nearStdInfModerate}Let $X\subseteq\rcrho^{n}$ and $Y\subseteq\RC{\rho}^{d}$.
Let $(\Omega_{\eps})$ be a net of open subsets of $\R^{n}$, and
$(f_{\eps})$ be a net of smooth functions, with $f_{\eps}\in\cinfty(\Omega_{\eps},\R^{d})$.
Assume that $X$ contains its subpoints. Then $(f_{\eps})$ defines
a GSF of the type $X\ra Y$ if and only if 
\begin{enumerate}
\item $X\subseteq\sint{\Omega_{\eps}}$ and $[f_{\eps}(x_{\eps})]\in Y$
for all $[x_{\eps}]\in X$. 
\item $\forall\alpha\in\N^{n}:\ (\partial^{\alpha}f_{\eps}(x_{\eps}))\in\R_{\rho}^{d}$
for all near-standard and for all infinite points $[x_{\eps}]\in X$. 
\end{enumerate}
\end{thm}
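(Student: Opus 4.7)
The forward direction is immediate from Definition~\ref{def:netDefMap}: if $(f_\eps)$ defines a GSF of type $X \longrightarrow Y$, then condition (ii) holds in particular on the near-standard and infinite points of $X$. For the converse, the plan is to observe that our hypotheses already guarantee that $(f_\eps)$ defines a GSF on the restricted domain
\[
X_0 := \{x \in X \mid x \text{ is near-standard or infinite}\} \longrightarrow Y,
\]
since $X_0 \subseteq X \subseteq \sint{\Omega_\eps}$ and both (i) and the moderation property are inherited. Consequently, Theorem~\ref{thm:indepRepr} applies on $X_0$, providing independence of representatives for all derivatives of $(f_\eps)$ at points of $X_0$. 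The goal is then to upgrade moderation from $X_0$ to all of $X$.

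Proceeding by contradiction, suppose that for some $\alpha \in \N^n$ and some representative $(x_\eps)$ of a point $x = [x_\eps] \in X$ the net $\left(\partial^\alpha f_\eps(x_\eps)\right)$ fails to be $\rho$-moderate. Then one can extract a sequence $(\eps_k)_k \downarrow 0$ with
\[
|\partial^\alpha f_{\eps_k}(x_{\eps_k})| > \rho_{\eps_k}^{-k} \quad \text{for all } k \in \N.
\]
As in the proof of Theorem~\ref{thm:nearStdInfEquality}, split into cases according to whether $(x_{\eps_k})_k$ admits a bounded or an unbounded subsequence. Extract the corresponding subsequence $(x_{\eps_{k_l}})_l$ (convergent in $\R^n$ in the first case, with norm diverging to $+\infty$ in the second), and set $J := \{\eps_{k_l} \mid l \in \N\} \subzero I$. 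Then $x' := x|_J \in X|_J$ is either near-standard or infinite, so by the assumption that $X$ contains its converging subpoints there exists $y \in X_0$ with $x' \subseteq y$ and $\lim_{\eps \to 0,\,\eps \in J} x'_\eps = \lim_{\eps \to 0} y_\eps$.

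By the subpoint definition, we may pick representatives $(y_\eps)_{\eps \in I}$ of $y$ and $(\hat x_\eps)_{\eps \in J}$ of $x'$ with $y_\eps = \hat x_\eps$ for all $\eps \in J$, where $(\hat x_\eps)_{\eps \in J} \sim_\rho (x_\eps)_{\eps \in J}$ on $J$. Define a new net by
\[
y'_\eps := \begin{cases} x_\eps & \text{if } \eps \in J,\\ y_\eps & \text{if } \eps \notin J.\end{cases}
\]
On $J$ we have $|y'_\eps - y_\eps| = |x_\eps - \hat x_\eps|$, which is $\rho$-negligible on $J$, and on $J^{\mathrm c}$ the difference vanishes identically; hence $(y'_\eps) \sim_\rho (y_\eps)$, so $(y'_\eps)$ is another representative of $y \in X_0$.

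The key step is now to invoke Theorem~\ref{thm:indepRepr} for the GSF defined by $(f_\eps)$ on $X_0$: this gives $\left(\partial^\alpha f_\eps(y'_\eps)\right) \sim_\rho \left(\partial^\alpha f_\eps(y_\eps)\right)$, and the latter is $\rho$-moderate by hypothesis on $X_0$. Thus there exist $N \in \N$ and $\eps_0 \in I$ such that $|\partial^\alpha f_\eps(y'_\eps)| \le \rho_\eps^{-N}$ for all $\eps \le \eps_0$. Restricting to $\eps \in J$, where $y'_\eps = x_\eps$, we obtain $|\partial^\alpha f_\eps(x_\eps)| \le \rho_\eps^{-N}$ for all sufficiently small $\eps \in J$. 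Choosing $l$ large enough that $\eps_{k_l} \le \eps_0$ and $k_l > N$ contradicts $|\partial^\alpha f_{\eps_{k_l}}(x_{\eps_{k_l}})| > \rho_{\eps_{k_l}}^{-k_l}$, since $\rho_{\eps_{k_l}}^{-k_l} > \rho_{\eps_{k_l}}^{-N}$.

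The main obstacle is the careful bookkeeping of representatives across the subpoint relation: one must exhibit a representative of $y \in X_0$ that literally coincides with the chosen bad representative of $x$ on the cofinal set $J$, so that a single application of independence of representatives on $X_0$ transports moderation back to $(x_\eps)$ on $J$ and defeats the assumed bad growth.
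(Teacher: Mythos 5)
Your proof is correct and follows exactly the route the paper intends: the paper only says the result is proved ``analogously'' to Theorem~\ref{thm:nearStdInfEquality}, and your argument is precisely that adaptation (contradiction, diagonal extraction of a bad sequence, Bolzano--Weierstrass split into near-standard versus infinite subpoints, and transport of moderateness back along $J$ via a representative of $y$ that coincides with $(x_\eps)$ on $J$). The careful construction of the representative $(y'_\eps)$ is the bookkeeping the paper leaves implicit, and it is done correctly here; note only that the final step could bypass Theorem~\ref{thm:indepRepr}, since condition (ii) already applies to every representative of $y$, including $(y'_\eps)$.
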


\noindent For example, if $\Omega$ is an open subset of $\R^{n}$,
and we define the set of compactly supported generalized points by
\[
\csp{\Omega}:=\{[x_{\eps}]\in\RC{\rho}^{n}\mid\exists K\comp\Omega\ \forall^{0}\eps:\ x_{\eps}\in K\}\sse\langle\Omega\rangle,
\]
then $\csp{\Omega}$ contains its subpoints. Internal and strongly
internal sets generated by a constant sequence $A\subseteq\R^{n}$,
i.e.~$[A]$ and $\sint{A}$, provide further examples of a subset
containing its subpoints. Moreover, an arbitrary union $\bigcup_{j\in J}X_{j}$
of sets, with each $X_{j}$ containing its subpoints, still contains
its subpoints.

The subset $\csp{\Omega}$ is the natural domain for embedded distributions,
as shown in the following section.

\subsubsection{\label{subsec:Embeddings}Embedding of Schwartz distributions}

\paragraph*{Introduction}

Among the re-occurring themes of this work are the choices which the
solution of a given problem within our framework may depend upon.
For instance, \eqref{eq:exp} shows that the domain of a GSF depends
on the infinitesimal net $\rho$. It is also easy to show that the
trivial Cauchy problem 
\[
\begin{cases}
x'(t)-[\eps^{-1}]\cdot x(t)=0\\
x(0)=1
\end{cases}
\]
has no solution in $^{\rho}\Gcinf(\R,\R)$ if $\rho_{\eps}=\eps$.
Nevertheless, it has the unique solution $x(t)=\left[e^{\frac{1}{\eps}t}\right]\in\gsf(\R,\R)$
if $\rho_{\eps}=e^{-\frac{1}{\eps}}$. Therefore, the choice of the
infinitesimal net $\rho$ is closely tied to the possibility of solving
a given class of differential equations. This illustrates the dependence
of the theory on the infinitesimal net $\rho$.

Further choices concern the embedding of Schwartz distributions. Since
we need to associate a net of smooth functions $(f_{\eps})$ to a
given distribution $T\in\D'(\Omega)$, this embedding is naturally
built upon a regularization process. In our approach, this regularization
will depend on an infinite number $b\in\rcrho$, and the choice of
$b$ depends on what properties we need from the embedding. For example,
if $\delta$ is the (embedding of the) one-dimensional Dirac delta,
then we have the property 
\begin{equation}
\delta(0)=b.\label{eq:deltaAt0}
\end{equation}
We can also choose the embedding so as to get the property 
\begin{equation}
H(0)=\frac{1}{2},\label{eq:H-at-0}
\end{equation}
where $H$ is the (embedding of the) Heaviside step function. Equalities
like these are used in diverse applications (see, e.g., \cite{Col92,MObook}
and references therein). In fact, we are going to construct a family
of structures of the type $(\mathcal{G},\partial,\iota)$, where $(\mathcal{G},\partial)$
is a a sheaf of differential algebra and $\iota:\D'\ra\mathcal{G}$
is an embedding. The particular structure we need to consider depends
on the problem we have to solve. Of course, one may be more interested
in having an intrinsic embedding of distributions. This can be done
by following the ideas of the full Colombeau algebra (see e.g.~\cite{GKOS,GiNi15,GiLu15,GN}).
Nevertheless, this choice decreases the simplicity of the present
approach and is incompatible with properties like \eqref{eq:deltaAt0}
and \eqref{eq:H-at-0}.

\paragraph*{The embedding}

If $\phi\in\mathcal{D}(\R^{n})$, $r\in\R_{>0}$ and $x\in\R^{n}$,
we use the notations $r\odot\phi$ for the function $x\in\R^{n}\mapsto\frac{1}{r^{n}}\cdot\phi\left(\frac{x}{r}\right)\in\R$
and $x\oplus\phi$ for the function $y\in\R^{n}\mapsto\phi(y-x)\in\R$.
These notations highlight that $\odot$ is a free action of the multiplicative
group $(\R_{>0},\cdot,1)$ on $\D(\R^{n})$ and $\oplus$ is a free
action of the additive group $(\R_{>0},+,0)$ on $\D(\R^{n})$. We
also have the distributive property $r\odot(x\oplus\phi)=rx\oplus r\odot\phi$.
Our embedding procedure will ultimately rely on convolution with suitable
mollifiers. To construct these, we need some technical preparations. 
\begin{lem}
\label{lem:ColombeauMollifier} For any $n\in\N_{>0}$ there exists
some $\mu_{n}\in\mathcal{S}(\R)$ with the following properties: 
\begin{enumerate}
\item \label{enu:iColMoll}$\int\mu_{n}(x)\,\diff{x}=1$. 
\item \label{enu:nullMoments}$\int_{0}^{\infty}x^{\frac{j}{n}}\mu_{n}(x)\,\diff{x}=0$
for all $j\in\N_{>0}$. 
\item $\mu_{n}(0)=1$. 
\item $\mu_{n}$ is even. 
\item \label{enu:vColMoll}$\mu_{n}(k)=0$ for all $k\in\mathbb{Z}\setminus\{0\}$. 
\end{enumerate}
\end{lem}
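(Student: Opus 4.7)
The plan is to construct $\mu_{n}$ in two stages: first produce a Schwartz function satisfying the pointwise conditions (iii), (iv), (v), and then add a correction drawn from a large subspace of even Schwartz functions vanishing at every integer, so as to enforce the moment conditions (i) and (ii) without disturbing the others.

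As a first approximation I would set $\mu_{n}^{(0)}(x):=\mathrm{sinc}(x)\cdot\rho(x)$, where $\mathrm{sinc}(x):=\sin(\pi x)/(\pi x)$ (with $\mathrm{sinc}(0):=1$) and $\rho\in\mathcal{S}(\R)$ is an even Schwartz function with $\rho(0)=1$, e.g.\ a Gaussian. Then $\mu_{n}^{(0)}\in\mathcal{S}(\R)$ is even, $\mu_{n}^{(0)}(0)=1$, and $\mu_{n}^{(0)}(k)=0$ for every $k\in\Z\setminus\{0\}$, so (iii), (iv), (v) already hold. One then looks for an even correction $\eta\in\mathcal{S}(\R)$ vanishing at every integer so that $\mu_{n}:=\mu_{n}^{(0)}+\eta$ additionally satisfies (i) and (ii). The subspace
\[
\mathcal{K}:=\{\eta\in\mathcal{S}(\R):\eta\text{ even},\ \eta(k)=0\ \forall k\in\Z\}
\]
is infinite-dimensional; for example, every product $\sin^{2}(\pi x)\cdot\psi(x)/(\pi x)^{2}$ with $\psi\in\mathcal{S}(\R)$ even lies in $\mathcal{K}$. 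The countably many conditions imposed by (i) and (ii) become linear functionals on $\mathcal{K}$, and I would solve them by choosing a sequence of building blocks $\eta_{\ell}\in\mathcal{K}$ with essential supports concentrated around well-separated non-integer centres $c_{\ell}\to\infty$, and then determining coefficients $a_{\ell}$ recursively so that each new term $a_{\ell}\eta_{\ell}$ settles one additional constraint from (i), (ii).

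The main obstacle is ensuring that $\eta:=\sum_{\ell}a_{\ell}\eta_{\ell}$ converges in $\mathcal{S}(\R)$, i.e.\ in every seminorm, while simultaneously satisfying infinitely many linear conditions. This would be handled by the separation of the $\eta_{\ell}$: because each moment functional $\eta\mapsto\int_{0}^{\infty}x^{j/n}\eta(x)\,dx$ picks up only a tail contribution from $\eta_{\ell}$ once $c_{\ell}$ is large, and because each $\eta_{\ell}$ is rapidly decaying away from $c_{\ell}$, the triangular system for the $a_{\ell}$ admits explicitly controllable bounds and can be made to produce $(a_{\ell})$ of rapid decay, guaranteeing Schwartz-class convergence. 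A possibly cleaner alternative is to work on the Mellin side: condition (ii) says that the Mellin transform of $\mu_{n}|_{(0,\infty)}$ vanishes on the arithmetic progression $\{1+j/n:j\in\N_{>0}\}$, which can be arranged using an entire function with the prescribed zero set (e.g.\ a Weierstrass product); one then inverts the Mellin transform, symmetrises to ensure (iv), normalises to ensure (i), (iii), and finally adds an element of $\mathcal{K}$ (which does not affect fractional moments because $\sin(\pi x)$ vanishes at $x\in\Z$) to enforce (v). Either route ultimately rests on the linear independence of the functionals in (i)--(v) over the infinite-dimensional space of even Schwartz functions.
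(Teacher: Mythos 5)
Your overall strategy---reduce everything to solving a countable system of continuous linear functionals on a space of even Schwartz functions vanishing at the nonzero integers---is the same idea the paper uses, but the paper closes the argument by invoking Eidelheit's theorem on infinite systems of linear equations in Fr\'echet spaces (\cite[Satz 2]{Eid}), applied to the space $F=\{\mu\in\mathcal{S}(\R)\mid\mu\text{ even},\ \mu(k)=0\ \forall k\in\Z\setminus\{0\}\}$ and the functionals $\mu\mapsto\mu(0)$, $\mu\mapsto\int\mu$, $\mu\mapsto\int_{0}^{\infty}x^{j/n}\mu$. The hypothesis one must verify there is not just linear independence but a growth condition on the \emph{orders} of finite linear combinations of these functionals (the order of $\sum_{m\le i}\lambda_{m}f_{m}$ tends to infinity with $i$, roughly because integrating against $x^{j/n}$ requires control of ever higher weighted seminorms). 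That condition is exactly what makes the recursive solution of the infinite system converge in every Schwartz seminorm.

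This is where your proposal has a genuine gap. The system you set up for the coefficients $a_{\ell}$ is not triangular, and the heuristic you give for controllability is backwards: if $\eta_{\ell}$ is essentially supported near a large centre $c_{\ell}$, then $\int_{0}^{\infty}x^{j/n}\eta_{\ell}(x)\,\diff{x}\approx c_{\ell}^{j/n}\int\eta_{\ell}$, which \emph{grows} with $c_{\ell}$ rather than contributing only a small tail; every moment functional sees every building block. To make the recursion work you would need, at stage $\ell$, a corrector lying in the common kernel of the first $\ell-1$ functionals, normalised on the $\ell$-th, and small in the first $\ell$ seminorms---and the existence of such correctors is precisely the content of the Eidelheit-type order condition, which your sketch neither states nor verifies. (The Mellin alternative is also broken at its last step: adding an element of $\mathcal{K}$ cannot ``enforce (v)'', since such elements already vanish at all integers, and vanishing at the integers has no bearing on the fractional moments $\int_{0}^{\infty}x^{j/n}\eta$, so the parenthetical justification there is a non sequitur.) Either supply the recursive corrector construction with explicit seminorm bounds, or cite Eidelheit's theorem as the paper does.
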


\begin{proof}
Consider the Fréchet space 
\[
F:=\{\mu\in\mathcal{S}(\R)\mid\mu\text{ even},\ \forall k\in\Z\setminus\{0\}:\mu(k)=0\}
\]
and define the continuous linear functionals $f_{m}:F\to\R$, where
$f_{0}(\mu):=\mu(0)$, $f_{1}(\mu):=\int\mu(x)\,dx$, and $f_{m}(\mu):=\int_{0}^{\infty}x^{\frac{m-1}{n}}\mu(x)\,dx$
($m\ge2$). Our objective then is to implement conditions (i)–(iii)
by showing the solvability of the system 
\begin{equation}
f_{0}(\mu)=1,\ f_{1}(\mu)=1,\ f_{m}(\mu)=0\ (m\ge2)\label{eideq}
\end{equation}
in $F$. To this end, we employ a classical result of M.\ Eidelheit
(\cite[Satz 2]{Eid}). First, the family $(f_{m})_{m\in\N}$ is linearly
independent. Next, the topology of $F\subseteq\mathcal{S}(\R)$ is
generated by the family of norms $p_{k}(\mu)=\sup_{l+m\le k}\sup_{x\in\R}(1+|x|)^{l}|\mu^{(m)}(x)|$,
$k\in\N$. Suppose now that $\lambda_{1},\dots,\lambda_{i}$ are nonzero
numbers and that the \emph{order} of the linear functional $\sum_{m=0}^{i}\lambda_{m}f_{m}$
is less or equal $l$. Here, the order of an element $f$ of $\mathcal{S}'(\R)$
is defined to be the smallest $k$ such that $|f(\mu)|\le Cp_{k}(\mu)$
for some $C>0$ and all $\mu\in\mathcal{S}(\R)$. Let $i_{l}:=nl+1$,
then certainly $i\le i_{l}$. Hence both conditions of \cite[Satz 2]{Eid}
are satisfied and we may conclude that \eqref{eideq} has a solution
$\mu_{n}$ in $F$. 
\end{proof}
\begin{rem}
In addition to conditions \ref{enu:iColMoll}-\ref{enu:vColMoll}
from Lemma \ref{lem:ColombeauMollifier} we may require that $\mu_{n}$
satisfy finitely many additional properties expressible by linearly
independent functionals as in the above proof (again by \cite[Satz 2]{Eid}).
In particular, we may prescribe the values for $\mu_{n}$ or its derivatives
at finitely many further points.

Finally, we note that any element of $\mathcal{S}(\R)$ satisfying
condition \ref{enu:nullMoments} from Lemma \ref{lem:ColombeauMollifier}
must change sign infinitely often. 
\end{rem}

\noindent We call \emph{Colombeau mollifier} (for a fixed dimension
$n$) any function $\mu$ that satisfies the properties of the previous
lemma. Concerning embeddings of Schwartz distributions, the idea is
classically to regularize distributions using a mollifier. The use
of a Colombeau mollifier allows us, on the one hand, to identify the
distribution $\phi\in\D(\Omega)\mapsto\int f\phi$ with the GSF $f\in\Coo(\Omega)\subseteq\gsf(\Omega,\R)$
(thanks to property \ref{enu:nullMoments}); on the other hand, it
allows us to explicitly calculate compositions such as $\delta\circ\delta$,
$H\circ\delta$, $\delta\circ H$ (see below).

\noindent It is worth noting that the condition \ref{enu:nullMoments}
of null moments is well known in the study of convergence of numerical
solutions of singular differential equations, see e.g.~\cite{Ho-Ni-St16,En-To-Ts05,To-En04}
and references therein. 
\noindent \begin{center}
\begin{figure}
\label{fig: Col_mol} 
\begin{centering}
\includegraphics[scale=0.2]{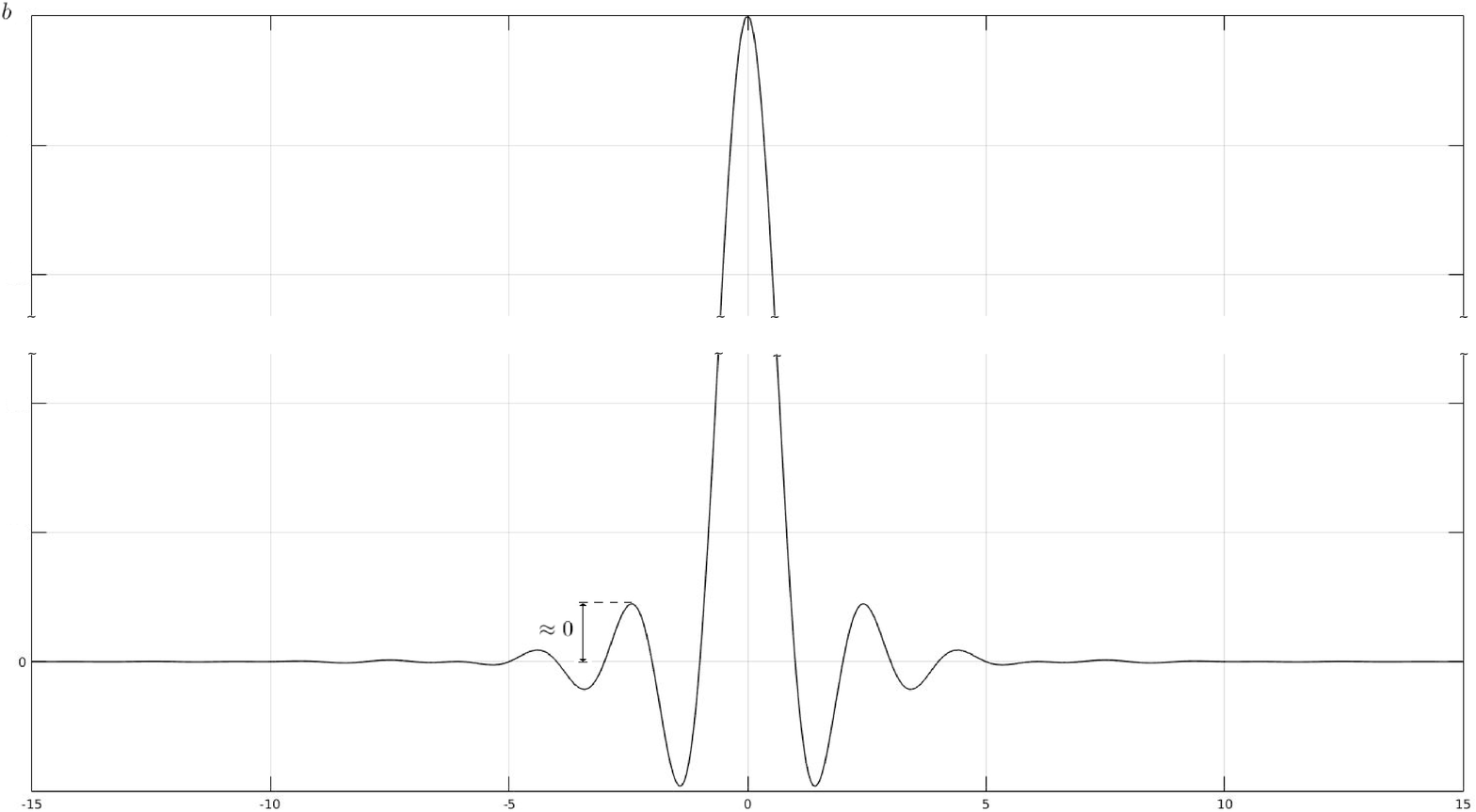}
\par\end{centering}
\begin{centering}
\includegraphics[scale=0.2]{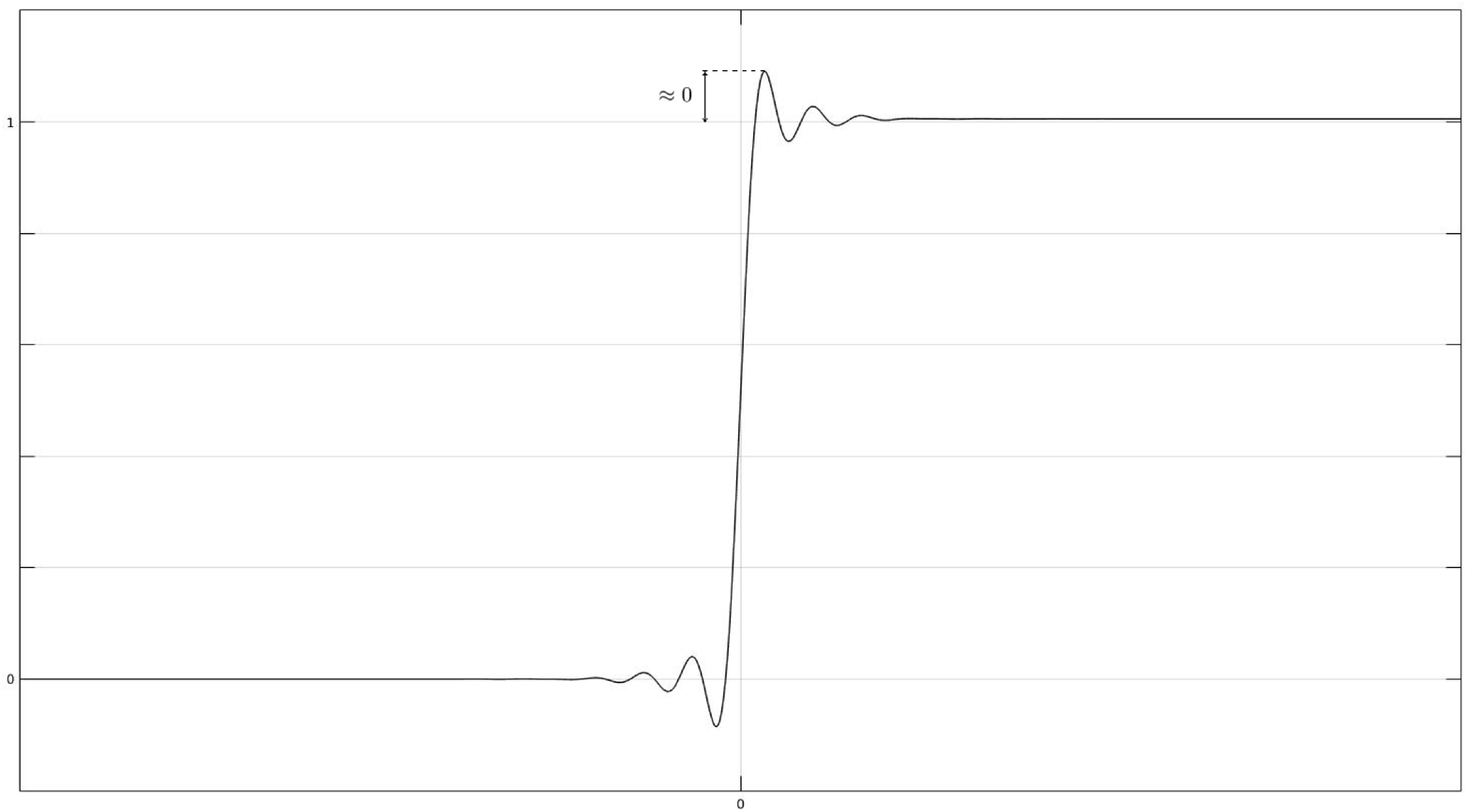} 
\par\end{centering}
\caption{\label{fig:MollifierHeaviside}A representation of Dirac delta and
Heaviside function. A Colombeau mollifier has a representation similar
to Dirac delta (but with finite values).}
\end{figure}
\par\end{center}

\noindent Next we show that the assignment $U\mapsto{}^{\rho}\Gcinf(\csp{U},\RC{\rho})$
($U\sse\Omega$ open) is a fine sheaf on $\Omega$. In fact, for $V\sse U$,
the natural restriction map $^{\rho}\Gcinf(\csp{U},\RC{\rho})\to\Gcinf(\csp{V},\RC{\rho})$
can also be written, in terms of defining nets, as $f=[f_{\eps}]\mapsto[f_{\eps}|_{V}]$.
Also, $\csp{U}\cap\csp{V}=\csp{U\cap V}$.

Suppose that $\Omega_{j}$ ($j\in J$) is an open covering of $\Omega$
and that for each $j\in J$ we are given $f^{j}=[f_{\eps}^{j}]\in{}^{\rho}\Gcinf(\csp{\Omega_{j}},\RC{\rho})$
such that $f^{j}|_{\csp{\Omega_{j}\cap\Omega_{k}}}=f^{k}|_{\csp{\Omega_{j}\cap\Omega_{k}}}$
for all $j$, $k\in J$. Then letting $\chi_{j}$ ($j\in J$) be a
partition of unity subordinate to $\Omega_{j}$ ($j\in J$), the GSF
defined by the net 
\[
f_{\eps}:=\sum_{j\in J}\chi_{j}\cdot f_{\eps}^{j}\in\cinfty(\Omega)
\]
is the unique element of $^{\rho}\Gcinf(\csp{\Omega},\RC{\rho})$
with $f|_{\csp{\Omega_{j}}}=f^{j}$ for all $j\in J$. In particular,
we may define a corresponding notion of standard support for each
$f\in\gsf(\csp{\Omega},\rcrho)$ by 
\[
\supp(f):=\left(\bigcup\left\{ \Omega'\subseteq\Omega\mid\Omega'\text{ open},\ f|_{\Omega'}=0\right\} \right)^{\text{c}}.
\]
The adjective \emph{standard} underscores that this set is made only
of standard points; a better notion of support for GSF is defined
as $\text{supp}(f)=\overline{\left\{ x\in X\mid|f(x)|>0\right\} }$
and studied in \cite{GK15}.

As a final preparation for the embedding of $\D'(\Omega)$ into $^{\rho}\Gcinf(\csp{\Omega},\RC{\rho})$
we need to construct suitable $n$-dimensional mollifiers from a Colombeau
mollifier $\mu$ as given by Lemma \ref{lem:ColombeauMollifier}.
To this end, let $\omega_{n}$ denote the surface area of $S^{n-1}$
and set 
\[
c_{n}:=\left\{ \begin{array}{lr}
\frac{2n}{\omega_{n}} & \text{for }n>1\\
1 & \text{for }n=1.
\end{array}\right.
\]
Then let $\tilde{\mu}:\R^{n}\to\R$, $\tilde{\mu}(x):=c_{n}\mu(|x|^{n})$.
Since $\mu$ is even, $\tilde{\mu}$ is smooth. Moreover, by Lemma
\ref{lem:ColombeauMollifier}, it has unit integral and all its higher
moments $\int x^{\alpha}\tilde{\mu}(x)\,dx$ vanish ($|\alpha|\ge1$).
With this notation we have: 
\begin{lem}
\label{lem:del} Let $\chi\in\D(\R^{n})$, $\chi=1$ on $\overline{\Eball_{1}(0)}$,
and $\chi=0$ on $\R^{n}\setminus\Eball_{2}(0)$. Also, let $b=[b_{\eps}]\in\rcrho$
be an infinite positive number, i.e.~$\lim_{\eps\to0^{+}}b_{\eps}=+\infty$.
Now set 
\begin{equation}
\mu_{\eps}^{b}(x):=(b_{\eps}^{-1}\odot\tilde{\mu})(x)\chi(x|\log(b_{\eps})|)=b_{\eps}^{n}\tilde{\mu}(b_{\eps}x)\chi(x|\log(b_{\eps})|).\label{col_mol}
\end{equation}
\begin{enumerate}
\item \label{deli} $\forall\eps:\ \mu_{\eps}^{b}\in\cinfty(\R^{n}),\ \supp(\mu_{\eps}^{b})\sse\Eball_{2|\log(b_{\eps})|^{-1}}(0)$. 
\item \label{delii} $\forall\alpha\in\N^{n}\,\exists N\in\N:\ \sup_{x\in\R^{n}}|\partial^{\alpha}\mu_{\eps}^{b}(x)|=O(b_{\eps}^{N})\ (\eps\to0)$. 
\item \label{deliii} $\forall\alpha\in\N^{n}\,\forall q\in\N:\ \sup_{x\in\R^{n}}|\partial^{\alpha}(\mu_{\eps}^{b}-b_{\eps}^{n}\tilde{\mu}(b_{\eps}\,.\,))(x)|=O(b_{\eps}^{-q})\ (\eps\to0)$. 
\item \label{deliv} $\forall q\in\N:\ \int\mu_{\eps}^{b}(x)\,\diff{x}=1+O(b_{\eps}^{-q})\ (\eps\to0)$. 
\item \label{delv} $\forall q\in\N\,\forall\alpha\in\N^{n}:\ |\alpha|>0\ \Rightarrow\ \int x^{\alpha}\mu_{\eps}^{b}(x)\,\diff{x}=O(b_{\eps}^{-q})\ (\eps\to0)$
. 
\end{enumerate}
\end{lem}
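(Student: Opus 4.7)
The plan is to treat the items in order. Items (i) and (ii) follow directly from the definition of $\mu_\eps^b$; item (iii) is the core technical estimate from which items (iv) and (v) are obtained by combining it with the vanishing higher moments of $\tilde{\mu}$ recorded just above the statement.

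For (i), smoothness is immediate from the smoothness of $\tilde{\mu}$ and $\chi$, while the support statement follows because $\chi(x|\log(b_\eps)|)=0$ whenever $|x|\,|\log(b_\eps)|\ge 2$. For (ii), I would apply the Leibniz rule to $b_\eps^n\tilde{\mu}(b_\eps x)\cdot\chi(x|\log(b_\eps)|)$. Each resulting term has the form
\[
b_\eps^{n+|\beta|}\,(\partial^\beta\tilde{\mu})(b_\eps x)\,|\log(b_\eps)|^{|\alpha-\beta|}\,(\partial^{\alpha-\beta}\chi)(x|\log(b_\eps)|),
\]
and since $\tilde{\mu}\in\maS(\R^n)$ has bounded derivatives, $\chi$ has compactly supported derivatives and $|\log(b_\eps)|^k$ is sub-polynomial in $b_\eps$, the bound $O(b_\eps^N)$ holds with, say, $N:=n+|\alpha|+1$.

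The main work is item (iii). I would write
\[
\mu_\eps^b(x)-b_\eps^n\tilde{\mu}(b_\eps x)=b_\eps^n\tilde{\mu}(b_\eps x)\bigl[\chi(x|\log(b_\eps)|)-1\bigr].
\]
The bracket vanishes on $\{x:|x|\le|\log(b_\eps)|^{-1}\}$ because $\chi\equiv 1$ there, and each of its derivatives of positive order is supported in the annulus $\{|\log(b_\eps)|^{-1}\le|x|\le 2|\log(b_\eps)|^{-1}\}$. Applying Leibniz, every term of $\partial^\alpha[\mu_\eps^b-b_\eps^n\tilde{\mu}(b_\eps\cdot)]$ is thus supported where $b_\eps|x|\ge b_\eps/|\log(b_\eps)|\to+\infty$. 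The Schwartz decay of $\tilde{\mu}$ yields, for arbitrary $M\in\N$ and on this region, $|(\partial^\gamma\tilde{\mu})(b_\eps x)|\le C_{M,\gamma}(b_\eps/|\log(b_\eps)|)^{-M}$, so a generic Leibniz term is at most
\[
C\cdot b_\eps^{n+|\beta|-M}\,|\log(b_\eps)|^{|\alpha-\beta|+M}.
\]
Since any positive power of $|\log(b_\eps)|$ is dominated by an arbitrarily small positive power of $b_\eps$, choosing $M$ large enough gives the required $O(b_\eps^{-q})$.

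For (iv) and (v), I split
\[
\int x^\alpha\mu_\eps^b(x)\,\diff{x}=\int x^\alpha b_\eps^n\tilde{\mu}(b_\eps x)\,\diff{x}+\int x^\alpha\bigl[\mu_\eps^b(x)-b_\eps^n\tilde{\mu}(b_\eps x)\bigr]\diff{x}.
\]
After the substitution $y=b_\eps x$ the first integral equals $b_\eps^{-|\alpha|}\int y^\alpha\tilde{\mu}(y)\,\diff{y}$, which by the moment properties of $\tilde{\mu}$ is $1$ when $\alpha=0$ and $0$ when $|\alpha|\ge 1$. For the second integral, the integrand is pointwise bounded by $(1+\|\chi\|_\infty)|x|^{|\alpha|}b_\eps^n|\tilde{\mu}(b_\eps x)|$ on $\{|x|\ge|\log(b_\eps)|^{-1}\}$; after the same substitution it is controlled by $b_\eps^{-|\alpha|}\int_{|y|\ge b_\eps/|\log(b_\eps)|}|y|^{|\alpha|}|\tilde{\mu}(y)|\,\diff{y}$, which is $O(b_\eps^{-q})$ for every $q$ by Schwartz decay. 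The hard part, as indicated, is the careful bookkeeping in (iii) of the three competing factors $b_\eps^{n+|\beta|}$, $|\log(b_\eps)|^{|\alpha-\beta|+M}$ and $b_\eps^{-M}$, together with the observation that logarithms are harmless in the face of arbitrary positive powers of $b_\eps$.
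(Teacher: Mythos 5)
Your proof is correct. Note that the paper does not actually write out an argument here: it simply cites \cite[Sec.~3]{Del}, where these estimates are proved for the special case $b_{\eps}=\eps^{-1}$, and asserts that the arguments carry over. What you have written is precisely the self-contained version of that transferred argument, and the technique is the standard one: Leibniz's rule, the observation that $\chi(\,\cdot\,|\log(b_{\eps})|)-1$ and all derivatives of $\chi(\,\cdot\,|\log(b_{\eps})|)$ are supported in $\{|x|\ge|\log(b_{\eps})|^{-1}\}$, the Schwartz decay of $\tilde{\mu}$ on the region where $b_{\eps}|x|\ge b_{\eps}/|\log(b_{\eps})|\to+\infty$, and the fact that powers of $|\log(b_{\eps})|$ are absorbed by arbitrarily small positive powers of $b_{\eps}$. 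Your reduction of \ref{deliv} and \ref{delv} to \ref{deliii} plus the substitution $y=b_{\eps}x$ and the moment identities $\int\tilde{\mu}=1$, $\int y^{\alpha}\tilde{\mu}(y)\,\diff{y}=0$ for $|\alpha|\ge1$ is exactly what is needed. The only cosmetic caveat, which applies equally to the statement as printed, is that the clause ``$\forall\eps$'' in \ref{deli} should be read as ``for $\eps$ small'': since only $\lim_{\eps\to0^{+}}b_{\eps}=+\infty$ is assumed, $\log(b_{\eps})$ could vanish for isolated large $\eps$, in which case $\mu_{\eps}^{b}=\tilde{\mu}$ is not compactly supported; this does not affect any of the asymptotic claims \ref{delii}--\ref{delv}.
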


\begin{proof}
All the claimed properties have been proved for the special case $b_{\eps}=\eps^{-1}$
in \cite[Sec. 3]{Del}, and the arguments employed there carry over
in a straightforward way to the present setting. 
\end{proof}
\begin{thm}
\label{thm:embeddingD'} Let $(\emptyset\not=)\Omega\subseteq\R^{n}$
be an open set and let $\mu_{\eps}^{b}$ as in Lemma \ref{lem:del}.
Set $\Omega{}_{\eps}:=\left\{ x\in\Omega\mid d(x,\Omega^{c})\ge\eps,\ |x|\le\frac{1}{\eps}\right\} $
and fix some $\chi\in\D(\R^{n})$, $\chi=1$ on $\overline{\Eball_{1}(0)}$,
$0\le\chi\le1$ and $\chi=0$ on $\R^{n}\setminus\Eball_{2}(0)$.
Also, take $\kappa_{\eps}\in\D(\Omega)$ such that $\kappa_{\eps}=1$
on a neighborhood $L_{\eps}$ of $\Omega_{\eps}$. Then the map 
\begin{equation}
\iota_{\Omega}^{b}:T\in\D'(\Omega)\mapsto\left[\left(\left(\kappa_{\eps}\cdot T\right)*\mu_{\eps}^{b}\right)(-)\right]\in\gsf(\csp{\Omega},\RC{\rho}).\label{eq:ColEmb}
\end{equation}
satisfies: 
\begin{enumerate}
\item \label{enu:restriction}$\iota^{b}:\mathcal{D}'\ra\gsf(\csp{-},\rti)$
is a sheaf-morphism of real vector spaces: If $\Omega'\subseteq\Omega$
is another open set and $T\in\mathcal{D}'(\Omega)$, then $\iota_{\Omega}^{b}(T)|_{\csp{\Omega'}}=\iota_{\Omega'}^{b}(T|_{\Omega'})$. 
\item \label{enu:embInjective} $\iota^{b}$ preserves supports, hence is
in fact a sheaf-monomorphism. 
\item \label{enu:embeddingSmoothUpToInfinitesimals} Any $f\in\cinfty(\Omega)$
can naturally be considered an element of $\gsf(\csp{\Omega},\RC{\rho})$
via $[x_{\eps}]\mapsto[f(x_{\eps})]$. Moreover, $\forall q\in\N_{>0}\ \forall x\in\csp{\Omega}:\ \left|\iota_{\Omega}^{b}(f)(x)-f(x)\right|\le b^{-q}$. 
\item \label{enu:smoothEmb} If $f\in\cinfty(\Omega)$ and if $b\ge\diff{\rho}^{-a}$
for some $a\in\R_{>0}$, then $\iota_{\Omega}^{b}(f)=f$. In particular,
$\iota^{b}$ then provides a multiplicative sheaf-monomorphism $\Coo(-)$
$\hookrightarrow\gsf(c(-),\R)$. 
\item \label{enu:commute_der} For any $T\in\D'(\Omega)$ and any $\alpha\in\N^{n}$,
$\iota_{\Omega}^{b}(\partial^{\alpha}T)=\partial^{\alpha}\iota_{\Omega}^{b}(T)$. 
\item \label{enu:valuesDistr}Let $b\ge\diff{\rho}^{-a}$ for some $a\in\R_{>0}$.
Then for any $\phi\in\mathcal{D}(\Omega)$ and any $T\in\mathcal{D}'(\Omega)$,
\[
\big[\int_{\Omega}\iota_{\Omega}^{b}(T)_{\eps}(x)\cdot\phi(x)\,\diff{x}\big]=\langle T,\phi\rangle\quad\text{in }\rcrho.
\]
\item \label{enu:deltaH}$\iota_{\R^{n}}^{b}(\delta)(0)=c_{n}b^{n}$ and
if $b\ge\diff{\rho}^{-a}$ for some $a\in\R_{>0}$, then $\iota_{\R}^{b}(H)(0)=\frac{1}{2}$. 
\item \label{enu:depXi-e}The embedding $\iota^{b}$ does not depend on
the particular choice of $(\kappa_{\eps})$ and (if $b\ge\diff{\rho}^{-a}$
for some $a\in\R_{>0}$) $\chi$ as above. 
\item \label{enu:iota_indep_of_repr} $\iota^{b}$ does not depend on the
representative $(b_{\eps})$ of $b$ employed in \eqref{col_mol}. 
\end{enumerate}
\end{thm}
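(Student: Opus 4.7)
The plan is to first check that \eqref{eq:ColEmb} indeed defines a GSF on $\csp{\Omega}$, and then to deduce each of (i)--(ix) from a quantitative property of $\mu_\eps^b$ supplied by Lem.~\ref{lem:del}, combined with routine distribution-theoretic calculus. The central geometric observation, used repeatedly, is the following: given $[x_\eps]\in\csp{\Omega}$, pick $K\comp\Omega$ with $x_\eps\in K$ for $\eps$ small; then $\mathrm{supp}\,\mu_\eps^b(x_\eps-\cdot)\subseteq\overline{\Eball_{2/|\log b_\eps|}(x_\eps)}$ lies in any prescribed open neighbourhood of $K$ for $\eps$ small, so $\kappa_\eps\equiv 1$ there and $((\kappa_\eps T)*\mu_\eps^b)(x_\eps)=\langle T,\mu_\eps^b(x_\eps-\cdot)\rangle$ eventually in $\eps$.

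For well-definedness: since $\kappa_\eps T$ has compact support in $\Omega$, $(\kappa_\eps T)*\mu_\eps^b\in\cinfty(\R^n)$. Because $\csp{\Omega}$ contains its subpoints, Thm.~\ref{thm:nearStdInfModerate} applies; I then combine the local order bound $|\langle T,\phi\rangle|\le C\,\|\phi\|_{C^N(U)}$ (for any $U$ with $K\comp U\comp\Omega$) with Lem.~\ref{lem:del}\ref{delii} to estimate $\partial^\alpha((\kappa_\eps T)*\mu_\eps^b)(x_\eps)=(-1)^{|\alpha|}\langle\kappa_\eps T,(\partial^\alpha\mu_\eps^b)(x_\eps-\cdot)\rangle=O(b_\eps^{N'})$, which is $\rho$-moderate because $(b_\eps)$ is. For (i), on $\csp{\Omega'}$ both $\kappa_\eps$ and $\kappa'_\eps$ are eventually $\equiv 1$ on $\mathrm{supp}\,\mu_\eps^b(x_\eps-\cdot)\subseteq\Omega'$, and $T|_{\Omega'}$ acts as $T$ there, so the two defining nets agree for $\eps$ small. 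The first half of (ii) (support inclusion $\mathrm{supp}(\iota^b T)\subseteq\mathrm{supp}(T)$) is the same argument: if $T|_{\Omega'}=0$, then $\iota^b(T)_\eps(x_\eps)=\langle T,\kappa_\eps\mu_\eps^b(x_\eps-\cdot)\rangle=0$ for $\eps$ small, as the test function is then supported in $\Omega'$. The reverse inclusion, hence the monomorphism property, will follow once (vi) is available. Property (v) reduces to the classical convolution identity $\partial^\alpha((\kappa_\eps T)*\mu_\eps^b)=(\kappa_\eps\partial^\alpha T)*\mu_\eps^b$ modulo Leibniz terms involving $\partial^\beta\kappa_\eps$ with $|\beta|\ge 1$, which vanish on a neighbourhood of any compactly supported $x_\eps$ for $\eps$ small.

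The analytic heart of the proof lies in (iii), (iv), (vi). For (iii), first observe that $f\in\cinfty(\Omega)$ clearly induces $[x_\eps]\mapsto[f(x_\eps)]$ as a GSF on $\csp{\Omega}$ by moderateness of derivatives on compact sets. To estimate the difference, I would Taylor-expand $f$ about $x_\eps$ to sufficiently high order inside $((\kappa_\eps f)*\mu_\eps^b)(x_\eps)=\int f(x_\eps-y)\mu_\eps^b(y)\,\diff{y}$, which is legitimate because $\kappa_\eps\equiv 1$ on a neighbourhood of $x_\eps$. Using Lem.~\ref{lem:del}\ref{deliii} to replace $\mu_\eps^b$ by $b_\eps^n\tilde\mu(b_\eps\cdot)$ up to a $C^k$-error of order $O(b_\eps^{-q})$, the unit-integral property \ref{deliv} cancels the zeroth Taylor term against $f(x_\eps)$, while the vanishing-moments property \ref{delv} kills every higher-order term, yielding $|\iota_\Omega^b(f)(x)-f(x)|\le b^{-q}$ for every $q\in\N_{>0}$. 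Item (iv) is an immediate corollary: if $b\ge\diff{\rho}^{-a}$, then $b^{-q}\le\diff{\rho}^{aq}$ is $\rho$-negligible for $q$ arbitrary, so the pointwise difference vanishes in $\rcrho$. The same mechanism handles (vi): by evenness of $\mu_\eps^b$ one rewrites $\int\iota_\Omega^b(T)_\eps(x)\phi(x)\,\diff{x}=\langle T,\mu_\eps^b*\phi\rangle$ (the cutoff $\kappa_\eps$ drops out on $\mathrm{supp}\,\phi$ for $\eps$ small), and the same Taylor-plus-moments computation applied to $\phi$ gives $\|\mu_\eps^b*\phi-\phi\|_{C^k}=O(b_\eps^{-q})$ for every $q$; hence $\langle T,\mu_\eps^b*\phi-\phi\rangle$ is $\rho$-negligible when $b\ge\diff{\rho}^{-a}$. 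Item (vii) is then a direct computation: $\iota_{\R^n}^b(\delta)(0)=\mu_\eps^b(0)=c_n b_\eps^n\tilde\mu(0)\chi(0)$ yields $c_n b^n$, and $\iota_\R^b(H)(0)=\int_{-\infty}^0\mu_\eps^b(y)\,\diff{y}=\tfrac12\int_\R\mu_\eps^b$ by evenness, which equals $\tfrac12$ in $\rcrho$ by Lem.~\ref{lem:del}\ref{deliv}. Finally, (viii) and (ix) both reduce to showing that two admissible choices of $(\kappa_\eps,\chi)$, respectively of representative $(b_\eps)$, produce defining nets differing by $O(b_\eps^{-q})$ uniformly on compactly supported points; for (ix) one exploits the smoothness and rapid decay of $\tilde\mu$ to convert the $\rho$-negligibility of $b_\eps-b'_\eps$ into a $\rho$-negligible difference in the convolutions.

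The main obstacle is the Taylor-plus-moments estimate underlying (iii), and by propagation (iv) and (vi): one must expand $f$ to order sufficient to beat any prescribed $b^{-q}$, control the remainder on the shrinking support $\Eball_{2/|\log b_\eps|}$, and manage the replacement of $\mu_\eps^b$ by $b_\eps^n\tilde\mu(b_\eps\cdot)$ in such a way that the cutoff $\chi$ in \eqref{col_mol} does not spoil the moment cancellation. Once this quantitative bound is in place, support preservation (the reverse inclusion in (ii)) follows by the standard argument $T|_{\Omega'}\ne 0\Rightarrow\exists\phi\in\D(\Omega'):\langle T,\phi\rangle\ne 0\Rightarrow\iota^b(T)|_{\csp{\Omega'}}\ne 0$ via (vi), and the remaining claims are routine bookkeeping.
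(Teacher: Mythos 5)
Your overall architecture coincides with the paper's: the same seminorm-plus-Lemma~\ref{lem:del} moderateness estimate for well-definedness, the same localization $\iota_{\Omega}^{b}(T)_{\eps}(x_{\eps})=\langle T,\mu_{\eps}^{b}(x_{\eps}-\,.\,)\rangle$ for $\eps$ small, and the same Taylor-plus-vanishing-moments computation driving \ref{enu:embeddingSmoothUpToInfinitesimals}, \ref{enu:smoothEmb} and \ref{enu:valuesDistr}. Your route to \ref{enu:valuesDistr} via $\langle(\kappa_{\eps}T)*\mu_{\eps}^{b},\phi\rangle=\langle T,\mu_{\eps}^{b}*\phi\rangle$ and $\|\mu_{\eps}^{b}*\phi-\phi\|_{C^{m}(L)}=O(b_{\eps}^{-q})$ is in fact slightly cleaner than the paper's, which first reduces to $T=\partial^{\alpha}f$ by the representation theorem; both are correct.

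There is, however, one genuine gap, in the converse inclusion of \ref{enu:embInjective}. You derive $\supp(T)\subseteq\supp(\iota_{\Omega}^{b}(T))$ from \ref{enu:valuesDistr}, but \ref{enu:valuesDistr} carries the hypothesis $b\ge\diff{\rho}^{-a}$, which \ref{enu:embInjective} does not: for a general infinite $b$ (say $b=[|\log\rho_{\eps}|]$) the error $O(b_{\eps}^{-q})$ in your moment computation is not $\rho$-negligible for any $q$, so the identity $\big[\int\iota_{\Omega}^{b}(T)_{\eps}\phi\big]=\langle T,\phi\rangle$ in $\rcrho$ fails and your chain of implications breaks. The paper avoids this by arguing directly: since $b_{\eps}\to+\infty$, classical mollification gives $(\kappa_{\eps}T)*\mu_{\eps}^{b}\to T$ in $\D'(\Omega)$ for \emph{any} infinite $b$; then, assuming $\iota_{\Omega}^{b}(T)|_{\csp{\Omega'}}=0$, one shows $(\kappa_{\eps}T)*\mu_{\eps}^{b}\to0$ uniformly on compact subsets of $\Omega'$ by contradiction (a sequence $\eps_{k}\downarrow0$, $x_{k}\in L\comp\Omega'$ with $|(\kappa_{\eps_{k}}T)*\mu_{\eps_{k}}^{b}(x_{k})|\ge c>0$ assembles into a point of $\csp{\Omega'}$ where $\iota_{\Omega}^{b}(T)\ne0$), whence $\langle T,\vphi\rangle=\lim_{\eps}\langle(\kappa_{\eps}T)*\mu_{\eps}^{b},\vphi\rangle=0$ for all $\vphi\in\D(\Omega')$. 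Note that this same ``extract a compactly supported point from bad $\eps$'s'' step is also what you would need, even under the stronger hypothesis on $b$, to pass from $\iota_{\Omega}^{b}(T)|_{\csp{\Omega'}}=0$ (vanishing at all points of $\csp{\Omega'}$) to the negligibility of $\int\iota_{\Omega}^{b}(T)_{\eps}\phi$; you left that implication implicit. Everything else in your proposal is sound and matches the paper's proof.
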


\begin{proof}
We follow ideas from \cite[Sec. 1.2]{GKOS} and \cite{Del}. Let $T\in\mathcal{D}'(\Omega)$
and let $[x_{\eps}]\in\csp{\Omega}$. Then there exists some $K\comp\Omega$
such that $x_{\eps}\in K$ for $\eps$ small. Also, we may assume
that $K+\Eball_{2|\log(b_{\eps})|^{-1}}(0)\sse L\sse\Omega_{\eps}$
for these $\eps$, where $L\comp\Omega$. Then by \ref{deli} of Lemma
\ref{lem:del}, for $\eps$ small we have 
\begin{equation}
\iota_{\Omega}^{b}(T)_{\eps}(x_{\eps})=\left(\kappa_{\eps}\cdot T\right)*\mu_{\eps}^{b}(x_{\eps})=T*\mu_{\eps}^{b}(x_{\eps})=\langle T,\mu_{\eps}^{b}(x_{\eps}-\,.\,)\rangle.\label{iotom}
\end{equation}
Since $T\in\D'(\Omega)$, we have a seminorm estimate of the form
\[
\forall\vphi\in\D_{L}(\Omega):\ |\langle T,\vphi\rangle|\le C\max_{|\beta|\le m}\sup_{x\in L}|\partial^{\beta}\vphi(x)|.
\]
Together with Lemma \ref{lem:del} \ref{deli} and \ref{delii} this
implies that $(\partial^{\alpha}\iota_{\Omega}^{b}(T)_{\eps}(x_{\eps}))\in\R_{\rho}^{n}$
for each $\alpha$. Hence $\iota_{\Omega}^{b}$ indeed maps $\D'(\Omega)$
into $^{\rho}\Gcinf(\csp{\Omega},\RC{\rho})$.

To show \ref{enu:restriction}, let $\Omega'\sse\Omega$ be open and
let $[x_{\eps}]\in c(\Omega')$. Then using the notations introduced
before \eqref{iotom}, we may suppose that $L\sse\Omega'_{\eps}$,
and so for $\eps$ small we have $\mu_{\eps}^{b}\in\D(\Omega')$.
Therefore, \eqref{iotom} implies for such $\eps$: 
\[
\iota_{\Omega}^{b}(T)_{\eps}(x_{\eps})=\langle T,\mu_{\eps}^{b}(x_{\eps}-\,.\,)\rangle=\langle T|_{\Omega'},\mu_{\eps}^{b}(x_{\eps}-\,.\,)\rangle=\iota_{\Omega'}^{b}(T|_{\Omega'})_{\eps}(x_{\eps}).
\]

Next we show \ref{enu:embInjective}. Suppose first that $T|_{\Omega'}=0$
for some open subset $\Omega'$ of $\Omega$. Let $[x_{\eps}]\in\csp{\Omega'}$
and pick $K\comp\Omega'$ such that $x_{\eps}\in K$ for $\eps$ small.
As above, for $\eps$ small we have $\supp(\mu_{\eps}^{b}(x_{\eps}-\,.\,))\sse\Omega'$,
as well as $\iota_{\Omega}^{b}(T)_{\eps}(x_{\eps})=\langle T,\mu_{\eps}^{b}(x_{\eps}-\,.\,)\rangle$,
which therefore vanishes. Hence $\iota_{\Omega}^{b}(T)|_{\Omega'}=0$,
and thereby $\supp(\iota_{\Omega}^{b}(T))\sse\supp(T)$.

Conversely, let $\Omega'\sse\Omega$ such that $\iota_{\Omega}^{b}(T)|_{\Omega'}=0$
and let $\vphi\in\D(\Omega').$ Since $(\kappa_{\eps}T)*\mu_{\eps}^{b}\to T$
in $\D'(\Omega)$, in order to show $\langle T,\vphi\rangle=0$ it
suffices to demonstrate that $(\kappa_{\eps}T)*\mu_{\eps}^{b}\to0$
as $\eps\to0$, uniformly on compact subsets of $\Omega'$. Suppose
this were not the case, then we could find some $L\comp\Omega'$,
some $c>0$ and sequences $\eps_{k}\downarrow0$ and $x_{k}\in L$
such that $|(\kappa_{\eps}T)*\mu_{\eps_{k}}^{b}(x_{k})|\ge c$ for
all $k$. Fixing any $z\in\Omega'$ and setting $x_{\eps}:=x_{k}$
for $\eps=\eps_{k}$ and $x_{\eps}=z$ otherwise then defines an element
$[x_{\eps}]\in\csp{\Omega'}$ with $\iota_{\Omega}^{b}(T)([x_{\eps}])\not=0$,
a contradiction.

Consequently, $\iota_{\Omega}^{b}$ induces an injective sheaf morphism
(again denoted by) $\iota^{b}:\D'(-)\ra{}^{\rho}\Gcinf(\csp{-},\RC{\rho})$.

\ref{enu:embeddingSmoothUpToInfinitesimals}: If $f\in\cinfty(\Omega)$
then any derivative of $f$ is uniformly (in $\eps$) bounded on any
$(x_{\eps})$ (for $[x_{\eps}]\in c(\Omega)$). Thus $f\in\Gcinf(\csp{\Omega},\RC{\rho})$.
Now let $[x_{\eps}]\in\csp{\Omega}$ and suppose first that $f$ has
compact support. By Lemma \ref{lem:del} \ref{deliv}, for any $x\in\Omega$,
$f(x)=\int f(x)\mu_{\eps}^{b}(y)\,dy+n_{\eps}$, where $n_{\eps}=O(b_{\eps}^{-q})$
for every $q>0$. Thus for $\eps$ small and any $q\in\N$ we have
by Taylor expansion 
\begin{equation}
\begin{split} & (\iota_{\Omega}^{b}(f)_{\eps}-f)(x_{\eps})=\int(f(x_{\eps}-y)-f(x_{\eps}))\mu_{\eps}^{b}(y)\,\diff{y}+n_{\eps}\\
 & =\sum_{k=1}^{q-1}\int\frac{1}{k!}((-y\cdot D)^{k}f)(x_{\eps})\mu_{\eps}^{b}(y)\,\diff{y}\\
 & +\frac{b_{\eps}^{-q}}{q!}\int((-y\cdot D)^{q}f)(x_{\eps}-\theta_{\eps}b_{\eps}^{-1}y)\tilde{\mu}(y)\chi(b_{\eps}^{-1}y|\log(b_{\eps})|)\,\diff{y}+n_{\eps},
\end{split}
\label{taylorarg}
\end{equation}
where $\theta_{\eps}\in(0,1)$. Here, the first sum is $O(b_{\eps}^{-q})$
by Lemma \ref{lem:del} \ref{delv}, as is the second term since $f$
is compactly supported, $\chi$ is globally bounded, and $\tilde{\mu}\in{\mathcal{S}}(\R^{n})$.
If $f$ is not compactly supported, pick $L\comp\Omega$ such that
$x_{\eps}\in L$ for $\eps$ small and let $\vphi\in\D(\Omega)$ equal
$1$ in a neighborhood of $L$. Then $f(x)=(\vphi f)(x)$ and \ref{enu:embInjective}
implies that $\iota_{\Omega}^{b}(f)(x)=\iota_{\Omega}^{b}(\vphi f)(x)$,
so the general case follows as well.

\ref{enu:smoothEmb}: It suffices to observe that, by our assumption
on $b$, \ref{enu:embeddingSmoothUpToInfinitesimals} implies that
$\iota_{\Omega}^{b}(f)([x_{\eps}])=[f(x_{\eps})]=f(x)$ for any $f\in\cinfty(\Omega)$
and any $x=[x_{\eps}]\in\csp{\Omega}$.

\ref{enu:commute_der}: As in the proof of \ref{enu:smoothEmb} we
may assume that $T$ has compact support. Then for $\eps$ small we
have 
\[
\iota_{\Omega}^{b}(\partial^{\alpha}T)_{\eps}=\partial^{\alpha}T*\mu_{\eps}^{b}=\partial^{\alpha}(T*\mu_{\eps}^{b})=\partial^{\alpha}\iota_{\Omega}^{b}(T)_{\eps}.
\]

\ref{enu:valuesDistr}: Pick $\zeta\in\D(\Omega)$ such that $\zeta\equiv1$
on a neighborhood of $\supp(\vphi)$. Then 
\[
\supp(\iota_{\Omega}^{b}(T)-\iota_{\Omega}^{b}(\zeta T))\cap\supp\vphi=\supp(T-\zeta T)\cap\supp\vphi=\emptyset,
\]
so we may replace $T$ by $\zeta T$, i.e., we may assume without
loss of generality that $T\in{\mathcal{E}}'(\Omega)$. By the representation
theorem of distribution theory $T$ then is a finite sum of terms
of the form $\partial^{\alpha}f$ with $f\in\mathcal{C}^{0}(\Omega)$
compactly supported in $\Omega$, so it will suffice to treat the
case $T=\partial^{\alpha}f$. For any $\varphi\in\mathcal{D}(\Om)$
we have 
\begin{align*}
\int(\iota_{\Omega}^{b}(\partial^{\alpha}f)_{\eps}- & \partial^{\alpha}f)(x)\varphi(x)\,\diff{x}=\int\int(\partial^{\alpha}f(x-y)-\partial^{\alpha}f(x))\mu_{\eps}^{b}(y)\vphi(x)\,\,\diff{y}\diff{x}+n_{\eps}\\
 & =\int\partial^{\alpha}f(x)\int\mu_{\eps}^{b}(y)(\vphi(x+y)-\vphi(x))\,\,\diff{y}\diff{x}+n_{\eps}.
\end{align*}
with $n_{\eps}=O(b_{\eps}^{-q})$ for any $q\in\N$ by Lemma \ref{lem:del}
\ref{delv}. As in the proof of \ref{enu:embeddingSmoothUpToInfinitesimals}
it follows that also the integral term in the above equality is of
order $O(b_{\eps}^{-q})$, giving the claim due to our assumption
on $b$.

\ref{enu:deltaH}: The first claim is immediate from $\iota_{\R}^{b}(\delta)_{\eps}(0)=\mu_{\eps}^{b}(0)$.
To show the second, note first that 
\[
\iota_{\R}^{b}(H)_{\eps}(0)-\int H(y)\mu_{\eps}^{b}(-y)\,\diff{y}=\int H(y)(\kappa_{\eps}(y)-1)\mu_{\eps}^{b}(-y)\,\diff{y}=0
\]
for $\eps$ small by the support properties of $\kappa_{\eps}$ and
$\chi$. Furthermore, since $\int_{0}^{\infty}\mu(y)\,dy=1/2$, we
obtain 
\[
\begin{split}\Big|\int H(y)\mu_{\eps}^{b}(-y)\,\diff{y}-\frac{1}{2}\Big| & =\Big|\int_{0}^{\infty}\mu(y)(\chi(b_{\eps}^{-1}|\log(b_{\eps})|)y)-1)\,\diff{y}\Big|\\
 & \le\int_{b_{\eps}|\log(b_{\eps})|^{-1}}^{\infty}|\mu(y)|\,\diff{y}=O(b_{\eps}^{-q})
\end{split}
\]
for any $q\in\N$, so the claim follows.

\ref{enu:depXi-e}: We first note that any two choices for either
$(\kappa_{\eps})$ or $\chi$ provide sheaf morphisms as in \ref{enu:restriction},
\ref{enu:embInjective}, hence it suffices to check that the resulting
embeddings coincide on compactly supported distributions. For any
such $T$ we have $\kappa_{\eps}T=T$ for $\eps$ small, so independence
from the choice of $(\kappa_{\eps})$ follows.

Now suppose that two different $\chi$'s have been chosen and denote
the corresponding functions from \eqref{col_mol} by $\mu_{\eps}^{b}$
and $\bar{\mu}_{\eps}^{b}$, and the resulting embeddings by $\iota^{b}$
and $\bar{\iota}^{b}$, respectively. Since $T\in{\mathcal{E}}'(\Omega)$,
it satisfies a seminorm estimate of the form 
\begin{equation}
\forall\vphi\in\cinfty(\Omega):\ |\langle T,\vphi\rangle|\le C\max_{|\beta|\le m}\sup_{x\in L}|\partial^{\beta}\vphi(x)|.\label{eq:comp_supp_dist}
\end{equation}
for some $L\comp\Omega$. Together with Lemma \ref{lem:del} \ref{deliii}
this implies that, for any $[x_{\eps}]\in c(\Omega)$ and $\eps$
small, we have 
\[
|(\iota_{\Omega}^{b}(T)_{\eps}-\bar{\iota}_{\Omega}^{b}(T)_{\eps})(x_{\eps})|=|\langle T,(\mu_{\eps}^{b}-\bar{\mu}_{\eps}^{b})(x_{\eps}-\,.\,)\rangle|=O(b_{\eps}^{-q})
\]
for any $q\in\N$.

\ref{enu:iota_indep_of_repr}: Let $(c_{\eps})$ be another representative
of $b$, so $(c_{\eps})\sim_{\rho}(b_{\eps})$. As in the proof of
\ref{enu:depXi-e} it then suffices to show that $\iota^{b}(T)=\iota^{c}(T)$
for any $T\in\mathcal{E}'(\Omega)$. Given $x=[x_{\eps}]\in c(\Omega)$,
let $K\comp\Omega$ be such that $x_{\eps}\in K$ for $\eps$ small.
Then by \eqref{eq:comp_supp_dist} and \eqref{eq:ColEmb}, 
\[
|(\iota^{b}(T)-\iota^{c}(T))(x_{\eps})|\le C\max_{|\beta|\le m}\sup_{x\in K-L}|\partial^{\beta}(\mu_{\eps}^{b}-\mu_{\eps}^{c})(x)|.
\]
Inserting from \eqref{col_mol} it follows by a straightforward estimate
that the right hand side here is of order $O(\rho_{\eps}^{q})$ for
any $q\in\N$, proving the claim. 
\end{proof}
Whenever we use the notation $\iota^{b}$ for an embedding, we assume
that $b\in\rcrho$ satisfies the overall assumptions of Thm.~\ref{thm:embeddingD'}
and of \ref{enu:smoothEmb} in that Theorem, and that $\iota^{b}$
has been defined as in \eqref{eq:ColEmb} using a Colombeau mollifier
$\mu$ for the given dimension. Note in particular that by Theorem
\ref{thm:embeddingD'} \ref{enu:iota_indep_of_repr} we are justified
in using the short hand notation $\iota^{b}$ for the embedding defined
via any representative $(b_{\eps})$ of $b$. 
\begin{rem}
\label{rem:embedding_properties}~ 
\begin{enumerate}
\item In Def.~\ref{def:RCGN}, we introduced the asymptotic gauge $\mathcal{I}(\rho)$,
and the entire construction depends on the fixed infinitesimal net
$\rho$ only through this set $\mathcal{I}(\rho)$. A more general
definition of asymptotic gauge is possible (see \cite{GiLu15}). Anyhow,
\cite[Sec.~4.3]{GiLu15} shows that an embedding of Schwartz's distribution
having certain minimal properties necessarily requires that the asymptotic
gauge be generated by a single net, as is the case for $\mathcal{I}(\rho)$. 
\item Let $\delta$, $H\in\gsf(\rcrho,\rcrho)$ be the corresponding $\iota^{b}$-embeddings
of the Dirac delta and of the Heaviside function. Then $\delta(x)=b\cdot\mu(b\cdot x)$
and $\delta(x)=0$ if $x$ is near-standard and $\st{x}\ne0$ or if
$x$ is infinite because $\mu\in\mathcal{S}(\R)$. Also, by construction
of $\mu_{\eps}^{b}$, $\delta$ can be represented like in the first
diagram of Fig.~\ref{fig:MollifierHeaviside}. E.g., $\delta(k/b)=0$
for each $k\in\Z\setminus\{0\}$, and each $\frac{k}{b}$ is a nonzero
infinitesimal. Similar properties can be stated e.g.~for $\delta^{2}(x)=b^{2}\cdot\mu(b\cdot x)^{2}$. 
\item Analogously, we have $H(x)=1$ if $x$ is near-standard and $\st{x}>0$
or if $x>0$ is infinite; $H(x)=0$ if $x$ is near-standard and $\st{x}<0$
or if $x<0$ is infinite. 
\item In \cite{Loj}, S.~Łojasiewicz introduced the notion of a point value
for distributions. He defined that $T\in\D'(\Omega)$ has the point
value $c\in\CC$ in $x_{0}\in\Omega$ if 
\begin{equation}
\lim_{\eps\to0}\langle T(x_{0}+\eps x),\vphi(x)\rangle=c\int\vphi(x)\,\diff{x}\qquad\forall\vphi\in\D(\Omega).\label{loj}
\end{equation}
Not every distribution has point values in arbitrary points — in fact,
if it does, it already has to be a function of first Baire class (\cite{Loj}).
Conversely, a continuous function $f$ clearly has point value $f(x)$
in any point $x$ in its domain.

We show that if $T$ has point value $c$ at $x_{0}\in\Omega$ then
$\iota_{\Omega}^{b}(T)_{\eps}(x_{0})\to c$ as $\eps\to0$. In fact,
since $\mathcal{S}'(\R^{n})$ is a normal space of distributions that
is invariant under translations, by \cite[Prop. 7]{Sh-67} this follows
if for any sequence $\eps_{k}\downarrow0$, the functions $g_{k}:=\mu_{\eps_{k}}^{b}$
satisfy the following conditions: 
\begin{itemize}
\item[(a)] $\int g_{k}(x)\,\diff{x}\to1$, and $\forall\eta>0$: $\int_{|x|\ge\eta}g_{k}(x)\,\diff{x}\to0$
as $k\to\infty$. 
\item[(b)] For each $\alpha\in\D(\R^{n})$ that is $1$ on a neighborhood of
$0$, $(1-\alpha)g_{k}\to0$ in $\mathcal{S}'(\R^{n})$ for $k\to\infty$. 
\item[(c)] For each $\alpha\in\N^{n}$ there exists some $M_{\alpha}>0$ such
that, for any $\eta>0$: $\int_{|x|\le\eta}|x|^{|\alpha|}|\partial^{\alpha}g_{k}(x)|\,\diff{x}\le M_{\alpha}$. 
\end{itemize}
Indeed, all these properties follow readily from \eqref{col_mol}. 
\item \label{rem_embedding_col}Colombeau's special (or simplified) algebra
$\mathcal{G}$ (\cite{C1,C2,MObook,GKOS}) is defined, for $\Om\sse\R^{n}$
open, as the quotient $\gs(\Om):=\esm(\Om)/\ns(\Om)$ of \emph{moderate
nets} modulo \emph{negligible nets}, where 
\begin{align*}
\esm(\Om):=\{(u_{\eps})\in\cinfty(\Omega)^{I}\mid\forall K\comp\Om\, & \forall\alpha\in\N^{n}\,\exists N\in\N:\\
 & \sup_{x\in K}|\partial^{\alpha}u_{\eps}(x)|=O(\eps^{-N})\}
\end{align*}
and 
\begin{align*}
\ns(\Om):=\{(u_{\eps})\in\cinfty(\Omega)^{I}\mid\forall K\comp\Om\, & \forall\alpha\in\N^{n}\,\forall m\in\N:\\
 & \sup_{x\in K}|\partial^{\alpha}u_{\eps}(x)|=O(\eps^{m})\}.
\end{align*}
It follows from \cite[Th. 37]{GKV} that $\gs(\Om)$ can be identified
with the algebra $^{\rho}\Gcinf(\csp{\Omega},\RC{\rho})$ in the special
case of $\rho(\eps)=\eps$. In this setting, Theorem \ref{thm:embeddingD'}
gives an alternative proof of the well known facts that the Colombeau
algebra contains $\cinfty(\Om)$ as a faithful subalgebra, $\D'(\Om)$
as a linear subspace and that the embedding is a sheaf morphism that
commutes with partial derivatives. An alternative point of view is
that Colombeau generalized functions correspond to those generalized
smooth functions that are defined on compactly supported generalized
points. As was already mentioned, more general domains are both useful
in applications and are indeed a necessary requirement for obtaining
a construction that is closed with respect to composition of generalized
functions. 
\end{enumerate}
\end{rem}

We close this section by considering the following natural problem:
let us define two embeddings $\iota_{\Omega}^{b}$, $\iota_{\Omega}^{c}$
as in \eqref{eq:ColEmb}, but using two different infinite positive
numbers $b$, $c\in\rcrho$, so that for all $T\in\mathcal{E}'(\Omega)$
we have 
\begin{align*}
\iota_{\Omega}^{b}(T) & :=\left[T\ast\mu_{\eps}^{b}\right],\\
\iota_{\Omega}^{c}(T) & :=\left[T\ast\mu_{\eps}^{c}\right].
\end{align*}
The following result characterizes equality of such embeddings. 
\begin{thm}
Let $b$, $c\in\rcrho$ be infinite positive numbers and let $\mu$
be a Colombeau mollifier for dimension $n$. Let $\Omega\sse\R^{n}$
be open. Then $\iota_{\Omega}^{b}=\iota_{\Omega}^{c}$ if and only
if $b=c$ in $\rcrho$, i.e.~if and only if they are equal as Robinson-Colombeau
generalized number. 
\end{thm}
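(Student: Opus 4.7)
The ``if'' direction is immediate from Theorem~\ref{thm:embeddingD'}(\ref{enu:iota_indep_of_repr}): the embedding depends on $b$ only through its equivalence class in $\rcrho$, so $b = c$ forces $\iota_\Omega^b = \iota_\Omega^c$.

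For the nontrivial direction, the plan is to test both embeddings against a single compactly supported distribution whose convolution with $\mu_\eps^{\,\cdot}$ admits a transparent closed form. I would take any $x_0 \in \Omega$ and use the translated Dirac mass $\delta_{x_0}\in\mathcal{E}'(\Omega)$, evaluated at the compactly supported generalized point $x_0 \in \csp{\Omega}$. Pick $K \comp \Omega$ with $x_0 \in K$; then for $\eps$ small one has $\kappa_\eps \equiv 1$ on a neighborhood of $\mathrm{supp}(\delta_{x_0})$, so
\begin{equation*}
 \iota_\Omega^{b}(\delta_{x_0})_\eps(x_0) = (\delta_{x_0} * \mu_\eps^{b})(x_0) = \mu_\eps^{b}(0) = b_\eps^{n}\,\tilde\mu(0)\,\chi(0) = c_n\, b_\eps^{n},
\end{equation*}
using $\tilde\mu(0) = c_n\mu(0) = c_n$ and $\chi(0)=1$, and analogously with $c$ in place of $b$. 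Thus $\iota_\Omega^b(\delta_{x_0})(x_0) = c_n b^n$ and $\iota_\Omega^c(\delta_{x_0})(x_0) = c_n c^n$ in $\rcrho$. The hypothesis $\iota_\Omega^b = \iota_\Omega^c$ then yields $c_n b^n = c_n c^n$; dividing by the real constant $c_n$ gives $b^n = c^n$.

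It remains to show that $b^n = c^n$ together with $b,c > 0$ (positive and invertible) forces $b=c$ in $\rcrho$. I would use the standard factorization
\begin{equation*}
 0 = b^n - c^n = (b-c)\sum_{k=0}^{n-1} b^{n-1-k} c^{k}.
\end{equation*}
By Lemma~\ref{lem:mayer}, each factor $b^{n-1-k}c^k$ is positive and invertible, and I claim the sum $S := \sum_{k=0}^{n-1} b^{n-1-k}c^k$ is therefore also positive and invertible. Indeed, Lemma~\ref{lem:mayer}(\ref{enu:greater-i_epsTom}) furnishes exponents $m_k \in \N$ with $b_\eps^{n-1-k}c_\eps^{k} > \rho_\eps^{m_k}$ for $\eps$ small, hence $S_\eps > \rho_\eps^{\max_k m_k}$ for $\eps$ small, and Lemma~\ref{lem:mayer} again gives $S > 0$ and $S$ invertible. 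Multiplying $(b-c)\cdot S = 0$ by $S^{-1}$ yields $b = c$, completing the proof.

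The main subtlety here is the last step: in the partially ordered ring $\rcrho$ one cannot simply ``take $n$-th roots'', and one must verify that a finite sum of positive invertible generalized numbers remains invertible — this is precisely what Lemma~\ref{lem:mayer} is designed for. The evaluation step itself is routine once one notices that the regularizer $\kappa_\eps$ disappears near $x_0$ for small $\eps$, so that the convolution reduces to a pointwise value of $\mu_\eps^b$ at the origin.
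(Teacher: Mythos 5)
Your proof is correct and takes essentially the same route as the paper: both test the hypothesis on the Dirac mass $\delta_{x_0}$ and evaluate at $x_0$, reducing the claim to $c_n b^n = c_n c^n$ in $\rcrho$. You additionally spell out the step from $b^n=c^n$ to $b=c$ via the factorization $b^n-c^n=(b-c)\sum_{k}b^{n-1-k}c^k$ and the invertibility of the sum (Lemma~\ref{lem:mayer}), a detail the paper's proof leaves implicit; your argument for it is sound.
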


\begin{proof}
By Theorem \ref{thm:embeddingD'} \ref{enu:iota_indep_of_repr}, $\iota^{b}$
is well-defined, i.e., does not depend on the representative of $b\in\rcrho$.
Conversely, suppose that $\iota_{\Omega}^{b}=\iota_{\Omega}^{c}$
and fix any $x_{0}\in\Omega$. Then in particular $\iota_{\Omega}^{b}(\delta_{x_{0}})=\iota_{\Omega}^{c}(\delta_{x_{0}})$
in $\gsf(\csp{\Omega},\rcrho)$. Due to \eqref{col_mol}, \eqref{eq:ColEmb},
an evaluation of these GSF at $x_{0}$ implies 
\[
\forall m\in\N:\ \left|(b_{\eps}^{n}-c_{\eps}^{n})c_{n}\right|=O(\rho_{\eps}^{m}),
\]
so $b=c$ in $\rcrho$. 
\end{proof}

\subsubsection{\label{subsec:ClosureComposition}Closure with respect to composition}

In contrast to the case of distributions, there is no problem in considering
the composition of two GSF. This property opens new interesting possibilities,
e.g.~in considering differential equations $y'=f(y,t)$, where $y$
and $f$ are GSF. For instance, there is no problem in studying $y'=\delta(y)$
(see \cite{Lu-Gi16}). 
\begin{thm}
\label{thm:GSFcategory} Subsets $S\subseteq\RC{\rho}^{s}$ with the
trace of the sharp topology, and generalized smooth maps as arrows
form a subcategory of the category of topological spaces. We will
call this category $\gsf$, the \emph{category of GSF}. 
\end{thm}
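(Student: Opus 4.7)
The plan is to verify the three defining conditions for a subcategory of $\mathbf{Top}$: (a) every object $S\subseteq\rti^{s}$ carries a topology (the trace of the sharp topology, by hypothesis); (b) arrows are continuous maps; (c) identities are arrows and arrows are closed under composition. Condition (b) is immediate from Thm.~\ref{thm:GSF-continuity}~\ref{enu:GSF-cont}. For the identity on $S\subseteq\rti^{s}$, the constant net $f_{\eps}:=\mathrm{id}_{\R^{s}}\in\cinfty(\R^{s},\R^{s})$ satisfies $S\subseteq\sint{\R^{s}}=\rti^{s}$, has all derivatives bounded (hence $\rho$-moderate), and represents $\mathrm{id}_{S}$ via $[x_{\eps}]\mapsto[x_{\eps}]$; thus $\mathrm{id}_{S}\in\gsf(S,S)$.

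The main step is closure under composition. Let $f\in\gsf(X,Y)$ and $g\in\gsf(Y,Z)$ with $X\subseteq\rti^{n}$, $Y\subseteq\rti^{d}$, $Z\subseteq\rti^{k}$. By Lem.~\ref{lem:fromOmega_epsToRn} choose defining nets $f_{\eps}\in\cinfty(\R^{n},\R^{d})$ and $g_{\eps}\in\cinfty(\R^{d},\R^{k})$ for $f$ and $g$ respectively. The natural candidate representative of $g\circ f$ is the net $(h_{\eps}):=(g_{\eps}\circ f_{\eps})\in\cinfty(\R^{n},\R^{k})$, which is well-defined because both domains have been arranged to be full Euclidean spaces.

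I verify the two conditions of Def.~\ref{def:netDefMap} for $(h_{\eps})$ with respect to $X\longrightarrow Z$. Condition \ref{enu:dom-cod} is immediate: $X\subseteq\sint{\R^{n}}$, and for $[x_{\eps}]\in X$ we have $[f_{\eps}(x_{\eps})]=f([x_{\eps}])\in Y$, whence $[h_{\eps}(x_{\eps})]=[g_{\eps}(f_{\eps}(x_{\eps}))]=g(f([x_{\eps}]))\in Z$; this simultaneously proves the pointwise identity $g\circ f=[h_{\eps}(-)]|_{X}$. For condition \ref{enu:partial-u-moderate}, fix $[x_{\eps}]\in X$ and $\alpha\in\N^{n}$. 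By the Faà di Bruno formula, $\partial^{\alpha}h_{\eps}(x_{\eps})$ is a finite universal polynomial in the quantities $\partial^{\beta}g_{\eps}(f_{\eps}(x_{\eps}))$ and $\partial^{\gamma}f_{\eps}(x_{\eps})$, with $1\le|\beta|\le|\alpha|$ and $1\le|\gamma|\le|\alpha|$. The nets $(\partial^{\gamma}f_{\eps}(x_{\eps}))$ are $\rho$-moderate because $(f_{\eps})$ defines a GSF on $X$. Setting $y_{\eps}:=f_{\eps}(x_{\eps})$, the net $(y_{\eps})$ is a representative of $f([x_{\eps}])\in Y$, hence $(\partial^{\beta}g_{\eps}(y_{\eps}))$ is $\rho$-moderate because $(g_{\eps})$ defines a GSF on $Y$. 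Since the set of $\rho$-moderate nets is a ring, the polynomial combination $\partial^{\alpha}h_{\eps}(x_{\eps})$ is $\rho$-moderate as required.

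The one place where care is needed is the application of Def.~\ref{def:netDefMap}~\ref{enu:partial-u-moderate} to $(g_{\eps})$: this definition requires moderation along every representative of a point in $Y$, and $y_{\eps}=f_{\eps}(x_{\eps})$ is indeed such a representative of $f([x_{\eps}])\in Y$, so the bound applies. This is the only nontrivial content; the remaining associativity and unit axioms in $\gsf$ are inherited from $\mathbf{Set}$ (equivalently $\mathbf{Top}$), so $\gsf$ is a subcategory of $\mathbf{Top}$.
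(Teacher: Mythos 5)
Your proposal is correct and follows essentially the same route as the paper: the identity is represented by the constant net $x\mapsto x$, and closure under composition is reduced via the Faà di Bruno formula to the fact that $\rho$-moderate nets form a ring, with the key observation (which you rightly flag) that $(f_{\eps}(x_{\eps}))$ is a representative of $f([x_{\eps}])\in Y$ so that Def.~\ref{def:netDefMap}~\ref{enu:partial-u-moderate} for $(g_{\eps})$ applies to it. The only difference is that you invoke Lem.~\ref{lem:fromOmega_epsToRn} to globalize both defining nets up front, which legitimately bypasses the paper's short argument that $S\subseteq\sint{f_{\eps}^{-1}(\Omega''_{\eps})}$ for $\eps$-dependent domains.
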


\begin{proof}
From Thm.\ \ref{thm:GSF-continuity} \ref{enu:GSF-cont} we already
know that every GSF is continuous; we have hence to prove that these
arrows are closed with respect to identity and composition in order
to obtain a concrete subcategory of topological spaces and continuous
maps.

\noindent If $T\subseteq\RC{\rho}^{t}$ is an arbitrary object, then
$f_{\eps}(x):=x$ is the net of smooth functions that globally defines
the identity $1_{T}$ on $T$. It is immediate that $1_{T}$ is generalized
smooth.

\noindent To prove that arrows of $^{\rho}\Gcinf$ are closed with
respect to composition, let $S\subseteq\RC{\rho}^{s},T\subseteq\RC{\rho}^{t},R\subseteq\RC{\rho}^{r}$
and $f:S\longrightarrow T$, $g:T\longrightarrow R$ be GSF, then
$f(x)=[f_{\eps}(x_{\eps})]\in T$ and $g(y)=[g_{\eps}(y_{\eps})]\in R$
for every $x\in S$ and $y\in T$, where $f_{\eps}\in\cinfty(\Omega'_{\eps},\R^{t})$
and $g_{\eps}\in\cinfty(\Omega''_{\eps},\R^{r})$ are suitable nets
of smooth functions as in Def.\ \ref{def:netDefMap}, and where $\Omega'_{\eps}$
is open in $\R^{s}$ and $\Omega''_{\eps}$ is open in $\R^{t}$.
Of course, the idea is to consider $g_{\eps}\circ f_{\eps}\in\cinfty(\Omega_{\eps},\R^{r})$,
where $\Omega_{\eps}:=f_{\eps}^{-1}(\Omega''_{\eps})$ (let us note
that, even in the case where $\Omega''_{\eps}$ does not depend by
$\eps$, generally speaking $\Omega_{\eps}$ still depends on $\eps$).\\
 Take $x\in S$, so that $f(x)=[f_{\eps}(x_{\eps})]\in T\subseteq\sint{\Omega''_{\eps}}$
and hence $f_{\eps}(x_{\eps})\in_{\eps}\Omega''_{\eps}$ and $x_{\eps}\in\Omega_{\eps}$
for $\eps$ small. If we take another representative $(x'_{\eps})\sim_{\rho}(x_{\eps})$
we have $f(x')=f(x)$ since $f$ is well-defined and, proceeding as
before, we still have that $x'_{\eps}\in\Omega_{\eps}$ for $\eps$
sufficiently small. This proves that $S\subseteq\sint{\Omega_{\eps}}$.
Moreover, since $[f_{\eps}(x_{\eps})]\in T$, we also have that $[g_{\eps}(f_{\eps}(x_{\eps}))]\in R$
and $g(f(x))=[g_{\eps}(f_{\eps}(x_{\eps}))]$. It remains to show
that the net $(g_{\eps}\circ f_{\eps})$ defines a GSF (Def.\ \ref{def:netDefMap})
of the type $S\longrightarrow R$.\\
 To this end, let us consider any $[x_{\eps}]\in S$ and any $\gamma\in\N^{s}$.
We can write 
\begin{equation}
\partial^{\gamma}(g_{\eps}\circ f_{\eps})(x_{\eps})=p\left[\partial^{\alpha_{1}}f_{\eps}(x_{\eps}),\ldots,\partial^{\alpha_{A}}f_{\eps}(x_{\eps}),\partial^{\beta_{1}}g_{\eps}(f_{\eps}(x_{\eps})),\ldots,\partial^{\beta_{B}}g_{\eps}(f_{\eps}(x_{\eps}))\right],\label{eq:derivativeOfComposition}
\end{equation}
where $p$ is a suitable polynomial (from the Faà di Bruno formula)
not depending on $x_{\eps}$. Every term $\partial^{\alpha_{i}}f_{\eps}(x_{\eps})$
and $\partial^{\beta_{j}}g_{\eps}(f_{\eps}(x_{\eps}))$ is $\rho$-moderate
by \ref{enu:partial-u-moderate} of Def.\ \ref{def:netDefMap}. Since
moderateness is preserved by polynomial operations, it follows that
also $\partial^{\gamma}(g_{\eps}\circ f_{\eps})(x_{\eps})$ is $\rho$-moderate. 
\end{proof}
\noindent For instance, we can think of the Dirac delta as a map of
the form $\delta:\RC{\rho}\longrightarrow\RC{\rho}$, and therefore
the composition $e^{\delta}$ is defined in $\{x\in\RC{\rho}\mid\exists z\in\RC{\rho}_{>0}:\ \delta(x)\le\log z\}$,
which of course does not contain $x=0$ but only suitable non zero
infinitesimals. On the other hand, $\delta\circ\delta:\RC{\rho}\ra\RC{\rho}$.
Moreover, from the inclusion of ordinary smooth functions (Thm.~\ref{thm:embeddingD'})
and the closure with respect to composition, it directly follows that
every $^{\rho}\Gcinf(U,\RC{\rho})$ is an algebra with pointwise operations
for every subset $U\subseteq\RC{\rho}^{n}$. For an open subset $\Omega\subseteq\R^{n}$,
the algebra $^{\rho}\Gcinf(\csp{\Omega},\RC{\rho})$ contains the
space $\mathcal{D}'(\Omega)$ of Schwartz distributions.

A natural way to define a GSF is to follow the original idea of classical
authors (see \cite{Kat-Tal12,Lau89,Dir26}) to fix an infinitesimal
or infinite parameter in a suitable ordinary smooth function. We will
call this type of GSF of \emph{Cauchy-Dirac type}; the next theorem
specifies this notion and states that GSF are of Cauchy-Dirac type
whenever the generating net $(f_{\eps})$ is smooth in $\eps$. 
\begin{cor}
\label{cor:quasi-stdAreGSF}Let $X\subseteq\R^{n}$, $Y\subseteq\R^{d}$,
$P\subseteq\R^{m}$ be open sets and $\phi\in\cinfty(P\times X,Y)$
be an ordinary smooth function. Let $p\in[P]$, and define $f_{\eps}:=\phi(p_{\eps},-)\in\cinfty(X,Y)$,
then $[f_{\eps}(-)]:[X]\longrightarrow[Y]$ is a GSF. In particular,
if $f:[X]\longrightarrow[Y]$ is a GSF defined by $(f_{\eps})$ and
the net $(f_{\eps})$ is smooth in $\eps$, i.e.\ if 
\[
\exists\phi\in\cinfty((0,1)\times X,Y):\ f_{\eps}=\phi(\eps,-)\quad\forall\eps\in(0,1),
\]
and if $[\eps]\in\rcrho$, then the GSF $f$ is of Cauchy-Dirac type
because $f(x)=\phi([\eps],x)$ for all $x\in[X]$. Finally, Cauchy-Dirac
GSF are closed with respect to composition. 
\end{cor}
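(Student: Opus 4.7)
The plan is to verify the two conditions of Def.~\ref{def:netDefMap} for the net $(f_\eps) := (\phi(p_\eps, -))$ in Part 1, and then to deduce Parts 2 and 3 directly from the notion of Cauchy-Dirac type. First I would choose the constant net $\Omega_\eps := X$, so that $f_\eps \in \cinfty(X, \R^d)$. Condition (i) reduces to checking $[X] \subseteq \sint{X}$ (after replacing $\phi$, if necessary, by a globally defined smooth extension on $\R^{m+n}$ via the cutoff procedure of Lem.~\ref{lem:fromOmega_epsToRn}), together with the inclusion $[\phi(p_\eps, x_\eps)] \in [Y]$, which is immediate from $\phi(P \times X) \subseteq Y$ and from the fact that $p_\eps \in P$ and $x_\eps \in X$ for $\eps$ small.

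For the moderation condition (ii), the key observation is $\partial^\alpha f_\eps(x_\eps) = (\partial_x^\alpha \phi)(p_\eps, x_\eps)$, where $\partial_x^\alpha$ differentiates only the second group of variables. Since $\phi$ is smooth on the open set $P \times X$, and both $(p_\eps)$ and $(x_\eps)$ are $\rho$-moderate nets taking values in $P$ and $X$ respectively for small $\eps$, evaluating the smooth function $\partial_x^\alpha \phi$ on this $\rho$-moderate pair of arguments produces a $\rho$-moderate net. The main obstacle here is controlling the growth of $\partial_x^\alpha \phi$ when $(p_\eps, x_\eps)$ drifts toward the boundary of $P \times X$; the clean workaround is to first pass to the global extension of Lem.~\ref{lem:fromOmega_epsToRn}, whose derivatives admit uniform polynomial bounds in $\rho_\eps^{-1}$ over every $\rho$-moderate domain, so that the chain-rule estimate closes.

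For Part 2, I would apply Part 1 with $(0, 1) \subseteq \R$ playing the role of $P$ and $[\eps] \in \rcrho$ (which lies in $[(0,1)]$) playing the role of $p$: this shows that $\phi([\eps], -): [X] \longrightarrow [Y]$ is a well-defined GSF, whose value at any $x = [x_\eps]$ equals $[\phi(\eps, x_\eps)] = [f_\eps(x_\eps)] = f(x)$ by the hypothesis $f_\eps = \phi(\eps, -)$. Hence $f$ coincides with the GSF $\phi([\eps],-)$ of Cauchy-Dirac type.

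For Part 3, given two Cauchy-Dirac GSF $f = \phi([\eps], -): [X] \longrightarrow [Y]$ and $g = \psi([\eps], -): [Y] \longrightarrow [Z]$ induced by smooth functions $\phi \in \cinfty((0,1) \times X, Y)$ and $\psi \in \cinfty((0,1) \times Y, Z)$, I would define the ordinary smooth function $\chi \in \cinfty((0,1) \times X, Z)$ by $\chi(t, x) := \psi(t, \phi(t, x))$. Applying Part 2 to $\chi$, the composition satisfies $(g \circ f)(x) = [\psi(\eps, \phi(\eps, x_\eps))] = \chi([\eps], x)$ for all $x = [x_\eps] \in [X]$, exhibiting $g \circ f$ as Cauchy-Dirac.
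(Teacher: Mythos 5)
Your overall strategy---verifying the two conditions of Def.~\ref{def:netDefMap} by hand---is a genuinely different route from the paper's, which disposes of all three claims in two lines: $[f_{\eps}(-)]$ is the composite of the (trivially generalized smooth) map $x\in[X]\mapsto(p,x)\in[P]\times[X]$ with $\phi$ regarded as a GSF via the inclusion of ordinary smooth functions (Thm.~\ref{thm:embeddingD'}), and closure under composition (Thm.~\ref{thm:GSFcategory}) then yields everything at once, including the closure of Cauchy-Dirac GSF under composition. Your Parts 2 and 3 are fine: Part 2 matches the paper's reasoning, and your explicit construction $\chi(t,x):=\psi(t,\phi(t,x))$ in Part 3 is a correct unwinding of the paper's appeal to Thm.~\ref{thm:GSFcategory}.

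Part 1, however, contains two concrete gaps. First, with the constant net $\Omega_{\eps}=X$, condition (i) of Def.~\ref{def:netDefMap} requires $[X]\subseteq\sint{X}$, which is false in general: by Thm.~\ref{thm:strongMembershipAndDistanceComplement} one has $[X]=[\text{cl}(X)]$, which contains points whose representatives approach the boundary of $X$ faster than every power of $\rho_{\eps}$ (for $X=(0,1)$ and $\rho_{\eps}=\eps$, the point $0=[e^{-1/\eps}]$ lies in $[X]$ but not in $\sint{X}$). The appeal to Lem.~\ref{lem:fromOmega_epsToRn} cannot repair this: that lemma presupposes that the net already defines a GSF, and its cutoff alters the values of $f_{\eps}$ precisely at such boundary representatives. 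Second, the claim that evaluating a smooth function at a $\rho$-moderate net of arguments in its domain produces a $\rho$-moderate net is false: $\phi(x)=\exp(\exp(1/x))$ on $(0,1)$ evaluated at $x_{\eps}=\eps$, or $\phi(x)=\exp(\exp(x))$ on $\R$ evaluated at $x_{\eps}=1/\eps$, are not moderate, and no ``uniform polynomial bounds in $\rho_{\eps}^{-1}$'' are supplied by Lem.~\ref{lem:fromOmega_epsToRn}. Moderateness of $\partial^{\alpha}f_{\eps}(x_{\eps})$ is automatic only when $(p_{\eps},x_{\eps})$ remains in a fixed compact subset of $P\times X$, i.e.\ on $\csp{P\times X}$, which is exactly the domain for which Thm.~\ref{thm:embeddingD'}.\ref{enu:embeddingSmoothUpToInfinitesimals} asserts the inclusion of smooth functions. (To be fair, the paper's own two-line proof also tacitly needs $[P]\times[X]\subseteq\csp{P\times X}$ in order to compose, so the difficulty is intrinsic to the statement as written; the point is that your write-up asserts the false general implication instead of isolating and flagging the needed restriction.)
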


\begin{proof}
In fact, the map $x\in[X]\mapsto(p,x)\in[P]\times[X]$ is trivially
generalized smooth and hence from the inclusion of smooth functions
(Theorem \ref{thm:embeddingD'}) and the closure with respect to composition
(Theorem \ref{thm:GSFcategory}) the conclusions follow. 
\end{proof}
\begin{example}
\label{enu:deltaCompDelta}The composition $\delta\circ\delta\in\gsf(\rcrho,\rcrho)$
is given by $(\delta\circ\delta)(x)=b\mu\left(b^{2}\mu(bx)\right)$
and is an even function. If $x$ is near-standard and $\st{x}\ne0$,
or $x$ is infinite, then $(\delta\circ\delta)(x)=b$. Since $(\delta\circ\delta)(0)=0$,
by the intermediate value theorem (see Cor\@.~\ref{cor:intermValue}
below), we have that $\delta\circ\delta$ attains any value in the
interval $[0,b]\subseteq\rcrho$. If $0\le x\le\frac{1}{2b}$, then
(for a $\mu$ as in Fig.\ \ref{fig: Col_mol}) $x$ is infinitesimal
and $(\delta\circ\delta)(x)=0$ because $\delta(x)\ge b\mu\left(\frac{1}{k}\right)$
is an infinite number. If $x=\frac{k}{b}$ for some $k\in\N_{>0}$,
then $x$ is still infinitesimal but $(\delta\circ\delta)(x)=b$ because
$\mu(bx)=0$. A representation of $\delta\circ\delta$ is given in
Fig.~\ref{fig:deltaCompDelta}. Analogously, one can deal with $H\circ\delta$
and $\delta\circ H$. 
\end{example}

\begin{figure}
\begin{centering}
\includegraphics[scale=0.2]{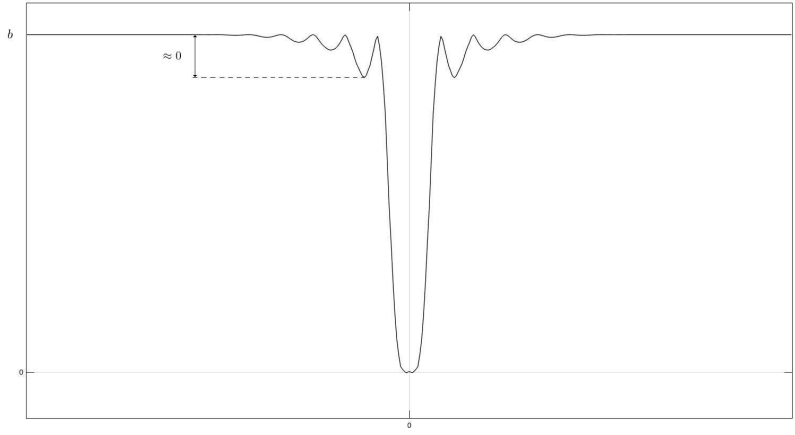} 
\par\end{centering}
\caption{\label{fig:deltaCompDelta}A representation of $\delta\circ\delta$}
\end{figure}

The theory of GSF originates from the theory of Colombeau quotient
algebras. In this well-developed approach, strong analytic tools,
including microlocal analysis, and an elaborate theory of pseudodifferential
and Fourier integral operators have been developed over the past few
years (cf.~\cite{C1,C2,MObook,GKOS,HOP,GHO} and references therein).
In these quotient algebras, each generalized function generates a
unique GSF defined on a subset of $\RC{(\eps)}$. On the other hand,
Colombeau generalized functions are in general not closed with respect
to composition because they cannot be defined on arbitrary domains
$X\subseteq\RC{\rho}^{n}$. We refer to \cite{GKV} for details about
the links between Colombeau algebras and GSF, and to \cite{Tod11,Tod13,ToVe08}
for a treatment of Colombeau algebras in the framework of nonstandard
analysis.

\section{\label{sec:Differential-calculus}Differential calculus and the Fermat-Reyes
theorem}

In this section we show how the derivatives of a GSF can be calculated
using a form of incremental ratio. The idea is to prove the Fermat-Reyes
theorem for GSF (see \cite{Gio10e,GK13b,Koc}). Essentially, this
theorem shows the existence and uniqueness of another GSF serving
as incremental ratio. This is the first of a long list of results
demonstrating the close similarities between ordinary smooth functions
and GSF.

We recall that the \emph{thickening} of an open set $\Omega\subseteq\R^{n}$
\emph{along} $v\in\R^{n}$ is $\thick_{v}(\Omega):=\{(x,h)\in\R^{n+1}\mid[x,x+hv]_{\R^{n}}\subseteq\Omega\}$,
and serves as a natural domain of a partial incremental ratio along
$v$ of any function defined on $\Omega$. In order to prove the Fermat-Reyes
theorem, it is simpler to define what \emph{a }thickening of $U\subseteq\rcrho^{n}$
along $v\in\rcrho^{n}$ is. 
\begin{defn}
\label{def:thickening}Let $U\subseteq\rcrho^{n}$ and let $v\in\rcrho^{n}$,
then we say that $T\subseteq\rcrho^{n+1}$ is \emph{a (sharp) thickening}
\emph{of }$U$\emph{ along $v$} if 
\begin{enumerate}
\item \label{enu:thickNonEmpty}$\forall x\in U:\ (x,0)\in T$ 
\item \label{enu:thickProductOfBalls}For all $(x,h)\in T$ there exist
$a$, $b\in\rcrho_{>0}$, with $b<a$, such that: 
\begin{enumerate}[label=(\alph*)]
\item $|h\cdot v|<b$ 
\item $B_{a}(x)\subseteq U$ 
\item \label{enu:prodBallsInclThick}$B_{a}(x)\times B_{b}(0)\subseteq T$. 
\end{enumerate}
\end{enumerate}
\noindent Finally, we will say that $T$ is \emph{a} \emph{large thickening
of $U$along $v$} if the radii $a$, $b$ in \ref{enu:thickProductOfBalls}
are real: \emph{$a$, $b\in\R_{>0}$.} 
\end{defn}

\begin{rem}
~ 
\begin{enumerate}
\item Conditions \ref{enu:thickNonEmpty} and \ref{enu:thickProductOfBalls}
imply that necessarily $U$ is a sharply open set, whereas $U$ is
a large open set if $T$ is a large thickening. 
\item \label{enu:relWithClassThickening}Let $(x,h)\in T$ and let the radii
$a$, $b$ be as in \ref{enu:thickProductOfBalls}, then for all $s\in[0,1]$
we have $|x+shv-x|\le|hv|<b<a$. Therefore $[x,x+hv]\subseteq B_{a}(x)\subseteq U$.
This gives a connection with the classical definition of thickening
and shows that if $f:U\ra\rcrho$, we can consider the difference
$f(x+hv)-f(x)$. 
\item Condition \ref{enu:thickProductOfBalls} of Def.~\ref{def:thickening}
yields that $T$ is a sharply open subset of $\rcrho{}^{n+1}$; it
is a large open subset in case $T$ is a large thickening. 
\item If $T$ and $\bar{T}$ are two (large) thickenings of $U$ along $v$,
then also $T\cap\bar{T}$ is a (large) thickening of the same type.
Finally, thickenings are also closed with respect to arbitrary non
empty unions. 
\end{enumerate}
\end{rem}

\noindent In the present setting, the Fermat-Reyes theorem is the
following. 
\begin{thm}
\noindent \label{thm:FR-forGSF} Let $U\subseteq\rcrho^{n}$ be a
sharply open set, let $v=[v_{\eps}]\in\rcrho^{n}$, and let $f\in{}^{\rho}\Gcinf(U,\rcrho)$
be a generalized smooth map generated by the net of smooth functions
$f_{\eps}\in\cinfty(\Omega_{\eps},\R)$. Then 
\begin{enumerate}
\item \label{enu:existenceRatio}If $S$ is a thickening of U along $v$
such that $S\subseteq\sint{\thick_{v_{\eps}}(\Omega_{\eps})}$, then
there exists a thickening $T\subseteq S$ of $U$ along $v$ and a
generalized smooth map $r\in{}^{\rho}\Gcinf(T,\rcrho)$, called the
\emph{generalized incremental ratio} of $f$ \emph{along} $v$, such
that 
\[
f(x+hv)=f(x)+h\cdot r(x,h)\qquad\forall(x,h)\in T.
\]
Moreover $r(x,0)=\left[\frac{\partial f_{\eps}}{\partial v_{\eps}}(x_{\eps})\right]$
for every $x\in U$, and we can thus define $\frac{\partial f}{\partial v}(x):=r(x,0)$,
so that $\frac{\partial f}{\partial v}\in{}^{\rho}\Gcinf(U,\rcrho)$. 
\item \label{enu:uniquenessRatio}Any two generalized incremental ratios
of $f$ coincide on the intersection of their domains. 
\end{enumerate}
\noindent If $U$ is a large open set and $S$ is a large thickening
of $U$ along $v$, then an analogous statement holds for a large
thickening $T$ of $U$ along $v$. 
\end{thm}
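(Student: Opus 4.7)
The plan is to define the generalized incremental ratio by the classical integral representation coming from the one-variable fundamental theorem of calculus, and to verify the conditions of Def.~\ref{def:generalizedSmoothMap}. Concretely, for every $\eps$, on the classical thickening $\thick_{v_\eps}(\Omega_\eps)$ I set
\[
r_\eps(x,h):=\int_0^1 \frac{\partial f_\eps}{\partial v_\eps}(x+shv_\eps)\,\diff{s},
\]
which is smooth in $(x,h)$ because $f_\eps$ is, and which satisfies $f_\eps(x+hv_\eps)-f_\eps(x)=h\cdot r_\eps(x,h)$ by applying the classical FTC to $s\mapsto f_\eps(x+shv_\eps)$; in particular $r_\eps(x,0)=\partial f_\eps/\partial v_\eps(x)$. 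I take $T:=S$: the hypothesis $S\subseteq\sint{\thick_{v_\eps}(\Omega_\eps)}$ together with Def.~\ref{def:internalStronglyInternal} ensures that every representative $(x_\eps,h_\eps)$ of any $(x,h)\in S$ lies in $\thick_{v_\eps}(\Omega_\eps)$ for $\eps$ small, so $r_\eps(x_\eps,h_\eps)$ is well defined.

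The main step is to show that $(r_\eps)$ defines a GSF on $T$, i.e.\ that $(\partial^{\alpha}r_\eps(x_\eps,h_\eps))\in\R_\rho$ for every representative of every $(x,h)\in T$ and every $\alpha$. By differentiation under the integral and the chain rule, $\partial^{\alpha}r_\eps(x,h)$ is a finite polynomial in components of $v_\eps$, in $s$, in powers of $h$, and in finitely many derivatives $\partial^{\beta_i}f_\eps$ evaluated at $x+shv_\eps$, integrated against $\diff{s}$ over $s\in[0,1]$. The main obstacle is establishing, for each $\beta_i$, uniform-in-$s$ moderateness of the net $(\partial^{\beta_i}f_\eps(x_\eps+sh_\eps v_\eps))_\eps$: pointwise-in-$s$ moderateness is immediate from $f\in\gsf(U,\rcrho)$ via Thm.~\ref{thm:indepRepr}, but a uniform $N\in\N$ bounding $\sup_{s\in[0,1]}|\partial^{\beta_i}f_\eps(x_\eps+sh_\eps v_\eps)|$ requires a contradiction argument. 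If no such $N$ existed, one could extract $\eps_k\downarrow0$ and $s_k\in[0,1]$ with $|\partial^{\beta_i}f_{\eps_k}(x_{\eps_k}+s_k h_{\eps_k}v_{\eps_k})|>\rho_{\eps_k}^{-k}$. Splicing $s_\eps:=s_k$ for $\eps=\eps_k$ and $s_\eps:=0$ otherwise produces a bounded (hence moderate) net $(s_\eps)$; set $y_\eps:=x_\eps+s_\eps h_\eps v_\eps$, a representative of $y:=x+[s_\eps]\cdot hv$. Since $|[s_\eps]|\le 1$ and $|hv|<_{\mathfrak{R}}a$ by Def.~\ref{def:thickening}~\ref{enu:thickProductOfBalls}, one has $|y-x|\le|hv|<_{\mathfrak{R}}a$, so $y\in B_a(x)\subseteq U$. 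Then $f\in\gsf(U,\rcrho)$ forces $(\partial^{\beta_i}f_\eps(y_\eps))$ to be $\rho$-moderate, contradicting the extraction. Uniform boundedness of the integrand on $[0,1]$ then delivers the moderateness of $\partial^{\alpha}r_\eps(x_\eps,h_\eps)$ and completes~\ref{enu:existenceRatio}.

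For uniqueness~\ref{enu:uniquenessRatio}, any two generalized incremental ratios $r,\bar r$ satisfy $h\cdot(r-\bar r)(x,h)=0$ on $T\cap\bar T$. At $(x,0)$, both sides agree with $[\partial f_\eps/\partial v_\eps(x_\eps)]$ by the integral representation. At $(x,h)$ with $h$ invertible, multiplying by $h^{-1}$ forces $r(x,h)=\bar r(x,h)$. For the remaining points, invertible generalized numbers are sharply dense in $\rcrho$ (given $h$, set $\tilde h_{n,\eps}:=h_\eps$ when $|h_\eps|\ge\rho_\eps^n$ and $\tilde h_{n,\eps}:=\rho_\eps^n$ otherwise; then $\tilde h_n$ is invertible and $|\tilde h_n-h|\le 2\diff{\rho}^n\to0$ in the sharp topology), and sharp openness of $T\cap\bar T$ combined with sharp continuity of $r,\bar r$ (Thm.~\ref{thm:GSF-continuity}~\ref{enu:GSF-cont}) propagates the equality from invertible $h$ to arbitrary $h$. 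The Fermat-topology statement follows by the identical argument: Thm.~\ref{thm:strongMembershipAndDistanceComplementFermat} replaces Thm.~\ref{thm:strongMembershipAndDistanceComplement} for the domain check, and the extraction step is unchanged since $[0,1]\subseteq\R$ and the Fermat radii of a large thickening are real.
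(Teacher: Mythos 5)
Your proposal is correct and follows essentially the same route as the paper: the same integral formula for $r_{\eps}$, reduction of moderateness to the moderateness of the derivatives of $f_{\eps}$ at points of the segment $[x,x+hv]\subseteq U$ guaranteed by the thickening, and uniqueness via invertible increments together with sharp density and continuity (which is exactly the content of Lemma~\ref{lem:invSufficesForGSF} that the paper invokes). The only differences are cosmetic --- you keep $T=S$ and bound the integrand uniformly in $s$ by a splicing argument where the paper shrinks $T$ to a union of products of balls and applies the integral mean value theorem; and your remark that two arbitrary incremental ratios agree at $h=0$ ``by the integral representation'' is unjustified for a ratio not given by that formula, but harmlessly redundant since your density argument already covers $h=0$.
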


Note that this result allows us to consider the partial derivative
of $f$ with respect to an arbitrary generalized vector $v\in\rcrho^{n}$
which can be, e.g., near-standard or infinite.

\noindent Before proving the theorem, it is essential to show that
GSF are uniquely determined by invertible elements. 
\begin{lem}
\label{lem:invSufficesForGSF} Let $U\subseteq\rcrho^{n}$ be an open
set in the sharp topology, and let $f\in{}^{\rho}\Gcinf(U,\RC{\rho}^{d})$
be a GSF. Then $f(x)=0$ for every $x\in U$ if and only if $f(x)=0$
for all $x\in U$ such that $|x|$ is invertible. 
\end{lem}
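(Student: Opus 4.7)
The forward implication is immediate. For the converse, I assume that $f$ vanishes on every $x\in U$ with $|x|$ invertible, and set
\[
V:=\{x\in U\mid |x|\text{ is invertible}\}.
\]
The plan is to prove that $V$ is sharply dense in $U$ and then conclude via the sharp continuity of $f$ (Thm.~\ref{thm:GSF-continuity}~\ref{enu:GSF-cont}) combined with the Hausdorff property of the sharp topology on $\rcrho^{d}$. Indeed, once density is in hand, $V\subseteq f^{-1}(\{0\})$ together with closedness of $f^{-1}(\{0\})$ in $U$ will force $U=\overline{V}\subseteq f^{-1}(\{0\})$.

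For the density step I would fix $x_{0}=[x_{0,\eps}]\in U$ and a sharp ball $B_{r}(x_{0})\subseteq U$ with $r\in\rcrho_{>0}$. By Lem.~\ref{lem:mayer} one may choose a representative of $r$ with $r_{\eps}>\rho_{\eps}^{p}$ for some $p\in\N$ and $\eps$ small, and hence pick $M\in\N$ with $2\diff{\rho}^{M}<r$ in $\rcrho$. I then define
\[
y_{\eps}:=\begin{cases} x_{0,\eps} & \text{if }|x_{0,\eps}|\ge\rho_{\eps}^{M},\\ \rho_{\eps}^{M}\,e_{1} & \text{otherwise,}\end{cases}
\]
where $e_{1}$ is the first canonical basis vector of $\R^{n}$. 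A direct case analysis gives $|y_{\eps}|\ge\rho_{\eps}^{M}$ for every $\eps$, so $|y|$ is invertible by Lem.~\ref{lem:mayer}; and $|y_{\eps}-x_{0,\eps}|\le 2\rho_{\eps}^{M}$ for every $\eps$, so $|y-x_{0}|\le 2\diff{\rho}^{M}<r$ in $\rcrho$. Since $(y_{\eps})$ is plainly $\rho$-moderate, $y:=[y_{\eps}]$ lies in $B_{r}(x_{0})\cap V$. As the balls $B_{r}(x_{0})$ form a base of sharp neighborhoods of $x_{0}$, this proves density of $V$ in $U$.

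For the continuity step, Thm.~\ref{thm:GSF-continuity}~\ref{enu:GSF-cont} yields that $f:U\to\rcrho^{d}$ is sharply continuous. Since the sharp topology on $\rcrho^{d}$ is Hausdorff, $\{0\}\subseteq\rcrho^{d}$ is closed, whence $f^{-1}(\{0\})$ is closed in the sharp subspace topology on $U$. The hypothesis gives $V\subseteq f^{-1}(\{0\})$, and density then forces $f^{-1}(\{0\})\supseteq\overline{V}=U$, i.e.\ $f\equiv 0$ on $U$.

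The main obstacle is the density of $V$. A naive perturbation such as $x_{0}+\diff{\rho}^{M}e_{1}$ need not produce an invertible $|y|$, since the first coordinate of $x_{0,\eps}$ may cancel $\rho_{\eps}^{M}$ to arbitrarily high order along a cofinal set of $\eps$'s. The pointwise case-split above is designed precisely to \emph{force} the uniform lower bound $|y_{\eps}|\ge\rho_{\eps}^{M}$, which by Lem.~\ref{lem:mayer} is exactly the criterion ensuring that $|y|$ is invertible; the representative-dependence of this construction is harmless because only one such $y$ is needed in each ball.
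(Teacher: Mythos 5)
Your proposal is correct and follows essentially the same route as the paper: both arguments reduce the statement to the density of the points with invertible norm in the sharp topology (via Lem.~\ref{lem:mayer}) and then conclude by the sharp continuity of $f$ from Thm.~\ref{thm:GSF-continuity}. The only difference is cosmetic: the paper asserts the density step as ``straightforward'' (using points with all coordinates invertible), whereas you carry it out explicitly with the case-split construction of $y_{\eps}$, which is a valid way to fill in that detail.
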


\begin{proof}
Using Lem.~\ref{lem:mayer}, it is straightforward to prove that
the group of invertible elements is dense in $\rcrho$ with respect
to the sharp topology. This implies that the set of points in $U$
whose every coordinate is invertible is dense in $U$. Clearly for
any such point $y$, $|y|$ is invertible. Thus given any point $x\in U$
there exists a sequence $(x_{k})$ in $U$ converging to $x$ in the
sharp topology and such that $|x_{k}|$ is invertible for each $k$.
Since $f$ is continuous with respect to the sharp topology (Thm.~\ref{thm:GSF-continuity}
\ref{enu:GSF-cont}), this yields $0=f(x_{k})\to f(x)$. 
\end{proof}
\noindent To show the existence of thickenings, we also need the following
result 
\begin{lem}
\label{lem:propOfThickenings}Let $(\Omega_{\eps})$ be a net of open
sets of $\R^{n}$, and let $v=[v_{\eps}]\in\rcrho^{n}$. Then 
\begin{enumerate}
\item \label{enu:trivialPair}if $x\in\sint{\Omega_{\eps}}$ then $(x,0)\in\sint{\thick_{v_{\eps}}(\Omega_{\eps})}$. 
\item \label{enu:sintThickSubsetThickSint}If $(x,h)\in\sint{\thick_{v_{\eps}}(\Omega_{\eps})}$
then $x+thv\in\sint{\Omega_{\eps}}$ for all $t\in[0,1]$. 
\item \label{enu:existenceThickening}If $U\subseteq\sint{\Omega_{\eps}}$
is sharply open, there exists a sharp thickening $T$ of $U$ along
$v$ such that $T\subseteq\sint{\thick_{v_{\eps}}(\Omega_{\eps})}$. 
\end{enumerate}
\noindent The same properties hold if we consider the strongly internal
sets $\sintF{\Omega_{\eps}}$ and $\sintF{\thick_{v_{\eps}}(\Omega_{\eps})}$
in the Fermat topology. In this case in \ref{enu:existenceThickening},
$U$ has to be supposed large open and the resulting thickening is
large as well. 
\end{lem}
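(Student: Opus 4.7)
Both Parts \ref{enu:trivialPair} and \ref{enu:sintThickSubsetThickSint} reduce, via Thm.~\ref{thm:strongMembershipAndDistanceComplement}\ref{enu:stronglyIntSetsDistance}, to an explicit distance estimate between a generalized point and the complement of a classical set. For \ref{enu:trivialPair}, I would start from $d(x_\eps,\Omega_\eps^c)>\rho_\eps^q$ together with the moderateness bound $|v_\eps|\le\rho_\eps^{-N}$: if $(y,h)\notin\thick_{v_\eps}(\Omega_\eps)$ then some intermediate point $y+sh\,v_\eps$ with $s\in[0,1]_{\R}$ lies in $\Omega_\eps^c$, so the triangle inequality forces $|x_\eps-y|+|h|\,|v_\eps|\ge\rho_\eps^q$, which in turn yields $|(x_\eps,0)-(y,h)|\ge\rho_\eps^{q+N+1}$ for $\eps$ small, and hence $(x,0)\in\sint{\thick_{v_\eps}(\Omega_\eps)}$ via Thm.~\ref{thm:strongMembershipAndDistanceComplement}\ref{enu:stronglyIntSetsDistance}. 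For \ref{enu:sintThickSubsetThickSint}, I would fix $z\in\Omega_\eps^c$ and introduce the auxiliary point $(z-t h_\eps v_\eps,\,h_\eps)$; its segment $[z-t h_\eps v_\eps,\,z+(1-t)h_\eps v_\eps]$ meets $z$ at the parameter $s=t$ and therefore lies outside $\thick_{v_\eps}(\Omega_\eps)$. The identity $|(x_\eps,h_\eps)-(z-t h_\eps v_\eps,\,h_\eps)|=|x_\eps+t h_\eps v_\eps-z|$, combined with the hypothesis $d((x_\eps,h_\eps),\thick_{v_\eps}(\Omega_\eps)^c)>\rho_\eps^q$, then delivers the needed lower bound on $d(x_\eps+t h_\eps v_\eps,\Omega_\eps^c)$.

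For Part \ref{enu:existenceThickening}, my plan is to construct $T$ as a union of product boxes over $U$. For each $y\in U$, Part~\ref{enu:trivialPair} together with the sharp openness of $U$ and of $\sint{\thick_{v_\eps}(\Omega_\eps)}$ (Thm.~\ref{thm:strongMembershipAndDistanceComplement}\ref{enu:stronglyIntAreOpen}) provides invertible positive radii $\alpha(y),\beta(y)\in\rcrho_{>0}$ with $B_{\alpha(y)}(y)\subseteq U$ and $B_{\alpha(y)}(y)\times B_{\beta(y)}(0)\subseteq\sint{\thick_{v_\eps}(\Omega_\eps)}$. The decisive additional requirement is the growth control $\beta(y)(1+|v|)<\alpha(y)/K$ for a fixed large integer $K$; this is achievable because $|v|$ is a moderate generalized number, so $\beta(y)$ may be chosen of the form $\alpha(y)\cdot\diff\rho^M$ with $M\in\N$ sufficiently large. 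Setting $T:=\bigcup_{y\in U}B_{\alpha(y)/K}(y)\times B_{\beta(y)/K}(0)$, condition~\ref{enu:thickNonEmpty} of Def.~\ref{def:thickening} is trivially satisfied. For condition~\ref{enu:thickProductOfBalls}, at each $(x,h)\in T$ one would carve a smaller product ball $B_a(x)\times B_b(0)$ out of the core box at the base point $y_0$ containing $(x,h)$; the growth bound on $\beta(y_0)(1+|v|)$ is precisely what permits the simultaneous verification of $|hv|<b<a$, $B_a(x)\subseteq U$, and $B_a(x)\times B_b(0)\subseteq T$.

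I expect the hardest step to be the combinatorial bookkeeping in \ref{enu:existenceThickening}: the condition $|hv|<b$ couples the sizes of the $x$- and $h$-factors of the product ball through the (possibly infinite) generalized number $|v|$, and the ball must lie inside $T$ itself rather than merely inside the larger set $\sint{\thick_{v_\eps}(\Omega_\eps)}$, so both the constant $K$ and the way boxes at neighbouring base points fit together need careful tuning. The Fermat analogues of the three claims then follow from the same three-step strategy, with Thm.~\ref{thm:strongMembershipAndDistanceComplementFermat}\ref{enu:stronglyIntSetsDistanceFermat} replacing Thm.~\ref{thm:strongMembershipAndDistanceComplement}\ref{enu:stronglyIntSetsDistance}, real radii in $\R_{>0}$ replacing $\rcrho_{>0}$-valued ones, and the large openness of $U$ supplying the real lower bounds throughout.
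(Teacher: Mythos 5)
Your proposal is correct and, for part \ref{enu:existenceThickening}, is essentially the paper's own construction: the paper also sets $T=\bigcup_{x\in U}B_{a_{x}}(x)\times B_{d_{x}}(0)$ with the $h$-radius shrunk so as to absorb the moderateness bound $|v|<\diff{\rho}^{-N}$ (its condition $d_{x}<b_{x}\cdot\diff{\rho}^{N}$ is your $\beta(y)(1+|v|)<\alpha(y)/K$ in a different guise), and it verifies $T\subseteq\sint{\thick_{v_{\eps}}(\Omega_{\eps})}$ by exactly the radius bookkeeping $a_{x}+d_{x}<a_{x}+b_{x}<c_{x}$ you describe; the simultaneous check of $|hv|<b$ and $B_{a}(x)\times B_{b}(0)\subseteq T$ that you flag as delicate is dispatched in the paper with the same brevity, so this is a shared feature rather than a gap in your argument. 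Where you genuinely diverge is in parts \ref{enu:trivialPair} and \ref{enu:sintThickSubsetThickSint}: the paper works directly from Def.~\ref{def:internalStronglyInternal}, taking an arbitrary net $(x'_{\eps},z_{\eps})\sim_{\rho}(x_{\eps},0)$ (resp.\ translating a representative of $x+thv$ back by $t_{\eps}h_{\eps}v_{\eps}$) and checking $\eps$-wise membership in $\thick_{v_{\eps}}(\Omega_{\eps})$, whereas you route everything through the quantitative distance characterization of Thm.~\ref{thm:strongMembershipAndDistanceComplement}\ref{enu:stronglyIntSetsDistance}. Both are sound; your version of \ref{enu:sintThickSubsetThickSint}, built on the identity $|(x_{\eps},h_{\eps})-(z-t_{\eps}h_{\eps}v_{\eps},h_{\eps})|=|x_{\eps}+t_{\eps}h_{\eps}v_{\eps}-z|$ for $z\in\Omega_{\eps}^{\text{c}}$, is arguably cleaner since it transfers the lower bound on $d\bigl((x_{\eps},h_{\eps}),\thick_{v_{\eps}}(\Omega_{\eps})^{\text{c}}\bigr)$ verbatim; just remember to phrase it with a representative $t_{\eps}\in[0,1]_{\R}$ of the generalized parameter $t$, as the paper does. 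The Fermat variant is handled identically in both proofs.
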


\begin{proof}
If $x\in\sint{\Omega_{\eps}}$, then $x_{\eps}\in\Omega_{\eps}$ for
$\eps$ small, and we also have $(x_{\eps},0)\in\thick_{v_{\eps}}(\Omega_{\eps})$
for the same $\eps$. Now take $(x'_{\eps},z_{\eps})\sim_{\rho}(x_{\eps},0)$,
so that $x=[x'_{\eps}]\in\sint{\Omega_{\eps}}$ and hence $B_{r}(x)\subseteq\sint{\Omega_{\eps}}$
for some $r\in\rcrho_{>0}$ such that $r_{\eps}<d(x'_{\eps},\Omega_{\eps}^{c})$
for all $\eps\in I$ (see Thm.~\ref{thm:strongMembershipAndDistanceComplement}).
The net $(z_{\eps})\sim_{\rho}0$, so we also have $|z_{\eps}v_{\eps}|<r_{\eps}$
for $\eps$ small. Thus for $\eps$ sufficiently small we obtain both
$x'_{\eps}\in\Omega_{\eps}$ and $|z_{\eps}v_{\eps}|<r_{\eps}$, so
that for all $s\in[0,1]_{\R}$ we have that $|x'_{\eps}+sz_{\eps}v_{\eps}-x'_{\eps}|\le|z_{\eps}v_{\eps}|<r_{\eps}<d(x'_{\eps},\Omega_{\eps}^{c})$.
Hence $x'_{\eps}+sz_{\eps}v_{\eps}\in\Omega_{\eps}$, i.e.~$(x'_{\eps},z_{\eps})\in\thick_{v_{\eps}}(\Omega_{\eps})$
for $\eps$ sufficiently small. This shows that $(x,0)\in\sint{\thick_{v_{\eps}}(\Omega_{\eps})}$,
implying \ref{enu:trivialPair}.\\
 To prove \ref{enu:sintThickSubsetThickSint}, assume that $(x,h)\in\sint{\thick_{v_{\eps}}(\Omega_{\eps})}$
and $t\in[0,1]$. Therefore, $0\le t_{\eps}\le1$ for $\eps$ small
and some representative $(t_{\eps})$ of $t$. Since $(x_{\eps},h_{\eps})\in_{\eps}\thick_{v_{\eps}}(\Omega{}_{\eps})$,
we have that $x_{\eps}+t_{\eps}h_{\eps}v_{\eps}\in\Omega_{\eps}$
for $\eps$ small. If we take another representative $(y_{\eps})\sim_{\rho}(x_{\eps}+t_{\eps}h_{\eps}v_{\eps})$,
then we can define $x'_{\eps}:=y_{\eps}-t_{\eps}h_{\eps}v_{\eps}$
so that $(x_{\eps},h_{\eps})\sim_{\rho}(x'_{\eps},h_{\eps})$. From
$(x_{\eps},h_{\eps})\in_{\eps}\thick_{v_{\eps}}(\Omega_{\eps})$ we
thus get that also $(x'_{\eps},h_{\eps})\in\thick_{v_{\eps}}(\Omega_{\eps})$
for $\eps$ small. Therefore $x'_{\eps}+t_{\eps}h_{\eps}v_{\eps}=y_{\eps}\in\Omega_{\eps}$
for $\eps$ small. This shows that $x+thv\in\sint{\Omega_{\eps}}$.\\
 Finally, in order to prove \ref{enu:existenceThickening}, we assume
that $U\subseteq\sint{\Omega_{\eps}}$ is a sharply open subset. For
all $x\in U\subseteq\sint{\Omega_{\eps}}$, we have $(x,0)\in\sint{\thick_{v_{\eps}}(\Omega_{\eps})}$
from \ref{enu:trivialPair}, and hence Thm.~\ref{thm:strongMembershipAndDistanceComplement}
\ref{enu:stronglyIntAreOpen} yields the existence of $c_{x}\in\rcrho_{>0}$
such that $B_{c_{x}}(x,0)\subseteq\sint{\thick_{v_{\eps}}(\Omega_{\eps})}$.
Since $U$ is a neighborhood of $x$, there exists $a_{x}\in\rcrho_{>0}$,
$a_{x}<c_{x}$, such that $B_{a_{x}}(x)\subseteq U$. Choose $b_{x}\in\rcrho_{>0}$
such that $b_{x}<a_{x}$ and $a_{x}+b_{x}<c_{x}$. Because $v\in\rcrho^{n}$
is $\rho$-moderate, we have $|v|<\diff{\rho}^{-N}$ for some $N\in\N$.
Take $d_{x}\in\rcrho_{>0}$ such that $d_{x}<b_{x}\cdot\diff{\rho}^{N}$
and define 
\[
T:=\bigcup_{x\in U}B_{a_{x}}(x)\times B_{d_{x}}(0).
\]
If $|h|<d_{x}$, then $|h\cdot v|<|v|\cdot d_{x}<b_{x}$ and hence
$T$ is a sharp thickening of $U$ along $v$. We finally note that
$(x',h)\in B_{a_{x}}(x)\times B_{d_{x}}(0)$ implies $|(x',h)-(x,0)|\le|x'-x|+|h|<a_{x}+d_{x}<a_{x}+b_{x}<c_{x}$,
so that $(x',h)\in B_{c_{x}}(x,0)\subseteq\sint{\thick_{v_{\eps}}(\Omega_{\eps})}$.
Therefore $T\subseteq\sint{\thick_{v_{\eps}}(\Omega_{\eps})}$.

Considering $\sim_{\text{F}}$ instead of $\sim_{\rho}$ and radii
in $\R_{>0}$, in the same way we can prove the analogous properties
for strongly internal sets in the Fermat topology. 
\end{proof}
We can now prove the Fermat-Reyes theorem for GSF. 
\begin{proof}[\emph{Proof of Theorem \ref{thm:FR-forGSF}}]
Since $U$ is sharply open, for any point $x\in U$ we can find a
ball $B_{R_{x}}(x)\subseteq U$, $R_{x}\in\rcrho_{>0}$. Define $a_{x}:=\frac{R_{x}}{2}$
and $b_{x}:=\frac{a_{x}}{2}$. Because $v\in\rcrho^{n}$ is $\rho$-moderate,
we have $|v|<\diff{\rho}^{-N}$ for some $N\in\N$. Take $d_{x}\in\rcrho_{>0}$
such that $d_{x}<b_{x}\cdot\diff{\rho}^{N}$ and set $T:=\bigcup_{x\in U}B_{a_{x}}(x)\times B_{d_{x}}(0)$.
Since for all $x\in U$ the pair $(x,0)$ is an interior point of
the given thickening $S$, we can assume to have chosen $a_{x}$ and
$d_{x}$ so that $T\subseteq S\subseteq\sint{\thick_{v_{\eps}}(\Omega_{\eps})}$.
In case $U$ is large open, we can proceed as above to obtain $a_{x}$,
$d_{x}\in\R_{>0}$, so that $T$ would then be a large thickening.\\
 Let us consider the net of smooth function $r_{\eps}\in\cinfty(\thick_{v_{\eps}}(\Omega_{\eps}))$
defined by $r_{\eps}(y,h):=\int_{0}^{1}\frac{\partial f_{\eps}}{\partial v_{\eps}}(y+thv_{\eps})\diff{t}$
for all $\eps\in I$. We calculate the partial derivative $\partial^{\alpha}r_{\eps}(y_{\eps},h_{\eps})$
for $\alpha\in\N^{n+1}$ and an arbitrary point $(y,h)\in T$. For
simplicity, set $\hat{\alpha}:=(\alpha_{1},\ldots,\alpha_{n})$, and
$v_{\eps}=:(v_{1\eps},\ldots,v_{n\eps})\in\R^{n}$. 
\begin{equation}
\partial^{\alpha}r_{\eps}(y_{\eps},h_{\eps})=\int_{0}^{1}\frac{\partial^{|\alpha|}}{\partial h^{\alpha_{n+1}}\partial y^{\hat{\alpha}}}\left[\frac{\partial f{}_{\eps}}{\partial v_{\eps}}(y_{\eps}+th_{\eps}v_{\eps})\right]\diff{t}\label{eq:partialDerOf-r-eps}
\end{equation}
Applying the chain rule and the mean value theorem for integrals,
\eqref{eq:partialDerOf-r-eps} can be written as a sum of terms of
the form $\partial^{\beta}f(y_{\eps}+t_{\eps}h_{\eps}v_{\eps})v_{\eps}^{\gamma}t_{\eps}^{m}$,
for suitable multi-indices $\beta,\gamma$, and $m\in\N$. Here, $t_{\eps}\in[0,1]_{\R}$
for all $\eps\in I$. From $(y,h)\in T$, we get $y\in B_{a_{x}}(x)$
and $h\in B_{d_{x}}(0)$ for some $x\in U$. This gives $|y+thv-x|\le|y-x|+|hv|<a_{x}+b_{x}<R_{x}$,
so that $y+thv\in B_{R_{x}}(x)\subseteq U$. From Def.~\ref{def:netDefMap}
\ref{enu:partial-u-moderate} we hence have that $\left(\partial^{\beta}f_{\eps}(y_{\eps}+t_{\eps}h_{\eps}v_{\eps})\right)$
is $\rho$-moderate. Since moderateness is preserved by polynomials,
and $t_{\eps}\in[0,1]_{\R}$ is moderate, from \eqref{eq:partialDerOf-r-eps}
we obtain that $\left(\partial^{\alpha}r_{\eps}(y_{\eps},h_{\eps})\right)$
is moderate. This proves that $r:=[r_{\eps}(-,-)]|_{T}:T\longrightarrow\rcrho$
is a GSF.\\
 We have 
\begin{align*}
h\cdot r(x,h) & =\left[h_{\eps}\cdot\int_{0}^{1}\frac{\partial f_{\eps}}{\partial v_{\eps}}(x_{\eps}+th_{\eps}v_{\eps})\,\diff{t}\right]\\
 & =\left[\int_{0}^{h_{\eps}}\frac{d}{ds}\left\{ f_{\eps}(x_{\eps}+sv_{\eps})\right\} (s)\,\diff{s}\right]\\
 & =[f_{\eps}(x_{\eps}+h_{\eps}v_{\eps})]-[f_{\eps}(x_{\eps})]=f(x+hv)-f(x).
\end{align*}
Of course $r(x,0)=\left[\frac{\partial f_{\eps}}{\partial v_{\eps}}(x_{\eps})\right]$,
and this concludes the existence part.\\
 To prove uniqueness, consider $(x,h)\in T\cap\bar{T}$, where $T$
and $\bar{T}$ are two thickenings (along $v$) of the incremental
ratios $r$, $\bar{r}$. Define $R(k):=r(x,k)$ and $\bar{R}(k):=\bar{r}(x,k)$
for $k\in B_{b}(0)$, where $(x,h)\in B_{a}(x)\times B_{b}(0)\subseteq T\cap\bar{T}$
by the definition of thickening. Since $k\in B_{b}(0)\mapsto(x,k)\in T\cap\bar{T}$
is a GSF, both $R$ and $\bar{R}$ are still generalized smooth maps
by the closure with respect to composition. Moreover 
\begin{equation}
k\cdot R(k)=k\cdot r(x,k)=f(x+kv)-f(x),\label{eq:FR-propertyByRepres}
\end{equation}
and analogously $k\cdot\bar{R}(k)=f(x+kv)-f(x)=k\cdot R(k)$. Therefore
$R(k)=\bar{R}(k)$ for every $k\in B_{b}(0)$ which is invertible,
and Lemma \ref{lem:invSufficesForGSF} yields $R=\bar{R}$. Since
$h\in B_{b}(0)$ we get $R(h)=r(x,h)=\bar{R}(h)=\bar{r}(x,h)$. 
\end{proof}
We will use the notation $\frac{\partial f}{\partial v}[-,-]_{T}\in{}^{\rho}\Gcinf(T,\rcrho)$
(or simply $\frac{\partial f}{\partial v}[-,-]$ in case the domain
is clear from the context) for the generalized smooth incremental
ratio of a function $f\in{}^{\rho}\Gcinf(U,\rcrho)$ defined on the
thickening $T$, so as to distinguish it from the derivative $\frac{\partial f}{\partial v}\in{}^{\rho}\Gcinf(U,\rcrho)$.
Since any partial derivative of a GSF is still a GSF, higher order
derivatives $\frac{\partial^{\alpha}f}{\partial v^{\alpha}}\in{}^{\rho}\Gcinf(U,\rcrho)$
are simply defined recursively.

As follows from Thm.\ \ref{thm:FR-forGSF} \ref{enu:existenceRatio}
and Thm\@.~\ref{thm:embeddingD'} \ref{enu:commute_der}, the concept
of derivative defined using the Fermat-Reyes theorem is compatible
with the classical derivative of Schwartz distributions via the embeddings
$\iota^{b}$ from Thm.\ \ref{thm:embeddingD'}. The following result
follows from the analogous properties for the nets of smooth functions
defining $f$ and $g$. 
\begin{thm}
\label{thm:linearityLeibnizDer} Let $U\subseteq\rcrho^{n}$ be an
open subset in the sharp topology, let $v\in\rcrho^{n}$ and $f$,
$g:U\longrightarrow\rcrho$ be generalized smooth maps. Then 
\begin{enumerate}
\item $\frac{\partial(f+g)}{\partial v}=\frac{\partial f}{\partial v}+\frac{\partial g}{\partial v}$ 
\item $\frac{\partial(r\cdot f)}{\partial v}=r\cdot\frac{\partial f}{\partial v}\quad\forall r\in\rcrho$ 
\item $\frac{\partial(f\cdot g)}{\partial v}=\frac{\partial f}{\partial v}\cdot g+f\cdot\frac{\partial g}{\partial v}$ 
\item For each $x\in U$, the map $\diff{f}(x).v:=\frac{\partial f}{\partial v}(x)\in\rcrho$
is $\rcrho$-linear in $v\in\rcrho^{n}$. 
\end{enumerate}
\end{thm}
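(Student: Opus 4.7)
The plan is to pull each identity down to the classical level on representatives, using the explicit formula $\frac{\partial f}{\partial v}(x)=\bigl[\frac{\partial f_\eps}{\partial v_\eps}(x_\eps)\bigr]$ supplied by Thm.~\ref{thm:FR-forGSF}~\ref{enu:existenceRatio} and the independence from representatives guaranteed by Thm.~\ref{thm:indepRepr}. First, by Lem.~\ref{lem:fromOmega_epsToRn} I can pick defining nets $(f_\eps),(g_\eps)\in\cinfty(\R^n,\R)^I$ for $f$ and $g$ on a common (full) domain, and fix representatives $(v_\eps)$, $(w_\eps)$ of $v,w$ and $(r_\eps)$ of $r$. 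This puts all four identities on equal footing.

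For (i), note that $(f_\eps+g_\eps)$ defines $f+g$, so $\frac{\partial(f+g)}{\partial v}(x)=\bigl[\frac{\partial(f_\eps+g_\eps)}{\partial v_\eps}(x_\eps)\bigr]$, and the classical linearity of the directional derivative in $\cinfty(\R^n,\R)$ gives $\frac{\partial(f_\eps+g_\eps)}{\partial v_\eps}=\frac{\partial f_\eps}{\partial v_\eps}+\frac{\partial g_\eps}{\partial v_\eps}$; passing to $\sim_\rho$-classes yields the claim. For (ii), the net $(r_\eps f_\eps)$ represents $r\cdot f$ (since $r_\eps$ is constant in the space variable), so $\frac{\partial(r_\eps f_\eps)}{\partial v_\eps}(x_\eps)=r_\eps\frac{\partial f_\eps}{\partial v_\eps}(x_\eps)$, which gives $\frac{\partial(r\cdot f)}{\partial v}=r\cdot\frac{\partial f}{\partial v}$ after taking classes. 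For (iii), the product $f\cdot g$ is represented by $(f_\eps g_\eps)$ (using the closure under composition and multiplication from Thm.~\ref{thm:GSFcategory}), and the classical Leibniz rule $\frac{\partial(f_\eps g_\eps)}{\partial v_\eps}=\frac{\partial f_\eps}{\partial v_\eps}\cdot g_\eps+f_\eps\cdot\frac{\partial g_\eps}{\partial v_\eps}$ passes to classes.

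For (iv), I need to check both additivity and homogeneity of $v\mapsto\diff{f}(x).v$. Given $v=[v_\eps]$, $w=[w_\eps]\in\rcrho^n$ and $s=[s_\eps]\in\rcrho$, the classical identities $\frac{\partial f_\eps}{\partial(v_\eps+w_\eps)}=\frac{\partial f_\eps}{\partial v_\eps}+\frac{\partial f_\eps}{\partial w_\eps}$ and $\frac{\partial f_\eps}{\partial(s_\eps v_\eps)}=s_\eps\frac{\partial f_\eps}{\partial v_\eps}$ (valid for every $\eps$ since the partial derivative along a vector is $\R$-linear in that vector) evaluate at $x_\eps$ to $\rho$-moderate nets and thus descend to the corresponding identities in $\rcrho$ after taking $\sim_\rho$-classes. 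This gives $\diff{f}(x).(v+w)=\diff{f}(x).v+\diff{f}(x).w$ and $\diff{f}(x).(sv)=s\cdot\diff{f}(x).v$, i.e.\ $\rcrho$-linearity.

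No real obstacle is expected: the proof is a routine transfer from the classical smooth setting to the $\rho$-quotient, and the only point requiring care is to verify that the nets produced by pointwise algebraic operations on the $(f_\eps),(g_\eps)$ still define GSF on $U$ along $v$. This is automatic because moderateness is preserved by polynomial operations on representatives (as already used in Thm.~\ref{thm:GSFcategory} and Thm.~\ref{thm:FR-forGSF}), and the resulting incremental ratios remain well-defined on a common thickening obtained by intersecting the thickenings produced by Lem.~\ref{lem:propOfThickenings}~\ref{enu:existenceThickening} for $f$ and $g$.
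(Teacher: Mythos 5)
Your proof is correct and follows exactly the route the paper intends: the paper gives no explicit proof beyond the remark that the result ``follows from the analogous properties for the nets of smooth functions defining $f$ and $g$'', and your argument is precisely the careful version of that reduction, using $\frac{\partial f}{\partial v}(x)=\bigl[\frac{\partial f_{\eps}}{\partial v_{\eps}}(x_{\eps})\bigr]$ from the Fermat--Reyes theorem together with preservation of moderateness under the relevant algebraic operations.
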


Using the Fermat-Reyes theorem, it is also possible to give intrinsic
proofs (i.e.~without using nets of smooth functions that define a
given GSF), as exemplified in the following 
\begin{thm}
\label{thm:chainRule} Let $U\subseteq\rcrho^{n}$ and $V\subseteq\rcrho^{d}$
be open subsets in the sharp topology and $g\in{}^{\rho}\Gcinf(V,U)$,
$f\in{}^{\rho}\Gcinf(U,\rcrho)$ be generalized smooth maps. Then
for all $x\in V$ and all $v\in\rcrho^{d}$ 
\begin{align*}
\frac{\partial\left(f\circ g\right)}{\partial v}(x) & =\diff{f}\left(g(x)\right).\frac{\partial g}{\partial v}(x)\\
\,\diff{\!\left(f\circ g\right)}(x) & =\diff{f}\left(g(x)\right)\circ\diff{g}(x).
\end{align*}
\end{thm}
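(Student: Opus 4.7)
The plan is to reduce the chain rule to the Fermat--Reyes theorem (Thm.~\ref{thm:FR-forGSF}) applied separately to $g$ and to $f$ along each standard basis direction $e_{i}\in\R^{d}$, and to glue these together via a telescoping identity. Writing $g=(g^{1},\dots,g^{d})$ and applying Thm.~\ref{thm:FR-forGSF} componentwise to $g$ along $v\in\rcrho^{n}$, we obtain a sharp thickening $T_{1}$ of $V$ along $v$ and a GSF $r_{g}=(r_{g}^{1},\dots,r_{g}^{d})\in\gsf(T_{1},\rcrho^{d})$ with
\[
g(x+hv)=g(x)+h\,r_{g}(x,h),\qquad r_{g}(x,0)=\frac{\partial g}{\partial v}(x).
\]
Likewise, for each $i=1,\dots,d$, Thm.~\ref{thm:FR-forGSF} applied to $f$ along the constant direction $e_{i}$ yields a sharp thickening $T_{i}^{f}$ of $U$ along $e_{i}$ and an incremental ratio $r_{i}^{f}\in\gsf(T_{i}^{f},\rcrho)$ satisfying $f(y+k\,e_{i})=f(y)+k\,r_{i}^{f}(y,k)$ and $r_{i}^{f}(y,0)=\partial f/\partial e_{i}(y)$.

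The central computation is a telescoping decomposition. From $g(x+hv)-g(x)=h\sum_{i}r_{g}^{i}(x,h)\,e_{i}$, inserting intermediate points and using Fermat--Reyes for $f$ componentwise gives
\begin{align*}
f(g(x+hv))-f(g(x)) & =\sum_{i=1}^{d}\Bigl[f\bigl(g(x)+h\textstyle\sum_{j\le i}r_{g}^{j}(x,h)e_{j}\bigr)-f\bigl(g(x)+h\textstyle\sum_{j<i}r_{g}^{j}(x,h)e_{j}\bigr)\Bigr]\\
 & =h\cdot R(x,h),
\end{align*}
where
\[
R(x,h):=\sum_{i=1}^{d}r_{g}^{i}(x,h)\cdot r_{i}^{f}\Bigl(g(x)+h\textstyle\sum_{j<i}r_{g}^{j}(x,h)e_{j},\,h\,r_{g}^{i}(x,h)\Bigr).
\]
By closure of GSF under composition (Thm.~\ref{thm:GSFcategory}), $R$ defines a GSF on a suitable sharp thickening $T\subseteq T_{1}$ of $V$ along $v$, obtained by shrinking $T_{1}$ so that every argument fed into each $r_{i}^{f}$ lies in its domain $T_{i}^{f}$.

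By the uniqueness part of the Fermat--Reyes theorem (Thm.~\ref{thm:FR-forGSF}~\ref{enu:uniquenessRatio}), $R$ coincides with the generalized incremental ratio of $f\circ g$ along $v$ on the overlap of their domains. Evaluating at $h=0$ therefore yields
\[
\frac{\partial(f\circ g)}{\partial v}(x)=R(x,0)=\sum_{i=1}^{d}\frac{\partial g^{i}}{\partial v}(x)\cdot\frac{\partial f}{\partial e_{i}}(g(x))=\diff{f}(g(x))\cdot\frac{\partial g}{\partial v}(x),
\]
where the last equality is the $\rcrho$-linearity of $\diff{f}(g(x))$ from Thm.~\ref{thm:linearityLeibnizDer}. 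The second identity of the statement then follows from the first by letting $v$ range over $\rcrho^{n}$ and invoking $\rcrho$-linearity of the differentials on both sides.

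The main technical obstacle is the thickening bookkeeping: one must ensure that on some sharp thickening $T\subseteq T_{1}$ of $V$ along $v$, every intermediate point $(g(x)+h\sum_{j<i}r_{g}^{j}(x,h)e_{j},\,h\,r_{g}^{i}(x,h))$ actually lies in $T_{i}^{f}$. At $h=0$ this reduces to $(g(x),0)$, which belongs to $T_{i}^{f}$ by Lem.~\ref{lem:propOfThickenings}~\ref{enu:trivialPair}; sharp openness of $T_{i}^{f}$, together with sharp continuity of $g$ and $r_{g}$ (Thm.~\ref{thm:GSF-continuity}~\ref{enu:GSF-cont}), shows that this persists on a sharp neighborhood of each $(x,0)$, which is enough to construct the required thickening $T$.
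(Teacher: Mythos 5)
Your proof is correct, but it follows a genuinely different route from the paper's. The paper applies the Fermat--Reyes theorem twice in succession: once to $g$ along $v$, and then to $f$ along the \emph{variable} direction $u(x,h):=\frac{\partial g}{\partial v}[x,h]$, writing $f\left[g(x)+h\,u(x,h)\right]=f(g(x))+h\cdot\frac{\partial f}{\partial u(x,h)}\left[g(x),h\right]$ and concluding by uniqueness of the incremental ratio of $f\circ g$. This is shorter, but it uses the incremental ratio of $f$ in a direction that itself depends on $(x,h)$, which is not literally the setting of Thm.~\ref{thm:FR-forGSF} (stated for a fixed direction) and tacitly requires joint smooth dependence of the ratio on the direction. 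Your telescoping over the fixed standard directions $e_{i}$ avoids exactly this issue: every application of Fermat--Reyes is along a constant direction, and the price you pay is the bookkeeping needed to fit all intermediate points $\bigl(g(x)+h\sum_{j<i}r_{g}^{j}(x,h)e_{j},\,h\,r_{g}^{i}(x,h)\bigr)$ into the thickenings $T_{i}^{f}$, which you handle correctly via Lem.~\ref{lem:propOfThickenings} and sharp continuity. The remaining steps (identifying $R$ with the incremental ratio of $f\circ g$ by the uniqueness clause of Thm.~\ref{thm:FR-forGSF}, evaluating at $h=0$, and invoking the $\rcrho$-linearity of $\diff{f}(g(x))$ from Thm.~\ref{thm:linearityLeibnizDer} to sum up the coordinate contributions) are all sound. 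One notational caveat: you have swapped the roles of $n$ and $d$ relative to the statement --- here $g:V\to U$ with $V\subseteq\rcrho^{d}$ and $U\subseteq\rcrho^{n}$, so the direction is $v\in\rcrho^{d}$, the components of $g$ are $g^{1},\dots,g^{n}$, and the basis vectors $e_{i}$ live in $\R^{n}$; the argument is unaffected, but the indices should be aligned with the theorem as stated.
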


\begin{proof}
For $h$ small (in the sharp topology), we can write 
\begin{equation}
f\left[g(x+hv)\right]=f\left[g(x)+h\frac{\partial g}{\partial v}[x,h]\right].\label{eq:chain1}
\end{equation}
Set $u(x,h):=\frac{\partial g}{\partial v}[x,h]\in\rcrho^{n}$. Then
\eqref{eq:chain1} yields 
\[
f\left[g(x+hv)\right]=f(g(x))+h\cdot\frac{\partial f}{\partial u(x,h)}\left[g(x),h\right].
\]
Therefore, the uniqueness of the smooth incremental ratio of $f\circ g$
in the direction $v$ implies 
\[
\frac{\partial\left(f\circ g\right)}{\partial v}[x,h]=\frac{\partial f}{\partial u(x,h)}\left[g(x),h\right].
\]
For $h=0$, we get 
\[
\frac{\partial\left(f\circ g\right)}{\partial v}(x)=\frac{\partial f}{\partial u(x,0)}\left(g(x)\right)=\diff{f}(g(x)).u(x,0)=\diff{f}(g(x)).\frac{\partial g}{\partial v}(x),
\]
which is our conclusion. 
\end{proof}

\section{\label{sec:Integral-calculus}Integral calculus using primitives}

In this section, we inquire existence and uniqueness of primitives
$F$ of a GSF $f\in\gsf([a,b],\rcrho)$ (see also \cite{YeLi16} for
an analogous approach). To this end, we shall have to introduce the
derivative $F'(x)$ at boundary points $x\in[a,b]$, i.e.~such that
$x-a$ or $b-x$ is not invertible. Let us note explicitly, in fact,
that the Fermat-Reyes Theorem \ref{thm:FR-forGSF} is stated only
for sharply open domains. We shal therefore require the following
result. 
\begin{lem}
\label{lem:approxOfBoundaryPointsWithInterior}Let $a$, $b\in\rcrho$
be such that $a<b$, then the interior $\text{int}\left([a,b]\right)$
in the sharp topology is dense in $[a,b]$. 
\end{lem}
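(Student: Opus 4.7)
The plan is to exhibit, for any $x\in[a,b]$ and any sharp ball $B_s(x)$ with $s\in\rcrho_{>0}$, an explicit point of $\text{int}([a,b])\cap B_s(x)$ obtained by pushing $x$ slightly toward the midpoint $c:=(a+b)/2$. The key observation is that since $a<b$, the difference $b-a$ is strictly positive and hence invertible (by Lem.~\ref{lem:mayer}), so we can divide by it and use it to set the scale of the perturbation.

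First I would characterize the interior: a point $y\in[a,b]$ lies in $\text{int}([a,b])$ if and only if both $y-a$ and $b-y$ are strictly positive and invertible, i.e.~satisfy $y-a>0$ and $b-y>0$ in the sense of $<_{\rcrho_{\geq 0}^{*}}$. Indeed, if $r\in\rcrho_{>0}$ and $B_r(y)\subseteq[a,b]$, then considering perturbations $y\pm r/2$ forces $y-a\geq r/2>0$ and $b-y\geq r/2>0$; conversely, taking $r:=\min\{y-a,b-y\}\in\rcrho_{>0}$ (which is invertible as the infimum of two invertible positive elements) yields $B_r(y)\subseteq[a,b]$.

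Next, given $x\in[a,b]$ and $s\in\rcrho_{>0}$, I would set
\[
\lambda:=\frac{s}{2(b-a)}\in\rcrho_{>0},\qquad y:=(1-\lambda)x+\lambda c.
\]
Then a direct computation gives
\[
y-a=(1-\lambda)(x-a)+\lambda\,\frac{b-a}{2}\geq\lambda\,\frac{b-a}{2}=\frac{s}{4}>0,
\]
and symmetrically $b-y\geq s/4>0$, both of which are invertible. By the characterization of the interior, $y\in\text{int}([a,b])$. Moreover, from $a\leq x\leq b$ one obtains $|c-x|\leq(b-a)/2$, so
\[
|y-x|=\lambda|c-x|\leq\lambda\,\frac{b-a}{2}=\frac{s}{4}<s,
\]
and hence $y\in B_s(x)$.

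I do not expect any serious obstacle here; the only subtlety is keeping track of which inequalities are sharp-topology strict (i.e.~with an invertible gap) versus merely $\leq$. The invertibility of $b-a$ is exactly what allows the choice of $\lambda$ to be both invertible and as small as needed, and the convex combination then automatically produces invertible gaps $y-a$ and $b-y$.
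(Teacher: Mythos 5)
Your argument is correct in substance but follows a genuinely different route from the paper's. The paper works $\eps$-wise: it fixes representatives with $a_{\eps}<b_{\eps}$ and $a_{\eps}\le x_{\eps}\le b_{\eps}$, identifies $\text{int}\left([a,b]\right)$ with the strongly internal set $\sint{(a_{\eps},b_{\eps})}$ via Thm.~\ref{thm:strongMembershipAndDistanceComplement}, and then clips $x_{\eps}$ into $[a_{\eps}+\rho_{\eps}^{k},b_{\eps}-\rho_{\eps}^{k}]$ to produce a sequence $y_{k}\to x$ of interior points. You instead stay entirely at the level of $\rcrho$: you characterize the interior intrinsically by positivity and invertibility of $y-a$ and $b-y$ (equivalent to the paper's description by Lem.~\ref{lem:mayer} and Thm.~\ref{thm:strongMembershipAndDistanceComplement}.\ref{enu:stronglyIntSetsDistance}) and then exhibit an explicit interior point of $B_{s}(x)$ as a convex combination with the midpoint. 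This buys a representative-free proof with an explicit approximant; the paper's version is more mechanical but matches the $\eps$-wise style used throughout.

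One small repair is needed. The inequality $y-a=(1-\lambda)(x-a)+\lambda\frac{b-a}{2}\ge\lambda\frac{b-a}{2}$ requires $1-\lambda\ge0$, which can fail, or simply be undecided since $\le$ is only a partial order on $\rcrho$, when $s$ is not dominated by $2(b-a)$ (e.g.\ $s$ interleaving between $4(b-a)$ and $\frac{b-a}{4}$ on complementary co-final sets of $\eps$'s). Since $B_{s'}(x)\subseteq B_{s}(x)$ whenever $s'\le s$, you may without loss of generality replace $s$ by $s\wedge(b-a)\in\rcrho_{>0}$ at the outset; with that normalization $\lambda\le\frac{1}{2}$ and every subsequent step goes through as you wrote it.
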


\begin{proof}
Take representatives of $a$, $b$ and $x\in[a,b]$ such that $a_{\eps}<b_{\eps}$
and $a_{\eps}\le x_{\eps}\le b_{\eps}$ for $\eps$ small. Thm.~\ref{thm:strongMembershipAndDistanceComplement}
\ref{enu:stronglyIntSetsDistance} yields $\text{int}\left([a,b]\right)=\sint{(a_{\eps},b_{\eps})}$.
To prove the conclusion, it suffices to define 
\[
y_{k\eps}:=\begin{cases}
x_{\eps} & \text{if }a_{\eps}+\rho_{\eps}^{k}\le x_{\eps}\le b_{\eps}-\rho_{\eps}^{k}\\
a_{\eps}+\rho_{\eps}^{k} & \text{if }x_{\eps}<a_{\eps}+\rho_{\eps}^{k}\\
b_{\eps}-\rho_{\eps}^{k} & \text{if }x_{\eps}>b_{\eps}-\rho_{\eps}^{k}
\end{cases}
\]
for any $k\in\N$ and $\eps\in I$. We have $d(y_{k\eps},(a_{\eps},b_{\eps})^{c})\ge\rho_{\eps}^{k}$,
so that $y_{k}\in\sint{(a_{\eps},b_{\eps})}$. Moreover, $|y_{k\eps}-x_{\eps}|<\rho_{\eps}^{k}$
for all $\eps$, and from this the desired limit condition follows. 
\end{proof}
The following result shows that every GSF can have at most one primitive
GSF up to an additive constant. 
\begin{thm}
\label{thm:uniquenessOfPrimitives}Let $X\subseteq\RC{\rho}$ and
let $f\in{}^{\rho}\Gcinf(X,\rcrho)$ be a generalized smooth function.
Let $a$, $b\in\rcrho$, with $a<b$, such that $(a,b)\subseteq X$.
If $f'(x)=0$ for all $x\in\text{\emph{int}}(a,b)$, then $f$ is
constant on $(a,b)$. An analogous statement holds if we take any
other type of interval (closed or half closed) instead of $(a,b)$. 
\end{thm}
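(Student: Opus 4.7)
The plan is to reduce to the open-interval case and then argue by contradiction via a subpoint construction, combining the classical fundamental theorem of calculus on representatives with the pointwise hypothesis $f'=0$ applied at a single, carefully chosen generalized point.

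First I would observe that $(a,b)=\{z\in\rcrho\mid a<z<b\}$ is already sharply open: if $z_{0}-a\ge r$ with $r\in\rcrho_{>0}$ invertible, then any $z\in B_{r/2}(z_{0})$ satisfies $z-a\ge r/2$ with $r/2$ invertible, and symmetrically for $b-z$. Hence $\text{int}(a,b)=(a,b)$, so the hypothesis reads $f'(z)=0$ for every $z\in(a,b)$. The closed and half-closed variants follow from the open-interval case by combining the sharp continuity of $f$ (Thm.~\ref{thm:GSF-continuity}\ref{enu:GSF-cont}) with Lem.~\ref{lem:approxOfBoundaryPointsWithInterior}. Fix $x$, $y\in(a,b)$ with representatives $(x_{\eps})$, $(y_{\eps})$, and by Lem.~\ref{lem:fromOmega_epsToRn} choose a defining net $(v_{\eps})\subset\cinfty(\R,\R)$ for $f$, so that the segment $[x_{\eps},y_{\eps}]$ always lies in the domain. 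The classical fundamental theorem of calculus then gives
\[
v_{\eps}(y_{\eps})-v_{\eps}(x_{\eps})=(y_{\eps}-x_{\eps})\,I_{\eps},\qquad I_{\eps}:=\int_{0}^{1}v'_{\eps}\bigl(x_{\eps}+t(y_{\eps}-x_{\eps})\bigr)\,\diff{t},
\]
and since $y-x$ is $\rho$-moderate it suffices to show $(I_{\eps})\sim_{\rho}0$.

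Next I would argue by contradiction. If $(I_{\eps})\not\sim_{\rho}0$, there exist $m\in\N$ and a cofinal $K\subzero I$ with $|I_{\eps}|>\rho_{\eps}^{m}$ for $\eps\in K$. Classical compactness of $[0,1]_{\R}$ yields, for each such $\eps$, a real $t_{\eps}\in[0,1]_{\R}$ realizing the maximum of the integrand, so $|v'_{\eps}(x_{\eps}+t_{\eps}(y_{\eps}-x_{\eps}))|\ge|I_{\eps}|>\rho_{\eps}^{m}$. Extending $t_{\eps}:=0$ for $\eps\in I\setminus K$, the net $(t_{\eps})$ is bounded, hence moderate, so $t:=[t_{\eps}]\in[0,1]\subset\rcrho$ is well defined and $c:=x+t(y-x)$ has representative $c_{\eps}=x_{\eps}+t_{\eps}(y_{\eps}-x_{\eps})$. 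Convexity of the sharp strict order shows $c\in(a,b)$: picking (via Def.~\ref{def:setOfRadii} and Lem.~\ref{lem:mayer}) a common invertible $r\in\rcrho_{>0}$ with $x-a$, $y-a\ge r$ and a common invertible $s\in\rcrho_{>0}$ with $b-x$, $b-y\ge s$, the non-negativity of $1-t$ and $t$ gives $c-a\ge(1-t)r+tr=r$ and $b-c\ge s$. Applying the hypothesis at $c$ yields $[v'_{\eps}(c_{\eps})]=f'(c)=0$ in $\rcrho$, which with exponent $m$ produces some $\eps_{0}$ such that $|v'_{\eps}(c_{\eps})|\le\rho_{\eps}^{m}$ for every $\eps\le\eps_{0}$; any $\eps\in K$ with $\eps\le\eps_{0}$ contradicts the lower bound.

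The main obstacle is precisely this subpoint surgery: the hypothesis is a statement about generalized points, but the classical integral only presents witnesses $\eps$-by-$\eps$. Assembling those real $t_{\eps}$ into a single generalized $t\in[0,1]$, and verifying via the convexity of the sharp strict order that $c=x+t(y-x)$ still lies in $(a,b)$, is what permits the hypothesis to act on the offending representative and close the argument. The rest is a direct transcription of the classical one-variable proof.
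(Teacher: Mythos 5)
Your proof is correct and follows essentially the same route as the paper's: both reduce $f(y)-f(x)$ to $(y-x)$ times an average of $v'_{\eps}$ over the segment, assemble the $\eps$-wise chosen parameters $t_{\eps}\in[0,1]_{\R}$ into a single generalized point $c=x+t(y-x)$ lying in $(a,b)$, and invoke $f'(c)=[v'_{\eps}(c_{\eps})]=0$ (Fermat--Reyes) to conclude. The only cosmetic difference is that the paper applies the integral mean value theorem and concludes directly, whereas you run the same construction by contradiction with the maximum of the integrand; the boundary cases are handled identically via Lem.~\ref{lem:approxOfBoundaryPointsWithInterior} and sharp continuity.
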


\begin{proof}
By Lemma \ref{lem:fromOmega_epsToRn}, we can assume that $f$ is
defined by a net of smooth functions $f_{\eps}\in\cinfty(\R,\R)$.
From the Fermat-Reyes Theorem \ref{thm:FR-forGSF}, we know that $f'(x)=[f'_{\eps}(x_{\eps})]$
for every interior point $x=[x_{\eps}]\in X$. For all $x$, $y\in\text{int}(a,b)\subseteq U$,
we can write 
\begin{align}
f(x)-f(y) & =[f_{\eps}(x_{\eps})-f_{\eps}(y_{\eps})]=\left[(y_{\eps}-x_{\eps})\cdot\int_{0}^{1}f'_{\eps}(x_{\eps}+s(y_{\eps}-x_{\eps}))\,\diff{s}\right]\nonumber \\
 & =(y-x)\cdot[f'_{\eps}(x_{\eps}+s_{\eps}(y_{\eps}-x_{\eps}))]=(y-x)\cdot f'(x+s(y-x)),\label{eq:fx-fy}
\end{align}
where $s_{\eps}\in[0,1]_{\R}$ is provided by the integral mean value
theorem and $s:=[s_{\eps}]\in[0,1]$. Since $x$, $y\in\text{int}(a,b)$,
we have $x+s(y-x)\in\text{int}(a,b)$ and hence $f'(x+s(y-x))=0$.
Thereby, \eqref{eq:fx-fy} yields $f(x)=f(y)$ as claimed. For a different
type of interval, it suffices to consider Lemma \ref{lem:approxOfBoundaryPointsWithInterior}
and sharp continuity of GSF (Thm.~\ref{thm:GSF-continuity}). 
\end{proof}
\begin{rem}
\label{rem:I-function}From the Fermat-Reyes Thm.~\ref{thm:FR-forGSF}
and from Thm.~\ref{thm:uniquenessOfPrimitives}, it follows that
the function $i(x):=1$ if $x\approx0$ and $i(x):=0$ otherwise cannot
be a GSF on any large neighborhood of $x=0$. This example stems from
the property that different standard real numbers can always be separated
by infinitesimal balls. 
\end{rem}

At interior points $x\in[a,b]$ in the sharp topology, the definition
of derivative $f^{(k)}(x)$ follows from the Fermat-Reyes Theorem
\ref{thm:FR-forGSF}. At boundary points, we have the following 
\begin{thm}
\label{thm:existenceOfDerivativesAtBorderPoints}Let $a$, $b\in\rcrho$
with $a<b$, and $f\in{}^{\rho}\Gcinf([a,b],\rcrho)$ be a generalized
smooth function. Then for all $x\in[a,b]$, the following limit exists
in the sharp topology 
\[
\lim_{\substack{y\to x\\
y\in\text{\emph{int}}\left([a,b]\right)
}
}f^{(k)}(y)=:f^{(k)}(x).
\]
Moreover, if the net $f_{\eps}\in\cinfty(\Omega_{\eps},\R)$ defines
$f$ and $x=[x_{\eps}]$, then $f^{(k)}(x)=[f_{\eps}^{(k)}(x_{\eps})]$
and hence $f^{(k)}\in{}^{\rho}\Gcinf([a,b],\rcrho)$. 
\end{thm}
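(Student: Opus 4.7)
The plan is to show that the candidate value $g(x):=[f_{\eps}^{(k)}(x_{\eps})]$ is a well-defined GSF on $[a,b]$, agrees with the Fermat--Reyes derivative on $\text{int}([a,b])$, and realizes the required sharp-topology limit at boundary points.

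First, by Def.~\ref{def:netDefMap}.\ref{enu:dom-cod} we have $[a,b]\subseteq\sint{\Omega_{\eps}}$, and by Def.~\ref{def:netDefMap}.\ref{enu:partial-u-moderate} the net $(f_{\eps}^{(k)}(x_{\eps}))$ is $\rho$-moderate for every $x=[x_{\eps}]\in[a,b]$. Independence of the representative is precisely the content of Thm.~\ref{thm:indepRepr} applied with $\alpha=(k)$, so $g(x):=[f_{\eps}^{(k)}(x_{\eps})]$ defines a map $g\colon[a,b]\to\rcrho$. Moreover, all further derivatives of $f_{\eps}^{(k)}$ are derivatives of $f_{\eps}$ and are hence $\rho$-moderate at points of $[a,b]$, so the net $(f_{\eps}^{(k)})$ defines a GSF of the type $[a,b]\to\rcrho$ in the sense of Def.~\ref{def:netDefMap}, giving $g\in{}^{\rho}\Gcinf([a,b],\rcrho)$. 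On the sharply open subset $\text{int}([a,b])$ an iterated application of the Fermat--Reyes theorem (Thm.~\ref{thm:FR-forGSF}) yields $f^{(k)}(y)=[f_{\eps}^{(k)}(y_{\eps})]=g(y)$.

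Finally, by Lemma~\ref{lem:approxOfBoundaryPointsWithInterior} every $x\in[a,b]$ is a sharp-topology limit of points $y\in\text{int}([a,b])$, and the sharp continuity of the GSF $g$ (Thm.~\ref{thm:GSF-continuity}.\ref{enu:GSF-cont}) gives
\[
\lim_{\substack{y\to x\\y\in\text{int}([a,b])}}f^{(k)}(y)=\lim_{\substack{y\to x\\y\in\text{int}([a,b])}}g(y)=g(x)=[f_{\eps}^{(k)}(x_{\eps})],
\]
which simultaneously establishes existence of the limit, the announced formula, and that the extension $f^{(k)}$ coincides with $g$ and hence is itself a GSF. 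The principal subtlety is that at boundary points $x$ of $[a,b]$ (where $x-a$ or $b-x$ fails to be invertible) the Fermat--Reyes machinery does not apply directly, since any thickening of $[a,b]$ along a nonzero direction would escape the domain. This is circumvented by working inside the ambient sharp open set $\sint{\Omega_{\eps}}$, where the net $(f_{\eps}^{(k)})$ is genuinely smooth, and by invoking Thm.~\ref{thm:indepRepr} to obtain representative-independence at boundary points free of charge.
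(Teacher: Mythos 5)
Your proof is correct and follows essentially the same route as the paper: identify $f^{(k)}$ with $[f_{\eps}^{(k)}(-)]$ on $\text{int}([a,b])$ via Fermat--Reyes, observe that $[f_{\eps}^{(k)}(-)]$ is itself a GSF (hence sharply continuous) on all of $[a,b]$, and conclude by the density of the interior (Lemma~\ref{lem:approxOfBoundaryPointsWithInterior}). The only difference is that you spell out the well-definedness and moderateness checks that the paper leaves implicit.
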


\begin{proof}
We have 
\[
{\displaystyle \lim_{\substack{y\to x\\
y\in\text{int}\left([a,b]\right)
}
}}f^{(k)}(y)={\displaystyle \lim_{\substack{y\to x\\
y\in\text{int}\left([a,b]\right)
}
}}\left[f_{\eps}^{(k)}(y_{\eps})\right]=[f{}_{\eps}^{(k)}(x_{\eps})],
\]
where the last equality follows due to the sharp continuity of $[f_{\eps}^{(k)}(-)]$
at every point $x\in[a,b]\subseteq\sint{\Omega_{\eps}}$ (Thm.~\ref{thm:GSF-continuity}
\ref{enu:GSF-cont} and Lem.~\ref{lem:approxOfBoundaryPointsWithInterior}). 
\end{proof}
We can now prove existence and uniqueness of primitives of GSF: 
\begin{thm}
\label{thm:existenceUniquenessPrimitives}Let $a$, $b$, $c\in\rcrho$,
with $a<b$ and $c\in[a,b]$. Let $f\in{}^{\rho}\Gcinf([a,b],\rcrho)$
be a generalized smooth function. Then, there exists one and only
one generalized smooth function $F\in{}^{\rho}\Gcinf([a,b],\rcrho)$
such that $F(c)=0$ and $F'(x)=f(x)$ for all $x\in[a,b]$. Moreover,
if $f$ is defined by the net $f_{\eps}\in\Coo(\R,\R)$ and $c=[c_{\eps}]$,
then $F(x)=\left[\int_{c_{\eps}}^{x_{\eps}}f_{\eps}(s)\,\diff{s}\right]$
for all $x=[x_{\eps}]\in[a,b]$. 
\end{thm}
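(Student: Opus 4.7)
The plan is to settle uniqueness first using Theorem~\ref{thm:uniquenessOfPrimitives}, and then to establish existence by constructing the obvious candidate $F_\eps(x) := \int_{c_\eps}^{x} f_\eps(s)\,\diff{s}$ from a defining net of $f$. For uniqueness, if $F_1, F_2 \in \gsf([a,b],\rcrho)$ both satisfy the stated conditions, then $G := F_1 - F_2$ is a GSF on $[a,b]$ with $G'(y) = 0$ for every $y \in [a,b]$, in particular on $\text{int}(a,b)$. Theorem~\ref{thm:uniquenessOfPrimitives} then forces $G$ to be constant on $(a,b)$, and this extends to $[a,b]$ via sharp continuity (Theorem~\ref{thm:GSF-continuity}~\ref{enu:GSF-cont}) combined with the density of $\text{int}[a,b]$ in $[a,b]$ (Lemma~\ref{lem:approxOfBoundaryPointsWithInterior}). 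Since $G(c) = 0$, the constant is zero.

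For existence, I first invoke Lemma~\ref{lem:fromOmega_epsToRn} to replace the representing nets by smooth functions $f_\eps \in \cinfty(\R,\R)$ defined globally, and then set $F_\eps(x) := \int_{c_\eps}^{x} f_\eps(s)\,\diff{s}$ for $x \in \R$. The main task is to verify that $(F_\eps)$ defines a GSF of type $[a,b] \longrightarrow \rcrho$ in the sense of Definition~\ref{def:netDefMap}. Since the $F_\eps$ are defined on all of $\R$, the only real content is $\rho$-moderateness of $\partial^\alpha F_\eps(x_\eps)$ for every $[x_\eps] \in [a,b]$. For $\alpha \geq 1$ this is immediate because $F_\eps^{(\alpha)} = f_\eps^{(\alpha - 1)}$, and these values at $x_\eps$ are $\rho$-moderate by hypothesis on $f$.

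I expect the case $\alpha = 0$, namely $\rho$-moderateness of the integral itself, to be the main obstacle. I would argue via the bound
\[
\bigl|F_\eps(x_\eps)\bigr| \le |x_\eps - c_\eps| \cdot \sup_{s \in J_\eps} |f_\eps(s)|,
\]
where $J_\eps$ denotes the Euclidean segment with endpoints $c_\eps$ and $x_\eps$. Choosing representatives so that $a_\eps \le c_\eps, x_\eps \le b_\eps$, moderateness of $a$ and $b$ yields moderateness of $|x_\eps - c_\eps|$. For the supremum, continuity of $f_\eps$ on the compact segment $J_\eps$ produces a maximizer $s_\eps \in J_\eps$; the generalized point $s := [s_\eps]$ then lies between $\min(c,x)$ and $\max(c,x)$, so $s \in [a,b]$, and hence $f(s) = [f_\eps(s_\eps)]$ is $\rho$-moderate by the assumption that $f$ is a GSF on $[a,b]$. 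The delicate bit is precisely this step: one must use the pointwise, $\eps$-by-$\eps$ selection of a maximizer to manufacture a legitimate generalized point of $[a,b]$ to which the moderateness hypothesis can be applied.

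Once $(F_\eps)$ defines a GSF, the remaining properties drop out. One has $F(c) = \bigl[\int_{c_\eps}^{c_\eps} f_\eps\bigr] = 0$, while $F'(x) = f(x)$ holds at sharply interior points of $[a,b]$ by the Fermat--Reyes Theorem~\ref{thm:FR-forGSF} (which yields $F'(x) = [F_\eps'(x_\eps)] = [f_\eps(x_\eps)] = f(x)$), and at boundary points by Theorem~\ref{thm:existenceOfDerivativesAtBorderPoints} together with sharp continuity of $f$. The explicit formula $F(x) = \bigl[\int_{c_\eps}^{x_\eps} f_\eps(s)\,\diff{s}\bigr]$ is simply the defining construction.
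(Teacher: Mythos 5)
Your proposal is correct and follows essentially the same route as the paper: reduce to globally defined $f_\eps$ via Lemma~\ref{lem:fromOmega_epsToRn}, define $F_\eps(x)=\int_{c_\eps}^{x}f_\eps$, check moderateness (the $\alpha=0$ case being the only nontrivial one, handled by producing an intermediate point of $[a,b]$ at which the moderateness of $f$ applies — the paper uses the integral mean value theorem where you use a maximizer of $|f_\eps|$ on the segment, an immaterial difference), then get $F'=f$ on the interior via Fermat--Reyes and at boundary points by the limit characterization, with uniqueness from Theorem~\ref{thm:uniquenessOfPrimitives}.
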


\begin{proof}
Fix representatives $(a_{\eps})$, $(b_{\eps})$ and $(c_{\eps})$
of $a$, $b$, $c$ such that 
\begin{equation}
a_{\eps}\le c_{\eps}\le b_{\eps}\label{eq:acb}
\end{equation}
for $\eps$ small. By Lemma \ref{lem:fromOmega_epsToRn}, we can assume
that $f$ is generated by a net $f_{\eps}\in\cinfty(\R,\R)$. Set
\begin{equation}
F_{\eps}(x):=\int_{c_{\eps}}^{x}f_{\eps}(s)\,\diff{s}\quad\forall x\in\R.\label{eq:primitive}
\end{equation}
We want to prove that the net $(F_{\eps})$ defines a GSF of type
$[a,b]\longrightarrow\rcrho$, and therefore we take $x\in[a,b]$
and $\alpha\in\N$. Choose a representative $(x_{\eps})$ of $x$
such that 
\begin{equation}
a_{\eps}\le x_{\eps}\le b_{\eps}\label{eq:a_x_b}
\end{equation}
for $\eps$ small. If $\alpha>0$, then $F_{\eps}^{(\alpha)}(x_{\eps})=f_{\eps}^{(\alpha-1)}(x_{\eps})$
and hence moderateness is clear since $x\in[a,b]$. For $\alpha=0$
we have $F_{\eps}(x_{\eps})=f_{\eps}(\sigma_{\eps})\cdot(x_{\eps}-c_{\eps})$,
where 
\begin{equation}
\sigma_{\eps}\in[c_{\eps},x_{\eps}]\cup[x_{\eps},c_{\eps}]\quad\forall\eps\in I\label{eq:sigma_c_x}
\end{equation}
is obtained by the integral mean value theorem. For $\eps$ small,
we have both \eqref{eq:acb} and \eqref{eq:a_x_b}, so that these
inequalities and \eqref{eq:sigma_c_x} yield $\sigma\in[a,b]\subseteq U$.
Therefore $(f_{\eps}(\sigma_{\eps}))$ and $(F_{\eps}(x_{\eps}))$
are moderate. This proves condition Def.~\ref{def:netDefMap} \ref{enu:partial-u-moderate}
for the net $(F_{\eps})$, and we can hence set $F(x):=[F_{\eps}(x_{\eps})]\in\rcrho$
for all $x=[x_{\eps}]\in[a,b]$.\\
 If $y\in\text{int}([a,b])$, we can apply our differential calculus
to the generalized smooth map $F|_{\text{int}([a,b])}=[F_{\eps}(-)]|_{\text{int}([a,b])}$,
obtaining $F'(y)=[f_{\eps}(y_{\eps})]=f(y)$. From this, if $x\in[a,b]$,
we get 
\[
F'(x)=\lim_{\substack{y\to x\\
y\in\text{int}([a,b])
}
}F'(y)=\lim_{\substack{y\to x\\
y\in\text{int}([a,b])
}
}f(y)=f(x)
\]
because $f$ is sharply continuous at $x\in[a,b]\subseteq U$. The
uniqueness part follows from Theorem \ref{thm:uniquenessOfPrimitives}. 
\end{proof}
\begin{defn}
\label{def:integral}Under the assumptions of Theorem \ref{thm:existenceUniquenessPrimitives},
we denote by $\int_{c}^{(-)}f:=\int_{c}^{(-)}f(s)\,\diff{s}\in{}^{\rho}\Gcinf([a,b],\rcrho)$
the unique generalized smooth function such that: 
\begin{enumerate}
\item $\int_{c}^{c}f=0$ 
\item $\left(\int_{c}^{(-)}f\right)'(x)=\frac{\diff{}}{\diff{x}}\int_{c}^{x}f(s)\,\diff{s}=f(x)$
for all $x\in[a,b]$. 
\end{enumerate}
\end{defn}

In Sec.~\ref{sec:n-dimIntegral}, we develop a generalization of
this concept of integration to GSF in several variables and to more
general domains of integration $M\subseteq\rcrho^{d}$. 
\begin{example}
~ 
\begin{enumerate}
\item Since $\rcrho$ contains both infinitesimal and infinite numbers,
our notion of definite integral also includes ``improper integrals''.
Let e.g.~$f(x)=\frac{1}{x}$ for $x\in\rcrho_{>0}$ and $a=1$, $b=\diff{\rho}^{-q}$,
$q>0$. Then 
\begin{equation}
\int_{a}^{b}f(s)\,\diff{s}=\left[\int_{1}^{\rho_{\eps}^{-q}}\frac{1}{s}\,\diff{s}\right]=[\log\rho_{\eps}^{-q}]-\log1=-q\log\diff{\rho},\label{eq:imprInt}
\end{equation}
which is, of course, a positive infinite generalized number. This
apparently trivial result is closely tied to the possibility to define
GSF on arbitrary domains, like $F\in{}^{\rho}\Gcinf([a,b],\rcrho)$
in Thm.~\ref{thm:existenceUniquenessPrimitives} where $b$ is an
infinite number as in \eqref{eq:imprInt}, which is one of the key
properties allowing one to get the closure with respect to composition. 
\item If $p$, $q\in\rti$, $p<0<q$ and both $p$ and $q$ are not infinitesimal,
then $\int_{p}^{q}\delta(t)\,\diff{t}\approx1$. 
\end{enumerate}
\end{example}

\begin{thm}
\label{thm:intRules}Let $f\in{}^{\rho}\Gcinf(X,\rcrho)$ and $g\in{}^{\rho}\Gcinf(Y,\rcrho)$
be generalized smooth functions defined on arbitrary domains in $\rcrho$.
Let $a$, $b\in\rcrho$ with $a<b$ and $[a,b]\subseteq X\cap Y$,
then 
\begin{enumerate}
\item \label{enu:additivityFunction}$\int_{a}^{b}\left(f+g\right)=\int_{a}^{b}f+\int_{a}^{b}g$ 
\item \label{enu:homog}$\int_{a}^{b}\lambda f=\lambda\int_{a}^{b}f\quad\forall\lambda\in\rcrho$ 
\item \label{enu:additivityDomain}$\int_{a}^{b}f=\int_{a}^{c}f+\int_{c}^{b}f$
for all $c\in[a,b]$ 
\item \label{enu:chageOfExtremes}$\int_{a}^{b}f=-\int_{b}^{a}f$ 
\item \label{enu:foundamental}$\int_{a}^{b}f'=f(b)-f(a)$ 
\item \label{enu:intByParts}$\int_{a}^{b}f'\cdot g=\left[f\cdot g\right]_{a}^{b}-\int_{a}^{b}f\cdot g'$ 
\item \label{enu:intMonotone}If $f(x)\le g(x)$ for all $x\in[a,b]$, then
$\int_{a}^{b}f\le\int_{a}^{b}g$. 
\end{enumerate}
\end{thm}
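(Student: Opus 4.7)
The plan is to dispose of \ref{enu:additivityFunction}--\ref{enu:chageOfExtremes} as immediate consequences of the explicit representation of the primitive given by Thm.~\ref{thm:existenceUniquenessPrimitives}: since $\int_c^{x}f$ is represented by the classical Riemann integral $\int_{c_\eps}^{x_\eps} f_\eps(s)\,\diff{s}$, each of these four identities reduces to the corresponding classical identity applied at the level of representatives and then passing to equivalence classes. (Alternatively, by the uniqueness statement in Thm.~\ref{thm:uniquenessOfPrimitives}, it suffices to check in each case that the two sides have equal derivative on $\text{int}([a,b])$ and coincide at one convenient point.)

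For \ref{enu:foundamental} I would observe that $x\mapsto f(x)-f(a)$ is a GSF on $[a,b]$ whose derivative, by Thm.~\ref{thm:linearityLeibnizDer}, equals $f'$ and which vanishes at $a$; the uniqueness part of Thm.~\ref{thm:existenceUniquenessPrimitives} then forces it to coincide with $x\mapsto\int_a^{x}f'$, and evaluation at $b$ gives the formula. Part \ref{enu:intByParts} follows by applying \ref{enu:foundamental} to the product $fg$ together with the Leibniz rule of Thm.~\ref{thm:linearityLeibnizDer} (valid at boundary points by sharp continuity and Lem.~\ref{lem:approxOfBoundaryPointsWithInterior}), then using \ref{enu:additivityFunction} to split the resulting integral.

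The only genuinely delicate item is \ref{enu:intMonotone}, which I expect to be the main obstacle. Setting $h:=g-f$, the previous parts reduce the claim to showing that if $h(x)\ge 0$ for every $x\in[a,b]$ then $\int_a^{b}h\ge 0$. The subtlety is that positivity of $h$ at every \emph{generalized} point of $[a,b]$ does not imply pointwise positivity of any chosen representative $h_\eps$ on the real interval $[a_\eps,b_\eps]$, so one cannot just integrate a classically nonnegative function. My plan is to exploit classical compactness of $[a_\eps,b_\eps]\Subset\R$ to select an interior minimizer: pick representatives with $a_\eps\le b_\eps$, use Lem.~\ref{lem:mayer} on $b-a>0$ to find $m\in\N$ with $b_\eps-a_\eps\ge\rho_\eps^{m}$ for small $\eps$, and for such $\eps$ let $\sigma_\eps\in[a_\eps,b_\eps]$ realize $\min_{[a_\eps,b_\eps]}h_\eps$, extending $(\sigma_\eps)$ arbitrarily elsewhere so that $\sigma:=[\sigma_\eps]\in[a,b]$.

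The hypothesis $h(\sigma)\ge 0$ then unfolds (by the definition of $\le$ on $\rcrho$ together with $\sim_\rho$-negligibility) to: for every $q\in\N$ there exists $\eps_q\in I$ such that $h_\eps(\sigma_\eps)\ge-\rho_\eps^{q}$ for all $\eps\le\eps_q$. Combining the classical estimate
\[
\int_{a_\eps}^{b_\eps} h_\eps(s)\,\diff{s}\;\ge\;(b_\eps-a_\eps)\,h_\eps(\sigma_\eps)
\]
with moderateness of $b-a$, which furnishes some $N\in\N$ with $b_\eps-a_\eps\le\rho_\eps^{-N}$ for $\eps$ small, one gets $\int_{a_\eps}^{b_\eps}h_\eps(s)\,\diff{s}\ge-\rho_\eps^{\,q-N}$ for $\eps$ sufficiently small. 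Since $q\in\N$ is arbitrary, this is exactly the statement that $\int_a^{b}h=\bigl[\int_{a_\eps}^{b_\eps}h_\eps(s)\,\diff{s}\bigr]\ge 0$ in $\rcrho$, completing the proof of \ref{enu:intMonotone}.
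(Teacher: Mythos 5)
Your proposal is correct, and for items \ref{enu:additivityFunction}--\ref{enu:intByParts} it is essentially the paper's argument: the paper's entire proof is the one-line remark that everything follows from the explicit representative \eqref{eq:primitive} and the classical rules of calculus, which is exactly your reduction. The only place where a comparison is meaningful is \ref{enu:intMonotone}, for which the paper offers nothing beyond a pointer to Def.~\ref{def:integral} and Thm.~\ref{thm:FR-forGSF}; you correctly identify that this is the one nontrivial item (nonnegativity of $h$ at all generalized points of $[a,b]$ does \emph{not} make any representative $h_\eps$ nonnegative on $[a_\eps,b_\eps]_\R$) and supply a complete argument. Your device --- evaluate $h$ at the generalized point $\sigma=[\sigma_\eps]$ built from $\eps$-wise minimizers of $h_\eps$ on the compact $[a_\eps,b_\eps]_\R$, unfold $h(\sigma)\ge 0$ into $h_\eps(\sigma_\eps)\ge-\rho_\eps^{q}$ for every $q$, and absorb the moderate factor $b_\eps-a_\eps\le\rho_\eps^{-N}$ --- is exactly the trick underlying the paper's extreme value theorem (Lem.~\ref{lem:extremeValueCont} and Cor.~\ref{cor:extremeValues}), so it is entirely within the paper's toolkit and arguably more transparent than whatever route through Fermat--Reyes the authors had in mind. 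Two minor remarks: the appeal to Lem.~\ref{lem:mayer} to get $b_\eps-a_\eps\ge\rho_\eps^{m}$ is harmless but not needed (the estimate $\int_{a_\eps}^{b_\eps}h_\eps\ge(b_\eps-a_\eps)\min_{[a_\eps,b_\eps]}h_\eps$ only uses $b_\eps-a_\eps\ge0$); and to pass from ``for all $q$, $u_\eps\ge-\rho_\eps^{q}$ for $\eps$ small'' to $[u_\eps]\ge0$ one should note explicitly that $z_\eps:=\max(0,-u_\eps)$ is then a single negligible net witnessing the inequality --- a one-line addition that closes the argument.
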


\begin{proof}
This follows directly from \eqref{eq:primitive} and the usual rules
of the integral calculus, or from Def.~\ref{def:integral} and Thm.~\ref{thm:FR-forGSF}
for property \ref{enu:intMonotone}. 
\end{proof}
\begin{thm}
\label{thm:changeOfVariablesInt}Let $f\in{}^{\rho}\Gcinf(T,\rcrho)$
and $\phi\in{}^{\rho}\Gcinf(S,T)$ be generalized smooth functions
defined on arbitrary domains in $\rcrho$. Let $a$, $b\in\rcrho$,
with $a<b$, such that $[a,b]\subseteq S$, $\phi(a)<\phi(b)$ and
$[\phi(a),\phi(b)]\subseteq T$. Finally, assume that $\phi([a,b])\subseteq[\phi(a),\phi(b)]$.
Then 
\[
\int_{\phi(a)}^{\phi(b)}f(t)\,\diff{t}=\int_{a}^{b}f\left[\phi(s)\right]\cdot\phi'(s)\,\diff{s}.
\]
\end{thm}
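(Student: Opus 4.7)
My plan is to reduce the statement to a pointwise application of the classical change-of-variables formula, using representing nets. The key enabling observations are Lemma~\ref{lem:fromOmega_epsToRn} (so that defining nets may be taken on all of $\R$) and the explicit formula for primitives given by Theorem~\ref{thm:existenceUniquenessPrimitives}.

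\textbf{Step 1 (Set-up).} By Lemma~\ref{lem:fromOmega_epsToRn} choose nets $(f_\eps)$, $(\phi_\eps) \in \cinfty(\R,\R)$ representing $f$ and $\phi$, respectively. Pick representatives $(a_\eps)$, $(b_\eps)$ of $a$, $b$; since $b-a>0$ is invertible, Lemma~\ref{lem:mayer} shows that $a_\eps < b_\eps$ for $\eps$ small, and analogously $\phi_\eps(a_\eps) < \phi_\eps(b_\eps)$ for $\eps$ small (since $\phi(b)-\phi(a) = [\phi_\eps(b_\eps)-\phi_\eps(a_\eps)] > 0$ is invertible). Theorem~\ref{thm:GSFcategory} together with Theorem~\ref{thm:linearityLeibnizDer} implies that $s\in[a,b]\mapsto f(\phi(s))\cdot\phi'(s)\in\rcrho$ is a well-defined GSF (its domain is legitimate because $\phi([a,b])\subseteq[\phi(a),\phi(b)]\subseteq T$), so the right-hand integral makes sense.

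\textbf{Step 2 (Express both sides via nets).} Apply Theorem~\ref{thm:existenceUniquenessPrimitives} once to $f$ with base point $c=\phi(a)=[\phi_\eps(a_\eps)]$ and evaluation point $\phi(b)=[\phi_\eps(b_\eps)]$, and once to the composite GSF $f(\phi(\cdot))\cdot\phi'(\cdot)$, which is defined by the net $s\mapsto f_\eps(\phi_\eps(s))\cdot\phi_\eps'(s)$, with base point $a=[a_\eps]$ and evaluation point $b=[b_\eps]$. This yields
\begin{align*}
\int_{\phi(a)}^{\phi(b)}f(t)\,\diff{t} &= \left[\int_{\phi_\eps(a_\eps)}^{\phi_\eps(b_\eps)} f_\eps(t)\,\diff{t}\right],\\
\int_a^b f[\phi(s)]\cdot\phi'(s)\,\diff{s} &= \left[\int_{a_\eps}^{b_\eps} f_\eps(\phi_\eps(s))\cdot\phi_\eps'(s)\,\diff{s}\right].
\end{align*}

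\textbf{Step 3 (Pointwise classical change of variables).} For each $\eps$, the functions $f_\eps$ and $\phi_\eps$ are smooth on all of $\R$, so the classical change-of-variables formula (a direct consequence of the chain rule and the fundamental theorem of calculus, valid without any monotonicity assumption on $\phi_\eps$) gives
\[
\int_{\phi_\eps(a_\eps)}^{\phi_\eps(b_\eps)} f_\eps(t)\,\diff{t} = \int_{a_\eps}^{b_\eps} f_\eps(\phi_\eps(s))\cdot\phi_\eps'(s)\,\diff{s}
\]
for every $\eps\in I$. Taking equivalence classes yields the desired equality in $\rcrho$.

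\textbf{Main obstacle.} The only subtle point is the bookkeeping that the representing nets appearing on the two sides of the formula are the \emph{same} nets that Theorem~\ref{thm:existenceUniquenessPrimitives} would produce when applied to the two GSF-integrals as built in Definition~\ref{def:integral}. This requires checking that the product and composition operations of GSF are represented at the level of nets by the obvious pointwise product and composition (clear from Definitions~\ref{def:generalizedSmoothMap} and Theorem~\ref{thm:GSFcategory}), and that the representatives $(a_\eps)$, $(b_\eps)$, $(\phi_\eps(a_\eps))$, $(\phi_\eps(b_\eps))$ can be chosen in a compatible way with the ordering hypothesis; both are handled by Lemma~\ref{lem:mayer}. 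An alternative route would be to set $F(t):=\int_{\phi(a)}^{t}f$, form $G:=F\circ\phi$, and apply the chain rule (Theorem~\ref{thm:chainRule}) on $\text{int}([a,b])$ together with Theorem~\ref{thm:existenceOfDerivativesAtBorderPoints} to extend to the boundary, then use uniqueness of primitives (Theorem~\ref{thm:uniquenessOfPrimitives}); but the direct net-level argument above avoids this extension step entirely.
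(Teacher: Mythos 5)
Your proof is correct, but it takes a genuinely different route from the paper's. The paper argues intrinsically: it sets $F(x):=\int_{\phi(a)}^{x}f$, $H:=F\circ\phi$ and $G(y):=\int_{a}^{y}f[\phi]\cdot\phi'$, observes $H(a)=G(a)=0$, computes $H'=G'$ via the chain rule (Thm.~\ref{thm:chainRule}) and Def.~\ref{def:integral}, and concludes $H=G$ from the uniqueness of primitives (Thm.~\ref{thm:uniquenessOfPrimitives}) — exactly the "alternative route" you sketch at the end, except that the paper needs no separate boundary-extension step because Thm.~\ref{thm:uniquenessOfPrimitives} is already stated for closed intervals. Your argument instead descends to the level of representing nets, uses Lemma~\ref{lem:fromOmega_epsToRn} to make $f_\eps$, $\phi_\eps$ globally defined, invokes the explicit net formula of Thm.~\ref{thm:existenceUniquenessPrimitives} on both sides, and applies the classical (monotonicity-free) change-of-variables identity $\eps$-wise. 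Both proofs are sound; the checks you flag as the "main obstacle" (that products and compositions of GSF are represented by the pointwise products and compositions of defining nets, and that $\phi'$ is represented by $[\phi_\eps'(-)]$ on all of $[a,b]$ including boundary points) are indeed exactly what Thm.~\ref{thm:GSFcategory}, Thm.~\ref{thm:FR-forGSF} and Thm.~\ref{thm:existenceOfDerivativesAtBorderPoints} supply. What each approach buys: the paper's proof is representative-free and showcases the intrinsic calculus (it is the argument that genuinely uses the hypothesis $\phi([a,b])\subseteq[\phi(a),\phi(b)]$, which is needed there to make $H$ well defined); your net-level proof is more direct and in fact does not appear to need that hypothesis at all, only $[\phi(a),\phi(b)]\subseteq T$ so that the left-hand integral is defined — so it proves a marginally more general statement at the cost of a less intrinsic argument.
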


\begin{proof}
Define\textit{\emph{ 
\begin{align*}
F(x) & :=\int_{\phi(a)}^{x}f(t)\,\diff{t}\quad\forall x\in[\phi(a),\phi(b)]\\
H(y) & :=\int_{\phi(a)}^{\phi(y)}f(t)\,\diff{t}\quad\forall y\in[a,b]\\
G(y) & :=\int_{a}^{y}f\left[\phi(s)\right]\cdot\phi'(s)\,\diff{s}\quad\forall y\in[a,b],
\end{align*}
Each one of these functions is generalized smooth by Def.~\ref{def:integral}
of the integral or by Thm.~\ref{thm:GSFcategory}, because it can
be written as a composition of generalized smooth maps. We have $H(a)=G(a)=0$,
$H(y)=F\left[\phi(y)\right]$ for every $y\in[a,b]$ and, by the chain
rule (Prop.~\ref{thm:chainRule}) $H'(y)=F'[\phi(y)]\cdot\phi'(y)=f\left[\phi(y)\right]\cdot\phi'(y)=G'(y)$,
the last two equalities following by Def.~\ref{def:integral} of
the integral. From the uniqueness Theorem \ref{thm:uniquenessOfPrimitives},
the conclusion $H=G$ follows.}} 
\end{proof}
\begin{rem}
(Relation to distributional primitives) Let $a,b\in\R$, $a<b$, and
set $\Omega=(a,b)\sse\R$. By \cite[Ch. II, \S 4]{Sch} there exists
a sequentially continuous operator $R:\D'(\Om)\to\D'(\Om)$ assigning
to any $T\in\D'(\Om)$ a primitive $R(T)$, i.e., $R(T)'=T$ in $\D'(\Om)$.
Now let $\iota_{\Omega}^{b}:\D'(\Omega)\to{}^{\rho}\Gcinf(\csp{\Omega},\RC{\rho})$
be an embedding as in Theorem \ref{thm:embeddingD'}, and fix any
$c\in\rcrho$ with $a\le c\le b$. Then 
\[
\iota_{\Omega}^{b}(R(T))'=\iota_{\Omega}^{b}(R(T)')=\iota(T)=\Big(\int_{c}^{(-)}\iota(T)\Big)'.
\]
Therefore, Theorem \ref{thm:intRules} \ref{enu:foundamental} implies
that 
\[
\int_{r}^{s}\iota(T)=\iota(R(T))(s)-\iota(R(T))(r)
\]
for all $s,t\in\rcrho$ with $a\le s,t\le b$. 
\end{rem}

\section{\label{sec:Some-classical-theorems}Some classical theorems for generalized
smooth functions}

It is natural to expect that several classical theorems of differential
and integral calculus can be extended from the ordinary smooth case
to the generalized smooth framework. Once again, we underscore that
these faithful generalizations are possible because we don't have
a priori limitations in the evaluation $f(x)$ for GSF. For example,
one does not have similar results in Colombeau theory, where an arbitrary
generalized function can be evaluated only at compactly supported
points.

\noindent We start from the intermediate value theorem. 
\begin{cor}
\label{cor:intermValue}Let $f\in{}^{\rho}\Gcinf(X,\rcrho)$ be a
generalized smooth function defined on the subset $X\subseteq\rcrho$.
Let $a$, $b\in\rcrho$, with $a<b$, such that $[a,b]\subseteq X$.
Assume that $f(a)<f(b)$. Then 
\[
\forall y\in\rcrho:\ f(a)\le y\le f(b)\ \Rightarrow\ \exists c\in[a,b]:\ y=f(c).
\]
\end{cor}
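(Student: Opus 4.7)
The plan is to reduce to the classical intermediate value theorem applied at the level of representatives. First, by Lem.~\ref{lem:fromOmega_epsToRn} I may assume that $f$ is defined by a net $f_{\eps}\in\cinfty(\R,\R)$, so that the classical IVT can be applied on arbitrary closed real intervals without worrying about the domains $\Omega_{\eps}$. Next, since $a<b$, I choose representatives $(a_{\eps})$, $(b_{\eps})$ of $a$, $b$ with $a_{\eps}\le b_{\eps}$ for $\eps$ small (using Lem.~\ref{lem:mayer} applied to $b-a$), and any representative $(y_{\eps})$ of $y$.

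The key technical step is to adjust $(y_{\eps})$ so that it actually lies between the relevant values of $f_{\eps}$. The hypothesis $f(a)\le y\le f(b)$ translates at the level of representatives into $f_{\eps}(a_{\eps})\le y_{\eps}+n_{\eps}$ and $y_{\eps}\le f_{\eps}(b_{\eps})+m_{\eps}$ for some $\rho$-negligible nets $(n_{\eps})$, $(m_{\eps})$. I therefore set
\[
\tilde{y}_{\eps}:=\max\bigl(f_{\eps}(a_{\eps}),\,\min(y_{\eps},f_{\eps}(b_{\eps}))\bigr),
\]
so that $f_{\eps}(a_{\eps})\le\tilde{y}_{\eps}\le f_{\eps}(b_{\eps})$ for $\eps$ small, while a short case analysis shows $|\tilde{y}_{\eps}-y_{\eps}|\le|n_{\eps}|+|m_{\eps}|$, so $(\tilde{y}_{\eps})\sim_{\rho}(y_{\eps})$ and $[\tilde{y}_{\eps}]=y$.

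Now the classical intermediate value theorem, applied to the continuous function $f_{\eps}$ on the real interval $[a_{\eps},b_{\eps}]$, produces $c_{\eps}\in[a_{\eps},b_{\eps}]$ with $f_{\eps}(c_{\eps})=\tilde{y}_{\eps}$. The net $(c_{\eps})$ is $\rho$-moderate because $|c_{\eps}|\le\max(|a_{\eps}|,|b_{\eps}|)$ and both $(a_{\eps})$, $(b_{\eps})$ are moderate; hence $c:=[c_{\eps}]\in\rcrho$ is well defined and, by construction, $a\le c\le b$ and $f(c)=[f_{\eps}(c_{\eps})]=[\tilde{y}_{\eps}]=y$.

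The only genuine subtlety is the representative-adjustment in the middle paragraph: one must verify that the truncation $\tilde{y}_{\eps}$ differs from $y_{\eps}$ by a $\rho$-negligible net, which amounts to observing that whenever $y_{\eps}$ strays outside $[f_{\eps}(a_{\eps}),f_{\eps}(b_{\eps})]$ it does so by at most $|n_{\eps}|$ or $|m_{\eps}|$. Everything else is a routine translation of the classical IVT to the quotient $\rcrho$, together with the reduction to globally defined representatives via Lem.~\ref{lem:fromOmega_epsToRn}.
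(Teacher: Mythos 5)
Your proof is correct and follows essentially the same route as the paper's: reduce to globally defined nets via Lem.~\ref{lem:fromOmega_epsToRn}, arrange representatives so that $f_{\eps}(a_{\eps})\le y_{\eps}\le f_{\eps}(b_{\eps})$ for small $\eps$, apply the classical intermediate value theorem $\eps$-wise, and check moderateness of $(c_{\eps})$. The only difference is that the paper compresses the representative adjustment into the phrase ``for suitable representatives,'' whereas your truncation $\tilde{y}_{\eps}$ makes that step explicit (note that it implicitly also uses $f_{\eps}(a_{\eps})\le f_{\eps}(b_{\eps})$ for small $\eps$, which follows from $f(a)<f(b)$ and Lem.~\ref{lem:mayer}).
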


\begin{proof}
Let $f$ be defined by the net $f_{\eps}\in\cinfty(\R,\R)$. For small
$\eps$ and for suitable representatives $(a_{\eps})$, $(b_{\eps})$,
$(y_{\eps})$, we have 
\[
a_{\eps}<b_{\eps}\quad,\quad f_{\eps}(a_{\eps})\le y_{\eps}\le f_{\eps}(b_{\eps}).
\]
By the classical intermediate value theorem we get some $c_{\eps}\in[a_{\eps},b_{\eps}]$
such that $f_{\eps}(c_{\eps})=y_{\eps}$. Therefore $c:=[c_{\eps}]\in[a,b]\subseteq X$
and hence $f(c)=[f_{\eps}(c_{\eps})]=[y_{\eps}]=y$. 
\end{proof}
\noindent Using this theorem we can conclude that no GSF can assume
only a finite number of values which are comparable with respect to
the relation $<$ on any nontrivial interval $[a,b]\subseteq X$,
unless it is constant. For example, this provides an alternative way
of seeing that the function $i$ of Rem.~\ref{rem:I-function} cannot
be a generalized smooth map.

\noindent We note that the solution $c\in[a,b]$ of the previous generalized
smooth equation $y=f(x)$ need not even be continuous in $\eps$.
Indeed, let us consider the net of smooth functions depicted in Figure
\ref{fig:MikeC}, where it is understood that, as $\eps$ approaches
$0$, the two waves at the extremes oscillate around the dashed rectilinear
positions shown in the figure. Set $f(x)=\left[\int_{0}^{1}f_{\eps}(s)\,\diff{s}-f_{\eps}(x_{\eps})\right]\in\rcrho$
for $x\in[0,1]\subseteq\rcrho$, and analyze the generalized smooth
equation $f(x)=0$. Let $\eps_{k}=\frac{1}{k}$ be the ``times''
where the two waves of the net $(f_{\eps})$ are rectilinear. At these
times the solution $f(x_{\eps_{k}})=0$ can be any point $x_{\eps_{k}}\in[b,c]$.
Assume that for $\eps\in\left[\frac{1}{k},\frac{1}{k}+\delta_{k}\right]$
only the wave on the left is rectilinear and for $\eps\in\left[\frac{1}{k}-\delta_{k},\frac{1}{k}\right]$
only the wave on the right is rectilinear (where $\delta_{k}\downarrow0$
is sufficiently small). Therefore, in the first case, any solution
must be of the form $x_{\eps}\in[c,1]$ and in the second case $x_{\eps}\in[0,b]$.
Thus any solution must jump at every time $\eps_{k}$ and the height
of the jump must be at least $c-b$.

This example allows us to draw the following general conclusion: if
we consider generalized numbers as solutions of smooth equations,
then we are forced to work on a non-totally ordered ring of scalars
derived from discontinuous (in $\eps$) representatives. To put it
differently: if we choose a ring of scalars with a total order or
continuous representatives, we will not be able to solve every smooth
equation, and the given ring can be considered, in some sense, incomplete.
Of course, this does not mean that the study of better behaved (non-totally
ordered) subrings of $\rcrho$, useful for special purposes, is not
interesting.

\begin{figure}
\noindent \begin{centering}
\includegraphics[scale=0.65]{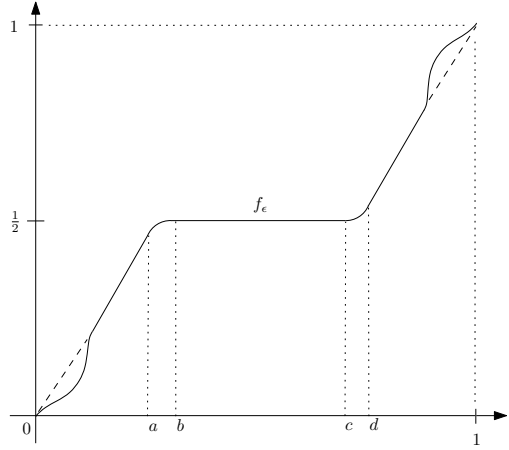} 
\par\end{centering}
\caption{\label{fig:MikeC}A net $(f_{\eps})$ defining a discontinuous solution
of a smooth equation.}
\end{figure}

\begin{thm}
\label{thm:classicalThms}Let $f\in{}^{\rho}\Gcinf(X,\rcrho^{d})$
be a generalized smooth function defined in the sharply open set $X\subseteq\rcrho^{n}$.
Let $a$, $b\in\rcrho^{n}$ such that $[a,b]\subseteq X$. Then 
\begin{enumerate}
\item \label{enu:meanValue}If $n=d=1$, then $\exists c\in[a,b]:\ f(b)-f(a)=(b-a)\cdot f'(c)$. 
\item \label{enu:integralMeanValue}If $n=d=1$, then $\exists c\in[a,b]:\ \int_{a}^{b}f(t)\,\diff{t}=(b-a)\cdot f(c)$. 
\item \label{enu:meanValueSevVars}If $d=1$, then $\exists c\in[a,b]:\ f(b)-f(a)=\nabla f(c)\cdot(b-a)$. 
\item \label{enu:IMVSevVars}Let $h:=b-a$, then $f(a+h)-f(a)=\int_{0}^{1}\diff{f}(a+t\cdot h).h\,\diff{t}$. 
\end{enumerate}
\end{thm}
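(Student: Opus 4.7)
The plan is to handle the four claims by the uniform strategy of fixing good representatives, invoking the classical counterpart pointwise in $\eps$, and then assembling the resulting $\eps$-dependent points into a generalized point. For (i), I would use Lemma~\ref{lem:fromOmega_epsToRn} to take $f=[f_\eps(-)]$ with $f_\eps\in\cinfty(\R,\R)$, and (using the characterization of $\le$ via representatives) choose nets $(a_\eps)$, $(b_\eps)$ with $a_\eps\le b_\eps$ for all $\eps$. The classical mean value theorem applied to each $f_\eps$ on $[a_\eps,b_\eps]_\R$ produces $c_\eps\in[a_\eps,b_\eps]_\R$ such that $f_\eps(b_\eps)-f_\eps(a_\eps)=(b_\eps-a_\eps)\cdot f_\eps'(c_\eps)$. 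Setting $c:=[c_\eps]$, moderateness of $(c_\eps)$ follows from $a_\eps\le c_\eps\le b_\eps$ together with moderateness of $(a_\eps)$, $(b_\eps)$, and $c\in[a,b]\subseteq X$ by construction; the required equality in $\rcrho$ is obtained by taking equivalence classes, using that $f'(c)=[f_\eps'(c_\eps)]$ by Theorem~\ref{thm:FR-forGSF} at interior points and by Theorem~\ref{thm:existenceOfDerivativesAtBorderPoints} at boundary points.

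For (ii), the cleanest route is to apply (i) to the primitive $F(x):=\int_a^x f(s)\diff{s}\in{}^\rho\Gcinf([a,b],\rcrho)$ given by Theorem~\ref{thm:existenceUniquenessPrimitives}, obtaining $c\in[a,b]$ with $F(b)-F(a)=(b-a)\cdot F'(c)=(b-a)\cdot f(c)$; alternatively, one can apply the classical integral mean value theorem directly to each $f_\eps$ on $[a_\eps,b_\eps]_\R$, exactly as in (i). Claim (iv) is equally direct: the map $g(t):=f(a+t h)$ belongs to $^\rho\Gcinf([0,1],\rcrho)$ by Theorem~\ref{thm:GSFcategory} since $a+th\in[a,b]\subseteq X$ for $t\in[0,1]$, the chain rule (Theorem~\ref{thm:chainRule}) gives $g'(t)=\diff f(a+th).h$, and Theorem~\ref{thm:intRules}~\ref{enu:foundamental} applied to $g$ yields $f(a+h)-f(a)=g(1)-g(0)=\int_0^1 g'(t)\diff{t}$.

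Claim (iii) is then reduced to (i) by the classical one-dimensional trick: with $g(t):=f(a+t(b-a))$ on $[0,1]$, (i) supplies $\theta\in[0,1]$ with $g(1)-g(0)=g'(\theta)$, and the chain rule rewrites the right-hand side as $\nabla f(c)\cdot(b-a)$ for $c:=a+\theta(b-a)\in[a,b]$.

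The only delicate point I foresee is bookkeeping for the representative $(c_\eps)$ in (i): the classical MVT assigns $c_\eps\in[a_\eps,b_\eps]_\R$ only where $a_\eps\le b_\eps$, and on an exceptional co-final set we would need to assign $c_\eps$ arbitrarily (say $c_\eps:=a_\eps$). Having chosen $a_\eps\le b_\eps$ for all $\eps$ at the outset bypasses this, and moderateness together with membership $c\in[a,b]\subseteq X$ makes $f'(c)$ well defined and independent of the chosen representative of $c$ by Theorem~\ref{thm:indepRepr}, so the final equalities in $\rcrho$ pass to equivalence classes without further hypotheses.
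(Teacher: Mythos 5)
Your proposal is correct and follows essentially the same route as the paper: apply the classical mean value theorems $\eps$-wise to the defining net $f_\eps$ on $[a_\eps,b_\eps]_\R$, assemble $c:=[c_\eps]\in[a,b]$ (moderate since it is trapped between $a_\eps$ and $b_\eps$), and reduce \ref{enu:integralMeanValue}, \ref{enu:meanValueSevVars}, \ref{enu:IMVSevVars} to the one-variable scalar case via the primitive, the parametrization $t\mapsto f(a+t(b-a))$, the chain rule and Thm.~\ref{thm:intRules}~\ref{enu:foundamental}. You in fact supply more detail than the paper (which compresses the multivariable cases into ``follow as usual''), including the representative bookkeeping for $(c_\eps)$.
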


\begin{proof}
Using the usual notations, for small $\eps$ we have $a_{\eps}<b_{\eps}$
and 
\begin{align}
\exists c_{\eps} & \in[a_{\eps},b_{\eps}]:\ f_{\eps}(b_{\eps})-f_{\eps}(a_{\eps})=(b_{\eps}-a_{\eps})\cdot f'_{\eps}(c_{\eps})\label{eq:epsMean}\\
\exists c_{\eps} & \in[a_{\eps},b_{\eps}]:\ \int_{a_{\eps}}^{b_{\eps}}f_{\eps}=(b_{\eps}-a_{\eps})\cdot f{}_{\eps}(c_{\eps}),\label{eq:epsIntMean}
\end{align}
from which the conclusions \ref{enu:meanValue} and \ref{enu:integralMeanValue}
follow directly. The several variables and vector valued cases \ref{enu:meanValueSevVars},
\ref{enu:IMVSevVars} follow as usual by reduction to the one-variable
and scalar valued case. 
\end{proof}
Internal sets generated by a sharply bounded net of compact sets serve
as a substitute for compact subsets for GSF, as can be seen from the
following extreme value theorem: 
\begin{lem}
\label{lem:extremeValueCont}Let $\emptyset\ne K=[K_{\eps}]\subseteq\rti^{n}$
be an internal set generated by a sharply bounded net $(K_{\eps})$
of compact sets $K_{\eps}\comp\R^{n}$ Assume that $\alpha:K\ra\rti$
is a well-defined map given by $\alpha(x)=[\alpha_{\eps}(x_{\eps})]$
for all $x\in K$, where $\alpha_{\eps}:K_{\eps}\ra\R$ are continuous
maps (e.g.~$\alpha(x)=|x|$). Then 
\[
\exists m,M\in K\,\forall x\in K:\ \alpha(m)\le\alpha(x)\le\alpha(M).
\]
\end{lem}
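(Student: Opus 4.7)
My plan is to lift the classical extreme value theorem $\eps$-wise and then check that the resulting generalized points have the required properties.

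First, for each $\eps\in I$ the set $K_\eps\Subset\R^n$ is compact and $\alpha_\eps\colon K_\eps\to\R$ is continuous, so by the classical extreme value theorem we can choose $m_\eps,M_\eps\in K_\eps$ with
\[
\alpha_\eps(m_\eps)\le\alpha_\eps(y)\le\alpha_\eps(M_\eps)\qquad\forall y\in K_\eps.
\]
The nets $(m_\eps)$ and $(M_\eps)$ are $\rho$-moderate because $(K_\eps)$ is sharply bounded (Def.~\ref{def:internalStronglyInternal}), so $m:=[m_\eps]$ and $M:=[M_\eps]$ are well-defined elements of $\rti^n$. Since $m_\eps\in K_\eps$ for all $\eps$ (in particular for $\eps$ small), Def.~\ref{def:internalStronglyInternal} gives $m\in[K_\eps]=K$, and likewise $M\in K$.

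Next I verify the extremality of $\alpha$ at $m$ and $M$. Take an arbitrary $x\in K=[K_\eps]$. By definition of internal set we may pick a representative $(x_\eps)$ of $x$ with $x_\eps\in K_\eps$ for $\eps$ small, so that $\alpha_\eps(x_\eps)$ is defined for such $\eps$. The classical inequalities above evaluated at $y=x_\eps$ yield
\[
\alpha_\eps(m_\eps)\le\alpha_\eps(x_\eps)\le\alpha_\eps(M_\eps)\qquad\text{for }\eps\text{ small},
\]
and passing to $\rho$-equivalence classes gives $\alpha(m)\le\alpha(x)\le\alpha(M)$, where the value $\alpha(x)=[\alpha_\eps(x_\eps)]$ does not depend on the chosen representative by the standing hypothesis that $\alpha$ is well-defined on $K$.

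The only point requiring minor care—and the closest thing to an obstacle—is the compatibility of the chosen representative of $x$ with the inequality: one must ensure one uses a representative that actually lies in $K_\eps$ for $\eps$ small, which is exactly the content of $x\in[K_\eps]$. Once that representative is fixed, the classical one-sided inequalities transfer to the quotient $\rti$ automatically, because the order on $\rti$ is defined precisely by $\eps$-wise comparison up to a $\rho$-negligible correction (Thm.~\ref{thm:CGN-Ring} and the discussion preceding it). This completes the proof.
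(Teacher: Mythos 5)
Your proof is correct and follows essentially the same route as the paper's: apply the classical extreme value theorem $\eps$-wise, use sharp boundedness of $(K_\eps)$ to get moderateness of $(m_\eps)$, $(M_\eps)$, and transfer the inequalities to $\rti$ via a representative of $x$ lying in $K_\eps$ for small $\eps$. The only (trivial) point the paper makes explicit and you elide is that $K\ne\emptyset$ guarantees $K_\eps\ne\emptyset$ for $\eps$ small, which is needed before invoking the classical theorem.
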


\begin{proof}
Since $K\ne\emptyset$, for $\eps$ sufficiently small, let us say
for $\eps\in(0,\eps_{0}]$, $K_{\eps}$ is non empty and, by our assumptions,
it is also compact. Since each $\alpha_{\eps}$ is continuous, for
all $\eps\in(0,\eps_{0}]$ we have 
\[
\exists m_{\eps},M_{\eps}\in K_{\eps}\,\forall x\in K_{\eps}:\ \alpha_{\eps}(m_{\eps})\le\alpha_{\eps}(x)\le\alpha_{\eps}(M_{\eps}).
\]
Since the net $(K_{\eps})$ is sharply bounded, both the nets $(m_{\eps})$
and $(M_{\eps})$ are moderate. Therefore $m=[m_{\eps}]$, $M=[M_{\eps}]\in K$.
Take any $x\in[K_{\eps}]$, then there exists a representative $(x_{\eps})$
such that $x_{\eps}\in K_{\eps}$ for $\eps$ small. Therefore $\alpha(m)=[\alpha_{\eps}(m_{\eps})]\le[\alpha_{\eps}(x{}_{\eps})]=\alpha(x)\le\alpha(M)$. 
\end{proof}
\begin{cor}
\label{cor:extremeValues}Let $f\in{}^{\rho}\Gcinf(X,\rcrho)$ be
a generalized smooth function defined in the subset $X\subseteq\rcrho^{n}$.
Let $\emptyset\ne K=[K_{\eps}]\subseteq X$ be an internal set generated
by a sharply bounded net $(K_{\eps})$ of compact sets $K_{\eps}\comp\R^{n}$,
then 
\begin{equation}
\exists m,M\in K\,\forall x\in K:\ f(m)\le f(x)\le f(M).\label{eq:epsExtreme}
\end{equation}
\end{cor}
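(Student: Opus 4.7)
The plan is to deduce this corollary as an essentially immediate consequence of the preceding Lemma \ref{lem:extremeValueCont}, applied with $\alpha$ taken to be $f$ itself restricted to $K$. The only real work is verifying that the restriction $f|_{K}$ admits a representation by a net of continuous (in fact smooth) functions defined on the nets of compact sets $K_{\eps}$, so that the hypotheses of Lemma \ref{lem:extremeValueCont} are met.

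First I would invoke Lemma \ref{lem:fromOmega_epsToRn} to choose a defining net $v_{\eps}\in\cinfty(\R^{n},\R)$ for $f$, so that $f(x)=[v_{\eps}(x_{\eps})]$ for every $x=[x_{\eps}]\in X\supseteq K$, independently of the chosen representative. Next, set $\alpha_{\eps}:=v_{\eps}|_{K_{\eps}}\colon K_{\eps}\to\R$; each $\alpha_{\eps}$ is continuous because $v_{\eps}$ is smooth. For any $x\in K=[K_{\eps}]$, the definition of internal set (Def.~\ref{def:internalStronglyInternal}) guarantees the existence of a representative $(x_{\eps})$ of $x$ with $x_{\eps}\in K_{\eps}$ for $\eps$ small, and then
\[
f(x)=[v_{\eps}(x_{\eps})]=[\alpha_{\eps}(x_{\eps})],
\]
so that $\alpha:=f|_{K}$ is exactly the kind of map to which Lemma \ref{lem:extremeValueCont} applies.

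Finally, the lemma produces $m$, $M\in K$ such that $\alpha(m)\le\alpha(x)\le\alpha(M)$ for all $x\in K$, which is precisely \eqref{eq:epsExtreme}. The main (mild) obstacle is the bookkeeping in step two: one must ensure that the chosen representative of $x\in K$ really lies in $K_{\eps}$ for small $\eps$ so that $\alpha_{\eps}(x_{\eps})$ is defined, and that changing the representative does not affect the equivalence class $[\alpha_{\eps}(x_{\eps})]$. Both points are immediate from the definition of internal set and the well-definedness of the GSF $f$ (Thm.~\ref{thm:indepRepr}), so the argument is essentially a direct reduction to Lemma \ref{lem:extremeValueCont}.
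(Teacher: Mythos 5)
Your proposal is correct and matches the paper's intent exactly: the corollary is stated without proof precisely because it is the immediate specialization of Lemma \ref{lem:extremeValueCont} to $\alpha=f|_{K}$, with the well-definedness of $[\alpha_{\eps}(x_{\eps})]$ guaranteed by Thm.~\ref{thm:indepRepr} and the existence of a representative in $K_{\eps}$ by the definition of internal set. The extra bookkeeping you supply (globalizing the defining net via Lemma \ref{lem:fromOmega_epsToRn} and restricting to $K_{\eps}$) is exactly the right way to make this reduction explicit.
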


These results motivate the following 
\begin{defn}
\label{def:functCmpt} A subset $K$ of $\rcrho^{n}$ is called \emph{functionally
compact}, denoted by $K\fcmp\rcrho^{n}$, if there exists a net $(K_{\eps})$
such that 
\begin{enumerate}
\item \label{enu:defFunctCmpt-internal}$K=[K_{\eps}]\subseteq\rcrho^{n}$ 
\item \label{enu:defFunctCmpt-sharpBound}$(K_{\eps})$ is sharply bounded 
\item \label{enu:defFunctCmpt-cmpt}$\forall\eps\in I:\ K_{\eps}\Subset\R^{n}$ 
\end{enumerate}
If, in addition, $K\subseteq U\subseteq\rcrho^{n}$ then we write
$K\fcmp U$. Finally, we write $[K_{\eps}]\fcmp U$ if \ref{enu:defFunctCmpt-sharpBound},
\ref{enu:defFunctCmpt-cmpt} and $[K_{\eps}]\subseteq U$ hold. 
\end{defn}

We refer to \cite{GK15} for a deeper study of this type of compact
sets in the case $\rho=(\eps)$. Note that any interval $[a,b]\subseteq\rti$
with $b-a\in\R_{>0}$, is \emph{not} connected: in fact if $c\in(a,b)$,
then both $c+D_{\infty}$ and $[a,b]\setminus\left(c+D_{\infty}\right)$
are sharply open in $[a,b]$. Once again, this is a general property
in several non-Archimedean frameworks (see e.g.~\cite{Rob73,Koc}).
On the other hand, as in the case of functionally compact sets, GSF
behave on intervals as if they were connected, in the sense that both
the intermediate value theorem Cor.~\ref{cor:intermValue} and the
extreme value theorem Cor.~\ref{cor:extremeValues} hold for them
(therefore, $f\left([a,b]\right)=\left[f(m),f(M)\right]$, where we
used the notations from the results just mentioned).

We close this section with generalizations of Taylor's theorem in
various forms. In the following statement, $\diff{^{k}f}(x):\rti^{dk}\ra\rti$
is the $k$-th differential of the GSF $f$, viewed as an $\rti$-multilinear
map $\rti^{d}\times\ptind^{k}\times\rti^{d}\ra\rti$, and we use the
common notation $\diff{^{k}f}(x)\cdot h^{k}:=\diff{^{k}f}(x)(h,\ldots,h)$.
Clearly, $\diff{^{k}f}(x)\in\gsf(\rti^{dk},\rti)$. For multilinear
maps $A:\rti^{p}\ra\rti^{q}$, we set $|A|:=[|A_{\eps}|]\in\rti$,
the generalized number defined by the norms of the operators $A_{\eps}:\R^{p}\ra\R^{q}$. 
\begin{thm}
\label{thm:Taylor}Let $f\in\gsf(U,\rcrho)$ be a generalized smooth
function defined in the sharply open set $U\subseteq\rcrho^{d}$.
Let $a$, $b\in\rcrho^{d}$ such that the line segment $[a,b]\subseteq U$,
and set $h:=b-a$. Then, for all $n\in\N$ we have 
\begin{enumerate}
\item \label{enu:LagrangeRest}$\exists\xi\in[a,b]:\ f(a+h)=\sum_{j=0}^{n}\frac{\diff{^{j}f}(a)}{j!}\cdot h^{j}+\frac{\diff{^{n+1}f}(\xi)}{(n+1)!}\cdot h^{n+1}.$ 
\item \label{enu:integralRest}$f(a+h)=\sum_{j=0}^{n}\frac{\diff{^{j}f}(a)}{j!}\cdot h^{j}+\frac{1}{n!}\cdot\int_{0}^{1}(1-t)^{n}\,\diff{^{n+1}f}(a+th)\cdot h^{n+1}\,\diff{t}.$ 
\end{enumerate}
\noindent Moreover, there exists some $R\in\rcrho_{>0}$ such that
\begin{equation}
\forall k\in B_{R}(0)\,\exists\xi\in[a,a+k]:\ f(a+k)=\sum_{j=0}^{n}\frac{\diff{^{j}f}(a)}{j!}\cdot k^{j}+\frac{\diff{^{n+1}f}(\xi)}{(n+1)!}\cdot k^{n+1}\label{eq:LagrangeInfRest}
\end{equation}
\begin{equation}
\frac{\diff{^{n+1}f}(\xi)}{(n+1)!}\cdot k^{n+1}=\frac{1}{n!}\cdot\int_{0}^{1}(1-t)^{n}\,\diff{^{n+1}f}(a+tk)\cdot k^{n+1}\,\diff{t}\approx0.\label{eq:integralInfRest}
\end{equation}
\end{thm}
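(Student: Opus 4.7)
The plan is to derive (i) and (ii) from their classical counterparts applied to a representing net $(f_\eps)$, and then handle the final claim by extracting a uniform derivative bound from Thm.~\ref{thm:GSF-continuity}~\ref{enu:modOnEpsDepBall}.

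First I would fix representatives $a=[a_\eps]$, $h=[h_\eps]$ and a net $f_\eps\in\cinfty(\Omega_\eps,\R)$ defining $f$. Arguing as in Lem.~\ref{lem:propOfThickenings}~\ref{enu:sintThickSubsetThickSint}, the assumption $[a,b]\subseteq U\subseteq\sint{\Omega_\eps}$ forces the classical segment $[a_\eps,a_\eps+h_\eps]_{\R^d}$ to lie in $\Omega_\eps$ for $\eps$ small. For such $\eps$, the classical Taylor theorem with Lagrange remainder produces $\tau_\eps\in[0,1]_\R$ with
\[
f_\eps(a_\eps+h_\eps)=\sum_{j=0}^{n}\frac{1}{j!}\,d^{j}f_\eps(a_\eps)\cdot h_\eps^{j}+\frac{1}{(n+1)!}\,d^{n+1}f_\eps(a_\eps+\tau_\eps h_\eps)\cdot h_\eps^{n+1}.
\]
Setting $\xi:=a+[\tau_\eps]\cdot h\in[a,b]$, the Fermat-Reyes theorem (Thm.~\ref{thm:FR-forGSF}) identifies $d^{j}f(a)\cdot h^{j}$ with $[d^{j}f_\eps(a_\eps)\cdot h_\eps^{j}]$ and $d^{n+1}f(\xi)\cdot h^{n+1}$ with its classical counterpart, while Def.~\ref{def:netDefMap}~\ref{enu:partial-u-moderate} gives moderateness of every term; this proves (i). For (ii), the map $t\in[0,1]\mapsto(1-t)^{n}\,d^{n+1}f(a+th)\cdot h^{n+1}$ is a GSF by Thm.~\ref{thm:GSFcategory}, so its integral over $[0,1]$ is well defined by Def.~\ref{def:integral}; the identity then reduces to the classical integral form of Taylor's remainder applied to each $f_\eps$, the equality at the level of representatives being transferred to $\rcrho$ by the usual $[\cdot]$-morphism.

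For the infinitesimal neighborhood claim, since $U$ is sharply open there exists $R_{0}\in\rcrho_{>0}$ with $B_{R_{0}}(a)\subseteq U$. Applying Thm.~\ref{thm:GSF-continuity}~\ref{enu:modOnEpsDepBall} to every partial derivative of $f$ of order $\leq n+1$, I obtain a single $q\in\R_{>0}$ and $\eps_{0}\in I$ such that
\[
\sup_{y\in\Eball_{\rho_\eps^{q}}(a_\eps)}|\partial^{\alpha}f_\eps(y)|\leq\rho_\eps^{-q}\qquad(\forall\,\eps\leq\eps_{0},\ |\alpha|\leq n+1).
\]
Set $R:=R_{0}\wedge\diff{\rho}^{q}\wedge\diff{\rho}^{q+1}\in\rcrho_{>0}$. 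For any $k\in B_{R}(0)$ the segment $[a,a+k]$ is contained in $B_{R}(a)\subseteq\sint{\Eball_{\rho_\eps^{q}}(a_\eps)}$, so part (i) applied with $h$ replaced by $k$ yields $\xi\in[a,a+k]$ realizing \eqref{eq:LagrangeInfRest}. By the above uniform bound,
\[
\left|\frac{d^{n+1}f(\xi)}{(n+1)!}\cdot k^{n+1}\right|\leq C\cdot\diff{\rho}^{-q}\cdot\diff{\rho}^{(q+1)(n+1)}=C\cdot\diff{\rho}^{\,qn+n+1}
\]
for some $C\in\R_{>0}$ depending only on $d$ and $n$. Since $qn+n+1>0$, the right-hand side is infinitesimal, which gives \eqref{eq:integralInfRest} once the equality of the Lagrange and integral forms of the remainder in (ii) is invoked.

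The main obstacle is the last clause: producing a single $R\in\rcrho_{>0}$ that keeps $[a,a+k]$ inside $U$ \emph{and} forces the remainder to be $\approx 0$ uniformly in $k\in B_{R}(0)$. The crux is the uniform bound from Thm.~\ref{thm:GSF-continuity}~\ref{enu:modOnEpsDepBall}, which converts the $\eps$-wise classical estimate into a bound of the form $C\cdot\diff{\rho}^{m}$ with $m>0$, hence infinitesimal; everything else is a straightforward translation of the corresponding classical theorem to the level of representatives.
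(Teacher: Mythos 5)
Your proof is correct. For parts (i) and (ii) you follow essentially the same route as the paper: apply the classical Taylor formula to a defining net $(f_{\eps})$ along the segment $[a_{\eps},b_{\eps}]_{\R^{d}}$ and transfer the identity to $\rcrho$ via Thm.~\ref{thm:FR-forGSF} and moderateness. For the final clause, however, your argument obtains the bound on $\diff{^{n+1}f}$ near $a$ and the choice of $R$ by a genuinely different route. The paper stays at the level of GSF: it takes the functionally compact set $H:=\left[\overline{\Eball_{r_{\eps}/2}(a_{\eps})}\right]$, bounds $\left|\diff{^{n+1}f}\right|$ on $H$ by a generalized number $K$ via the extreme value theorem (Cor.~\ref{cor:extremeValues}), then picks an invertible $P\ge K$ and an infinitesimal $p$ with $p/P\approx0$ and sets $R:=\min(r/2,p/P)$, so that the remainder is at most $\frac{P}{(n+1)!}\left(\frac{p}{P}\right)^{n+1}\le\frac{p}{(n+1)!}\approx0$. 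You instead go back to the defining net and use the uniform moderateness estimate of Thm.~\ref{thm:GSF-continuity}~\ref{enu:modOnEpsDepBall} (correctly maximizing $q$ over the finitely many $\alpha$ with $|\alpha|\le n+1$, which is legitimate since enlarging $q$ only weakens that statement) to get $\left|\diff{^{n+1}f}(\xi)\right|\le C\diff{\rho}^{-q}$ on $B_{\diff{\rho}^{q}}(a)$, and then choose $R\le\diff{\rho}^{q+1}$ so the remainder is $O(\diff{\rho}^{qn+n+1})$ with $qn+n+1\ge1>0$. Both are valid; yours yields a more explicit, quantitative radius and remainder estimate (and the exponent $q+1$ rather than $q$ is indeed needed to cover the case $n=0$), while the paper's version is representative-free once the extreme value theorem is available and illustrates the intended role of functionally compact sets as substitutes for compact sets.
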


Formulas \ref{enu:LagrangeRest} and \ref{enu:integralRest} correspond
to a plain generalization of Taylor's theorem for ordinary smooth
functions with Lagrange and integral remainder, respectively. Dealing
with generalized functions, it is important to note that this direct
statement also includes the possibility that the differential $\diff{^{n+1}f}(\xi)$
may be infinite at some point. For this reason, in \eqref{eq:LagrangeInfRest}
and \eqref{eq:integralInfRest}, considering a sufficiently small
increment $k$, we get more classical infinitesimal remainders $\diff{^{n+1}f}(\xi)\cdot k^{n+1}\approx0$. 
\begin{proof}
Let $f_{\eps}\in\cinfty(\R^{d},\R)$ be a net of smooth functions
that defines $f$. We have $a+h=b\in[a,b]\subseteq U$ and $U$ is
sharply open, so by the Taylor formula applied to $f_{\eps}$ and
by Theorem \ref{thm:FR-forGSF} we have 
\begin{align*}
f(a+h) & =[f_{\eps}(a_{\eps}+h_{\eps})]\\
 & =\left[\sum_{j=0}^{n}\frac{\diff{^{j}f_{\eps}}(a_{\eps})}{j!}h_{\eps}^{j}+\frac{\diff{^{n+1}f_{\eps}}(\xi_{\eps})}{(n+1)!}h_{\eps}^{n+1}\right]\\
 & =\sum_{j=0}^{n}\frac{\diff{^{j}f}(a)}{j!}h^{j}+\frac{\diff{^{n+1}f}(\xi)}{(n+1)!}h^{n+1}
\end{align*}
for some $\xi_{\eps}\in(a_{\eps},b_{\eps})$, and where $\xi=[\xi_{\eps}]\in\rcrho$
so that $\xi\in[a,b]$. Analogously, we can prove \ref{enu:integralRest}.

To prove the second part of the theorem, we start by considering a
sharp ball $B_{r}(a)\subseteq U$, where $r=[r_{\eps}]>0$. Set $H:=\left[\overline{\Eball_{r_{\eps}/2}(a_{\eps})}\right]$,
and 
\[
K:=\max\left(\left|\diff{^{n+1}f}(M)\right|,\left|\diff{^{n+1}f}(m)\right|\right)\in\rti,
\]
where $\left|\diff{^{n+1}f}(M)\right|$ and $\left|\diff{^{n+1}f}(m)\right|$
are the maximum and the minimum values of $\left|\diff{^{n+1}f}\right|=\left[\left|\diff{^{n+1}f_{\eps}}\right|\right]:U\ra\rti$
on $H\subseteq U$, see Cor.~\ref{cor:extremeValues}. We hence have
$\left|\diff{^{n+1}f}(\xi)\right|\le K$ for all $\xi\in H$. Take
any strictly positive number $P\in\rcrho_{>0}$ such that $P\ge K$
and any strictly positive infinitesimal $p\in\rcrho_{>0}$ so that
$\frac{p}{P}\approx0$ and hence $\left(\frac{p}{P}\right)^{n+1}\le\frac{p}{P}$.
Set $R:=\min\left(\frac{r}{2},\frac{p}{P}\right)$, then $R\in\rcrho_{>0}$
since both $r$ and $\frac{p}{P}$ are invertible. If $k\in B_{R}(0)$
then $[a,a+k]\subseteq H\subseteq U$. We can therefore apply \ref{enu:LagrangeRest}
to get \eqref{eq:LagrangeInfRest}. Finally 
\[
\left|\frac{\diff{^{n+1}f}(\xi)}{(n+1)!}k^{n+1}\right|\le\frac{K}{(n+1)!}R^{n+1}\le\frac{P}{(n+1)!}\cdot\left(\frac{p}{P}\right)^{n+1}\le\frac{P}{(n+1)!}\cdot\frac{p}{P}\approx0.
\]
\end{proof}
The following definitions allow us to state Taylor formulas in Peano
and in infinitesimal form. The latter has no remainder term thanks
to the use of an equivalence relation that permits the introduction
of a language of nilpotent infinitesimals, see e.g.~\cite{Gio10a}
for a similar formulation. For simplicity, we only present the 1-dimentional
case. 
\begin{defn}
\label{def:little-oh-equality_k} 
\begin{enumerate}
\item \label{enu:little-oh}Let $U\subseteq\rcrho$ be a sharp neighborhood
of $0$ and $P$, $Q:U\longrightarrow\rcrho$ be maps defined on $U$.
Then we say that 
\[
P(u)=o(Q(u))\quad\text{as }u\to0
\]
if there exists a function $R:U\longrightarrow\rcrho$ such that 
\[
\forall u\in U:\ P(u)=R(u)\cdot Q(u)\quad\text{and}\quad\lim_{u\to0}R(u)=0,
\]
where the limit is taken in the sharp topology. 
\item \label{enu:equalityUpTo-k-thOrderInf}Let $x$, $y\in\rcrho$ and
$k$, $j\in\R_{>0}$, then we write $x=_{j}y$ if there exist representatives
$(x_{\eps})$, $(y_{\eps})$ of $x$, $y$, respectively, such that
\begin{equation}
|x_{\eps}-y_{\eps}|=O(\rho_{\eps}^{\frac{1}{j}}).\label{eq:equalityUpTo_j}
\end{equation}
We will read $x=_{j}y$ as $x$ \emph{is equal to} $y$ \emph{up to}
$j+1$\emph{-th order infinitesimals}. Finally, if $k\in\N_{>0}$,
we set $D_{kj}:=\left\{ x\in\rcrho\mid x^{k+1}=_{j}0\right\} $, which
is called the \emph{set of} $k$\emph{-th order infinitesimals for
the equality $=_{j}$}, and 
\[
D_{\infty j}:=\left\{ x\in\rcrho\mid\exists k\in\N_{>0}:\ x^{k+1}=_{j}0\right\} 
\]
which is called the \emph{set of infinitesimals for the equality $=_{j}$}. 
\end{enumerate}
\end{defn}

Of course, the reformulation of Def.~\ref{def:little-oh-equality_k}
\ref{enu:little-oh} for the classical Landau's little-oh is particularly
suited to the case of a ring like $\rcrho$, instead of a field. The
intuitive interpretation of $x=_{j}y$ is that for particular (physics-related)
problems one is not interested in distinguishing quantities whose
difference $|x-y|$ is less than an infinitesimal of order $j$. The
idea behind taking $\frac{1}{j}$ in \eqref{eq:equalityUpTo_j} is
to obtain the property that the greater the order $j$ of the infinitesimal
error, the greater the difference $|x-y|$ is allowed to be. This
is a typical property in rings with nilpotent infinitesimals (see
e.g.~\cite{Gio10a,Koc}). The set $D_{ki}$ represents the neighborhood
of infinitesimals of $k$-th order for the equality $=_{j}$. Once
again, the greater the order $k$, the bigger is the neighborhood
(see Theorem \ref{thm:TaylorPeano-Infinitesimals} \ref{enu:chainOfInfNeighborhoods}
below). 
\begin{thm}
\label{thm:TaylorPeano-Infinitesimals}Let $f\in\gsf(U,\rcrho)$ be
a generalized smooth function defined in the sharply open set $U\subseteq\rcrho$.
Let $x$, $\delta\in\rcrho$, with $\delta>0$ and $[x-\delta,x+\delta]\subseteq U$.
Let $k$, $l$, $j\in\R_{>0}$. Then 
\begin{enumerate}
\item \label{enu:Peano}$\forall n\in\N:\ f(x+u)=\sum_{r=0}^{n}\frac{f^{(r)}(x)}{r!}u^{r}+o(u^{n})$
as $u\to0$. 
\item \label{enu:indepedRepr}The definition of $x=_{j}y$ does not depend
on the representatives of $x$, $y$. 
\item \label{enu:equivalenceRel}$=_{j}$ is an equivalence relation on
$\rcrho$. 
\item \label{enu:relationBetweenD_j-And-D_l}If $x=_{j}y$ and $l\ge j$,
then $x=_{l}y$. 
\item \label{enu:relationDandD_j}If $\forall^{0}j\in\R_{>0}:\ x=_{j}y$,
then $x=y$. 
\item \label{enu:=00003D00003Dj-sumProd}If $x=_{j}y$ and $z=_{j}w$ then
$x+z=_{j}y+w$. If $x$ and $z$ are finite, then $x\cdot z=_{j}y\cdot w$. 
\item \label{enu:D_k-infinitesimal}$\forall h\in D_{kj}:\ h\approx0$. 
\item \label{enu:chainOfInfNeighborhoods}$D_{mj}\subseteq D_{kj}\subseteq D_{\infty j}$
if $m\le k$. 
\item \label{enu:D_j-AlmostIdeal}$D_{kj}$ is a subring of $\rcrho$. For
all $h\in D_{kj}$ and all finite $x\in\rcrho$, we have $x\cdot h\in D_{kj}$. 
\item \label{enu:infTaylor}Let $n\in\N_{>0}$ and assume that $j$ satisfies
\begin{equation}
\forall z\in\rcrho\,\forall\xi\in[x-\delta,x+\delta]:\ z=_{j}0\then z\cdot f^{(n+1)}(\xi)=_{j}0.\label{eq:relBetween-i-AndInfinite-f^(n+1)}
\end{equation}
Then there exist $k\in\R_{>0}$ such that $k\le n$ and 
\[
\forall u\in D_{kj}:\ f(x+u)=_{j}\sum_{r=0}^{n}\frac{f^{(r)}(x)}{r!}u^{r}.
\]
\item \label{enu:infTaylor2}For all $n\in\N_{>0}$ there exist $e$, $k\in\R_{>0}$
such that $e\le j$, $k\le n$ and $\forall u\in D_{ke}:\ f(x+u)=_{e}\sum_{r=0}^{n}\frac{f^{(r)}(x)}{r!}u^{r}$. 
\end{enumerate}
\end{thm}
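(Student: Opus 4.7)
The plan is to dispatch items (ii)--(ix) by elementary manipulations with representatives, item (i) as a reformulation of the integral-remainder form of Taylor's theorem (Thm.~\ref{thm:Taylor}(ii)), and items (x)--(xi) by combining the Lagrange form of Thm.~\ref{thm:Taylor}(i) with the algebra of $=_{j}$ established in (vi)--(ix).

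For the algebraic part: (ii) holds because if $(x_{\eps})\sim_{\rho}(x'_{\eps})$ and $(y_{\eps})\sim_{\rho}(y'_{\eps})$, the triangle inequality combined with the $\rho$-negligibility of $(x_{\eps}-x'_{\eps})$ and $(y_{\eps}-y'_{\eps})$ preserves any bound $O(\rho_{\eps}^{1/j})$. Reflexivity, symmetry and transitivity in (iii) are routine; (iv) uses $\rho_{\eps}<1$ to get $\rho_{\eps}^{1/j}\le\rho_{\eps}^{1/l}$ when $l\ge j$; (vi) is the usual arithmetic of asymptotic bounds, requiring finiteness of $x$, $z$ in the product case. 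For (v), $\forall^{0}j\in\R_{>0}$ combined with (iv) yields $|x_{\eps}-y_{\eps}|=O(\rho_{\eps}^{N})$ for every $N\in\N$, so $x=y$. Item (vii) is immediate: $h^{k+1}=_{j}0$ forces $|h_{\eps}|=O(\rho_{\eps}^{1/(j(k+1))})\to 0$. Item (viii) follows from writing $h^{k+1}=h^{k-m}\cdot h^{m+1}$ and invoking (vi), since $h^{k-m}$ is infinitesimal by (vii) hence finite. For (ix), the binomial expansion of $(h_{1}+h_{2})^{k+1}$ combined with the common order of each $|h_{i,\eps}|$ gives $(h_{1}+h_{2})^{k+1}=_{j}0$; closure under multiplication and under scaling by a finite $x$ follows directly from (vi).

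For (i), take a sharp ball $B_{r}(x)\subseteq U$ and apply the extreme value theorem (Cor.~\ref{cor:extremeValues}) to $\bigl[\overline{\Eball_{r_{\eps}/2}(x_{\eps})}\bigr]$ to obtain $K\in\rti$ with $|f^{(n+1)}|\le K$ there. On $B_{r/2}(0)$ set
\[
R(u):=\frac{u}{n!}\int_{0}^{1}(1-t)^{n}f^{(n+1)}(x+tu)\,dt,
\]
so that Thm.~\ref{thm:Taylor}(ii) reads $f(x+u)-\sum_{r=0}^{n}\frac{f^{(r)}(x)}{r!}u^{r}=R(u)\cdot u^{n}$. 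Replacing $K$ by $K+1$ to ensure invertibility, the bound $|R(u)|\le\frac{(K+1)|u|}{(n+1)!}$ forces $R(u)\to 0$ as $u\to 0$ in the sharp topology, which is precisely the condition in Def.~\ref{def:little-oh-equality_k}(i).

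The substantive step is (x). Let $K:=\max(|f^{(n+1)}(M)|,|f^{(n+1)}(m)|)$ on $[x-\delta,x+\delta]$ via Cor.~\ref{cor:extremeValues}, and use Lem.~\ref{lem:mayer}(iii) to obtain $m_{0}\in\N$ with $\delta_{\eps}>\rho_{\eps}^{m_{0}}$ for $\eps$ small. Choose $k\in(0,n]$ so small that $1/(j(k+1))\ge m_{0}$, so that every $u\in D_{kj}$ satisfies $|u|\le\delta$; then $[x,x+u]\subseteq[x-\delta,x+\delta]\subseteq U$ and the Lagrange form of Thm.~\ref{thm:Taylor}(i) supplies $\xi\in[x,x+u]$ with remainder $\frac{f^{(n+1)}(\xi)}{(n+1)!}u^{n+1}$. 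Since $u\in D_{kj}$ and $k\le n$, (viii) gives $u^{n+1}=_{j}0$; (vi) yields $\frac{u^{n+1}}{(n+1)!}=_{j}0$; the hypothesis \eqref{eq:relBetween-i-AndInfinite-f^(n+1)} applied with $z:=\frac{u^{n+1}}{(n+1)!}$ then produces the remainder $=_{j}0$. For (xi), write $|K_{\eps}|\le\rho_{\eps}^{-N}$ from moderateness and choose $e\le j$ and $k\in(0,n]$ so small that $(n+1)/(k+1)\ge 1+eN$; a direct estimate then shows $\frac{u^{n+1}f^{(n+1)}(\xi)}{(n+1)!}=_{e}0$ for every $u\in D_{ke}$, which bypasses the need for hypothesis \eqref{eq:relBetween-i-AndInfinite-f^(n+1)}. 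The main obstacle is precisely this order arithmetic: $k$ and $e$ must be chosen simultaneously so that $|u|\le\delta$, $u^{n+1}$ stays on the $=_{e}0$ side, and the potentially infinite growth of $f^{(n+1)}$ is absorbed by the exponent $1/e$.
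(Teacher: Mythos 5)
Your treatment of items (i)--(ix) and (xi) follows the paper's proof in all essentials: the same integral-remainder reformulation for the Peano form, the same binomial estimate for closure of $D_{kj}$ under sums, and for (xi) the same idea of shrinking $e$ until the finite order of infinity of $\max_{[x-\delta,x+\delta]}\left|f^{(n+1)}\right|$ is absorbed by the exponent $1/e$. For item (x), however, you take a genuinely different and more direct route. The paper first converts hypothesis \eqref{eq:relBetween-i-AndInfinite-f^(n+1)} into a quantitative bound on $K:=\max\left(\left|f^{(n+1)}(M)\right|,\left|f^{(n+1)}(m)\right|\right)$, namely that its order of infinity $p$ satisfies $jp<1$ (by testing the hypothesis on $z=\diff{\rho}^{s/j}$ with $1\le s<2$), and then does exponent arithmetic to find $k$ with $(k+1)(jp+1)<n+1$. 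You instead instantiate the hypothesis directly at $z:=u^{n+1}/(n+1)!$ and at the Lagrange point $\xi$, which immediately makes the remainder $=_{j}0$. This is a legitimate and cleaner use of the hypothesis, and it yields the conclusion for every $k\le n$ rather than only for $k<(n-jp)/(jp+1)$; the trade-off is that you exploit the full pointwise strength of \eqref{eq:relBetween-i-AndInfinite-f^(n+1)} rather than only the growth bound it implies.

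The one step you should not leave as stated is the claim that $k$ can be chosen so that every $u\in D_{kj}$ satisfies $|u|\le\delta$. Membership $u\in D_{kj}$ only gives $|u|\le A^{1/(k+1)}\diff{\rho}^{1/(j(k+1))}$ for a $u$-dependent real constant $A$, and since $1/(j(k+1))<1/j$ for every $k>0$, your condition $1/(j(k+1))\ge m_{0}$ is unattainable whenever $jm_{0}\ge1$, i.e.\ whenever $\delta$ is a sufficiently small (invertible) infinitesimal relative to $j$; even when it is attainable, the uncontrolled constant $A$ still has to be absorbed. So the choice of $k$ you describe does not in general force $[x,x+u]\subseteq[x-\delta,x+\delta]$, which you need both to apply Thm.~\ref{thm:Taylor} and to place $\xi$ inside the interval where \eqref{eq:relBetween-i-AndInfinite-f^(n+1)} is assumed. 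To be fair, the paper's own proof of (x) silently makes the same assumption (it bounds $\left|f^{(n+1)}(\xi)\right|$ by the maximum over $[x-\delta,x+\delta]$ without checking that the Lagrange point lies there), so this is a defect of the statement as given rather than of your strategy alone; but since you explicitly attempt a justification, it should be repaired, e.g.\ by restricting to $u\in D_{kj}\cap B_{\delta}(0)$ or by recording the needed relation between $j$ and $\delta$ as an explicit hypothesis.
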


\begin{proof}
In order to prove \ref{enu:Peano} we set $P(u)=f(x+u)-\sum_{r=0}^{n}\frac{f^{(r)}(x)}{r!}u^{r}$,
$Q(u)=u^{n}$ and $R(u)=u\cdot\int_{0}^{1}\frac{f^{(n+1)}(x+tu)}{n!}(1-t)^{n}\,\diff{t}$
for $u\in B_{\delta}(0)$. The segment $[x-u,x+u]\subseteq B_{\delta}(x)\subseteq U$,
so Thm.~\ref{thm:Taylor} \ref{enu:integralRest} yields $P(u)=Q(u)\cdot R(u)$
for all $u\in U_{x}$. As in the previous proof, set 
\[
K:=\max\left(\left|f^{(n+1)}(M)\right|,\left|f^{(n+1)}(m)\right|\right)
\]
so that $\left|f^{(n+1)}(\xi)\right|\le K$ for all $\xi\in[x-\delta,x+\delta]$,
then 
\[
|R(u)|\le|u|\cdot\left|\int_{0}^{1}\frac{f^{(n+1)}(x+tu)}{n!}(1-t)^{n}\,\diff{t}\right|\le|u|\cdot\frac{K}{(n+1)!}
\]
which goes to $0$ as $u\to0$ in the sharp topology.

The proofs of \ref{enu:indepedRepr}-\ref{enu:D_j-AlmostIdeal} are
simple. We only prove that $D_{kj}$ is closed with respect to sums.
Let $x$, $y\in D_{kj}$ so that 
\begin{equation}
\left|\frac{x_{\eps}^{k+1}}{\rho_{\eps}^{\frac{1}{j}}}\right|\le M\quad,\quad\left|\frac{y_{\eps}^{k+1}}{\rho_{\eps}^{\frac{1}{j}}}\right|\le N\label{eq:x-y-In-D_j}
\end{equation}
for $\eps$ small and for some $M$, $N\in\R_{>0}$. Then 
\begin{align*}
\left|\frac{(x_{\eps}+y_{\eps})^{k+1}}{\rho_{\eps}^{\frac{1}{j}}}\right| & \le\sum_{r=0}^{k+1}{k+1 \choose r}\left|\frac{x_{\eps}^{k+1}}{\rho_{\eps}^{\frac{1}{j}}}\right|^{\frac{r}{k+1}}\left|\frac{y_{\eps}^{k+1}}{\rho_{\eps}^{\frac{1}{j}}}\right|^{\frac{k+1-r}{k+1}}\\
 & \le\sum_{r=0}^{k+1}{k+1 \choose r}M^{\frac{r}{k+1}}N^{\frac{k+1-r}{k+1}},
\end{align*}
proving the claim.

In order to show \ref{enu:infTaylor}, we first note that $x=_{j}y$
is equivalent to 
\[
\exists A\in\R_{>0}:\ |x-y|\le A\cdot\diff{\rho}{}^{\frac{1}{j}}.
\]
We again use the notation $K:=\max\left(\left|f^{(n+1)}(M)\right|,\left|f^{(n+1)}(m)\right|\right)$
and set $p:=\inf\left\{ q\in\R\mid\exists B\in\R_{>0}:\ K\le B\diff{\rho}{}^{-q}\right\} $
so that for any $r\in\R_{>0}$ we have 
\begin{align}
\left|f(x+u)-\sum_{r=0}^{n}\frac{f^{(r)}(x)}{r!}u^{r}\right| & \le\frac{K}{(n+1)!}\cdot|u|^{n+1}\nonumber \\
 & \le B\diff{\rho}^{-p-r}\cdot|u|^{n+1}\label{eq:infTaylorDiffEstim}
\end{align}
for some $B\in\R_{>0}$. Now assume that $u\in D_{kj}$. Then 
\[
|u|^{k+1}\le A\cdot\diff{\rho}^{\frac{1}{j}}
\]
for some $A\in\R_{>0}$, so that 
\[
|u|^{n+1}\le A\cdot\diff{\rho}^{\frac{n+1}{j(k+1)}}.
\]
This and \eqref{eq:infTaylorDiffEstim} yield 
\[
\left|f(x+u)-\sum_{r=0}^{n}\frac{f^{(r)}(x)}{r!}u^{r}\right|\le AB\cdot\diff{\rho}^{\frac{n+1}{j(k+1)}-p-r},
\]
which gives the desired conclusion if $\frac{n+1}{j(k+1)}-p-r\ge\frac{1}{j}$.
There exists such an $r\in\R_{>0}$ if and only if $\frac{n+1}{j(k+1)}-p-\frac{1}{j}>0$,
i.e.~for $(k+1)(jp+1)<n+1$. If $jp+1\le0$, then any $k>0$ satisfies
this inequality. Otherwise, we need to prove the existence of $k<\frac{n-jp}{jp+1}$.
Since $n\in\N_{>0}$, a sufficient condition for this is that $jp<1$.
This gives a relation between the order of the equality $=_{j}$ and
the order of infinity $p$ of the derivative $f^{(n+1)}$ on the interval
$[x-\delta,x+\delta]$. In order to prove that $jp<1$, we thus need
to use the assumption \eqref{eq:relBetween-i-AndInfinite-f^(n+1)}.
Set $z:=\diff{\rho}^{\frac{s}{j}}$, where $1\le s<2$. Then $0\le z\le\diff{\rho}^{\frac{1}{j}}$,
i.e.~$z=_{j}0$, and \eqref{eq:relBetween-i-AndInfinite-f^(n+1)}
yields 
\begin{equation}
\forall\xi\in[x-\delta,x+\delta]:\ z\cdot\left|f^{(n+1)}(\xi)\right|=_{j}0\label{eq:z-time-f(n+1)}
\end{equation}
Taking $\xi=M$ in \eqref{eq:z-time-f(n+1)} we obtain 
\[
\left|f^{(n+1)}(M)\right|\le C\cdot\diff{\rho}^{-\frac{s-1}{j}}
\]
for some $C\in\R_{>0}$. Analogously 
\[
\left|f^{(n+1)}(m)\right|\le D\cdot\diff{\rho}^{-\frac{s-1}{j}}.
\]
for some $D$ that we may assume to be $\ge C$ so that $K\le D\diff{\rho}^{-\frac{s-1}{j}}$,
which implies $p\le\frac{s-1}{j}$ by definition of $p$. Therefore,
$jp\le s-1<1$, which proves our claim.

To prove \ref{enu:infTaylor2}, we only need to note that there always
exists an $e\in\R_{>0}$ such that $ep<1$, $e\le j$, so that we
can repeat the previous deduction with $e$ instead of $j$. 
\end{proof}

\section{\label{sec:n-dimIntegral}Multidimensional integration and hyperlimits}

In this section we want to introduce integration of GSF over functionally
compact sets with respect to an arbitrary Borel measure $\mu$.

The possibility to achieve results mirroring classical limit theorems
for this notion of integral is closely linked to the introduction
of the notion of hyperlimit, i.e.~of limits of sequences of generalized
numbers $a=(a_{n})_{n\in\N}:\hyperNrho\ra\RC{\sigma}$, where $\sigma$
and $\rho$ are two gauges (see Def.~\ref{def:RCGN}) and $n\to+\infty$
along generalized natural numbers, i.e.~for 
\[
n\in\hyperNrho:=\left\{ [n_{\eps}]\in\rcrho\mid n_{\eps}\in\N\ \forall\eps\right\} .
\]
Mimicking nonstandard analysis, the numbers $n\in\hyperNrho$ are
called \emph{hypernatural} numbers. To glimpse the necessity of studying
$\hyperNrho$, it suffices to note that $\frac{1}{n}<\diff{\rho}^{q}$
is always false for $n\in\N$ but it can be satisfied for suitable
$n\in\hyperNrho$. Therefore, if $\lim_{n\to+\infty}a_{n}=0$ in the
classical sense, i.e.~for $n\in\N$ and with respect to the sharp
topology, then necessarily $a_{n}$ is infinitesimal for $n\in\N$
sufficiently large. This represents a severe limitation for this notion
of limit. It is also clear from the fact that $\rcrho$ with the sharp
topology is an ultra-pseudometric space, see e.g.~\cite{S0}, and
hence a series in $\rcrho$ converges in the sharp topology if and
only if its general term $a_{n}\to0$ as $n\to+\infty$, $n\in\N$,
in the sharp topology, see \cite{Kob84}.

\subsection{Integration over functionally compact sets}

In section \ref{sec:Integral-calculus}, we already defined a notion
of integral over intervals using the notion of primitive. This notion
does not help if we want to define the integral $\int_{D}f$ of a
GSF $f$ over a domain $D\subseteq\rcrho^{n}$ which is more general
than an interval. In this case, it is natural to try an $\eps$-wise
definition of the type $\int_{D}f\,\diff{\mu}:=\left[\int_{D_{\eps}}f_{\eps}\,\diff{\mu}\right]\in\rcrho$,
where the net $(f_{\eps})$ defines the GSF $f$ and the net $(D_{\eps})$
determines, in some way, the subset $D\subseteq\rcrho^{n}$, e.g.~$D=[D_{\eps}]$
in case of internal sets. In pursuing this idea, it is important to
recall that the internal set (interval) $[0,1]=\left[[0,1]_{\R}\right]$
can also be defined by a net of finite sets. Indeed, if $\text{int}(-)$
is the integer part function, and we set 
\begin{align}
N_{\eps} & :=\text{int}\left(\rho_{\eps}^{-1/\eps}\right)\nonumber \\
K_{\eps} & :=\{\rho_{\eps}^{1/\eps},2\rho_{\eps}^{1/\eps},\ldots,N_{\eps}\rho_{\eps}^{1/\eps}\}\label{eq:B_eps}
\end{align}
then the Hausdorff distance $d_{\text{H}}([0,1]_{\R},K_{\eps})=\rho_{\eps}^{1/\eps}$
and hence $[0,1]=[K_{\eps}]$ (see also \cite{Ver08,GKV}). Consequently,
if $\lambda$ is the Lebesgue measure on $\R$, we have that the generalized
number $\left[\lambda([0,1]_{\R})\right]=1$, whereas $\left[\lambda(K_{\eps})\right]=0$
and, in general, $\left[\int_{[0,1]_{\R}}f_{\eps}\,\diff{\lambda}\right]\ne\left[\int_{K_{\eps}}f_{\eps}\,\diff{\lambda}\right]=0$.
Therefore, even the definition of integral over an interval cannot
be easily accomplished by proceeding $\eps$-wise, i.e.~on defining
nets.

If we try to understand when such an $\eps$-wise definition can be
accomplished, it turns out that we have to consider an enlargement
$\overline{\Eball}_{\rho_{\eps}^{m}}(K_{\eps})$ and then take $m\to+\infty$.
This is indeed quite natural if one keeps in mind that $[K_{\eps}]=[L_{\eps}]$
if and only if the Hausdorff distance $d_{\text{H}}(K_{\eps},L_{\eps})$
defines a negligible nets, (see \cite{Ver08,GKV}). In the following,
we say that $(K_{\eps})$ is a \emph{representative of }$K\fcmp\rcrho^{n}$
if $K=[K_{\eps}]$, $(K_{\eps})$ is sharply bounded, and $K_{\eps}\Subset\R^{n}$
for all $\eps$. 
\begin{defn}
\label{def:intOverCompact}Let $\mu$ be a Borel measure on $\R^{n}$
and let $K$ be a functionally compact subset of $\RC{\rho}^{n}$.
Then we call $K$ $\mu$-measurable if the limit 
\begin{equation}
\mu(K):=\lim_{\substack{m\to\infty\\
m\in\N
}
}[\mu(\overline{\Eball}_{\rho_{\eps}^{m}}(K_{\eps}))]\label{eq:muMeasurable}
\end{equation}
exists for some representative $(K_{\eps})$ of $K$. The limit is
taken in the sharp topology on $\RC{\rho}$, and $\overline{\Eball}_{r}(A):=\{x\in\R^{n}:d(x,A)\le r\}$. 
\end{defn}

In the following result, we will prove that this definition satisfies
our requirements. We will occasionally integrate generalized functions
more general than GSF: 
\begin{defn}
\label{def:integrableMap}Let $K\fcmp\RC{\rho}^{n}$. Let $(\Omega_{\eps})$
be a net of open subsets of $\R^{n}$, and $(f_{\eps})$ be a net
of continuous maps $f_{\eps}$: $\Omega_{\eps}\longrightarrow\R$.
Then we say that 
\[
(f_{\eps})\textit{ defines a generalized integrable map}:K\longrightarrow\RC{\rho}
\]
if 
\begin{enumerate}
\item $K\subseteq\sint{\Omega_{\eps}}$ and $[f_{\eps}(x_{\eps})]\in\RC{\rho}$
for all $[x_{\eps}]\in K$. 
\item $\forall(x_{\eps}),(x'_{\eps})\in\R_{\rho}^{n}:\ [x_{\eps}]=[x'_{\eps}]\in K\ \Rightarrow\ (f_{\eps}(x_{\eps}))\sim_{\rho}(f_{\eps}(x'_{\eps}))$. 
\end{enumerate}
\noindent If $f\in\Set(K,\RC{\rho})$ is such that 
\begin{equation}
\forall[x_{\eps}]\in K:\ f\left([x_{\eps}]\right)=\left[f_{\eps}(x_{\eps})\right]%\label{eq:f-u-relations}
\end{equation}
we say that $f:K\longrightarrow\RC{\rho}$ is a \emph{generalized
integrable function}.

\noindent We will again say that $f$ \emph{is defined by the net}
$(f_{\eps})$ or that the net $(f_{\eps})$ \emph{represents} $f$.
The set of all these generalized integrable functions will be denoted
by $\GI(K,\RC{\rho})$. 
\end{defn}

\noindent E.g., if $f=[f_{\eps}(-)]|_{K}\in\gsf(K,\RC{\rho})$, then
both $f$ and $|f|=[|f_{\eps}(-)|]|_{K}$ are integrable on $K$.

As in Lemma \ref{lem:fromOmega_epsToRn}, we may assume without loss
of generality that $f_{\eps}$ are continuous maps defined on the
whole of $\R^{n}$. 
\begin{thm}
\label{thm:muMeasurableAndIntegral}Let $K\subseteq\RC{\rho}^{n}$
be $\mu$-measurable. 
\begin{enumerate}
\item \label{enu:indepRepr}The definition of $\mu(K)$ is independent of
the representative $(K_{\eps})$. 
\item \label{enu:existsRepre}There exists a representative $(K_{\eps})$
of $K$ such that $\mu(K)=[\mu(K_{\eps})]$. 
\item \label{enu:epsWiseDefInt}Let $(K_{\eps})$ be any representative
of $K$ and let $f=[f_{\eps}(-)]|_{K}\in\GI(K,\RC{\rho})$. Then 
\[
\int_{K}f\,\diff{\mu}:=\lim_{m\to\infty}\biggl[\int_{\overline{\Eball}_{\rho_{\eps}^{m}}(K_{\eps})}f_{\eps}\,\diff{\mu}\biggr]
\]
exists and its value is independent of the representative $(K_{\eps})$. 
\item \label{enu:existsReprDefInt}There exists a representative $(K_{\eps})$
of $K$ such that 
\begin{equation}
\int_{K}f\,\diff{\mu}=\biggl[\int_{K_{\eps}}f_{\eps}\,\diff{\mu}\biggr]\label{eq:measurable}
\end{equation}
for each $f=[f_{\eps}(-)]|_{K}\in\GI(K,\RC{\rho})$. 
\item \label{enu:indepReprDefInt}If \eqref{eq:measurable} holds, then
the same holds for any representative $(L_{\eps})$ of $K$ with $L_{\eps}\supseteq K_{\eps}$,
$\forall^{0}\eps$. 
\end{enumerate}
\end{thm}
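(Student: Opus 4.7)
The plan is to organize all five assertions around one technical fact: any two representatives $(K_\eps)$, $(K'_\eps)$ of the same $K\fcmp\rcrho^n$ have $\rho$-negligible Hausdorff distance, so for each fixed $m$, once $\eps$ is small enough one has the sandwich
\[
\overline{\Eball}_{\rho_\eps^m}(K_\eps)\subseteq \overline{\Eball}_{\rho_\eps^{m-1}}(K'_\eps)
\]
and symmetrically. Monotonicity of $\mu$ then gives two-sided bounds between $[\mu(\overline{\Eball}_{\rho_\eps^m}(K_\eps))]$ and $[\mu(\overline{\Eball}_{\rho_\eps^{m-1}}(K'_\eps))]$, and passing to the sharp limit as $m\to\infty$ forces the two limits to coincide, proving \ref{enu:indepRepr}.

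For \ref{enu:existsRepre} (and later \ref{enu:existsReprDefInt}) I would use an interleaving/diagonal construction. Fix any representative $(K_\eps)$ and a representative $(c_\eps)$ of the already-defined $\mu(K)$. Sharp convergence gives, for each $n\in\N$, integers $m_n\uparrow\infty$ and $\bar\eps_n\downarrow 0$ (which I take strictly decreasing) such that $|\mu(\overline{\Eball}_{\rho_\eps^{m_n}}(K_\eps))-c_\eps|\le\rho_\eps^n$ for $\eps\in(\bar\eps_{n+1},\bar\eps_n]$. Setting $\tilde K_\eps:=\overline{\Eball}_{\rho_\eps^{m_n}}(K_\eps)$ on that interval yields a representative of $K$ (because $d_H(\tilde K_\eps,K_\eps)\le\rho_\eps^{m_n}$ is $\rho$-negligible since $m_n\to\infty$) with $[\mu(\tilde K_\eps)]=\mu(K)$, proving \ref{enu:existsRepre}. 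For \ref{enu:existsReprDefInt}, once existence in \ref{enu:epsWiseDefInt} is secured, the same construction is applied with $\mu$ replaced by $f_\eps\,\diff\mu$; to obtain \emph{one} representative that works for \emph{every} $f\in\GI(K,\rcrho)$, one lets $m_n$ grow fast enough to beat every polynomial moderateness exponent, so that for any $f$ with $\sup|f_\eps|=O(\rho_\eps^{-N})$ the thickening integrals and $\int_{\tilde K_\eps}f_\eps\,\diff\mu$ agree up to $\rho$-negligible error.

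For \ref{enu:epsWiseDefInt}, the existence of the sharp limit is a Cauchy argument: for $m'>m$ I would estimate
\[
\Big|\int_{\overline{\Eball}_{\rho_\eps^m}(K_\eps)}\! f_\eps\,\diff\mu - \int_{\overline{\Eball}_{\rho_\eps^{m'}}(K_\eps)}\! f_\eps\,\diff\mu\Big|\le \sup_{\overline{\Eball}_{\rho_\eps^m}(K_\eps)}|f_\eps|\cdot\bigl[\mu(\overline{\Eball}_{\rho_\eps^m}(K_\eps))-\mu(\overline{\Eball}_{\rho_\eps^{m'}}(K_\eps))\bigr].
\]
The first factor is $\rho$-moderate: since $(K_\eps)$ is sharply bounded, the thickenings sit in a sharply bounded net, and moderateness of $|f_\eps|$ on such an $\eps$-dependent enlargement follows from the definition of $\GI(K,\rcrho)$ combined with the type of argument in Thm.~\ref{thm:GSF-continuity}\,\ref{enu:modOnEpsDepBall} and Lem.~\ref{Lem:OmegaEpsAroundBoundedAndCptSupp}. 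The second factor tends sharply to $0$ by $\mu$-measurability. Cauchy completeness of $\rcrho$ in the sharp topology (noted after Lem.~\ref{lem:mayer}) then produces the limit, and independence of the representative follows exactly as in \ref{enu:indepRepr}: passing to the common thickening $\overline{\Eball}_{\rho_\eps^{m-1}}(K'_\eps)$ absorbs the symmetric-difference error into the same modulus controlling the Cauchy estimate.

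Finally, \ref{enu:indepReprDefInt} is a sandwich. With $(K_\eps)$ satisfying \ref{enu:existsReprDefInt} and $L_\eps\supseteq K_\eps$ both representing $K$, negligibility of $d_H(L_\eps,K_\eps)$ forces $L_\eps\subseteq\overline{\Eball}_{\rho_\eps^m}(K_\eps)$ for $\eps$ small, for any prescribed $m$. Applying \ref{enu:existsReprDefInt} to $f\equiv 1$ gives $[\mu(K_\eps)]=\mu(K)$, so by $\mu$-measurability one can pick $m$ with $|[\mu(\overline{\Eball}_{\rho_\eps^m}(K_\eps))]-[\mu(K_\eps)]|\le\diff\rho^n$; multiplying by the moderate $\sup|f_\eps|$ bounds the integral difference by $\diff\rho^n\cdot(\text{moderate})$, and arbitrariness of $n$ forces equality in $\rcrho$. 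The main obstacle I anticipate is the uniformity required in \ref{enu:existsReprDefInt}, namely producing a single representative valid for every $f\in\GI(K,\rcrho)$: this forces the diagonal rate $m_n$ to outrun every fixed moderateness exponent, and the choice of the cut-offs $\bar\eps_n$ must be made relative to this — the delicate bookkeeping step of the proof.
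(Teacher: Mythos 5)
Your proposal is correct, and for parts \ref{enu:indepRepr}, \ref{enu:existsRepre} and \ref{enu:indepReprDefInt} it is essentially the paper's argument: negligible Hausdorff distance between representatives gives the nested thickenings $\overline{\Eball}_{\rho_{\eps}^{m}}(K_{\eps})\subseteq\overline{\Eball}_{\rho_{\eps}^{m-1}}(K'_{\eps})$, and the diagonal choice $\tilde K_{\eps}:=\overline{\Eball}_{\rho_{\eps}^{m_{q_{\eps}}}}(K_{\eps})$ produces the good representative. You diverge in the logical order of \ref{enu:epsWiseDefInt} and \ref{enu:existsReprDefInt}: you prove existence of the limit for an \emph{arbitrary} representative by a Cauchy argument together with Cauchy completeness of $\rcrho$, whereas the paper first passes to the representative from \ref{enu:existsRepre} and identifies the limit outright as $\bigl[\int_{K_{\eps}}f_{\eps}\,\diff{\mu}\bigr]$ via the single estimate $\bigl|\int_{\overline{\Eball}_{\rho_{\eps}^{m}}(K_{\eps})}f_{\eps}\,\diff{\mu}-\int_{K_{\eps}}f_{\eps}\,\diff{\mu}\bigr|\le\mu(\overline{\Eball}_{\rho_{\eps}^{m}}(K_{\eps})\setminus K_{\eps})\cdot\sup_{\overline{\Eball}_{\rho_{\eps}^{m}}(K_{\eps})}|f_{\eps}|$. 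The paper's route avoids invoking Cauchy completeness (which is only established in a later paper of this series), while yours proves \ref{enu:epsWiseDefInt} without first constructing a special representative; your bound $|\int_{A}f-\int_{B}f|\le\sup|f|\cdot\mu(B\setminus A)$ for $A\subseteq B$ also lets you skip the paper's reduction to $f_{\eps}\ge0$ and positive/negative parts. Two remarks. First, the ``delicate bookkeeping'' you anticipate in \ref{enu:existsReprDefInt} is not needed: the representative from \ref{enu:existsRepre} depends only on $\mu$ and $K$, and it works for \emph{every} $f$ simultaneously, because $[\mu(\overline{\Eball}_{\rho_{\eps}^{m}}(K_{\eps})\setminus K_{\eps})]\to0$ sharply as $m\to\infty$ and hence still tends to $0$ after multiplication by any fixed moderate bound $\diff{\rho}^{-N}\ge\sup|f_{\eps}|$; no growth condition on $m_{n}$ relative to $N$ ever enters. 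Second, the one step that both you and the paper leave implicit, and which you should spell out, is that $\bigl(\sup_{\overline{\Eball}_{\rho_{\eps}^{m}}(K_{\eps})}|f_{\eps}|\bigr)$ is moderate for \emph{some} $m$ (and then for all larger $m$): this follows by a diagonal contradiction argument producing a net $(x_{\eps})$ with $d(x_{\eps},K_{\eps})$ negligible, hence $[x_{\eps}]\in K$ by Thm.~\ref{thm:strongMembershipAndDistanceComplement}.\ref{enu:internalSetsDistance}, at which $(f_{\eps}(x_{\eps}))$ fails to be moderate, contradicting Def.~\ref{def:integrableMap}.
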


\begin{proof}
\ref{enu:indepRepr} Let $(L_{\eps})$ be another representative.
As $[K_{\eps}]\subseteq[L_{\eps}]$, we have that $(\sup_{x\in K_{\eps}}d(x,L_{\eps}))_{\eps}=:(n_{\eps})$
is negligible, so $K_{\eps}\subseteq\overline{\Eball}_{n_{\eps}}(L_{\eps})$,
and $\mu(\overline{\Eball}_{\rho_{\eps}^{m}}(K_{\eps}))\le\mu(\overline{\Eball}_{\rho_{\eps}^{m-1}}(L_{\eps}))$.
Also using this inequality with the roles of $K_{\eps}$ and $L_{\eps}$
interchanged, we see that $\lim_{m\to\infty}[\mu(\overline{\Eball}_{\rho_{\eps}^{m}}(L_{\eps}))]$
exists and that it equals $\lim_{m\to\infty}[\mu(\overline{\Eball}_{\rho_{\eps}^{m}}(K_{\eps}))]$.

\ref{enu:existsRepre} Call $\left[c_{\eps}\right]:=\mu(K)$ and let
$K=[L_{\eps}]$. By definition of $\mu$-measurable set and by the
previous point \ref{enu:indepRepr}, for any $q\in\N$, there exists
$m_{q}\in\N$ (w.l.o.g.\ $m_{q}\ge q$) and $\eps_{q}>0$ (w.l.o.g.\ $\eps_{q}<\eps_{q-1}$
and $\eps_{q}<1/q$) such that 
\[
|\mu(\overline{\Eball}_{\rho_{\eps}^{m_{q}}}(L_{\eps}))-c_{\eps}|\le\rho_{\eps}^{q},\qquad\forall\eps\le\eps_{q}
\]
Now let $q_{\eps}:=q$ if $\eps\in(\eps_{q+1},\eps_{q}]$. Then $q_{\eps}\to\infty$
as $\eps\to0$ and 
\[
[\mu(\overline{\Eball}_{\rho_{\eps}^{m_{q_{\eps}}}}(L_{\eps}))]=[c_{\eps}]=\mu(K).
\]
As also $(\rho_{\eps}^{m_{q_{\eps}}})$ is negligible, we have $K=[\overline{\Eball}_{\rho_{\eps}^{m_{q_{\eps}}}}(L_{\eps})]$
and hence the conclusion follows for $K_{\eps}:=\overline{\Eball}_{\rho_{\eps}^{m_{q_{\eps}}}}(L_{\eps})$.

\ref{enu:epsWiseDefInt}–\ref{enu:existsReprDefInt}. Choose a representative
$(K_{\eps})$ as in part \ref{enu:existsRepre}. Then 
\[
\left|\int_{\overline{\Eball}_{\rho_{\eps}^{m}}(K_{\eps})}f_{\eps}\,\diff{\mu}-\int_{K_{\eps}}f_{\eps}\,\diff{\mu}\right|\le\mu(\overline{\Eball}_{\rho_{\eps}^{m}}(K_{\eps})\setminus K_{\eps})\sup_{\overline{\Eball}_{\rho_{\eps}^{m}}(K_{\eps})}|f_{\eps}|.
\]
As $[\mu(\overline{\Eball}_{\rho_{\eps}^{m}}(K_{\eps})\setminus K_{\eps})]=[\mu(\overline{\Eball}_{\rho_{\eps}^{m}}(K_{\eps}))]-[\mu(K_{\eps})]\to0$
as $m\to\infty$ and since $(\sup_{\overline{\Eball}_{\rho_{\eps}^{m}}(K_{\eps})}|f_{\eps}|)$
is moderate for some $m$ and decreasing in $m$, we find that 
\[
\lim_{m\to\infty}\left[\int_{\overline{\Eball}_{\rho_{\eps}^{m}}(K_{\eps})}f_{\eps}\,\diff{\mu}\right]=\left[\int_{K_{\eps}}f_{\eps}\,\diff{\mu}\right]
\]
exists. Independence of the representative of $K$ follows as in part
\ref{enu:indepRepr}, if $f_{\eps}\ge0$. The general case follows
by considering the positive and negative part of $f_{\eps}$.

\ref{enu:indepReprDefInt} Let $f_{\eps}\ge0$. Then by assumption,
$[\int_{K_{\eps}}f_{\eps}\,\diff{\mu}]\le[\int_{L_{\eps}}f_{\eps}\,\diff{\mu}]$.
For the converse inequality, observe that $[\int_{L_{\eps}}f_{\eps}\,\diff{\mu}]\le[\int_{\overline{\Eball}_{\rho_{\eps}^{m}}(L_{\eps})}f_{\eps}\,\diff{\mu}]$
for each $m\in\N$. Again the general case follows by considering
the positive and negative part of $f_{\eps}$. 
\end{proof}
The following Lemma provides an alternative characterization of $\mu$-measurability: 
\begin{lem}
\label{lem:altCharMeasurability}A functionally compact set $K$ is
$\mu$-measurable if and only if there exists a representative $(K_{\eps})$
of $K$ such that $[\mu(K_{\eps})]=[\mu(L_{\eps})]$, for each representative
$(L_{\eps})$ of $K$ with $L_{\eps}\supseteq K_{\eps}$, $\forall^{0}\eps$. 
\end{lem}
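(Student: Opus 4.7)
Two elementary observations drive the proof. First, for any net of natural numbers $(m_\eps)$ with $m_\eps \to \infty$ as $\eps \to 0^+$, the net $L_\eps := \overline{\Eball}_{\rho_\eps^{m_\eps}}(K_\eps)$ is automatically a representative of $K$ satisfying $L_\eps \supseteq K_\eps$, since then $\rho_\eps^{m_\eps}$ is $\rho$-negligible. Second, the scalar net $m \mapsto \mu(\overline{\Eball}_{\rho_\eps^m}(K_\eps))$ is pointwise non-increasing in $m$ and bounded below by $\mu(K_\eps)$.

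For the forward direction, I would first fix a representative $(K_\eps)$ with $\mu(K) = [\mu(K_\eps)]$ via Theorem~\ref{thm:muMeasurableAndIntegral}~\ref{enu:existsRepre}. Given any representative $(L_\eps)$ of $K$ with $L_\eps \supseteq K_\eps$ for $\eps$ small, the equality $[L_\eps] = [K_\eps]$ forces $d_{\text{H}}(K_\eps, L_\eps) = \sup_{y \in L_\eps} d(y, K_\eps)$ to be $\rho$-negligible, so for each fixed $m \in \N$ one has $L_\eps \subseteq \overline{\Eball}_{\rho_\eps^m}(K_\eps)$ for $\eps$ small. This produces the $\eps$-wise sandwich $\mu(K_\eps) \le \mu(L_\eps) \le \mu(\overline{\Eball}_{\rho_\eps^m}(K_\eps))$, which lifts to the corresponding sandwich in $\rcrho$ between $[\mu(K_\eps)]$, $[\mu(L_\eps)]$ and $[\mu(\overline{\Eball}_{\rho_\eps^m}(K_\eps))]$ (moderateness of $(\mu(L_\eps))$ comes for free from the right-hand bound). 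Sending $m \to \infty$ in the sharp topology collapses the upper bound to $\mu(K) = [\mu(K_\eps)]$, and the standard order-theoretic closure ``$y - z \le \diff{\rho}^q$ for all $q \in \N$ implies $y \le z$'' in $\rcrho$ forces $[\mu(L_\eps)] = [\mu(K_\eps)]$.

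For the reverse direction, I would argue by contradiction: assume the stated property of $(K_\eps)$ but suppose that $a_m := [\mu(\overline{\Eball}_{\rho_\eps^m}(K_\eps))]$ fails to converge to $a := [\mu(K_\eps)]$ in the sharp topology. Using the countable neighborhood basis $\{B_{\diff{\rho}^q}(a) : q \in \N\}$, non-convergence produces some $q \in \N$ and a sequence $m_k \to \infty$ with $a_{m_k} - a \not< \diff{\rho}^q$; a short case analysis based on Lemma~\ref{lem:mayer} then converts this into the concrete $\eps$-wise statement that there exist $J_k \subzero I$ on which $\mu(\overline{\Eball}_{\rho_\eps^{m_k}}(K_\eps)) - \mu(K_\eps) > \rho_\eps^{q+1}$. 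A standard diagonalization now picks $\eps_k \in J_k$ with $\eps_k \downarrow 0$ and defines $m_\eps := m_k$ for $\eps \in (\eps_{k+1}, \eps_k]$, so that $m_\eps \to \infty$. The net $L_\eps := \overline{\Eball}_{\rho_\eps^{m_\eps}}(K_\eps) \supseteq K_\eps$ is then, by the first observation, a representative of $K$, yet $\mu(L_{\eps_k}) - \mu(K_{\eps_k}) > \rho_{\eps_k}^{q+1}$ for every $k$, so $(\mu(L_\eps) - \mu(K_\eps))$ is not $\rho$-negligible and $[\mu(L_\eps)] \ne [\mu(K_\eps)]$, contradicting the hypothesis.

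The main obstacle is the order-theoretic bookkeeping in the reverse direction, namely converting the abstract failure of sharp convergence of $(a_m)$ into a concrete $\eps$-wise lower bound of the form $\rho_\eps^{q+1}$ that survives the diagonalization while keeping $m_\eps \to \infty$. Lemma~\ref{lem:mayer} is the indispensable bridge between the $\rcrho$-inequality $a_{m_k} - a \not< \diff{\rho}^q$ and the required concrete estimate, and the diagonal construction relies on the freedom to choose $\eps_k \in J_k$ as slowly as we like. The forward direction, by contrast, is a routine sandwich argument once Theorem~\ref{thm:muMeasurableAndIntegral}~\ref{enu:existsRepre} has been invoked.
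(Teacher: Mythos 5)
Your proof is correct and follows essentially the same route as the paper's: the forward direction is the sandwich argument already contained in Theorem~\ref{thm:muMeasurableAndIntegral} (which the paper simply cites), and the reverse direction is the same proof by contradiction, extracting $q$ and $m_k\to\infty$ from the failure of sharp convergence and diagonalizing to build the bad representative $L_\eps=\overline{\Eball}_{\rho_\eps^{m_\eps}}(K_\eps)$. Your extra care in passing between the order relation in $\RC{\rho}$ and the $\eps$-wise estimate via Lemma~\ref{lem:mayer} is sound and matches what the paper leaves implicit.
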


\begin{proof}
$\Rightarrow$: by the previous Thm.~\ref{thm:muMeasurableAndIntegral}.

$\Leftarrow$: it suffices to show that $\lim_{m\to\infty}[\mu(\overline{\Eball}_{\rho_{\eps}^{m}}(K_{\eps}))]=[\mu(K_{\eps})]$.
Seeking a contradiction, suppose that there exists $q\in\N$ for which
it does not hold that 
\[
\exists M\,\forall m\ge M\,\forall^{0}\eps:\ |\mu(\overline{\Eball}_{\rho_{\eps}^{m}}(K_{\eps}))-\mu(K_{\eps})|\le\rho_{\eps}^{q}.
\]
Then we can construct a strictly increasing sequence $(m_{k})_{k}\to\infty$
and a strictly decreasing sequence $(\eps_{k})_{k}\to0$ such that
$|\mu(\overline{\Eball}_{\rho_{\eps_{k}}^{m_{k}}}(K_{\eps_{k}}))-\mu(K_{\eps_{k}})|>\rho_{\eps_{k}}^{q}$,
$\forall k$.\\
 Let $L_{\eps}:=\overline{\Eball}_{\rho_{\eps}^{m_{k}}}(K_{\eps})$,
whenever $\eps\in(\eps_{k+1},\eps_{k}]$, $\forall k$. Then $K=[L_{\eps}]$,
but $[\mu(K_{\eps})]\ne[\mu(L_{\eps})]$, as $|\mu(L_{\eps})-\mu(K_{\eps})|>\rho_{\eps}^{q}$,
for each $\eps=\eps_{k}$ ($k\in\N$). 
\end{proof}
\begin{example}
\label{exa:measurability} Let $\lambda$ denote the Lebesgue-measure. 
\begin{enumerate}
\item If $K=\prod_{i=1}^{n}[a_{i},b_{i}]$, then $K$ is $\lambda$-measurable
with 
\[
\int_{K}f\,\diff{\lambda}=\left[\int_{a_{1,\eps}}^{b_{1,\eps}}\,dx_{1}\dots\int_{a_{n,\eps}}^{b_{n,\eps}}f_{\eps}(x_{1},\dots,x_{n})\,\diff{x_{n}}\right]
\]
for any representatives $(a_{i,\eps})$, $(b_{i,\eps})$ of $a_{i}$
and $b_{i}$, respectively. 
\item Let $\rho_{\eps}=\eps$, and 
\[
{\textstyle K:=\left\{ \frac{1}{n}\mid n\in\N_{>0}\right\} \cup\{0\}.}
\]
Then $[K]$ is $\lambda$-measurable with $\lambda([K])=0$. Indeed,
the contribution of $\{1/n\mid n>\eps^{-m/2}\}$ to $\lambda(\overline{\Eball}_{\eps^{m}}(K))$
is at most $\eps^{m/2}+2\eps^{m}$, while the contribution of $\{1/n\mid n\le\eps^{-m/2}\}$
is at most $2\eps^{m}\eps^{-m/2}=2\eps^{m/2}$. Thus $\lim_{m\to\infty}[\lambda(\overline{\Eball}_{\eps^{m}}(K))]=0$. 
\item Let $\rho_{\eps}=\eps$, and 
\[
K:={\textstyle \left\{ \frac{1}{\log n}\mid n\in\N_{>1}\right\} \cup\{0\}.}
\]
Then $[K]$ is not $\lambda$-measurable. For, if $n\ge\frac{\eps^{-m}}{(\log\eps^{-m})^{2}}$,
then, by the mean value theorem, 
\[
\frac{1}{\log n}-\frac{1}{\log(n+1)}\le\frac{1}{n(\log n)^{2}}\le\frac{\eps^{m}(\log\eps^{-m})^{2}}{\Bigl(\log\Bigl(\frac{\eps^{-m}}{(\log\eps^{-m})^{2}}\Bigr)\Bigr)^{2}}\le2\eps^{m}
\]
for small $\eps$. So the contribution of $\left\{ \frac{1}{\log n}\mid n\ge\frac{\eps^{-m}}{(\log\eps^{-m})^{2}}\right\} $
to $\lambda(\overline{\Eball}_{\eps^{m}}(K))$ lies between $\frac{1}{\log(\eps^{-m})}$
and $\frac{2}{\log(\eps^{-m})}$ for small $\eps$. The contribution
of $\Bigl\{\frac{1}{\log n}\mid n<\frac{\eps^{-m}}{(\log\eps^{-m})^{2}}\Bigr\}$
to $\lambda(\overline{\Eball}_{\eps^{m}}(K))$ is at most $\frac{2}{(\log\eps^{-m})^{2}}$,
which is of a lower order. Thus $\lim_{m\to\infty}[\lambda(\overline{\Eball}_{\eps^{m}}(K))]$
does not exist. 
\end{enumerate}
\end{example}

\subsection{Hyperfinite limits}

We start by defining the set of hypernatural numbers in $\rcrho$
and the set of $\rho$-moderate nets of natural numbers. For a deeper
study of these notions, see \cite{MTAG20}. 
\begin{defn}
\label{def:hypernatural}We set 
\begin{enumerate}
\item $\hyperNrho:=\left\{ [n_{\eps}]\in\rcrho\mid n_{\eps}\in\N\quad\forall\eps\right\} $ 
\item $\N_{\rho}:=\left\{ (n_{\eps})\in\R_{\rho}\mid n_{\eps}\in\N\quad\forall\eps\right\} .$ 
\end{enumerate}
\end{defn}

\noindent Therefore, $n\in\hyperNrho$ if and only if there exists
$(x_{\eps})\in\R_{\rho}$ such that $n=[\text{int}(|x_{\eps}|)]$.
Clearly, $\N\subset\hyperNrho$. Note that the integer part function
$\text{int}(-)$ is not well-defined on $\rcrho$. In fact, if $x=1=\left[1-\rho_{\eps}^{1/\eps}\right]=\left[1+\rho_{\eps}^{1/\eps}\right]$,
then $\text{int}\left(1-\rho_{\eps}^{1/\eps}\right)=0$, whereas $\text{int}\left(1+\rho_{\eps}^{1/\eps}\right)=1$,
for $\eps$ sufficiently small. Similar counterexamples can be constructed
for floor and ceiling functions.

\noindent However, the nearest integer function is well defined on
$\hyperNrho$. 
\begin{lem}
\label{lem:nearestInt}Let $(n_{\eps})\in\N_{\rho}$ and $(x_{\eps})\in\R_{\rho}$
be such that $[n_{\eps}]=[x_{\eps}]$. Let $\text{\emph{rpi}}:\R\ra\N$
be the function rounding to the nearest integer with tie breaking
towards positive infinity. Then $\text{\emph{rpi}}(x_{\eps})=n_{\eps}$
for $\eps$ small. The same result holds using $\text{\emph{rni}}:\R\ra\N$,
the function rounding half towards $-\infty$. 
\end{lem}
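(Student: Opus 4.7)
The plan is very short because the lemma reduces to the fact that a $\rho$-moderate net of reals representing the same point in $\rcrho$ as a net of naturals must eventually lie within distance $1/2$ of that net of naturals.

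First, from $[n_{\eps}]=[x_{\eps}]\in\rcrho$ we have by Def.~\ref{def:RCGN} that $(n_{\eps}-x_{\eps})\sim_{\rho}0$, so in particular, taking $n=1$ in the definition of the congruence,
\[
\forall^{0}\eps:\ |n_{\eps}-x_{\eps}|\le\rho_{\eps}.
\]
Since $\rho_{\eps}\to0^{+}$, there is some $\eps_{0}\in I$ such that $\rho_{\eps}<1/2$ for all $\eps\in(0,\eps_{0}]$. Intersecting the two smallness conditions, I obtain
\[
\forall^{0}\eps:\ |n_{\eps}-x_{\eps}|<1/2,
\]
and in particular the inequality is \emph{strict}, which is the only delicate point.

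Next, recall that $\text{rpi}(y)=k$ (for $k\in\N$, $y\in\R$) precisely when $y\in[k-1/2,k+1/2)$, the half-open interval dictated by the choice of breaking ties towards $+\infty$; analogously, $\text{rni}(y)=k$ precisely when $y\in(k-1/2,k+1/2]$. Because the estimate above gives the \emph{strict} bound $|x_{\eps}-n_{\eps}|<1/2$, we conclude for $\eps$ small that
\[
x_{\eps}\in(n_{\eps}-1/2,\,n_{\eps}+1/2),
\]
which is contained in both of the intervals $[n_{\eps}-1/2,n_{\eps}+1/2)$ and $(n_{\eps}-1/2,n_{\eps}+1/2]$. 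Hence $\text{rpi}(x_{\eps})=n_{\eps}=\text{rni}(x_{\eps})$ for $\eps$ small, proving both statements simultaneously.

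The only potential obstacle is the tie-breaking subtlety: one must make sure that the bound obtained from $\sim_{\rho}$ is strict rather than a closed $\le 1/2$, since $\text{rpi}$ and $\text{rni}$ disagree exactly at half-integers. This is handled automatically because $\rho_{\eps}$ is eventually $<1/2$, so $|n_{\eps}-x_{\eps}|\le\rho_{\eps}<1/2$ yields a strict inequality; no further work is needed.
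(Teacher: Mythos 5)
Your proof is correct and follows essentially the same route as the paper's: both extract $|x_{\eps}-n_{\eps}|\le\rho_{\eps}<\tfrac12$ for $\eps$ small from the negligibility of the difference, and then read off the value of the rounding function (the paper via $\text{rpi}(x)=\lfloor x+\tfrac12\rfloor$, you via the equivalent interval characterization). Your explicit remark that the strict inequality is what resolves the tie-breaking issue is exactly the point the paper's chain of inequalities is making implicitly.
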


\begin{proof}
We have $\text{rpi}(x)=\lfloor x+\frac{1}{2}\rfloor$, where $\lfloor-\rfloor$
is the floor function. For $\eps$ small, $\rho_{\eps}<\frac{1}{2}$
and, since $[n_{\eps}]=[x_{\eps}]$, for such $\eps$ we can also
have $n_{\eps}-\rho_{\eps}+\frac{1}{2}<x_{\eps}+\frac{1}{2}<n_{\eps}+\rho_{\eps}+\frac{1}{2}$.
But $n_{\eps}\le n_{\eps}-\rho_{\eps}+\frac{1}{2}$ and $n_{\eps}+\rho_{\eps}+\frac{1}{2}<n_{\eps}+1$.
Therefore $\lfloor x_{\eps}+\frac{1}{2}\rfloor=n_{\eps}$. An analogous
argument can be applied to $\text{rni}(-)$. 
\end{proof}
\noindent Actually, this lemma does not allow us to define a \emph{nearest
integer }function $\nint{}:\hyperNrho\ra\N_{\rho}$ as $\nint{([x_{\eps}])}:=\text{rpi}(x_{\eps})$
because if $[x_{\eps}]=[n_{\eps}]$, the equality $n_{\eps}=\text{rpi}(x_{\eps})$
holds only for $\eps$ small. We should therefore consider the function
$\nint{}$ as valued in the germs for $\eps\to0^{+}$ generated by
nets in $\N_{\rho}$. A simpler approach is to choose a representative
$(n_{\eps})\in\N_{\rho}$ for each $x\in\hyperNrho$ and to define
$\nint{(x)}:=(n_{\eps})$. Clearly, we must consider the net $\left(\nint{(x)}_{\eps}\right)$
only for $\eps$ small, such as in equalities of the form $x=\left[\nint{(x)}_{\eps}\right]$.
This is what we do in the following 
\begin{defn}
\label{def:nint}The nearest integer function $\nint(-)$ is defined
by: 
\begin{enumerate}
\item $\nint:\hyperNrho:\ra\N_{\rho}$ 
\item If $[x_{\eps}]\in\hyperNrho$ and $\nint\left([x_{\eps}]\right)=(n_{\eps})$
then $\forall^{0}\eps:\ n_{\eps}=\text{rpi}(x_{\eps})$. 
\end{enumerate}
In other words, if $x\in\hyperNrho$, then $x=\left[\nint(x)_{\eps}\right]$
and $\nint(x)_{\eps}\in\N$ for all $\eps$. 
\end{defn}

We first consider the notion of hyperlimit. As we will see clearly
in Example \ref{exa:hyperlimits}.\ref{enu:twoGauges}, a key point
in the definition of hyperlimit is to consider \emph{two} gauges.
This is a natural way of proceeding because different gauges define
different topologies. On the other hand, the notion of hyperlimit
corresponds exactly to that of limit in the sharp topology on $\rcrho$
of a generalized sequence (hypersequence), i.e.~defined on the directed
set $\hyperN{\sigma}$. 
\begin{defn}
\label{def:hyperlimit}Let $\rho$, $\sigma$ be two gauges (see Def.~\ref{def:RCGN}).
Let $(a_{n})_{n}:\hyperN{\sigma}\ra\rcrho$ be a $\sigma$-hypersequence
of $\rho$-generalized numbers. Finally let $l\in\rcrho$. Then we
say that 
\[
l\text{ is the hyperlimit of }(a_{n})_{n}
\]
if 
\begin{equation}
\forall q\in\N\,\exists M\in\hyperN{\sigma}\,\forall n\in\hyperN{\sigma}:\ n\ge M\ \Rightarrow\ \left|a_{n}-l\right|<\diff{\rho}^{q}.\label{eq:hyperlimit}
\end{equation}
\end{defn}

\begin{rem}
\label{rem:hyperlim}~ 
\begin{enumerate}
\item In a hyperlimit, we are considering $\hyperN{\sigma}$ as an ordered
set directed by $\le$: 
\[
n,m\in\hyperN{\sigma}\ \Rightarrow\ n\vee m=\left[\max\left(\nint(n)_{\eps},\nint(m)_{\eps}\right)\right]\in\hyperN{\sigma}.
\]
On the other hand, on $\rcrho$ we are considering the sharp topology
(which is Hausdorff). In fact, if $l$, $\lambda$ are hyperlimits
of $a:\hyperN{\sigma}\ra\rcrho$, then 
\[
\left|l-\lambda\right|\le\left|l-a_{M}\right|+\left|a_{M}-\lambda\right|\le2\diff{\rho}^{q+1}<\diff{\rho}^{q}
\]
for all $q$. So $l=\lambda\in\rcrho$. We will therefore use the
notations 
\[
l=\hyperlim{\rho}{\sigma}a_{n}
\]
or simply $l=\hyperlimrho a_{n}$ if $\sigma=\rho$. 
\item \label{enu:extension}A sufficient condition to extend an ordinary
sequence $a:\N\ra\rcrho$ of $\rho$-generalized numbers to the whole
of $\hyperN{\sigma}$ is 
\begin{equation}
\forall n\in\hyperN{\sigma}:\ \left(a_{\nint(n)_{\eps}}\right)\in\R_{\rho}.\label{eq:ext_a}
\end{equation}
In fact, in this way $a_{n}$ is well-defined because of Lem.~\ref{lem:nearestInt};
on the other hand, using \eqref{eq:ext_a}, we have defined an extension
of the old sequence $a$ because if $n\in\N$, then $\nint(n)_{\eps}=n$
for $\eps$ small and hence we get $a_{n}=[a_{n}]$. For example,
the sequence of infinities $a_{n}=\frac{1}{n}+\diff{\rho}^{-1}$ for
all $n\in\N$ can be extended to any $\hyperN{\sigma}$, whereas $a_{n}=\diff{\sigma}^{-n}$
can be extended as $a:\hyperN{\sigma}\ra\rcrho$ only for certain
gauges $\rho$, e.g.~if the gauges satisfy 
\[
\exists N\in\N\,\forall n\in\N\,\forall^{0}\eps:\ \sigma_{\eps}^{n}\ge\rho_{\eps}^{N},
\]
e.g.~$\sigma_{\eps}\ge-\log(\rho_{\eps})^{-1}$. 
\end{enumerate}
\end{rem}

\begin{example}
\label{exa:hyperlimits}~ 
\begin{enumerate}
\item \label{enu:twoGauges}The following example strongly motivates the
use of two gauges. Let $\rho$ be a gauge and set $\sigma_{\eps}:=\exp\left(-\rho_{\eps}^{-\frac{1}{\rho_{\eps}}}\right)$,
so that also $\sigma$ is a gauge. We have 
\[
\hyperlim{\rho}{\sigma}\frac{1}{\log n}=0\in\rcrho\quad\text{whereas}\quad\not\exists\,\hyperlim{\rho}{\rho}\frac{1}{\log n}.
\]
In fact, if $n>1$, we have $0<\frac{1}{\log n}<\diff{\rho}^{q}$
if and only if $\log n>\diff{\rho}^{-q}$, i.e.~$n>e^{\diff{\rho}^{-q}}$
(in $\RC{\sigma}$). We can thus take $M:=\left[\text{int}\left(e^{\rho_{\eps}^{-q}}\right)+1\right]\in\hyperN{\sigma}$
because $e^{\rho_{\eps}^{-q}}<\exp\left(\rho_{\eps}^{-\frac{1}{\rho_{\eps}}}\right)=\sigma_{\eps}^{-1}$
for $\eps$ small.\\
 Vice versa, by contradiction, if $\exists\,\hyperlim{\rho}{\rho}\frac{1}{\log n}=:l\in\rcrho$,
then by the definition of hyperlimit from $\hyperNrho$ to $\rcrho$
we would get the existence of $M\in\hyperN\rho$ such that 
\begin{equation}
\forall n\in\hyperNrho:\ n\ge M\ \Rightarrow\ \frac{1}{\log n}-\diff{\rho}<l<\frac{1}{\log n}+\diff{\rho}.\label{eq:existHyperlimAbs}
\end{equation}
Since $M$ is $\rho$-moderate, we always have $0<\frac{1}{\log M}-\diff{\rho}$,
so $l>0$. Thus $\diff{\rho}^{p}<|l|$ for some $p\in\N$. Setting
\[
q:=\min\left\{ p\in\N\mid\diff{\rho}^{p}<|l|\right\} +1,
\]
we get that $\left|l_{\bar{\eps}_{k}}\right|<\rho_{\bar{\eps}_{k}}^{q}$
for some sequence $(\bar{\eps}_{k})_{k}\downarrow0$. Therefore 
\[
\frac{1}{\log M_{\bar{\eps}_{k}}}<l_{\bar{\eps}_{k}}+\rho_{\bar{\eps}_{k}}\le\left|l_{\bar{\eps}_{k}}\right|+\rho_{\bar{\eps}_{k}}<\rho_{\bar{\eps}_{k}}^{q}+\rho_{\bar{\eps}_{k}}
\]
and hence $M_{\bar{\eps}_{k}}>\exp\left(\frac{1}{\rho_{\bar{\eps}_{k}}^{q}+\rho_{\bar{\eps}_{k}}}\right)$
for all $k\in\N$, which is in contradiction with $M\in\rcrho$ because
$q\ge1$. 
\item For all $k\in\N_{>0}$, we have $\hyperlimrho\frac{1}{n^{k}}=0$.
In fact, for all $n\in\hyperNrho_{>0}$, we have $0<\frac{1}{n^{k}}<\diff{\rho}^{q}$
if and only if $n^{k}>\diff{\rho}^{-q}$, i.e.~$n>\diff{\rho}^{-\frac{q}{k}}$.
Thus, it suffices to take $M_{\eps}:=\text{int}\left(\rho_{\eps}^{-\frac{q}{k}}\right)+1$
in the definition of hyperlimit. Analogously, we can treat rational
functions having degree of denominator greater or equal to that of
the numerator. 
\end{enumerate}
\end{example}

\subsection{Properties of multidimensional integral}

We start by proving the change of variable formula. 
\begin{lem}
\label{lem:epsWiseInjectivity}Let $K=[K_{\eps}]$ be functionally
compact and $\phi=[\phi_{\eps}]\in\gsf(K,\RC{\rho}^{d})$ with $\det(d\phi)(x)$
invertible for each $x\in K$. If $\phi$ is injective on $K$, then
the $\phi_{\eps}$ are injective on $K_{\eps}$, $\forall^{0}\eps$. 
\end{lem}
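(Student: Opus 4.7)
I argue by contradiction. Suppose the conclusion fails, so there is a co-final $J\subzero I$ such that for every $\eps\in J$ there exist $u_\eps,v_\eps\in K_\eps$ with $u_\eps\ne v_\eps$ and $\phi_\eps(u_\eps)=\phi_\eps(v_\eps)$. Pick any $z\in K$ with representative $(z_\eps)$ and extend by setting $u_\eps:=v_\eps:=z_\eps$ for $\eps\notin J$. Since $(K_\eps)$ is sharply bounded, $(u_\eps),(v_\eps)\in\R_{\rho}^d$, so $x:=[u_\eps]$, $y:=[v_\eps]\in[K_\eps]=K$. By construction $\phi_\eps(u_\eps)=\phi_\eps(v_\eps)$ for all $\eps$ sufficiently small, hence $\phi(x)=\phi(y)$ in $\rcrho^d$. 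Injectivity of $\phi$ forces $x=y$, so $(u_\eps-v_\eps)\sim_\rho 0$; yet $u_\eps\ne v_\eps$ for every $\eps\in J$.

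Next I apply the vector-valued mean value theorem (Thm.~\ref{thm:classicalThms}~\ref{enu:IMVSevVars}) componentwise to each $\phi_\eps\in\cinfty(\Omega_\eps,\R^d)$: for $\eps\in J$,
\[
0=\phi_\eps(u_\eps)-\phi_\eps(v_\eps)=A_\eps\cdot(u_\eps-v_\eps),\qquad A_\eps:=\int_0^1 d\phi_\eps\bigl(v_\eps+t(u_\eps-v_\eps)\bigr)\,\diff{t}\in\R^{d\times d}.
\]
The plan is now to show that $A_\eps$ is an invertible real matrix for all $\eps\in J$ sufficiently small, which will directly contradict $u_\eps\ne v_\eps$.

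To control $A_\eps$, I write $A_\eps=d\phi_\eps(u_\eps)+E_\eps$ with
\[
E_\eps=\int_0^1\!\bigl[d\phi_\eps(v_\eps+t(u_\eps-v_\eps))-d\phi_\eps(u_\eps)\bigr]\,\diff{t}.
\]
Because $\det(d\phi)(x)\in\rcrho^*$ and $x\in K$, Lem.~\ref{lem:mayer} yields $N\in\N$ with $|\det d\phi_\eps(u_\eps)|\ge\rho_\eps^{N}$ for $\eps$ small; combined with moderateness of the entries of $d\phi_\eps(u_\eps)$ (Def.~\ref{def:netDefMap}~\ref{enu:partial-u-moderate}), Cramer's rule gives $M'\in\N$ with $\|d\phi_\eps(u_\eps)^{-1}\|\le\rho_\eps^{-M'}$ for small $\eps$. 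On the other hand, Thm.~\ref{thm:GSF-continuity}~\ref{enu:modOnEpsDepBall} applied to the second derivatives furnishes $M''\in\N$ and an $\eps$-depending Euclidean ball $\Eball_{\rho_\eps^q}(u_\eps)$ on which $\|d^2\phi_\eps\|\le\rho_\eps^{-M''}$; since $(u_\eps-v_\eps)\sim_\rho 0$, the segment $[u_\eps,v_\eps]$ lies in this ball for $\eps$ small, yielding $\|E_\eps\|\le\rho_\eps^{-M''}|u_\eps-v_\eps|$. Using again that $|u_\eps-v_\eps|$ is $\rho$-negligible, we may assume $|u_\eps-v_\eps|\le\rho_\eps^{M'+M''+1}$ for $\eps$ small in $J$, whence
\[
\bigl\|d\phi_\eps(u_\eps)^{-1}E_\eps\bigr\|\le\rho_\eps^{-M'-M''}|u_\eps-v_\eps|\le\rho_\eps\le\tfrac12.
\]

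By the Neumann series, $I+d\phi_\eps(u_\eps)^{-1}E_\eps$ is invertible, so $A_\eps=d\phi_\eps(u_\eps)\bigl(I+d\phi_\eps(u_\eps)^{-1}E_\eps\bigr)$ is an invertible real matrix. But then $A_\eps(u_\eps-v_\eps)=0$ gives $u_\eps=v_\eps$ for $\eps\in J$ small, the desired contradiction. The main obstacle here is the passage from the pointwise invertibility $\det d\phi(x)\in\rcrho^*$ to quantitative $\eps$-wise control of the linearisation along a sequence of collision points; this is handled by combining Lem.~\ref{lem:mayer}, Cramer's rule, and the local Lipschitz estimate of Thm.~\ref{thm:GSF-continuity} to dominate the perturbation $E_\eps$ by negligibly small quantities relative to $d\phi_\eps(u_\eps)^{-1}$.
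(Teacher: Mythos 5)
Your proof is correct, and its first half (contradiction, assembling the collision points into generalized points $x=[u_\eps]$, $y=[v_\eps]\in K$ with $\phi(x)=\phi(y)$, and invoking injectivity of $\phi$ to get $(u_\eps-v_\eps)\sim_\rho 0$) is exactly the paper's argument. Where you diverge is in the final step: the paper simply observes that $x=y$ contradicts the \emph{local} injectivity of $\phi_\eps$ on some $\eps$-dependent ball $\Eball_{r_\eps}(x_\eps)$ with $[r_\eps]>0$, and outsources that local injectivity to the inverse function theorem for GSF (the cited \cite[Thm.~6]{GK17} and \cite{EG:13}). You instead re-derive the needed quantitative local injectivity from scratch: writing $0=A_\eps(u_\eps-v_\eps)$ with $A_\eps=d\phi_\eps(u_\eps)+E_\eps$, using Lem.~\ref{lem:mayer} plus Cramer's rule to get $\|d\phi_\eps(u_\eps)^{-1}\|\le\rho_\eps^{-M'}$, the moderate bound on second derivatives from Thm.~\ref{thm:GSF-continuity}~\ref{enu:modOnEpsDepBall} to dominate $E_\eps$ by $\rho_\eps^{-M''}|u_\eps-v_\eps|$, and negligibility of $|u_\eps-v_\eps|$ to make the Neumann-series perturbation small. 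This is a sound and complete chain of estimates (the only implicit point, that the segment $[u_\eps,v_\eps]$ lies in $\Omega_\eps$ for small $\eps$, follows from $K\subseteq\sint{\Omega_\eps}$ and Thm.~\ref{thm:strongMembershipAndDistanceComplement}~\ref{enu:stronglyIntSetsDistance} exactly as you use it). The trade-off: the paper's version is two lines but depends on an external theorem; yours is longer but self-contained within the results of this paper, and makes explicit the quantitative mechanism by which pointwise invertibility of $\det d\phi$ forces $\eps$-wise separation of collision points.
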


\begin{proof}
By contradiction, suppose that for each $\eta>0$, there exists $\eps<\eta$
such that $\phi_{\eps}$ is not injective on $K_{\eps}$. Then we
find for such $\eps$ some $x_{\eps}$, $y_{\eps}\in K_{\eps}$ with
$x_{\eps}\ne y_{\eps}$ and $\phi_{\eps}(x_{\eps})=\phi_{\eps}(y_{\eps})$.
For all other $\eps$, define $x_{\eps}=y_{\eps}\in K_{\eps}$ arbitrary.
Then $x:=[x_{\eps}]$, $y:=[y_{\eps}]\in K$ and $\phi(x)=\phi(y)$.
As $\phi$ is injective, $x=y$. But then this contradicts the local
injectivity of $\phi_{\eps}$ on $\Eball_{r_{\eps}}(x_{\eps})$ for
some $[r_{\eps}]>0$, see also \cite[Thm.~6]{GK17} and \cite{EG:13}. 
\end{proof}
\begin{thm}
\label{thm:changeVarsMultid}Let $K\subseteq\RC{\rho}^{n}$ be $\lambda$-measurable,
where $\lambda$ is the Lebesgue measure, and let $\phi\in\gsf(K,\RC{\rho}^{d})$
be such that $\phi^{-1}\in\gsf(\phi(K),\RC{\rho}^{n})$. Then $\phi(K)$
is $\lambda$-measurable and 
\[
\int_{\phi(K)}f\,\diff{\lambda}=\int_{K}(f\circ\phi)\left|\det(\diff{\phi})\right|\,\diff{\lambda}
\]
for each $f\in\gsf(\phi(K),\RC{\rho})$. 
\end{thm}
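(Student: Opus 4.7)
The plan is to reduce the statement to an $\eps$-wise application of the classical Lebesgue change of variables formula, via a carefully chosen pair of defining nets. First, using Theorem~\ref{thm:muMeasurableAndIntegral}.\ref{enu:existsRepre}, I would fix a representative $(K_\eps)$ of $K$ such that $\lambda(K)=[\lambda(K_\eps)]$ and Theorem~\ref{thm:muMeasurableAndIntegral}.\ref{enu:existsReprDefInt} holds for integration of GSF over $K$. I would then extend $\phi$ to a globally defined net $(\phi_\eps)\subseteq\cinfty(\R^n,\R^n)$ via Lemma~\ref{lem:fromOmega_epsToRn}, and set $L_\eps:=\phi_\eps(K_\eps)$ as the candidate representative of $\phi(K)$: each $L_\eps$ is compact, and by moderateness of $\phi$ the net $(L_\eps)$ is sharply bounded, so $[L_\eps]\fcmp\rcrho^n$.

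Before invoking the classical theorem, two uniformity properties of this choice of $(K_\eps)$ must be secured. The invertibility of $\det(\diff{\phi})(x)$ for each $x\in K$ follows from the chain rule (Theorem~\ref{thm:chainRule}) applied to $\phi^{-1}\circ\phi=\mathrm{id}_K$, yielding $\diff{\phi^{-1}}(\phi(x))\circ\diff{\phi}(x)=\mathrm{Id}$. Lemma~\ref{lem:epsWiseInjectivity} then forces injectivity of $\phi_\eps$ on $K_\eps$ for $\eps$ small. Applying Lemma~\ref{lem:mayer} pointwise to the invertible values of $|\det(\diff{\phi})|$, combined with the extreme value theorem on functionally compact sets (Corollary~\ref{cor:extremeValues}), I would extract some $N\in\N$ such that $|\det(\diff{\phi_\eps})(x)|\ge\rho_\eps^N$ for every $x\in K_\eps$ and $\eps$ small; sharp continuity of $\diff{\phi}$ (Theorem~\ref{thm:GSF-continuity}) then extends this lower bound to the Euclidean enlargement $\overline{\Eball}_{\rho_\eps^m}(K_\eps)$ for $m$ sufficiently large.

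On such a neighborhood, and for $\eps$ small, $\phi_\eps$ is a diffeomorphism onto its image and the classical change of variables theorem yields
\[
\int_{\phi_\eps(K_\eps)} f_\eps\,\diff{\lambda}=\int_{K_\eps}(f_\eps\circ\phi_\eps)\,|\det(\diff{\phi_\eps})|\,\diff{\lambda}.
\]
By the choice of representative $(K_\eps)$, the right hand side lifts to $\int_K(f\circ\phi)\,|\det(\diff{\phi})|\,\diff{\lambda}$. It remains to show $\phi(K)=[L_\eps]$ (which follows from the injectivity above plus a standard argument rewriting any representative of $x\in K$ as one with $x_\eps\in K_\eps$), that $\phi(K)$ is $\lambda$-measurable, and that $[\int_{L_\eps}f_\eps\,\diff{\lambda}]=\int_{\phi(K)}f\,\diff{\lambda}$. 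For measurability I would invoke Lemma~\ref{lem:altCharMeasurability}: for any enlargement $(M_\eps)$ with $M_\eps\supseteq L_\eps$ for $\eps$ small and $[M_\eps]=\phi(K)$, I would transport the difference back via $\phi_\eps^{-1}$, which is well-defined on a neighborhood of $L_\eps$ thanks to the uniform lower bound on $|\det(\diff{\phi_\eps})|$, and apply the inverse change of variables to bound $\lambda(M_\eps\setminus L_\eps)$ by a constant multiple of $\lambda(\phi_\eps^{-1}(M_\eps)\setminus K_\eps)$, which is $\rho$-negligible by the $\lambda$-measurability of $K$.

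The main obstacle will be the coordination of representatives: a single net $(K_\eps)$ must simultaneously realize $\lambda$-measurability of $K$ in the sense of Theorem~\ref{thm:muMeasurableAndIntegral}, support pointwise injectivity of $\phi_\eps$ together with a uniform lower bound on $|\det(\diff{\phi_\eps})|$ on a common Euclidean neighborhood, and push forward to a net $(L_\eps)$ inheriting the analogous properties for $\phi(K)$. The most delicate step is verifying that $(L_\eps)=(\phi_\eps(K_\eps))$ is itself a valid representative for $\lambda$-measurability and integration on $\phi(K)$; the transport argument via $\phi^{-1}$ has to be carried out carefully so that the $O(\rho_\eps^\infty)$ discrepancies between different representatives of $K$ and of $\phi(K)$ — measured in Hausdorff distance — can be absorbed into the sharp-topology limit appearing in Definition~\ref{def:intOverCompact}.
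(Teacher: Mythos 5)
Your strategy is sound and would yield the theorem, but it is organized quite differently from the paper's proof, and the difference is worth spelling out. You first pin down a ``good'' representative $(K_{\eps})$ realizing the integral $\eps$-wise (Thm.~\ref{thm:muMeasurableAndIntegral}~\ref{enu:existsRepre}, \ref{enu:existsReprDefInt}), apply the classical formula on $K_{\eps}$ itself, and then separately verify that $L_{\eps}:=\phi_{\eps}(K_{\eps})$ is again a good representative by transporting the excess mass of any enlargement back through $\phi_{\eps}^{-1}$ and invoking Lem.~\ref{lem:altCharMeasurability}. The paper instead never selects a distinguished representative: it derives from the moderateness of $\diff{\phi_{\eps}}$ and of $\diff{(\phi^{-1})_{\eps}}$ the two-sided inclusions $\phi_{\eps}(\overline{\Eball}_{\rho_{\eps}^{m+M}}(K_{\eps}))\subseteq\overline{\Eball}_{\rho_{\eps}^{m}}(\phi_{\eps}(K_{\eps}))\subseteq\phi_{\eps}(\overline{\Eball}_{\rho_{\eps}^{m-M}}(K_{\eps}))$ with $M$ independent of $m$, applies the classical change of variables on the enlargements (after reducing to $f_{\eps}\ge0$, which you should also do since your transport estimate needs monotonicity), and sandwiches $\int_{\overline{\Eball}_{\rho_{\eps}^{m}}(\phi_{\eps}(K_{\eps}))}f_{\eps}$ between the $K$-side integrals over the $(m\mp M)$-enlargements; the existence of the $m\to\infty$ limit on the $K$-side then delivers measurability of $\phi(K)$ and the integral identity in a single stroke. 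What the paper's route buys is precisely the step you flag as most delicate: it makes the verification that $(L_{\eps})$ is an admissible representative unnecessary. One refinement to your argument: the uniform lower bound on $|\det(\diff{\phi_{\eps}})|$ by itself only gives local invertibility, which is not enough to guarantee that a whole $\rho_{\eps}^{m}$-enlargement of $L_{\eps}$ lies in the range of $\phi_{\eps}$ restricted to a controlled enlargement of $K_{\eps}$; for that you should use the moderateness of the defining net of $\phi^{-1}$ directly (equivalently, a Lipschitz bound $\|\diff{(\phi^{-1})_{\eps}}\|\le\rho_{\eps}^{-M}$ near $\phi(K)$, or Cramer's rule combined with the upper bound on $\|\diff{\phi_{\eps}}\|$), which is exactly the estimate the paper isolates as the crux. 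With that adjustment, and the positive/negative part decomposition, your proof closes.
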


\begin{proof}
Let $x\in\overline{\Eball}_{\rho_{\eps}^{m}}(K_{\eps})$. Then there
exists $y\in K_{\eps}$ such that $|x-y|\le\rho_{\eps}^{m}$. As $\phi\in\gsf(K,\RC{\rho}^{d})$,
\[
|\phi_{\eps}(x)-\phi_{\eps}(y)|\le|x-y|\sup_{\overline{\Eball}_{\rho_{\eps}^{m}}(K_{\eps})}\|\diff{\phi_{\eps}}\|\le\rho_{\eps}^{m-M}
\]
for some $M\in\N$ (not depending on $m$). Thus $\phi_{\eps}(\overline{\Eball}_{\rho_{\eps}^{m}}(K_{\eps}))\subseteq\overline{\Eball}_{\rho_{\eps}^{m-M}}(\phi_{\eps}(K_{\eps}))$.
Applying this to $\phi^{-1}$, we find that also $\overline{\Eball}_{\rho_{\eps}^{m}}(\phi_{\eps}(K_{\eps}))\subseteq\phi_{\eps}(\overline{\Eball}_{\rho_{\eps}^{m-M}}(K_{\eps}))$
for some $M\in\N$. Now let $f_{\eps}\ge0$. As $(\det(\diff{\phi_{\eps}^{-1}}))$
is moderate, $|\det(\diff{\phi_{\eps}})(x)|>0$ for each $x\in K$.
Thus by Lem.~\ref{lem:epsWiseInjectivity}, w.l.o.g~$\phi_{\eps}$
are injective. Then 
\[
\int_{\overline{\Eball}_{\rho_{\eps}^{m}}(\phi_{\eps}(K_{\eps}))}f_{\eps}\,\diff{\lambda}\le\int_{\phi_{\eps}(\overline{\Eball}_{\rho_{\eps}^{m-M}}(K_{\eps}))}f_{\eps}\,\diff{\lambda}=\int_{\overline{\Eball}_{\rho_{\eps}^{m-M}}(K_{\eps})}(f_{\eps}\circ\phi_{\eps})|\det\diff{\phi_{\eps}}|\,\diff{\lambda}
\]
and 
\[
\int_{\overline{\Eball}_{\rho_{\eps}^{m+M}}(K_{\eps})}(f_{\eps}\circ\phi_{\eps})|\det\diff{\phi_{\eps}}|\,\diff{\lambda}=\int_{\phi_{\eps}(\overline{\Eball}_{\rho_{\eps}^{m+M}}(K_{\eps}))}f_{\eps}\,\diff{\lambda}\le\int_{\overline{\Eball}_{\rho_{\eps}^{m}}(\phi_{\eps}(K_{\eps}))}f_{\eps}\,\diff{\lambda}
\]
Since $\lim_{m\to\infty}\bigl[\int_{\overline{\Eball}_{\rho_{\eps}^{m-M}}(K_{\eps})}(f_{\eps}\circ\phi_{\eps})|\det\diff{\phi_{\eps}}|\,\diff{\lambda}\bigr]$
exists, it follows from the previous inequalities that also $\lim_{m\to\infty}\bigl[\int_{\overline{\Eball}_{\rho_{\eps}^{m}}(\phi_{\eps}(K_{\eps}))}f_{\eps}\,\diff{\lambda}\bigr]$
exists, with the same value. The general case follows by considering
the positive and negative part of $f_{\eps}$. 
\end{proof}
We now consider the problem of additivity of the integral. 
\begin{defn}
\label{def:stronglyDisjoint}Let $K$, $L$ be functionally compact.
Then we call $K$ and $L$ \emph{strongly disjoint} if the following
equivalent conditions hold: 
\begin{enumerate}
\item for each representative $(K_{\eps})$ of $K$ and $(L_{\eps})$ of
$L$, $K_{\eps}\cap L_{\eps}=\emptyset$, $\forall^{0}\eps$ 
\item for some (and thus each) representative $(K_{\eps})$ of $K$ and
$(L_{\eps})$ of $L$, there exists $m\in\N$ such that $\overline{\Eball}_{\rho_{\eps}^{m}}(K_{\eps})\cap\overline{\Eball}_{\rho_{\eps}^{m}}(L_{\eps})=\emptyset$,
$\forall^{0}\eps$ 
\item $\forall e\in\RC{\rho}:\ e^{2}=e,e\ne0\ \Rightarrow\ Ke\cap Le=\emptyset$ 
\item $\forall H\subzero I:\ K|_{H}\cap L|_{H}=\emptyset$ 
\item $x\ne y$ for each subpoint $x$ of $K$ and $y$ of $L$. 
\end{enumerate}
\end{defn}

\ 
\begin{defn}
\label{def:almostStronglyDisjoint}Let $K$, $L$ be functionally
compact. Then we call $K$ and $L$ \emph{almost strongly disjoint}
if the following equivalent conditions hold: 
\begin{enumerate}
\item for each representative $(K_{\eps})$ of $K$ and $(L_{\eps})$ of
$L$, $[\mu(K_{\eps}\cap L_{\eps})]=0$ 
\item for some (and thus each) representative $(K_{\eps})$ of $K$ and
$(L_{\eps})$ of $L$ 
\[
\lim_{m\to\infty}[\mu(\overline{\Eball}_{\rho_{\eps}^{m}}(K_{\eps})\cap\overline{\Eball}_{\rho_{\eps}^{m}}(L_{\eps}))]=0.
\]
\end{enumerate}
\end{defn}

\noindent The equivalence of the conditions follows by a similar argument
as in Lem.~\ref{lem:altCharMeasurability}, e.g.: 
\begin{lem}
The conditions in Def.\ \ref{def:almostStronglyDisjoint} are equivalent. 
\end{lem}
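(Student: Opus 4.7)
The plan is to establish the cycle of implications $(2\text{b})\Rightarrow(2\text{a})\Rightarrow(1)\Rightarrow(2\text{b})$, closely paralleling the strategy of Lem.~\ref{lem:altCharMeasurability}. The implication $(2\text{b})\Rightarrow(2\text{a})$ is trivial. For $(2\text{a})\Rightarrow(1)$, I would exploit that any two representatives of a functionally compact set differ by a Hausdorff-negligible perturbation: if $K=[K_\eps]=[K_\eps']$ and $L=[L_\eps]=[L_\eps']$, then $(\sup_{x\in K_\eps'}d(x,K_\eps))$ and $(\sup_{x\in L_\eps'}d(x,L_\eps))$ are negligible, hence for every fixed $m\in\N$ we have $K_\eps'\subseteq\overline{\Eball}_{\rho_\eps^m}(K_\eps)$ and $L_\eps'\subseteq\overline{\Eball}_{\rho_\eps^m}(L_\eps)$ for $\eps$ small. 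Consequently $\mu(K_\eps'\cap L_\eps')\le\mu(\overline{\Eball}_{\rho_\eps^m}(K_\eps)\cap\overline{\Eball}_{\rho_\eps^m}(L_\eps))$; given $q\in\N$, use $(2\text{a})$ to pick $M=M(q)$ so that the right-hand side is bounded by $\rho_\eps^q$ eventually, yielding $[\mu(K_\eps'\cap L_\eps')]=0$.

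For the main implication $(1)\Rightarrow(2\text{b})$, I would argue by contradiction. Suppose representatives $(K_\eps),(L_\eps)$ are fixed and the $m$-indexed sequence fails to sharply converge to $0$. Then there is $q\in\N$ such that for no $M$ does $\mu(\overline{\Eball}_{\rho_\eps^m}(K_\eps)\cap\overline{\Eball}_{\rho_\eps^m}(L_\eps))\le\rho_\eps^q$ hold eventually for every $m\ge M$. Using this I would recursively extract a strictly increasing sequence $(m_k)_k\to\infty$ in $\N$ and a strictly decreasing sequence $(\eps_k)_k\downarrow0$ with
\[
\mu\bigl(\overline{\Eball}_{\rho_{\eps_k}^{m_k}}(K_{\eps_k})\cap\overline{\Eball}_{\rho_{\eps_k}^{m_k}}(L_{\eps_k})\bigr)>\rho_{\eps_k}^{q}\qquad\forall k\in\N.
\]
Then define $\widetilde K_\eps:=\overline{\Eball}_{\rho_\eps^{m_k}}(K_\eps)$ and $\widetilde L_\eps:=\overline{\Eball}_{\rho_\eps^{m_k}}(L_\eps)$ for $\eps\in(\eps_{k+1},\eps_k]$. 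Since $\rho_\eps^{m_k}\to 0$ faster than any positive power of $\rho_\eps$, the Hausdorff distance between $\widetilde K_\eps$ and $K_\eps$ (respectively $\widetilde L_\eps$ and $L_\eps$) is negligible; as $(K_\eps),(L_\eps)$ are sharply bounded and closed neighborhoods of compacta are compact, $(\widetilde K_\eps),(\widetilde L_\eps)$ are legitimate representatives of $K$ and $L$. But then $\mu(\widetilde K_\eps\cap\widetilde L_\eps)>\rho_\eps^{q}$ on the cofinal set $\{\eps_k\mid k\in\N\}\subzero I$, so $[\mu(\widetilde K_\eps\cap\widetilde L_\eps)]\ne 0$, contradicting $(1)$.

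The main delicate point is the construction of $(\widetilde K_\eps),(\widetilde L_\eps)$: one must verify simultaneously that (a) they are sharply bounded nets of compact subsets of $\R^n$, which follows because $(K_\eps),(L_\eps)$ are already sharply bounded and the enlargement radii $\rho_\eps^{m_k}$ are infinitesimal; (b) they define the same functionally compact sets as $K$ and $L$, which follows from negligibility of the Hausdorff distance and the characterization $[K_\eps]=[L_\eps]\iff d_{\text{H}}(K_\eps,L_\eps)$ negligible; and (c) the failure of convergence on a cofinal set $J\subzero I$ is enough to violate $[\mu(\widetilde K_\eps\cap\widetilde L_\eps)]=0$, which is exactly the content of Def.~\ref{def:RCGN} applied to negligibility. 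Once these are checked, the cycle of implications closes and the equivalence is established.
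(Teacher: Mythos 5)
Your proposal is correct and follows essentially the same route as the paper's proof: the implication from the intersection condition (i) to the limit condition is obtained by the identical contradiction argument — extracting sequences $(m_k)_k\uparrow\infty$ and $(\eps_k)_k\downarrow 0$ and assembling enlarged representatives $\overline{\Eball}_{\rho_\eps^{m_k}}(K_\eps)$, $\overline{\Eball}_{\rho_\eps^{m_k}}(L_\eps)$ on which $\mu$ of the intersection stays above $\rho_\eps^{q}$ — while the converse uses the inclusion of an arbitrary representative into the $\rho_\eps^{m}$-enlargements of the given one. The only differences are cosmetic (you split condition (ii) into its \emph{some} and \emph{each} forms and close a three-step cycle, and you spell out in more detail why the enlarged nets are legitimate representatives, which the paper leaves implicit).
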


\begin{proof}
$(i)\Rightarrow(ii)$: let $K=[K_{\eps}]$ and $L=[L_{\eps}]$. Seeking
a contradiction, suppose that there exists $q\in\N$ for which it
does not hold that 
\[
\exists M\,\forall m\ge M\,\forall^{0}\eps:\ \mu(\overline{\Eball}_{\rho_{\eps}^{m}}(K_{\eps})\cap\overline{\Eball}_{\rho_{\eps}^{m}}(L_{\eps}))\le\rho_{\eps}^{q}.
\]
Then we can construct a strictly increasing sequence $(m_{k})_{k}\to\infty$
and a strictly decreasing sequence $(\eps_{k})_{k}\to0$ s.t.\ $\mu(\overline{\Eball}_{\rho_{\eps_{k}}^{m_{k}}}(K_{\eps_{k}})\cap\overline{\Eball}_{\rho_{\eps_{k}}^{m_{k}}}(L_{\eps_{k}}))>\rho_{\eps_{k}}^{q}$,
$\forall k$.\\
 Let $K'_{\eps}:=\overline{\Eball}_{\rho_{\eps}^{m_{k}}}(K_{\eps})$
and $L'_{\eps}:=\overline{\Eball}_{\rho_{\eps}^{m_{k}}}(L_{\eps})$,
whenever $\eps\in(\eps_{k+1},\eps_{k}]$, $\forall k$. Then $K=[K'_{\eps}]$
and $L=[L'_{\eps}]$, but $[\mu(K'_{\eps}\cap L'_{\eps})]\ne0$, as
$\mu(K'_{\eps}\cap L'_{\eps})>\rho_{\eps}^{q}$, for each $\eps=\eps_{k}$
($k\in\N$).\\
 $(ii)\Rightarrow(i)$: let $(K_{\eps})$, $(L_{\eps})$ as in (ii),
and $K=[K'_{\eps}]$ and $L=[L'_{\eps}]$. For each $q\in\N$, we
have that 
\[
[\mu(\overline{\Eball}_{\rho_{\eps}^{m}}(K_{\eps})\cap\overline{\Eball}_{\rho_{\eps}^{m}}(L_{\eps}))]\le\rho^{q}
\]
for sufficiently large $m\in\N$. As $[K'_{\eps}]\subseteq[K_{\eps}]$,
$K'_{\eps}\subseteq\overline{\Eball}_{\rho_{\eps}^{m}}(K_{\eps})$,
$\forall^{0}\eps$, and similarly for $L$, and thus also $[\mu(K'_{\eps}\cap L'_{\eps})]\le\rho^{q}$. 
\end{proof}
\noindent E.g., if $a\le b\le c$, then $[a,b]$ and $[b,c]$ are
almost strongly disjoint. Obviously, strongly disjoint sets are almost
strongly disjoint. Recall that the union of two internal sets is usually
not internal, but 
\[
K\vee L:=[K_{\eps}\cup L_{\eps}]=\{e_{S}x+e_{\co S}y:x\in K,y\in L,S\subseteq\left]0,1\right]\}
\]
is the smallest internal set containing $K$ and $L$ \cite{Ob-Ve}.
E.g., $[a,b]\vee[b,c]=[a,c]$. 
\begin{thm}
If $K$, $L$ are almost strongly disjoint $\mu$-measurable subsets
of $\RC{\rho}^{n}$, then $K\vee L$ is also $\mu$-measurable and
for each $f\in\GI(K\vee L,\RC{\rho})$ 
\[
\int_{K\vee L}f\,\diff{\mu}=\int_{K}f\,\diff{\mu}+\int_{L}f\,\diff{\mu}.
\]
\end{thm}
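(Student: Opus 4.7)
My plan is to reduce the assertion to an $\eps$-level inclusion--exclusion, after first selecting representatives of $K$ and $L$ that simultaneously witness the measure and integral formulas of Theorem~\ref{thm:muMeasurableAndIntegral}. First I would invoke parts~\ref{enu:existsRepre} and~\ref{enu:existsReprDefInt} of that theorem to get two candidate representatives of $K$ (one for the measure, one for the integrals), form their union, and use the enlargement invariance from~\ref{enu:indepReprDefInt} (for the integral) together with Lemma~\ref{lem:altCharMeasurability} (for the measure) to obtain a single representative $(K_\eps)$ satisfying $\mu(K)=[\mu(K_\eps)]$ and $\int_K f\,\diff\mu=[\int_{K_\eps} f_\eps\,\diff\mu]$; an identical choice for $L$ yields $(L_\eps)$ with the analogous properties, so that $(K_\eps\cup L_\eps)$ is a representative of $K\vee L$.

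For the $\mu$-measurability of $K\vee L$, the crucial identity is
\[
\overline{\Eball}_{\rho_\eps^m}(K_\eps \cup L_\eps) = \overline{\Eball}_{\rho_\eps^m}(K_\eps) \cup \overline{\Eball}_{\rho_\eps^m}(L_\eps),
\]
which is immediate from $d(x,A\cup B)=\min(d(x,A),d(x,B))$. Inclusion--exclusion then gives
\[
\mu\bigl(\overline{\Eball}_{\rho_\eps^m}(K_\eps \cup L_\eps)\bigr)=\mu\bigl(\overline{\Eball}_{\rho_\eps^m}(K_\eps)\bigr)+\mu\bigl(\overline{\Eball}_{\rho_\eps^m}(L_\eps)\bigr)-\mu\bigl(\overline{\Eball}_{\rho_\eps^m}(K_\eps)\cap\overline{\Eball}_{\rho_\eps^m}(L_\eps)\bigr).
\]
Passing to the corresponding classes in $\rcrho$ and letting $m\to\infty$ in the sharp topology, the first two terms converge to $\mu(K)$ and $\mu(L)$ by Definition~\ref{def:intOverCompact} applied to $K$ and $L$, while the third tends to $0$ by the second equivalent condition in Definition~\ref{def:almostStronglyDisjoint}. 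Hence the limit in~\eqref{eq:muMeasurable} exists for the representative $(K_\eps\cup L_\eps)$ and equals $\mu(K)+\mu(L)$, establishing both $\mu$-measurability and the set-theoretic additivity.

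For the integral additivity, I would repeat the same inclusion--exclusion at the level of integrals, applying Theorem~\ref{thm:muMeasurableAndIntegral}~\ref{enu:epsWiseDefInt} to the representative $(K_\eps\cup L_\eps)$:
\[
\int_{\overline{\Eball}_{\rho_\eps^m}(K_\eps\cup L_\eps)}\!f_\eps\,\diff\mu=\int_{\overline{\Eball}_{\rho_\eps^m}(K_\eps)}\!f_\eps\,\diff\mu+\int_{\overline{\Eball}_{\rho_\eps^m}(L_\eps)}\!f_\eps\,\diff\mu-\int_{\overline{\Eball}_{\rho_\eps^m}(K_\eps)\cap\overline{\Eball}_{\rho_\eps^m}(L_\eps)}\!f_\eps\,\diff\mu.
\]
As $m\to\infty$ the first two terms yield $\int_K f\,\diff\mu$ and $\int_L f\,\diff\mu$ by the choice of representatives, and the third is dominated in absolute value by the product of $\mu(\overline{\Eball}_{\rho_\eps^m}(K_\eps)\cap\overline{\Eball}_{\rho_\eps^m}(L_\eps))$ and $\sup_{\overline{\Eball}_{\rho_\eps^m}(K_\eps\cup L_\eps)}|f_\eps|$; the supremum is $\rho$-moderate for some (hence all larger) $m$ by Definition~\ref{def:integrableMap}, whereas the measure factor tends to $0$ by almost strong disjointness, so the product tends to $0$ in the sharp topology. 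The main obstacle is precisely the simultaneous choice of representatives in the first step, since enlargement is needed to make the measure and all the integral formulas hold at the same time; once this is settled, the rest is a straightforward inclusion--exclusion exploiting exactly the defining properties of $\mu$-measurability and almost strong disjointness together with Theorem~\ref{thm:muMeasurableAndIntegral}~\ref{enu:epsWiseDefInt}--\ref{enu:indepReprDefInt}.
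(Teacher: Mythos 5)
Your proof is correct and follows essentially the same route as the paper: the identity $\overline{\Eball}_{\rho_{\eps}^{m}}(K_{\eps}\cup L_{\eps})=\overline{\Eball}_{\rho_{\eps}^{m}}(K_{\eps})\cup\overline{\Eball}_{\rho_{\eps}^{m}}(L_{\eps})$, inclusion--exclusion, and the estimate of the cross term by $\mu(\overline{\Eball}_{\rho_{\eps}^{m}}(K_{\eps})\cap\overline{\Eball}_{\rho_{\eps}^{m}}(L_{\eps}))\cdot\sup|f_{\eps}|\to0$. The preliminary careful selection of representatives is harmless but not needed, since you ultimately work with the $m$-enlargements, whose limits are representative-independent by Theorem~\ref{thm:muMeasurableAndIntegral}.
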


\begin{proof}
As $\overline{\Eball}_{\rho_{\eps}^{m}}(K_{\eps}\cup L_{\eps})=\overline{\Eball}_{\rho_{\eps}^{m}}(K_{\eps})\cup\overline{\Eball}_{\rho_{\eps}^{m}}(L_{\eps})$,
\begin{align*}
%\begin{align*}\end{align*}
\left[\int_{\overline{\Eball}_{\rho_{\eps}^{m}}(K_{\eps}\cup L_{\eps})}f\,\diff{\mu}\right] & =\left[\int_{\overline{\Eball}_{\rho_{\eps}^{m}}(K_{\eps})}f\,\diff{\mu}\right]+\left[\int_{\overline{\Eball}_{\rho_{\eps}^{m}}(L_{\eps})}f\,\diff{\mu}\right]-\\
 & \phantom{=}-\left[\int_{\overline{\Eball}_{\rho_{\eps}^{m}}(K_{\eps})\cap\overline{\Eball}_{\rho_{\eps}^{m}}(L_{\eps})}f\,\diff{\mu}\right].
\end{align*}
Since 
\begin{align*}
\left|\left[\int_{\overline{\Eball}_{\rho_{\eps}^{m}}(K_{\eps})\cap\overline{\Eball}_{\rho_{\eps}^{m}}(L_{\eps})}f\,\diff{\mu}\right]\right| & \le\left[\mu(\overline{\Eball}_{\rho_{\eps}^{m}}(K_{\eps})\cap\overline{\Eball}_{\rho_{\eps}^{m}}(L_{\eps}))\right]\cdot\\
 & \phantom{\le}\cdot\left[\sup_{\overline{\Eball}_{\rho_{\eps}^{m}}(K_{\eps})\cap\overline{\Eball}_{\rho_{\eps}^{m}}(L_{\eps})}|f|\right]\stackrel{m\to\infty}{\to}0
\end{align*}
we see that $K\vee L$ is $\mu$-measurable with $\int_{K\vee L}f\,\diff{\mu}=\int_{K}f\,\diff{\mu}+\int_{L}f\,\diff{\mu}$. 
\end{proof}
\begin{example}
Let $S\subseteq[0,1[$ with $0\in\overline{S}$ and $0\in\overline{\co S}$.
Let $K_{\eps}=\begin{cases}
[0,1], & \eps\in S\\{}
[0,3], & \eps\in\co S
\end{cases}$ and $L_{\eps}=\begin{cases}
[2,3], & \eps\in S\\{}
[0,3], & \eps\in\co S.
\end{cases}$ Then $K\cap L=\emptyset$ and $\mu(K\vee L)=2e_{S}+3e_{\co S}\ne2e_{S}+6e_{\co S}=\mu(K)+\mu(L)$.
Thus the condition that $K$ and $L$ are almost strongly disjoint
cannot be replaced by the condition that $K\cap L=\emptyset$. 
\end{example}

\begin{thm}
\label{thm:uniformConv} Let $K\fcmp\rcrho^{n}$. Let $f_{n}\in\GI(K,\rcrho^{d})$,
$\forall n\in\hyperN{\sigma}$. If $\hyperlim{\rho}{\sigma}f_{n}(x)$
exists for each $x\in K$, then the convergence is uniform over $K$
and the limit function is integrable on $K$. 
\end{thm}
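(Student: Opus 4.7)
The proof splits into three stages: (i) reducing uniform convergence on $K$ to uniform Cauchyness using Cauchy completeness of the sharp topology on $\rcrho^{d}$ (noted after Thm.~\ref{thm:intersectionBalls}); (ii) establishing uniform Cauchyness via a contradiction that exploits a diagonal subpoint extraction inside the functionally compact $K$; (iii) constructing a defining net of continuous maps for the limit by diagonal selection and checking Def.~\ref{def:integrableMap}. Set $f(x):=\hyperlim{\rho}{\sigma}f_{n}(x)$ pointwise on $K$, fix a representative $(K_{\eps})$ of $K$ as in Thm.~\ref{thm:muMeasurableAndIntegral}~\ref{enu:existsRepre}, and for each $n\in\hyperN{\sigma}$ fix a continuous defining net $(f_{n,\eps})$. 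For $n,m\in\hyperN{\sigma}$, the map $g_{n,m}(x):=|f_{n}(x)-f_{m}(x)|$ is represented by continuous $\eps$-level functions on $K_{\eps}$, so Lem.~\ref{lem:extremeValueCont} yields a maximum $S_{n,m}=g_{n,m}(\xi_{n,m})\in\rcrho_{\geq 0}$ attained at some $\xi_{n,m}\in K$. Uniform convergence then reduces to: for every $q\in\N$, there exists $N\in\hyperN{\sigma}$ with $S_{n,m}<\diff{\rho}^{q}$ whenever $n,m\geq N$.

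\textbf{Main obstacle: uniform Cauchyness via subpoint.} Assume, for contradiction, that uniform Cauchyness fails at some $q\in\N$. Choose $(n_{k})_{k},(m_{k})_{k}\to\infty$ in $\hyperN{\sigma}$ with $S_{n_{k},m_{k}}\not<\diff{\rho}^{q}$ and extreme points $\xi_{k}\in K$. Reading this $\eps$-wise produces co-final sets $J_{k}\subzero I$ on which $|f_{n_{k},\eps}(\xi_{k,\eps})-f_{m_{k},\eps}(\xi_{k,\eps})|\geq\rho_{\eps}^{q}$ for $\eps\in J_{k}$ small. Using sharp boundedness of $(K_{\eps})$ and compactness of each $K_{\eps}$, recursively pick $\eps_{k}\in J_{k}$ with $\eps_{k}\downarrow 0$ and $\xi_{k,\eps_{k}}\in K_{\eps_{k}}$; interleave by setting $\xi_{\eps}:=\xi_{k,\eps_{k}}$ for $\eps=\eps_{k}$ and $\xi_{\eps}\in K_{\eps}$ arbitrary elsewhere, giving $\xi:=[\xi_{\eps}]\in K$. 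Applying the pointwise hyperlimit hypothesis at $\xi$ yields $N_{0}\in\hyperN{\sigma}$ with $|f_{n}(\xi)-f_{m}(\xi)|<\diff{\rho}^{q+1}$ for all $n,m\geq N_{0}$; evaluating $\eps$-wise at $\eps=\eps_{k}$ with $n_{k},m_{k}\geq N_{0}$, and using the independence-of-representatives condition of Def.~\ref{def:integrableMap} and Thm.~\ref{thm:indepRepr} to match the chosen representative of $\xi$ with $\xi_{k,\eps_{k}}$, contradicts the lower bound $\rho_{\eps_{k}}^{q}$. The chief technical difficulty lies in the bookkeeping across the family of defining nets $(f_{n,\eps})_{\eps}$, each varying in $n\in\hyperN{\sigma}$: one must ensure that the interleaved subpoint $\xi$ is simultaneously compatible with all of these nets at the extracted hypernatural indices, and this relies crucially on the sharp boundedness of $(K_{\eps})$ to guarantee moderateness of the diagonal point.

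\textbf{Limit and integrability.} With uniform Cauchyness in hand, Cauchy completeness of the sharp topology produces a uniform limit $f\colon K\to\rcrho^{d}$. To exhibit a defining net for $f$, for each $\eps$ select an ordinary $n(\eps)\in\N$ such that the $\eps$-wise estimate $\sup_{x\in K_{\eps}}|f_{n(\eps),\eps}(x)-f_{m,\eps}(x)|\leq\rho_{\eps}^{1/\eps}$ holds for all hypernaturals $m$ beyond the threshold in $\hyperN{\sigma}$ supplied by uniform Cauchyness at level $\lceil 1/\eps\rceil$ (a classical diagonal selection). Put $\tilde f_{\eps}:=f_{n(\eps),\eps}$, which is continuous on its corresponding $\Omega_{n(\eps),\eps}$. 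For every $[x_{\eps}]\in K$, the uniform Cauchy estimates combined with moderateness of each $f_{n(\eps)}([x_{\eps}])$ imply moderateness and representative-independence of $[\tilde f_{\eps}(x_{\eps})]$, while uniform convergence yields $[\tilde f_{\eps}(x_{\eps})]=f(x)$. Hence $(\tilde f_{\eps})$ defines $f$ in the sense of Def.~\ref{def:integrableMap} and $f\in\GI(K,\rcrho^{d})$, completing the proof.
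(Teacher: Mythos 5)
Your overall architecture (uniform Cauchyness by contradiction via a diagonal point in $K$, then a diagonal selection of defining nets for the limit) matches the paper's, but the key extraction step in your uniform Cauchyness argument has a genuine gap. You pick a \emph{single} $\eps_{k}\in J_{k}$ from each co-final set and set $\xi_{\eps_{k}}:=\xi_{k,\eps_{k}}$, so the interleaved point $\xi$ agrees with the extremal point $\xi_{k}$ at only one value of $\eps$ per index $k$. The pointwise Cauchy property at $\xi$ then gives, for each \emph{fixed} pair $(n_{k},m_{k})$ with $n_{k},m_{k}\ge N_{0}$, an estimate $|f_{n_{k},\eps}(\xi_{\eps})-f_{m_{k},\eps}(\xi_{\eps})|<\rho_{\eps}^{q+1}$ only for $\eps\le\eta_{k}$, where $\eta_{k}$ depends on $k$ (and on $N_{0}$, hence on $\xi$ itself). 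Nothing forces $\eps_{k}\le\eta_{k}$: the thresholds are determined only after $\xi$ is built, while $\xi$ is built from the $\eps_{k}$, so the single-$\eps$ lower bound $\ge\rho_{\eps_{k}}^{q}$ cannot be played off against the upper bound. No contradiction is reached.

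The paper's proof closes exactly this loophole by a different interleaving: the decreasing sequence $(\eps_{n})_{n}$ is chosen so that each index $N$ recurs \emph{co-finally} (the pattern $\eps_{1}\in S_{1}$; $\eps_{2}\in S_{2},\eps_{3}\in S_{1}$; $\eps_{4}\in S_{3},\eps_{5}\in S_{2},\eps_{6}\in S_{1}$; \dots). Then for each fixed $N$ there is $U_{N}\subseteq S_{N}$ with $0\in\overline{U_{N}}$ and $xe_{U_{N}}=x_{N}e_{U_{N}}$, whence $|f_{k_{N}}(x)-f_{l_{N}}(x)|e_{U_{N}}\ge\rho^{m}e_{U_{N}}$ as an inequality of restrictions to the co-final set $U_{N}$. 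This is what genuinely contradicts convergence of the hypersequence $(f_{n}(x))_{n}$ at the single point $x$, since the Cauchy estimate for the fixed pair $(k_{N},l_{N})$ must hold for all small $\eps$, in particular co-finally on $U_{N}$. To repair your proof you must replace the one-$\eps$-per-index extraction by this co-final recurrence. Two smaller remarks: invoking Cauchy completeness is unnecessary since the pointwise limit exists by hypothesis (one just lets the second index tend to infinity in the uniform Cauchy estimate); and in the last stage your ``ordinary $n(\eps)\in\N$'' should be an index into the subfamily $f_{\sigma^{-n}}$, $n\in\N$, as in the paper's choice $f_{\sigma^{-n_{\eps}},\eps}$ — an ordinary hypernatural $n(\eps)\in\N$ itself never reaches the infinite thresholds in $\hyperN{\sigma}$.
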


\begin{proof}
We first show that the sequence is uniformly Cauchy, i.e.\ that for
each $m\in\N$ 
\begin{equation}
\exists N\in\N\,\forall k,l\in\hyperN{\sigma},k,l\ge\sigma^{-N}\,\forall x\in K:|f_{k}(x)-f_{l}(x)|\le\rho^{m}.\label{eq:UniformlyCauchy}
\end{equation}
Seeking a contradiction, suppose that for some $m\in\N$, we have
\[
\forall N\in\N\,\exists k,l\in\hyperN{\sigma},k,l\ge\sigma^{-N}\,\exists x\in K:\ |f_{k}(x)-f_{l}(x)|\not\le\rho^{m}.
\]
We thus construct sequences $(k_{N})_{N}$ and $(l_{N})_{N}$ in $\hyperN{\sigma}$,
with $k_{N},l_{N}\ge\sigma^{-N}$ for which there exist $x_{N}\in K$
s.t.\ $|f_{k_{N}}(x_{N})-f_{l_{N}}(x_{N})|\not\le\rho^{m}$, $\forall N\in\N$.
Let $K=[K_{\eps}]$ and $f_{k}=[f_{k,\eps}]$. We thus find $S_{N}\subseteq(0,1]$
with $0\in\overline{S_{N}}$ such that $|f_{k_{N},\eps}(x_{N,\eps})-f_{l_{N},\eps}(x_{N,\eps})|\ge\rho_{\eps}^{m}$
for each $\eps\in S_{N}$. Then choose a decreasing sequence $(\eps_{n})_{n}\to0$
such that 
\begin{align*}
 & \eps_{1}\in S_{1}\\
 & \eps_{2}\in S_{2};\eps_{3}\in S_{1};\\
 & \eps_{4}\in S_{3};\eps_{5}\in S_{2};\eps_{6}\in S_{1};\\
 & \dots
\end{align*}
Let $x_{\eps_{n}}:=x_{N,\eps_{n}}$ if $\eps_{n}\in S_{N}$, for each
$n\in\N$. Extend to a net $(x_{\eps})_{\eps}$ with $x_{\eps}\in K_{\eps}$,
$\forall\eps$. Then $xe_{U_{N}}=x_{N}e_{U_{N}}$ for some $U_{N}\subseteq S_{N}$
with $0\in\overline{U_{N}}$, and therefore $|f_{k_{N}}(x)-f_{l_{N}}(x)|e_{U_{N}}\ge\rho^{m}e_{U_{N}}$,
$\forall N$. We thus contradict the fact that $(f_{n}(x))_{n\in\hyperN{\sigma}}$
is a convergent hypersequence.\\
 By taking the limit for $l\to\infty$ in \eqref{eq:UniformlyCauchy},
we conclude that $(f_{n})_{n\in\hyperN{\sigma}}$ is uniformly convergent
on $K$.\\
 For each $n\in\N$, fix a representative $(f_{\sigma^{-n},\eps})_{\eps}$
of $f_{\sigma^{-n}}$ with $\sup_{x\in K_{\eps}}|f_{\sigma^{-n},\eps}|\le\eps^{-M}$,
$\forall\eps\in(0,1]$, $\forall n\in\N$ (some $M\in\N$, independent
of $n$). For each $m\in\N$, there exists $N_{m}\in\N$ such that
\[
\forall n,n'\ge N_{m}\,\forall^{0}\eps:\sup_{x\in K_{\eps}}|f_{\sigma^{-n},\eps}(x)-f_{\sigma^{-n'},\eps}(x)|\le\rho_{\eps}^{m}.
\]
Then for each $k\in\N$, there exists some $\eps_{k}>0$ such that
\[
\forall\eps\le\eps_{k}\,\forall m\le k\,\forall n,n'\in[N_{m},k]:\sup_{x\in K_{\eps}}|f_{\sigma^{-n},\eps}(x)-f_{\sigma^{-n'},\eps}(x)|\le\rho_{\eps}^{m}.
\]
W.l.o.g., $(\eps_{k})_{k}\downarrow0$. Let $n_{\eps}:=k$, for $\eps\in(\eps_{k+1},\eps_{k}]$.
Then 
\[
\forall m\in\N\,\forall n\in\N,n\ge N_{m}\,\forall^{0}\eps:\sup_{x\in K_{\eps}}|f_{\sigma^{-n},\eps}(x)-f_{\sigma^{-n_{\eps}},\eps}(x)|\le\rho_{\eps}^{m}.
\]
Thus the limit function $f=[f_{\sigma^{-n_{\eps}},\eps}]\in\GI(K,\RC\rho)$. 
\end{proof}
\begin{thm}
Let $K\subseteq\RC{\rho}^{n}$ be $\mu$-measurable and $f\in\gsf(K,\RC{\rho})$.
If $\hyperlim{\rho}{\sigma}f_{n}(x)$ exists for each $x\in K$, then
$\hyperlim{\rho}{\sigma}f_{n}$ is integrable on $K$ and 
\[
\hyperlim{\rho}{\sigma}\int_{K}f_{n}\,\diff{\mu}=\int_{K}\hyperlim{\rho}{\sigma}f_{n}\,\diff{\mu}.
\]
\end{thm}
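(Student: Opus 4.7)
\smallskip

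\textbf{Plan.} Set $f:=\hyperlim{\rho}{\sigma}f_{n}$. The previous theorem already supplies the first half of the conclusion: $f$ is an integrable generalized function on $K$, and the convergence of $(f_{n})$ to $f$ is uniform on $K$ in the precise sense extracted in that proof, namely
\[
\forall m\in\N\,\exists N\in\N\,\forall n\in\hyperN{\sigma},\,n\ge\sigma^{-N}\,\forall x\in K:\ |f_{n}(x)-f(x)|\le\diff{\rho}^{m}.
\]
Thus only the limit/integral exchange remains. The plan is to pass this uniform bound through the integral by using monotonicity, together with the $\rho$-moderateness of $\mu(K)$ that follows from $\mu$-measurability.

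First I would pick $p\in\N$ with $\mu(K)\le\diff{\rho}^{-p}$, which is possible because by Thm.~\ref{thm:muMeasurableAndIntegral} the value $\mu(K)\in\rcrho$ is a well-defined $\rho$-moderate generalized number. Next, using Thm.~\ref{thm:muMeasurableAndIntegral}\ref{enu:existsReprDefInt}, I fix a representative $(K_{\eps})$ of $K$ such that $\int_{K}g\,\diff\mu=[\int_{K_{\eps}}g_{\eps}\,\diff\mu]$ for all $g\in\GI(K,\rcrho)$, so that monotonicity and the triangle inequality for the integral on $K$ follow directly $\eps$-wise from the classical ones. Given $q\in\N$, apply uniform convergence with $m:=q+p$: there exists $N\in\N$ such that for every $n\in\hyperN{\sigma}$ with $n\ge\sigma^{-N}$ and every $x\in K$ we have $|f_{n}(x)-f(x)|\le\diff{\rho}^{q+p}$.

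Now I would combine these ingredients. By the triangle inequality and monotonicity of the integral in the chosen representatives,
\[
\Bigl|\int_{K}f_{n}\,\diff\mu-\int_{K}f\,\diff\mu\Bigr|\le\int_{K}|f_{n}-f|\,\diff\mu\le\diff{\rho}^{q+p}\cdot\mu(K)\le\diff{\rho}^{q+p}\cdot\diff{\rho}^{-p}=\diff{\rho}^{q}.
\]
Since $q\in\N$ was arbitrary, this is exactly the definition of $\hyperlim{\rho}{\sigma}\int_{K}f_{n}\,\diff\mu=\int_{K}f\,\diff\mu$, completing the proof.

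The main technical point, rather than a real obstacle, is the justification of the inequality $\int_{K}|f_{n}-f|\,\diff\mu\le\diff{\rho}^{q+p}\mu(K)$ from the uniform estimate on $K$. Here one has to be careful that ``$|f_{n}(x)-f(x)|\le\diff{\rho}^{m}$ for all $x\in K$'' yields an $\eps$-wise bound on appropriately chosen representatives defined on $K_{\eps}$; this is precisely the content of the construction of the limit representative $f=[f_{\sigma^{-n_{\eps}},\eps}]$ in the preceding uniform-convergence theorem, so the inequality transfers at the level of defining nets and then to $\rcrho$ through the formula $\int_{K}g\,\diff\mu=[\int_{K_{\eps}}g_{\eps}\,\diff\mu]$ guaranteed by $\mu$-measurability.
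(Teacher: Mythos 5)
Your proposal is correct and follows essentially the same route as the paper: invoke the preceding uniform-convergence theorem to get integrability of the limit and the uniform estimate, then bound $\left|\int_{K}f_{n}-\int_{K}f\right|\le\int_{K}|f_{n}-f|\le\diff{\rho}^{m}\mu(K)$ for $n$ large. Your version is in fact slightly more careful than the paper's, since you make explicit the moderateness of $\mu(K)$ (choosing $m=q+p$) and the transfer of the pointwise bound to an $\eps$-wise bound on representatives, both of which the paper leaves implicit.
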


\begin{proof}
By Thm.~\ref{thm:uniformConv}, $(f_{n})_{n}$ uniformly converges
to some $f=[f_{\eps}]$. Then 
\[
\left|\int_{K}f_{n}-\int_{K}f\right|%=\left[\left|\int_{K_{\eps}}f_{n,\eps}-\int_{K_{\eps}}f_{\eps}\right|\right]
\le\int_{K}|f_{n}-f|\le\rho^{m}\mu(K)
\]
as soon as $n\in\hyperN{\sigma}$ is large enough. 
\end{proof}

\section{\label{sec:Sheaf-properties}Sheaf properties}

The aim of this section is to establish appropriate sheaf properties
for GSF. That this task is not entirely straightforward can be seen
from the following example, which can be easily reformulated in other
non-Archimedean settings: 
\begin{example}
\label{exa:i_revisited} Let $i:\rcrho\to\rcrho$ be as in Rem.~\ref{rem:I-function},
i.e., $i(x):=1$ if $x\approx0$ and $i(x):=0$ otherwise. The domain
$\rcrho$ of this function is the disjoint union of the sharply open
sets $D_{\infty}=\{x\in\rcrho\mid x\approx0\}$ and its complement
$D_{\infty}^{\text{c}}$. Moreover, $i|_{D_{\infty}}\equiv1$ and
$i|_{D_{\infty}^{\text{c}}}\equiv0$ are both GSF. However, as we
have seen in the remark following Cor.~\ref{cor:intermValue}, $i$
itself is \emph{not} a GSF. This shows that $^{\rho}\Gcinf$ is not
a sheaf with respect to the sharp topology. 
\end{example}

\noindent Trivially, if we introduce the space of (sharply) locally
defined GSF by means of $f\in\gsf_{\text{loc}}(X,Y)$ if $f:X\to Y$,
and $\forall x\in X\,\exists r\in\rcrho_{>0}:\ f|_{B_{r}(x)\cap X}\in\gsf(B_{r}(x)\cap X,Y)$,
then $\gsf_{\text{loc}}(-,Y)$ is naturally a sheaf with respect to
the sharp topology. By Example \ref{exa:i_revisited}, however, $\gsf_{\text{loc}}(X,Y)$
is strictly larger than $\gsf(X,Y)$. This fact can be viewed as a
necessary trade-off between the classical statement of locality for
generalized functions, on the one hand, and the requirement to preserve
classical theorems from smooth analysis on the other. In the above
example, it is the validity of an intermediate value theorem in our
setting that precludes the function $i$ from qualifying as a GSF.
Conversely, it follows that this result does not hold in $\gsf_{\text{loc}}(X,Y)$.
Any theory of generalized functions that is based on set-theoretical
functions and includes actual infinitesimals has to face these dichotomies
related to the total disconnectedness of its non-Archimedean ring
of scalars.

The general scheme of this section is: 
\begin{enumerate}[label=(\alph*)]
\item \label{enu:1GeneralSchemeSheaf}We are searching for a new compatibility/coherence
condition for an arbitrarily indexed family $\left(f_{j}\right)_{j\in J}$
of GSF (throughout this section, $J$ will be an arbitrary set), which
allows us to prove a corresponding sheaf property. We will call this
property \emph{dynamic compatibility condition} (DCC). 
\item \label{enu:2GeneralSchemeSheaf}The DCC must imply the classical one.
Note that for particular types of covers the classic coherence condition
may still work, e.g.~covers made of near-standard points and large
open sets (see Cor.~\ref{cor:sheafFermat} below) or those made of
increasing sequences of internal sets (see Thm\@.~\ref{thm:sheafIncreasingInternal}
below). 
\item \label{enu:3GeneralSchemeSheaf}The DCC must be a necessary condition
if we assume that the sections $\left(f_{j}\right)_{j\in J}$ glue
together into a GSF. 
\item \label{enu:4GeneralSchemeSheaf}The sheaf property based on the DCC
should be a particular case of the general abstract notion of sheaf
(see Sec.~\ref{sec:Gtopos}). 
\end{enumerate}
We start from the following sheaf property (originally proved in \cite{Ver08}): 
\begin{thm}
\label{thm:sheafIncreasingInternal}Let $X\subseteq\rcrho^{n}$, $Y\subseteq\rcrho^{d}$
and $K_{q}\fcmp\rcrho^{n}$, $f_{q}\in\gsf(K_{q},Y)$ for all $q\in\N$,
where $X=\bigcup_{q\in\N}K_{q}$, $K_{q}\subseteq\text{\emph{int}}\left(K_{q+1}\right)$
and $f_{q+1}|_{K_{q}}=f_{q}$ for each $q\in\N$. Then there exists
a unique $f\in\gsf(X,Y)$ such that $f|_{K_{q}}=f_{q}$ for all $q\in\N$. 
\end{thm}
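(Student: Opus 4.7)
Uniqueness is immediate: $X=\bigcup_{q} K_q$ and the prescribed values $f|_{K_q}=f_q$ are mutually consistent by the compatibility $f_{q+1}|_{K_q}=f_q$, so the set map $f$ is forced, and the GSF structure will come from the construction below. The existence plan is to build a single defining net $(f_\eps)$ by diagonal selection from the nets defining the $f_q$. Using Lem.~\ref{lem:fromOmega_epsToRn}, fix defining nets $f_{q,\eps}\in\cinfty(\R^n,\R^d)$ for each $f_q$ and representatives $(K_{q,\eps})$ of $K_q$. I will construct $q:I\to\N$ with $q(\eps)\to\infty$ as $\eps\to0^+$, put $f_\eps:=f_{q(\eps),\eps}$, and verify that $(f_\eps)$ defines a GSF of the required type.

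\textbf{Core estimate (main obstacle).} The technical heart is the following \emph{uniform compatibility}: for every $q_0\le q$ in $\N$, every $\alpha\in\N^n$ and every $m\in\N$, there exist $N\in\N$ and $\eps_0\in I$ such that
\[
\sup_{x\in\overline{\Eball}_{\rho_\eps^N}(K_{q_0,\eps})}\bigl|\partial^\alpha f_{q,\eps}(x)-\partial^\alpha f_{q_0,\eps}(x)\bigr|\le\rho_\eps^m\qquad\forall\eps\le\eps_0.
\]
I expect this to be the main obstacle. I would establish it in three standard Colombeau-style steps: (a) The compatibility $f_q|_{K_{q_0}}=f_{q_0}$ gives $(f_{q,\eps}(x_\eps)-f_{q_0,\eps}(x_\eps))\sim_\rho 0$ for every $[x_\eps]\in K_{q_0}$; a diagonal contradiction argument patterned on the proof of Lem.~\ref{Lem:OmegaEpsAroundBoundedAndCptSupp}~\ref{enu:1.10} upgrades this to $\sup_{K_{q_0,\eps}}|f_{q,\eps}-f_{q_0,\eps}|=O(\rho_\eps^m)$ for each $m$. (b) Iterating the hypothesis gives $K_{q_0}\subseteq\text{int}(K_q)$, so Lem.~\ref{Lem:OmegaEpsAroundBoundedAndCptSupp}~\ref{enu:int} produces $S\in\N$ with $\overline{\Eball}_{\rho_\eps^S}(K_{q_0,\eps})\subseteq K_{q,\eps}$ for small $\eps$; combined with the $\rho$-moderateness of $\nabla f_{q,\eps},\nabla f_{q_0,\eps}$ on this thickening—obtained from Thm.~\ref{thm:GSF-continuity}~\ref{enu:modOnEpsDepBall} via the same diagonal extraction—a mean-value estimate promotes the $K_{q_0,\eps}$-bound to $\overline{\Eball}_{\rho_\eps^{N'}}(K_{q_0,\eps})$. (c) A Landau--Kolmogorov interpolation on a $\rho_\eps^{N''}$-ball, exploiting moderateness of higher derivatives on the enlargement, transfers negligibility from values to arbitrary $\partial^\alpha$.

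\textbf{Diagonal selection.} With uniform compatibility in hand, for each $Q\in\N$ choose a common threshold $\eta_Q>0$ (with $\eta_Q\downarrow 0$) such that for all $q_0\le q\le Q$, $|\alpha|\le Q$ and $\eps\le\eta_Q$,
\[
\sup_{x\in\overline{\Eball}_{\rho_\eps^Q}(K_{q_0,\eps})}\bigl|\partial^\alpha f_{q,\eps}(x)-\partial^\alpha f_{q_0,\eps}(x)\bigr|\le\rho_\eps^Q,
\]
which is legal because the set of $(q_0,q,\alpha)$ is finite. Set $q(\eps):=Q$ for $\eps\in(\eta_{Q+1},\eta_Q]$ and $f_\eps:=f_{q(\eps),\eps}$; then $q(\eps)\to\infty$ as $\eps\to0^+$.

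\textbf{Verification.} Take $[x_\eps]\in X$, pick $q_0$ with $x\in K_{q_0}$, and by Thm.~\ref{thm:strongMembershipAndDistanceComplement}~\ref{enu:internalSetsDistance} choose $(x'_\eps)\sim_\rho(x_\eps)$ with $x'_\eps\in K_{q_0,\eps}$ for $\eps$ small. For $\eps$ small enough that $q(\eps)\ge q_0$, $|\alpha|\le q(\eps)$ and $\eps\le\eta_{q(\eps)}$, taking $Q=q=q(\eps)$ above yields
\[
\bigl|\partial^\alpha f_\eps(x'_\eps)-\partial^\alpha f_{q_0,\eps}(x'_\eps)\bigr|\le\rho_\eps^{q(\eps)},
\]
which is $\rho$-negligible since $q(\eps)\to\infty$. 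Hence $[\partial^\alpha f_\eps(x'_\eps)]=[\partial^\alpha f_{q_0,\eps}(x'_\eps)]$ is $\rho$-moderate; in particular $f(x):=[f_\eps(x'_\eps)]=f_{q_0}(x)\in Y$. Applying the case $|\alpha|=1$ together with a mean-value estimate handles the switch from $x'_\eps$ back to any other representative $x_\eps$ of $x$, so $(f_\eps)$ defines a GSF $f\in\gsf(X,Y)$, and by construction $f|_{K_q}=f_q$ for every $q$. Once the uniform-compatibility estimate is secured, the diagonalisation and verification are routine; the entire obstruction therefore sits in that triple of Colombeau-style estimates.
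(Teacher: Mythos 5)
Your argument is correct in outline but follows a genuinely different route from the paper. The paper never proves a uniform compatibility estimate: it builds a smooth partition of unity $(\psi_{q,\eps})_{q}$ subordinate to the exhaustion by mollifying characteristic functions of the annuli $K_{q+3,\eps}\setminus K_{q,\eps}$ at scale $\rho_\eps^{k_{q+3}}$ (the exponents $k_q$ coming from Lem.~\ref{Lem:OmegaEpsAroundBoundedAndCptSupp}~\ref{enu:int}), and sets $f_\eps:=\sum_q\psi_{q,\eps}f_{q+3,\eps}$; on a point of $K_N$ the sum is finite, moderateness follows from the Leibniz rule and the explicit derivative bounds on $\psi_{q,\eps}$, and $f(x)=f_N(x)$ follows from the \emph{pointwise} compatibility at the generalized point --- no sup-over-$K_{q_0,\eps}$ upgrade and no interpolation between derivatives is needed. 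Your diagonal selection $f_\eps=f_{q(\eps),\eps}$ buys a representative that is an honest member of the given family rather than a blend, at the price of the three Colombeau-style estimates you identify; all three go through as you sketch them (the contradiction/extension-of-nets upgrade, Lem.~\ref{Lem:OmegaEpsAroundBoundedAndCptSupp}~\ref{enu:int} plus the mean value theorem, and the interpolation inequality), and the quantifier structure of your diagonalisation is sound. One small point to watch: in the verification you also need moderateness of $\partial^{\alpha+e_k}f_{q(\eps),\eps}$ on an enlargement $\overline{\Eball}_{\rho_\eps^N}(K_{q_0,\eps})$ of \emph{fixed} exponent $N$ (not merely the shrinking $\rho_\eps^{q(\eps)}$-enlargement appearing in your selection) in order to pass from the special representative $(x'_\eps)$ with $x'_\eps\in K_{q_0,\eps}$ to an arbitrary representative $(x_\eps)$, since $|x_\eps-x'_\eps|\le\rho_\eps^k$ holds only for each fixed $k$ and $\eps$ small depending on $k$; this bound is already available from your step (b) applied to $f_{q_0,\eps}$ together with the uniform compatibility, so it suffices to record it explicitly among the finitely many conditions defining $\eta_Q$.
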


\begin{proof}
Let $f_{q}=[f_{q,\eps}(-)]$ and $K_{q}=[K_{q,\eps}]$, for each $q\in\N$.
By Lem.~\ref{Lem:OmegaEpsAroundBoundedAndCptSupp}.\ref{enu:int},
there exist $k_{q}\in\N$ ($k_{q}$ recursively chosen so that $(k_{q})_{q}$
is increasing) such that $\Eball_{\rho_{\eps}^{k_{q}}}(K_{q,\eps})\subseteq K_{q+1,\eps}$,
for each $q$, $\eps$. We may assume $Y\subseteq\rcrho$ (in general,
one can apply the one-dimensional case componentwise). Let $\theta\in\cinfty(\R^{n})$
with $\theta(x)=0$, if $\abs x\ge1$ and $\theta(x)\ge0$, for each
$x\in\R^{n}$ with $\int_{\R^{n}}\theta=1$ and let $r\odot\theta_{r}(x):=r^{-n}\theta(\inv rx)$,
for $r\in\R_{>0}$. Let $1_{A}$ denote the characteristic function
of a set $A\subseteq\R^{n}$, and set 
\[
\phi_{q,\eps}:=1_{K_{q+3,\eps}\setminus K_{q,\eps}}*\rho_{\eps}^{k_{q+3}}\odot\theta,\quad\forall q,\eps.
\]
If $y\in\Eball_{\rho_{\eps}^{k_{q+3}}}(x)\cap K_{q,\eps}$, then $x\in\Eball_{\rho_{\eps}^{k_{q}}}(K_{q,\eps})\subseteq K_{q+1,\eps}$,
and hence $\Eball_{\rho_{\eps}^{k_{q+3}}}(x)\cap K_{q,\eps}=\emptyset$
if $x\notin K_{q+1,\eps}$. If $x\in K_{q+2,\eps}$, then $\Eball_{\rho_{\eps}^{k_{q+3}}}(x)\subseteq\Eball_{\rho_{\eps}^{k_{q+2}}}(K_{q+2,\eps})\subseteq K_{q+3,\eps}$.
Thereby, $\phi_{q,\eps}(x)=1$, for each $x\in K_{q+2,\eps}\setminus K_{q+1,\eps}$.
Moreover, $\supp\phi_{q,\eps}\subseteq K_{q+3,\eps}+\Eball_{\rho_{\eps}^{k_{q+3}}}(0)=\Eball_{\rho_{\eps}^{k_{q+3}}}(K_{q+3,\eps})\subseteq K_{q+4,\eps}$.
Further, $\sup_{x\in\R^{n}}\abs{\partial^{\alpha}\phi_{q,\eps}(x)}\le\rho_{\eps}^{-k_{q+3}\abs\alpha}\int_{\R^{n}}\abs{\partial^{\alpha}\theta}$
by the properties of the convolution. Let $\phi_{\eps}:=\sum_{q\in\N}\phi_{q,\eps}$.
Then $\phi_{\eps}\in\cinfty(\bigcup_{q\in\N}K_{q,\eps})$ and for
each $q$, $(\sup_{x\in K_{q,\eps}}\abs{\partial^{\alpha}\phi_{\eps}(x)})\in\R_{\rho}$.
Also $\phi_{\eps}(x)\ge1$, for each $x\in\bigcup_{q\in\N}K_{q,\eps}$.
Let $\psi_{q,\eps}:=\phi_{q,\eps}/\phi_{\eps}\in\cinfty(\R^{n})$.
Then $\sum_{q\in\N}\psi_{q,\eps}(x)=1$, for each $x\in\bigcup_{q\in\N}K_{q,\eps}$.
Since $\sup_{x\in K_{q,\eps}}\abs{1/\phi_{\eps}(x)}\le1$, we find
that $(\sup_{x\in\R^{n}}\abs{\partial^{\alpha}\psi_{q,\eps}(x)})\in\R_{\rho}$,
for each $q$. Let $f_{\eps}:=\sum_{q\in\N}\psi_{q,\eps}\cdot f_{q+3,\eps}\in\cinfty(\bigcup_{q\in\N}K_{q,\eps})$,
for each $\eps$ (recall that $\supp\psi_{q,\eps}\subseteq K_{q+3,\eps}$).
Then for each $N\in\N$, $\alpha\in\N^{d}$ and $x=[x_{\eps}]\in K_{N}$
(without loss of generality, $x_{\eps}\in K_{N,\eps}$, for each $\eps$),
\[
\abs{\partial^{\alpha}f_{\eps}(x_{\eps})}\le\sum_{q\le N+3}\abs{\partial^{\alpha}(\psi_{q,\eps}\cdot f_{q+3,\eps})(x_{\eps})}\in\R_{\rho}
\]
and 
\[
\abs{f_{\eps}(x_{\eps})-f_{N,\eps}(x_{\eps})}\le\sum_{q\le N+3}\abs{\psi_{q,\eps}(x_{\eps})}\abs{f_{q+3,\eps}(x_{\eps})-f_{N,\eps}(x_{\eps})}\sim_{\rho}0
\]
since $\supp\psi_{q,\eps}\subseteq K_{q+3,\eps}$. Hence $f\in\gsf(X,Y)$
with $\restr{f}{K_{N}}=f_{N}$. 
\end{proof}
\noindent This property allows us to firstly prove a sheaf property
for $K=[K_{\eps}]\fcmp\rcrho^{n}$ and secondly to use Thm.~\ref{thm:sheafIncreasingInternal}
to extend it to domains $X$ that satisfy the previous assumptions,
i.e.~the following 
\begin{defn}
\label{def:fcnlCmptExh}Let $X\subseteq\rcrho^{n}$, then we say that
$X$ \emph{admits a functionally compact exhaustion} if there exists
a sequence $(K_{q})_{q\in\N}$ such that 
\begin{enumerate}
\item $K_{q}\fcmp\rcrho^{n}$; 
\item $K_{q}\subseteq\text{int}\left(K_{q+1}\right)$; 
\item $X=\bigcup_{q\in\N}K_{q}$. 
\end{enumerate}
\end{defn}

\noindent For example, every strongly internal set $X=\sint{A_{\eps}}$
admits a functionally compact exhaustion since we can consider 
\begin{align}
K_{q\eps} & :=\overline{\Eball}_{\rho_{\eps}^{-q}}(0)\cap\overline{\Eball}_{-\rho_{\eps}^{q}}(A_{\eps})\qquad\forall q\in\N\nonumber \\
K_{q} & :=\left[{K_{q\eps}}\right]\fcmp X\label{eq:K_q-1}\\
X & =\bigcup_{q\in\N}K_{q}\nonumber 
\end{align}
where $\overline{\Eball}_{-r}(A):=\{x\in A\mid d(x,A^{c})\ge r\}$.
Other simple examples are e.g.~the intervals $(0,a]$ or $(-\infty,a]$.

\subsection{The Lebesgue generalized number}

We first introduce a notation for a specified Lebesgue number: 
\begin{lem}
\label{lem:Lebnum}Let $K\Subset\R^{n}$ and $(V_{j})_{j\in J}$ be
an open cover of $K$. For $x\in K$, set 
\begin{align}
\sigma(x) & :=\sup\{r\in\R_{>0}\mid\exists j\in J:\ \Eball_{r}(x)\subseteq V_{j}\}\label{eq:LebNumSup}\\
\sigma & :=\frac{1}{2}\min\{\sigma(x)\mid x\in K\};\text{ if }K=\emptyset,\text{ set }\sigma:=1.\nonumber 
\end{align}
Then $\sigma(-):K\ra\R_{>0}$ is a continuous function and $\sigma$
is a Lebesgue number of $(V_{j})_{j\in J}$ for $K$, i.e. 
\begin{equation}
\forall x\in K\,\exists j\in J:\ \Eball_{\sigma}(x)\subseteq V_{j}.\label{eq:propLebnum}
\end{equation}
We use the notation $\text{\emph{Lebnum}}\left((V_{j})_{j\in J},K\right)=:\sigma$. 
\end{lem}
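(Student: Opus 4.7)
\medskip

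\noindent\textbf{Proof plan.} My plan is to establish the lemma in four steps, the heart of the argument being a $1$-Lipschitz estimate for $\sigma(-)$ that is then combined with compactness of $K$. Throughout, I treat the trivial case $K=\emptyset$ as handled by the definition.

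First, I would verify that $\sigma(x)\in(0,+\infty]$ is well-defined for each $x\in K$. Since $(V_j)_{j\in J}$ covers $K$, there is some $j_0\in J$ with $x\in V_{j_0}$; openness of $V_{j_0}$ yields $r>0$ with $\Eball_r(x)\subseteq V_{j_0}$, so $\sigma(x)\ge r>0$. (Should $\sigma(x)=+\infty$ occur—e.g.\ if some $V_j=\R^n$—the same arguments below go through in the extended real sense; one can also replace $\sigma(x)$ by $\min(\sigma(x),1)$ without affecting the conclusion.)

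Next, the key step: I would prove that $\sigma(-)$ is $1$-Lipschitz, hence continuous. Fix $x,y\in K$ and take any $r\in\R_{>0}$ with $r<\sigma(x)$. By definition of the sup there exists $j\in J$ with $\Eball_r(x)\subseteq V_j$. For any $z\in\Eball_{r-|x-y|}(y)$ one has $|z-x|\le|z-y|+|y-x|<r$, so $\Eball_{r-|x-y|}(y)\subseteq\Eball_r(x)\subseteq V_j$, giving $\sigma(y)\ge r-|x-y|$. Letting $r\uparrow\sigma(x)$ yields $\sigma(x)-\sigma(y)\le|x-y|$, and by symmetry $|\sigma(x)-\sigma(y)|\le|x-y|$.

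Third, I would apply the classical extreme value theorem: $\sigma(-)$ is continuous on the compact set $K\Subset\R^n$, hence attains its infimum at some $x_0\in K$, with $\min\{\sigma(x)\mid x\in K\}=\sigma(x_0)>0$ by step one. Thus $\sigma=\tfrac{1}{2}\sigma(x_0)\in\R_{>0}$ is well-defined and finite.

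Finally, for the Lebesgue property, I would argue that for every $x\in K$ one has $\sigma<\tfrac{1}{2}\cdot 2\sigma(x)=\sigma(x)$ (strict inequality from the factor $\tfrac{1}{2}$ in the definition of $\sigma$), so by the very definition of $\sigma(x)$ as a supremum there exists $j\in J$ with $\Eball_\sigma(x)\subseteq V_j$, establishing \eqref{eq:propLebnum}. The only potential obstacle is the pathological case $\sigma(x)=+\infty$ for some or all $x$, but this is harmless: either it never occurs (in which case everything is finite), or the inequality $\sigma<\sigma(x)$ remains valid and the same sup-argument applies. No step here presents a deep difficulty; the content of the lemma is really the $1$-Lipschitz continuity, which then mechanically delivers the classical Lebesgue number via compactness.
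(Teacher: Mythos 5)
Your proof is correct, and it is essentially the standard argument that the paper itself only cites (the paper's "proof" is just a pointer to \cite[Thm.~1.6.11]{Bu-Bu-Iv01}, whose proof likewise rests on the $1$-Lipschitz continuity of $\sigma(-)$ followed by the extreme value theorem on the compact set $K$). Your handling of the downward-closedness of the set $\{r\in\R_{>0}\mid\exists j:\ \Eball_{r}(x)\subseteq V_{j}\}$ in the final step, and of the possible value $\sigma(x)=+\infty$, is sound and in fact supplies details the cited reference leaves implicit.
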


\begin{proof}
See the proof of \cite[Thm.~1.6.11]{Bu-Bu-Iv01}. 
\end{proof}
\begin{lem}
\label{lem:genLebNum}Let $K=[K_{\eps}]\fcmp\rcrho^{n}$ with $K_{\eps}\Subset\R^{n}$
for all $\eps$. Assume that 
\begin{equation}
K\subseteq\bigcup_{j\in J}U_{j},\label{eq:coverK}
\end{equation}
where $U_{j}=\sint{U_{j\eps}}$ are strongly internal sets, and set
\begin{align*}
s_{\eps} & :=\text{\emph{Lebnum}}\left(\left(U_{j\eps}\right)_{j\in J},K_{\eps}\right)\\
s & :=[s_{\eps}]\in\rcrho_{\ge0}.
\end{align*}
Then $s>0$. 
\end{lem}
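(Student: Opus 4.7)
The plan is to use Lem.~\ref{lem:mayer} to reduce the claim $s>0$ to showing that, for some $m\in\N$, $s_\eps>\rho_\eps^m$ holds for $\eps$ small. Since $s_\eps\ge 0$ by construction, invertibility is the only nontrivial point. If $K=\emptyset$ then $K_\eps=\emptyset$ for $\eps$ small (because a nonempty subpoint of $K$ would yield a point of $K$ via a moderate extension), so $s_\eps=1$ by the convention in Lem.~\ref{lem:Lebnum} and the claim is immediate. Assume therefore $K\ne\emptyset$, so that $K_\eps\Subset\R^n$ is nonempty for $\eps$ small, and argue by contradiction: if the desired $m$ fails, a diagonal extraction produces a sequence $\eps_k\downarrow 0$ with $s_{\eps_k}<\rho_{\eps_k}^{k}$ for every $k\in\N$.

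Next I would exploit compactness of $K_{\eps_k}$ and continuity of $\sigma_{\eps_k}$ (Lem.~\ref{lem:Lebnum}) to pick $x_k\in K_{\eps_k}$ attaining the minimum, so that $\sigma_{\eps_k}(x_k)=2s_{\eps_k}<2\rho_{\eps_k}^{k}$. Extending to a net $(x_\eps)_{\eps\in I}$ by choosing $x_\eps\in K_\eps$ arbitrarily off the sequence $(\eps_k)$, sharp boundedness of $(K_\eps)$ makes $(x_\eps)$ $\rho$-moderate, hence $x:=[x_\eps]\in[K_\eps]=K$.

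By the covering hypothesis \eqref{eq:coverK}, fix $j_0\in J$ with $x\in U_{j_0}=\sint{U_{j_0\eps}}$. Thm.~\ref{thm:strongMembershipAndDistanceComplement}\ref{enu:stronglyIntSetsDistance} supplies $q\in\R_{>0}$ with $d(x_\eps,U_{j_0\eps}^{\text{c}})>\rho_\eps^{q}$ for $\eps$ small, so $\Eball_{\rho_\eps^{q}}(x_\eps)\subseteq U_{j_0\eps}$ and therefore $\sigma_\eps(x_\eps)\ge\rho_\eps^{q}$ for those $\eps$. Specializing at $\eps=\eps_k$ with $k>q$ and $\eps_k$ small enough that the strongly internal estimate holds, we obtain
\[
\rho_{\eps_k}^{q}\;\le\;\sigma_{\eps_k}(x_{\eps_k})\;<\;2\rho_{\eps_k}^{k},
\]
i.e.\ $\rho_{\eps_k}^{-(k-q)}<2$. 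Since $k-q>0$ and $\rho_{\eps_k}\to 0^+$, the left-hand side diverges to $+\infty$, contradiction. Hence the desired $m$ exists and $s>0$.

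The main obstacle is conceptual rather than computational: the pointwise positivity $s_\eps>0$ given by the classical Lebesgue-number lemma is far weaker than invertibility in $\rcrho$, which demands a uniform polynomial lower bound $s_\eps\gtrsim\rho_\eps^m$. Producing this quantitative bound is exactly where the hypothesis that each $U_j$ is \emph{strongly} internal is used, via Thm.~\ref{thm:strongMembershipAndDistanceComplement}\ref{enu:stronglyIntSetsDistance}; a covering by merely internal sets would not suffice. The rest of the proof is the standard compactness-plus-diagonal extraction pattern, combined with the fact that $x=[x_\eps]\in K$ must lie in \emph{some} member of the cover.
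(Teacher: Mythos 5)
Your proof is correct and follows essentially the same route as the paper's: argue by contradiction, extract a sequence $\eps_{k}\downarrow0$ with $s_{\eps_{k}}<\rho_{\eps_{k}}^{k}$, pick a minimizing point $x_{\eps_{k}}\in K_{\eps_{k}}$, extend it to a point $x\in K$ using sharp boundedness, locate $x$ in some $U_{j_{0}}$ of the cover, and use the quantitative lower bound on $d(x_{\eps},U_{j_{0}\eps}^{\text{c}})$ coming from strong internality to contradict the smallness of $\sigma_{\eps_{k}}(x_{\eps_{k}})$. The only (equivalent) cosmetic difference is that you invoke Thm.~\ref{thm:strongMembershipAndDistanceComplement}.\ref{enu:stronglyIntSetsDistance} directly where the paper passes through a closed ball $\overline{B_{R}(x)}\subseteq U_{j}$ and Lem.~\ref{Lem:OmegaEpsAroundBoundedAndCptSupp}; note only that your parenthetical for the case $K=\emptyset$ is not quite accurate ($K=\emptyset$ only forces $K_{\eps}=\emptyset$ co-finally, not for all small $\eps$), though the paper's own proof glosses over the same edge case.
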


\begin{proof}
By contradiction, assume that $s\not>0$, so that $s_{\eps_{k}}<\rho_{\eps_{k}}^{k}$
for all $k\in\N$ and for some sequence $(\eps_{k})_{k\in\N}\downarrow0$.
By definition of $\text{Lebnum}$ we can write $s_{\eps_{k}}=\frac{1}{2}\sigma(x_{\eps_{k}})<\rho_{\eps_{k}}^{k}$
for some $x_{\eps_{k}}\in K_{\eps_{k}}\Subset\R^{n}$. By \eqref{eq:LebNumSup}
we hence have 
\begin{equation}
\forall k\in\N\,\forall j\in J:\ \Eball_{2\rho_{\eps_{k}}^{k}}(c_{\eps_{k}})\not\subseteq U_{j\eps_{k}}.\label{eq:notIncl}
\end{equation}
Note that the conclusion is trivial if $K_{\eps}=\emptyset$ for $\eps$
small. We can therefore assume that there exists some $h_{\eps}\in K_{\eps}$
for all $\eps$. Let $x_{\eps}:=x_{\eps_{k}}$ if $\eps=\eps_{k}$
and $x_{\eps}:=h_{\eps}$ otherwise, so that $x_{\eps}\in K_{\eps}\subseteq\bigcup_{j\in J}U_{j\eps}$
for small $\eps$ (see Lem.~\ref{Lem:OmegaEpsAroundBoundedAndCptSupp}.\ref{enu:1.10}
and recall that $K\subseteq\bigcup_{j\in J}U_{j}\subseteq\sint{\bigcup_{j\in J}U_{j\eps}}$).
Thereby, $x:=[x_{\eps}]\in K\subseteq\bigcup_{j\in J}U_{j}$. So,
$x\in U_{j}$ for some $j\in J$, and hence $\overline{B_{R}(x)}\subseteq U_{j}=\sint{U_{j\eps}}$
for some $R\in\rcrho_{>0}$, so that $\Eball_{R_{\eps_{k}}}(x_{\eps_{k}})\subseteq U_{j\eps_{k}}$
for $k\in\N$ sufficiently large by Lem.~\ref{Lem:OmegaEpsAroundBoundedAndCptSupp}.\ref{enu:1.10}
and \eqref{eq:closureBall}. Since also $2\rho_{\eps_{k}}^{k}<R_{\eps_{k}}$
for $k\in\N$ sufficiently large, we can finally say that $\Eball_{2\rho_{\eps_{k}}^{k}}(x_{\eps_{k}})\subseteq\Eball_{R_{\eps_{k}}}(x_{\eps_{k}})\subseteq U_{j\eps_{k}}$,
which contradicts \eqref{eq:notIncl}. 
\end{proof}
On the basis of this result, we can set 
\[
\text{Lebnum}\left(\left(U_{j}\right)_{j\in J},K\right)=:s\in\rcrho_{>0}.
\]
Assumption \eqref{eq:coverK} cannot be replaced by the weaker $K\subseteq\sint{\bigcup_{j\in J}U_{j\eps}}$:
let $K_{\eps}:=[-1,1]_{\R}$, $J:=\{1,2\}$, $c_{1\eps}:=-1$, $c_{2\eps}:=1$,
$r_{1\eps}:=1+e^{-1/\eps}=:r_{2\eps}$, then 
\[
[K_{\eps}]\subseteq\sint{\Eball_{r_{1\eps}}(c_{1\eps})\cup\Eball_{r_{2\eps}}(c_{2\eps})}=\sint{(-2-e^{-1/\eps},2+e^{-1/\eps})_{\R}}
\]
but $s_{\eps}=\text{Lebnum}((U_{j\eps})_{j\in J},K_{\eps})\le e^{-1/\eps}$
because for $x=0$ the largest ball contained in a set of the covering
is $\Eball_{e^{-1/\eps}}(0)$.

\subsection{The dynamic compatibility condition}
\begin{defn}
\label{def:DCC}\ 
\begin{enumerate}
\item \label{enu:U_j}Let $[J]:=\{(j_{\eps})\mid j_{\eps}\in J,\forall\eps\}=J^{I}$. 
\item Let $X\subseteq\rcrho^{n}$, $Y\subseteq\rcrho^{d}$ and $f\in\Set(X,Y)$.
Let $K\fcmp X\subseteq\bigcup_{j\in J}U_{j}$, and assume that for
all $j\in J$ we have $f_{j}:=f|_{U_{j}\cap X}\in\gsf(U_{j}\cap X,Y)$.
Then we say that $\left(f_{j}\right)_{j\in J}$ \emph{satisfies the
dynamic compatibility condition (DCC) on the cover $\left(K\cap U_{j}\right)_{j\in J}$}
if for all $j\in J$ there exist nets $\left(f_{j\eps}\right)$ defining
$f_{j}$, for each $j\in J$, such that setting $U_{\bar{\jmath}}:=\sint{U_{j_{\eps},\eps}}$,
we have: 
\begin{enumerate}[label=(\alph*)]
\item \label{enu:DCCmoder} $\forall\bar{\jmath}=(j_{\eps})\in[J]\,\forall[x_{\eps}]\in U_{\bar{\jmath}}\cap K\,\forall\alpha\in\N^{n}:\ \left(\partial^{\alpha}f_{j_{\eps},\eps}(x_{\eps})\right)\in\R_{\rho}^{d}$. 
\item \label{enu:DCCequality} $\forall\bar{\jmath}=(j_{\eps}),\bar{h}=(h_{\eps})\in[J]\,\forall[x_{\eps}]\in K\cap U_{\bar{\jmath}}\cap U_{\bar{h}}:\ \left[f_{j_{\eps},\eps}(x_{\eps})\right]=\left[f_{h_{\eps},\eps}(x_{\eps})\right]$. 
\end{enumerate}
\end{enumerate}
\noindent Finally, we say that $\left(f_{j}\right)_{j\in J}$ \emph{satisfies
the DCC on the cover $\left(U_{j}\right)_{j\in J}$} if it satisfies
the DCC on each functionally compact set contained in $X$. The adjective
\emph{dynamic} underscores that we are considering $\eps$-depending
indices $\bar{\jmath}=(j_{\eps})\in[J]$. 
\end{defn}

\begin{rem}
\noindent \label{rem:SCC}~ 
\begin{enumerate}
\item Taking constant $\bar{\jmath}$ and $\bar{h}$ in Def.~\ref{def:DCC}.\ref{enu:DCCequality},
we have that DCC is stronger than the classical compatibility condition
for $\left(f_{j}\right)_{j\in J}$ on $K\fcmp X$. 
\item DCC is a necessary condition if the sections $\left(f_{j}\right)_{j\in J}$
glue into a GSF $f=[f_{\eps}(-)]$ because in this case we can take
$f_{j\eps}=f_{\eps}$ for all $j\in J$. 
\item The notation $U_{\bar{\jmath}}:=\sint{U_{j_{\eps},\eps}}$ used to
state the DCC was introduced merely for simplicity of notations. In
fact, in general it is not possible to prove the independence from
the representative net $(U_{j\eps})$: if $U_{j}=\sint{V_{j\eps}}$,
we can have $d_{\text{H}}(U_{j\eps}^{c},V_{j\eps}^{c})=\rho_{\eps}^{j/\eps}$,
but taking $\bar{\jmath}=(\eps)$ we would have $d_{\text{H}}(U_{\eps,\eps}^{c},V_{\eps,\eps}^{c})=\rho_{\eps}$
and hence $U_{\bar{\jmath}}=\sint{U_{\eps,\eps}}\ne V_{\bar{\jmath}}=\sint{V_{\eps,\eps}}$. 
\end{enumerate}
\end{rem}

In the next and final subsection we prove that the DCC implies the
sheaf property.

\subsection{Proof of the sheaf property}
\begin{lem}
\label{lem:sheaf} Let $K$, $K^{+}$ be functionally compact sets
with $K\subseteq\text{int}(K^{+})\subseteq\rcrho^{n}$. Let $Y\subseteq\rcrho^{d}$
and $f\in\Set(K^{+},Y)$. Let $K^{+}\subseteq\bigcup_{j\in J}U_{j}$,
where $U_{j}=\sint{U_{j\eps}}$ are strongly internal sets and for
all $j\in J$ we have $f_{j}:=f|_{U_{j}\cap K^{+}}\in\gsf(U_{j}\cap K^{+},Y)$.
Let $K^{+}=[K_{\eps}^{+}]$. Assume that, for some representatives
$(f_{j\eps})_{\substack{j\in J\\
\eps\in I
}
}$ of $\left(f_{j}\right)_{j\in J}$ we have 
\begin{equation}
\forall m\in\N\,\forall^{0}\eps\,\forall j,k\in J:\ \sup_{K_{\eps}^{+}\cap U_{j\eps}\cap U_{k\eps}}|f_{k\eps}-f_{j\eps}|\le\rho_{\eps}^{m}.\label{eq:GCC}
\end{equation}
Then $f\in\gsf(K,Y)$. 
\end{lem}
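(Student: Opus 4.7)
The plan is to construct a defining net $(f_\eps)$ for $f|_K$ by an $\eps$-wise partition-of-unity gluing. First I would apply Lemma \ref{lem:genLebNum} to the strongly internal cover $(U_j)_{j\in J}$ of the functionally compact set $K^+ \supseteq K$ to produce a Lebesgue number $s = [s_\eps] \in \rcrho_{>0}$; by Lemma \ref{lem:mayer} we may assume $s_\eps > \rho_\eps^M$ for some $M \in \N$. Lemma \ref{Lem:OmegaEpsAroundBoundedAndCptSupp}.\ref{enu:int} applied to $K \subseteq \text{int}(K^+)$ yields $S \in \N$ with $\Eball_{\rho_\eps^S}(K_\eps) \subseteq K^+_\eps$; after shrinking $s$ so that $s_\eps \le \rho_\eps^S$, every Euclidean ball of radius $s_\eps$ centered in $K_\eps$ stays inside $K^+_\eps$.

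Next, for each $\eps$ I would cover the compact set $K^+_\eps$ by finitely many balls $\Eball_{s_\eps/3}(x_{i,\eps})$, $i=1,\dots,N_\eps$, centered in $K^+_\eps$, and—using the Lebesgue number property—assign to each $(i,\eps)$ an index $j(i,\eps)\in J$ with $\Eball_{s_\eps}(x_{i,\eps}) \subseteq U_{j(i,\eps),\eps}$. Picking a subordinate smooth partition of unity $(\chi_{i,\eps})_i$ supported in $(\Eball_{s_\eps/2}(x_{i,\eps}))_i$, with $\sum_i \chi_{i,\eps} \equiv 1$ on a neighborhood of $K^+_\eps$ and the standard estimates $|\partial^\beta \chi_{i,\eps}| \le C_\beta\, s_\eps^{-|\beta|}$, define on $\Omega_\eps := \bigcup_i \Eball_{s_\eps/2}(x_{i,\eps})$
$$f_\eps(x) := \sum_i \chi_{i,\eps}(x)\, f_{j(i,\eps),\eps}(x),$$
each summand being smooth because $\supp \chi_{i,\eps} \subseteq U_{j(i,\eps),\eps}$, where $f_{j(i,\eps),\eps}$ is defined.

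To verify that $(f_\eps)$ defines a GSF of type $K \to Y$ coinciding with $f$ on $K$, I would fix $x = [x_\eps] \in K$ and a reference index $j^* \in J$ with $x \in U_{j^*}$, after further shrinking $s$ so that $\Eball_s(x) \subseteq U_{j^*}$. For any $i$ contributing at $x_\eps$ one then has $\supp \chi_{i,\eps} \subseteq K^+_\eps \cap U_{j(i,\eps),\eps} \cap U_{j^*,\eps}$, and hypothesis \eqref{eq:GCC} gives $\sup_{\supp \chi_{i,\eps}}|g_{i,\eps}| \le \rho_\eps^m$ for every $m$, where $g_{i,\eps} := f_{j(i,\eps),\eps} - f_{j^*,\eps}$. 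Rewriting $f_\eps = f_{j^*,\eps} + \sum_i \chi_{i,\eps}\, g_{i,\eps}$ on a neighborhood of $x_\eps$ (where $\sum_i \chi_{i,\eps}=1$) immediately yields $[f_\eps(x_\eps)] = f_{j^*}(x) = f(x)$.

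The hard part will be moderation of $\partial^\alpha f_\eps(x_\eps)$ for $|\alpha| \ge 1$: the Leibniz expansion produces terms $\partial^\beta \chi_{i,\eps}(x_\eps)\cdot \partial^{\alpha-\beta} g_{i,\eps}(x_\eps)$, where the partition factor only costs $C_\beta\, s_\eps^{-|\beta|} = O(\rho_\eps^{-M|\beta|})$ but the $\eps$-varying index $j(i,\eps)$ obstructs any direct moderation claim for $\partial^{\alpha-\beta} f_{j(i,\eps),\eps}$. My plan to handle this is a Gagliardo–Nirenberg interior estimate on $\Eball_{s_\eps/2}(x_{i,\eps})$: combining the superpolynomially small sup-norm bound on $g_{i,\eps}$ coming from \eqref{eq:GCC} with a single a priori moderate higher-derivative bound on $D^m g_{i,\eps}$ on this ball—obtained by applying Corollary \ref{cor:extremeValues} to appropriate functionally compact subsets of $K^+ \cap U_{j^*}$ and $K^+ \cap U_{j(i,\eps)}$—to deduce $|\partial^k g_{i,\eps}(x_\eps)| \le \rho_\eps^q$ for every $q$, $\eps$ small. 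Summing the at most $N_\eps = O(\rho_\eps^{-Mn})$ contributing indices then gives moderation and completes the verification.
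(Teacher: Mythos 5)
Your construction is sound up to and including the value computation $[f_\eps(x_\eps)]=f_{j^*}(x)$, and you correctly identify the moderateness of $\partial^\alpha f_\eps(x_\eps)$ as the crux. But the fix you propose has a genuine gap: the Gagliardo--Nirenberg interpolation needs, besides the superpolynomially small sup-norm of $g_{i,\eps}=f_{j(i,\eps),\eps}-f_{j^*,\eps}$, a \emph{moderate} bound on some higher derivative $D^m f_{j(i,\eps),\eps}$ near $x_\eps$, uniform as $\eps\to 0$. The hypotheses do not supply this. Each $f_j$ is a GSF only for \emph{fixed} $j\in J$, so Cor.~\ref{cor:extremeValues} (or Thm.~\ref{thm:GSF-continuity}~\ref{enu:modOnEpsDepBall}) gives $\sup|\partial^m f_{j,\eps}|\le\rho_\eps^{-N_j}$ only for $\eps\le\eps_j$, with both the exponent $N_j$ and the threshold $\eps_j$ depending on $j$; since $j(i,\eps)$ varies with $\eps$ over a possibly infinite set $J$, no single moderate bound along the diagonal $\eps\mapsto f_{j(i,\eps),\eps}$ follows. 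Condition \eqref{eq:GCC} is purely a $0$-th order estimate and cannot be upgraded to derivative control by itself. (This uniform diagonal moderateness is exactly what clause \ref{enu:DCCmoder} of the DCC provides, but the present lemma is the primitive step that must get by without it.)

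The paper's proof avoids this obstruction by not attempting a smooth gluing at all: it sets $f_\eps:=\sum_k f_{j_k,\eps}1_{V_k}$ with $V_k$ a disjoint (characteristic-function) decomposition of the finite subcover, so that \eqref{eq:GCC} gives $\sup_{K_\eps^+\cap U_{j,\eps}}|f_\eps-f_{j,\eps}|\le\rho_\eps^{q_\eps^2}$ with $q_\eps\to\infty$, and then convolves with a mollifier $\delta_{q_\eps,\eps}$ of width $\rho_\eps^{q_\eps}$. All derivatives then fall on the mollifier, costing only $\rho_\eps^{-|\alpha|q_\eps}$, which is beaten by the $\rho_\eps^{q_\eps^2}$ sup-norm bound; the comparison $\partial^\alpha(f_{j,\eps}*\delta_{q_\eps,\eps})\approx\partial^\alpha f_{j,\eps}$ only uses derivative bounds for the single fixed index $j$ with $x\in U_j$. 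If you want to salvage your partition-of-unity route, you would need to replace the interpolation step by an analogous mollification of your $f_\eps$ (or otherwise manufacture a uniform high-derivative bound along $\eps$-varying indices, which the hypotheses do not give you).
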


\begin{proof}
Let $K=[K_{\eps}]$. By changing the representative $(K_{\eps}^{+})_{\eps}$
of $K^{+}$, we may assume that there exists $S\in\N$ such that $B_{\rho_{\eps}^{S}}(K_{\eps})\subseteq K_{\eps}^{+}$,
$\forall\eps$ \cite[Lemma 3.12]{Ver08}. We have that $K_{\eps}^{+}\subseteq\bigcup_{j\in J}U_{j,\eps}$,
$\forall\eps\le\eps_{0}$, for some $\eps_{0}>0$ (proof by contradiction).
Let $\eps\in(0,\eps_{0}]$. By compactness of $K_{\eps}^{+}$, $K_{\eps}^{+}\subseteq U_{j_{1},\eps}\cup\dots\cup U_{j_{l_{\eps}},\eps}$
for some $l_{\eps}\in\N$. Call $V_{k}:=U_{j_{k},\eps}\setminus(U_{j_{1},\eps}\cup\dots\cup U_{j_{k-1},\eps})$,
and let 
\[
f_{\eps}:=\sum_{k=1}^{l_{\eps}}f_{j_{k},\eps}1_{V_{k}}.
\]
Then $f_{\eps}$ is locally integrable. Let $m\in\N$. By \eqref{eq:GCC},
we find $\eps_{m}>0$ such that 
\[
\forall j,k\in J\,\forall\eps\le\eps_{m}:\ \sup_{K_{\eps}^{+}\cap U_{j,\eps}\cap U_{k,\eps}}|f_{k,\eps}-f_{j,\eps}|\le\rho_{\eps}^{m}.
\]
Further, by the definition of $f_{\eps}$, we then also have 
\[
\forall j\in J\,\forall\eps\le\eps_{m}:\ \sup_{K_{\eps}^{+}\cap U_{j,\eps}}|f_{\eps}-f_{j,\eps}|\le\rho_{\eps}^{m}.
\]
W.l.o.g., $\eps_{m}\downarrow0$. Let $q_{\eps}:=q$, for each $\eps\in(\eps_{(q+1)^{2}},\eps_{q^{2}}]$
($q\in\N$). Then 
\[
\forall j\in J\,\forall\eps\le\eps_{0}:\ \sup_{K_{\eps}^{+}\cap U_{j,\eps}}|f_{\eps}-f_{j,\eps}|\le\rho_{\eps}^{q_{\eps}^{2}}.
\]
Let $b$ be a smooth map $\R^{n}\to\R$ with $\int b=1$ and $\supp(b)\subseteq B_{1}(0)$,
and let 
\[
\delta_{q,\eps}(x):=\rho_{\eps}^{-nq}b\left(\frac{x}{\rho_{\eps}^{q}}\right).
\]
We show that $(f_{\eps}*\delta_{q_{\eps},\eps})_{\eps}$ is a representative
of $f$, thereby proving that $f\in\gsf(K,\rti^{d})$. We therefore
take $x=[x_{\eps}]\in K$, and we prove that 
\begin{enumerate}
\item \label{enu:derConv}$(\partial^{\alpha}(f_{\eps}*\delta_{q_{\eps},\eps})(x_{\eps}))\in\R_{\rho}^{d}$,
$\forall\alpha\in\N^{n}$ 
\item \label{enu:eqConv}$[(f_{\eps}*\delta_{q_{\eps},\eps})(x_{\eps})]=f(x)$. 
\end{enumerate}
Let $j\in J$ such that $x\in U_{j}$. Then there exists $S\in\N$
such that $B_{\rho_{\eps}^{S}}(x_{\eps})\subseteq K_{\eps}^{+}\cap U_{j,\eps}$,
$\forall^{0}\eps$. To prove \ref{enu:derConv} and \ref{enu:eqConv},
it suffices to see that $(\partial^{\alpha}(f_{\eps}*\delta_{q_{\eps},\eps})(x_{\eps})-\partial^{\alpha}f_{j,\eps}(x_{\eps}))_{\eps}$
is negligible for each $\alpha\in\N^{n}$.

Let $\alpha\in\N^{n}$. Then 
\begin{align*}
 & \left|\partial^{\alpha}(f_{\eps}*\delta_{q_{\eps},\eps})(x_{\eps})-\partial^{\alpha}(f_{j,\eps}*\delta_{q_{\eps},\eps})(x_{\eps})\right|=\left|((f_{\eps}-f_{j,\eps})*\partial^{\alpha}\delta_{q_{\eps},\eps})(x_{\eps})\right|\\
 & =\rho_{\eps}^{-|\alpha|q_{\eps}}\left|\int(f_{\eps}-f_{j,\eps})(x_{\eps}-\rho_{\eps}^{q_{\eps}}u)\partial^{\alpha}b(u)\,du\right|\le C_{\alpha}\rho_{\eps}^{-|\alpha|q_{\eps}}\sup_{B_{\rho_{\eps}^{q_{\eps}}}(x_{\eps})}|f_{\eps}-f_{j,\eps}|\\
 & \le C_{\alpha}\rho_{\eps}^{(q_{\eps}-|\alpha|)q_{\eps}}
\end{align*}
and 
\begin{align*}
 & \left|\partial^{\alpha}(f_{j,\eps}*\delta_{q_{\eps},\eps})(x_{\eps})-\partial^{\alpha}f_{j,\eps}(x_{\eps})\right|=\left|\int(\partial^{\alpha}f_{j,\eps}(x_{\eps}-y)-\partial^{\alpha}f_{j,\eps}(x_{\eps}))\delta_{q_{\eps},\eps}(y)\,dy\right|\\
 & \le C\sup_{u\in B_{\rho_{\eps}^{q_{\eps}}}(x_{\eps})}|\partial^{\alpha}f_{j,\eps}(u)-\partial^{\alpha}f_{j,\eps}(x_{\eps})|\le C\rho_{\eps}^{q_{\eps}}\max_{|\beta|=|\alpha|+1}\sup_{u\in B_{\rho_{\eps}^{q_{\eps}}}(x_{\eps})}|\partial^{\beta}f_{j,\eps}(u)|\\
 & \le C\rho_{\eps}^{q_{\eps}-N_{j,\alpha}}
\end{align*}
for sufficiently small $\eps$, by Thm.\ \ref{thm:GSF-continuity}~\ref{enu:modOnEpsDepBall}.
Combining both inequalities, we conclude that $(\partial^{\alpha}(f_{\eps}*\delta_{q_{\eps},\eps})(x_{\eps})-\partial^{\alpha}f_{j,\eps}(x_{\eps}))_{\eps}$
is $\rho$-negligible. 
\end{proof}
\begin{lem}
\label{lem:sheafFcmp} Let $K$, $K^{+}$ be functionally compact
sets with $K\subseteq\text{int}(K^{+})\subseteq\rcrho^{n}$. Let $Y\subseteq\rcrho^{d}$
and $f\in\Set(K^{+},Y)$. Let $K^{+}\subseteq\bigcup_{j\in J}U_{j}$,
where for all $j\in J$ we have $f_{j}:=f|_{U_{j}\cap K^{+}}\in\gsf(U_{j}\cap K^{+},Y)$
and $U_{j}$ is a strongly internal set. Assume that $\left(f_{j}\right)_{j\in J}$
satisfies the DCC on\emph{ the cover $\left(K^{+}\cap U_{j}\right)_{j\in J}$.}
Then $f\in\gsf(K,Y)$. 
\end{lem}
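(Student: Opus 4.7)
The plan is to reduce Lemma \ref{lem:sheafFcmp} to Lemma \ref{lem:sheaf} by deducing the uniform estimate \eqref{eq:GCC} from the DCC, and then invoking Lemma \ref{lem:sheaf} with the representatives $(f_{j\eps})_{j\in J,\eps\in I}$ supplied by Def.~\ref{def:DCC}. The key auxiliary tool is the Lebesgue number of Lem.~\ref{lem:genLebNum}: since $K^+ \fcmp \rcrho^n$ is covered by the strongly internal sets $(U_j)_{j\in J}$, there exist $s=[s_\eps]\in\rcrho_{>0}$ and $Q\in\N$ with $s_\eps>\rho_\eps^Q$ for small $\eps$, such that every $x\in K_\eps^+$ admits $\ell_\eps(x)\in J$ with $\Eball_{s_\eps}(x)\subseteq U_{\ell_\eps(x),\eps}$. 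Along any moderate net $(x_\eps)$ with $x_\eps\in K_\eps^+$, this produces an index net $(\ell_\eps(x_\eps))\in[J]$ for which $[x_\eps]$ is \emph{strongly} inside the corresponding $U_{\bar\ell}$, so DCC~\ref{enu:DCCequality} becomes applicable to any pair involving $\bar\ell$.

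To verify \eqref{eq:GCC} I argue by contradiction: if it fails, I extract $m\in\N$, a sequence $\eps_n\downarrow 0$, indices $j_n,k_n\in J$ and points $y_n\in K_{\eps_n}^+\cap U_{j_n,\eps_n}\cap U_{k_n,\eps_n}$ with $|f_{j_n,\eps_n}(y_n)-f_{k_n,\eps_n}(y_n)|>\rho_{\eps_n}^m$. I complete $(j_\eps),(k_\eps),(y_\eps)$ to nets on the whole of $I$ by using the Lebesgue selection for the indices and arbitrary points of $K_\eps^+$ at $\eps\notin\{\eps_n\}$, producing $\bar j,\bar k\in[J]$, $y=[y_\eps]\in K^+$, and an auxiliary index net $\bar\ell\in[J]$ (the Lebesgue selection along $(y_\eps)$) with $y$ strongly in $U_{\bar\ell}$. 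The desired contradiction is to derive from DCC that $[f_{j_\eps,\eps}(y_\eps)]=[f_{k_\eps,\eps}(y_\eps)]$, which is incompatible with the violating inequality on the cofinal set $\{\eps_n\}$.

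The main obstacle is that DCC~\ref{enu:DCCequality} demands strong membership $y\in U_{\bar j}\cap U_{\bar k}$, whereas at $\eps=\eps_n$ only the classical $y_n\in U_{j_n,\eps_n}\cap U_{k_n,\eps_n}$ is available, without uniform control on $d(y_n,U_{j_n,\eps_n}^c)$ or $d(y_n,U_{k_n,\eps_n}^c)$. My plan to bypass this is to use $\bar\ell$ as an intermediary: DCC~\ref{enu:DCCequality} applied to $(\bar\ell,\bar\ell)$ pins the common value $f(y)=[f_{\ell_\eps,\eps}(y_\eps)]$, and a triangle-inequality reduction shows that at least one of $|f_{j_n,\eps_n}(y_n)-f_{\ell_n,\eps_n}(y_n)|$ or $|f_{k_n,\eps_n}(y_n)-f_{\ell_n,\eps_n}(y_n)|$ exceeds $\rho_{\eps_n}^m/2$. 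I then perturb $y_n$ to a nearby $\tilde y_n$ strongly inside $U_{j_n,\eps_n}$ (respectively $U_{k_n,\eps_n}$) while preserving the magnitude of the corresponding difference, using openness of $U_{j_n,\eps_n}$ near $y_n$ together with the moderate derivative bound from DCC~\ref{enu:DCCmoder} applied to $\bar\ell$ in the Lebesgue-sized ball around $y_\eps$. The technical heart is ensuring that the perturbed net $(\tilde y_\eps)$ is moderate and that $[\tilde y_\eps]$ is strongly in $U_{\bar j}\cap U_{\bar\ell}$; passing to a cofinal subsequence where the local geometry admits a uniform polynomial lower bound on the available room, one then applies DCC~\ref{enu:DCCequality} to the pair $(\bar j,\bar\ell)$ at the perturbed point to reach the required contradiction, thereby verifying \eqref{eq:GCC} and completing the proof via Lemma~\ref{lem:sheaf}.
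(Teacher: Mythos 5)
There is a genuine gap, and it lies at the root of your plan rather than in the technical details. You propose to verify \eqref{eq:GCC} for the \emph{original} cover $\left(U_{j}\right)_{j\in J}$ with the representatives supplied by the DCC. But \eqref{eq:GCC} takes a supremum over the full classical intersection $K_{\eps}^{+}\cap U_{j\eps}\cap U_{k\eps}$, whereas the DCC constrains the nets $(f_{j\eps})$ only at points that are \emph{strongly} interior, i.e.\ at distance at least $\rho_{\eps}^{q}$ from $U_{j\eps}^{\text{c}}$ for some fixed $q$. On the part of $U_{j\eps}\cap U_{k\eps}$ lying within, say, $e^{-1/\eps}$ of both complements (take $\rho_{\eps}=\eps$), the values of $f_{j\eps}$ and $f_{k\eps}$ are completely unconstrained by the hypotheses: such points generate no element of $U_{\bar{\jmath}}\cap U_{\bar{k}}$ for any $\bar{\jmath},\bar{k}\in[J]$, and they do not even enter the requirement that $(f_{j\eps})$ define the GSF $f_{j}$ on $U_{j}\cap K^{+}$. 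So \eqref{eq:GCC} for the original cover is simply not a consequence of the DCC, and your contradiction argument is attempting to prove a false intermediate statement.

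Your proposed repair --- perturbing $y_{n}$ to a nearby $\tilde{y}_{n}$ strongly inside $U_{j_{n},\eps_{n}}$ --- does not close this gap, for two reasons. First, such a $\tilde{y}_{n}$ need not exist anywhere near $y_{n}$: the overlap can be a sliver of sub-polynomial width (again $e^{-1/\eps}$), so that \emph{no} point of $U_{j_{n},\eps_{n}}$ near $y_{n}$ is at distance $\ge\rho_{\eps_{n}}^{q}$ from the complement for any $q$; your hope of ``passing to a cofinal subsequence where the local geometry admits a uniform polynomial lower bound on the available room'' has no justification. Second, even when $\tilde{y}_{n}$ exists, transferring the lower bound $|f_{j_{n},\eps_{n}}(y_{n})-f_{\ell_{n},\eps_{n}}(y_{n})|>\rho_{\eps_{n}}^{m}/2$ to $\tilde{y}_{n}$ requires a moderate derivative bound on $f_{j_{n},\eps_{n}}$ near $y_{n}$; condition \ref{enu:DCCmoder} of Def.~\ref{def:DCC} (and Thm.~\ref{thm:GSF-continuity}) give such bounds only at points of some $U_{\bar{\jmath}}$, so near $y_{n}$ only $f_{\ell}$ is controlled, not $f_{j_{n}}$. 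The paper's proof avoids all of this by using the Lebesgue number to \emph{replace} the cover: for each $x=[x_{\eps}]\in K^{+}$ one chooses $j_{\eps}$ with $\Eball_{2s_{\eps}}(x_{\eps})\subseteq U_{j_{\eps},\eps}$ and sets $V_{x}:=\sint{\Eball_{s_{\eps}}(x_{\eps})}$; the factor $2$ guarantees that every point of $V_{x,\eps}$ is at distance $\ge s_{\eps}>\rho_{\eps}^{Q}$ from $U_{j_{\eps},\eps}^{\text{c}}$, so classical membership in the refined cover automatically yields strong membership in $U_{\bar{\jmath}(x)}$, condition \ref{enu:DCCequality} applies directly, and \eqref{eq:GCC} is verified for $(V_{x})_{x\in K^{+}}$ --- which is the cover to which Lemma~\ref{lem:sheaf} is then applied. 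You should restructure your argument along these lines.
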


\begin{proof}
Let $s_{\eps}:=\frac{1}{2}\text{Lebnum}\left(\left(U_{j\eps}\right)_{j\in J},K_{\eps}^{+}\right)$.
By Lemma \ref{lem:genLebNum}, $s:=[s_{\eps}]>0$.\\
 We define a cover $(V_{x})_{x\in K^{+}}$ of $K^{+}$ as follows.
Let $x=[x_{\eps}]\in K^{+}$, where $x_{\eps}\in K_{\eps}^{+}$, $\forall\eps$.
Then there exist $j_{\eps}\in J$ such that $B_{2s_{\eps}}^{E}(x_{\eps})\subseteq U_{j_{\eps},\eps}$,
$\forall\eps$. We then define $V_{x}:=\sint{\Eball_{s_{\eps}}(x_{\eps})}$,
and we denote $\bar{\jmath}(x):=[j_{\eps}]\in[J]$. By condition \ref{enu:DCCmoder},
$(f_{j_{\eps},\eps})$ defines a generalized smooth map $f_{\bar{\jmath}(x)}\in\gsf(V_{x}\cap K^{+},Y)$.\\
 In order to conclude that $f\in\gsf(K,Y)$ by Lemma \ref{lem:sheaf},
it suffices to show that: 
\begin{enumerate}
\item \label{enu:i}$f_{\bar{\jmath}(x)}=f|_{V_{x}\cap K^{+}}$, $\forall x\in K^{+}$. 
\item \label{enu:ii}$\forall m\in\N\,\forall^{0}\eps\,\forall x,y\in K^{+}:\ \sup_{K_{\eps}^{+}\cap V_{x,\eps}\cap V_{y,\eps}}|f_{\bar{\jmath}(y),\eps}-f_{\bar{\jmath}(x),\eps}|\le\rho_{\eps}^{m}$. 
\end{enumerate}
Proof of \ref{enu:i}: Let $x\in K^{+}$. Let $y\in V_{x}\cap K^{+}$.
We want to show that $f_{\bar{\jmath}(x)}(y)=f(y)$. As $y\in K^{+}$,
we have $y\in U_{j}$ for some $j\in J$. Thus $y\in K^{+}\cap U_{j}\cap V_{x}\subseteq K^{+}\cap U_{j}\cap U_{\bar{\jmath}(x)}$,
and condition \ref{enu:DCCequality} yields $f(y)=f_{j}(y)=f_{\bar{\jmath}(x)}(y)$.\\
 Proof of \ref{enu:ii}: By contradiction, suppose that there exists
$m\in\N$ and, for each $n\in\N$, there exist $\eps_{n}>0$ with
$(\eps_{n})_{n}$ decreasingly tending to $0$ and $x_{n},y_{n}\in K^{+}$
and $z_{\eps_{n}}\in K_{\eps_{n}}^{+}\cap V_{x_{n},\eps_{n}}\cap V_{y_{n},\eps_{n}}$
such that 
\begin{equation}
|f_{j_{\eps_{n}},\eps_{n}}(z_{\eps_{n}})-f_{h_{\eps_{n}},\eps_{n}}(z_{\eps_{n}})|>\rho_{\eps_{n}}^{m}\label{eq:SCC-contradiction}
\end{equation}
where we denote $j_{\eps_{n}}:=\bar{\jmath}(x_{n})_{\eps_{n}}$ and
$h_{\eps_{n}}:=\bar{\jmath}(y_{n})_{\eps_{n}}\in J$. Then there exist
$x_{\eps_{n}},y_{\eps_{n}}\in K_{\eps_{n}}^{+}$ such that $V_{x_{n},\eps_{n}}=B_{s_{\eps_{n}}}^{E}(x_{\eps_{n}})$
and $V_{y_{n},\eps_{n}}=B_{s_{\eps_{n}}}^{E}(y_{\eps_{n}})$. For
$\eps\notin\{\eps_{n}:n\in\N\}$, let $z_{\eps}\in K_{\eps}^{+}$
be arbitrary. Then $z\in K^{+}$. Let $j_{\eps}:=h_{\eps}:=\bar{\jmath}(z)_{\eps}$,
for $\eps\notin\{\eps_{n}:n\in\N\}$. Let $(z'_{\eps})$ be any representative
of $z$. Then 
\[
\begin{cases}
z'_{\eps}\in B_{s_{\eps}}^{E}(z_{\eps})\subseteq U_{j_{\eps},\eps}=U_{h_{\eps},\eps} & \forall^{0}\eps\notin\{\eps_{n}:n\in\N\}\\
z'_{\eps_{n}}\in B_{s_{\eps_{n}}}^{E}(z_{\eps_{n}})\subseteq B_{2s_{\eps_{n}}}^{E}(x_{\eps_{n}})\subseteq U_{j_{\eps_{n}},\eps_{n}} & \forall n\in\N\text{ large enough}
\end{cases}
\]
and similarly $z'_{\eps_{n}}\in U_{h_{\eps_{n}},\eps_{n}}$ for large
enough $n\in\N$. Thus $z\in K^{+}\cap U_{\bar{\jmath}}\cap U_{\bar{h}}$,
and condition \ref{enu:DCCequality} contradicts \eqref{eq:SCC-contradiction}. 
\end{proof}
Using Thm.~\ref{thm:sheafIncreasingInternal} and a functionally
compact exhaustion, see Def.~\ref{def:fcnlCmptExh}, we get 
\begin{thm}
\label{thm:sheafFcnlCmpExh} Let $X\subseteq\rcrho^{n}$ be a set
that admits a functionally compact exhaustion, $Y\subseteq\rcrho^{d}$
and $f\in\Set(X,Y)$. Let $X\subseteq\bigcup_{j\in J}U_{j}$, where
for all $j\in J$ we have $f_{j}:=f|_{U_{j}\cap X}\in\gsf(U_{j}\cap X,Y)$
and $U_{j}$ is a strongly internal set. Assume that $\left(f_{j}\right)_{j\in J}$
satisfies the DCC on\emph{ the cover $\left(U_{j}\right)_{j\in J}$.}
Then $f\in\gsf(X,Y)$. 
\end{thm}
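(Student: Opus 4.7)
The plan is to apply Lemma \ref{lem:sheafFcmp} ``annulus by annulus'' along the functionally compact exhaustion, then glue the resulting pieces via Theorem \ref{thm:sheafIncreasingInternal}. Concretely, let $(K_q)_{q\in\N}$ be a functionally compact exhaustion of $X$ as in Def.~\ref{def:fcnlCmptExh}, so $K_q \fcmp \rcrho^n$, $K_q \subseteq \text{int}(K_{q+1})$, and $X = \bigcup_{q\in\N} K_q$. For each $q$, I will produce a GSF $g_q := f|_{K_q} \in \gsf(K_q,Y)$.

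To obtain $g_q$, I would invoke Lemma \ref{lem:sheafFcmp} with $K := K_q$ and $K^+ := K_{q+1}$. The hypotheses are all immediate: $K_q \subseteq \text{int}(K_{q+1})$ comes from the definition of the exhaustion; $K_{q+1} \subseteq X \subseteq \bigcup_j U_j$, with each $U_j$ strongly internal, by hypothesis; the restriction $f|_{U_j \cap K_{q+1}}$ is a GSF because $f_j = f|_{U_j \cap X}$ is a GSF and $U_j\cap K_{q+1}\subseteq U_j\cap X$, so the same defining nets work on the smaller domain; and the DCC on the cover $(K_{q+1} \cap U_j)_{j\in J}$ is granted, because we are assuming DCC on $(U_j)_{j\in J}$, which by Def.~\ref{def:DCC} means precisely DCC on every functionally compact subset of $X$, and $K_{q+1}$ is such a subset. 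Lemma \ref{lem:sheafFcmp} then yields $g_q = f|_{K_q} \in \gsf(K_q, Y)$.

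Having produced the family $(g_q)_{q\in\N}$, the compatibility condition $g_{q+1}|_{K_q} = g_q$ required by Theorem \ref{thm:sheafIncreasingInternal} holds tautologically, since both sides equal $f|_{K_q}$ as set-theoretical maps. All the remaining hypotheses of that theorem, namely $K_q \fcmp \rcrho^n$, $K_q \subseteq \text{int}(K_{q+1})$, and $X = \bigcup_q K_q$, are already built into the exhaustion. Applying Theorem \ref{thm:sheafIncreasingInternal} therefore furnishes a unique $\tilde{f} \in \gsf(X,Y)$ with $\tilde{f}|_{K_q} = g_q = f|_{K_q}$ for every $q$. Since $X = \bigcup_q K_q$, the two set-theoretical maps $\tilde{f}$ and $f$ agree pointwise on $X$, so $f = \tilde{f} \in \gsf(X,Y)$.

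There is no serious obstacle in this reduction: the content of the theorem has already been absorbed into Lemma \ref{lem:sheafFcmp} (which handles a single ``shell'' with the help of Lem.~\ref{lem:genLebNum} and the mollification argument of Lem.~\ref{lem:sheaf}) and into Theorem \ref{thm:sheafIncreasingInternal} (which handles the countable gluing via a generalized smooth partition of unity). The only point requiring minor care is the verification that the DCC on $(U_j)_{j\in J}$ indeed transfers to DCC on $(K_{q+1} \cap U_j)_{j\in J}$ with defining nets that are admissible for $K_{q+1}$; this is exactly how DCC was defined, so the verification is immediate, and the entire argument reduces to the bookkeeping outlined above.
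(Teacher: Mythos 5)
Your proof is correct and is essentially the argument the paper intends: the paper only remarks that the theorem follows by combining Thm.~\ref{thm:sheafIncreasingInternal} with a functionally compact exhaustion, and your reduction (apply Lemma \ref{lem:sheafFcmp} to each pair $K_q\subseteq\text{int}(K_{q+1})$, then glue the restrictions $f|_{K_q}$ via Thm.~\ref{thm:sheafIncreasingInternal}) is exactly that argument spelled out. The one point needing care --- that the DCC on the cover $\left(U_j\right)_{j\in J}$ yields the DCC on $\left(K_{q+1}\cap U_j\right)_{j\in J}$ because, by Def.~\ref{def:DCC}, the former is by definition the latter for every functionally compact subset of $X$ --- is handled correctly in your write-up.
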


The usual sheaf properties both for Schwartz distributions and for
Colombeau generalized functions do not need any stronger compatibility
condition, which ultimately stems from the possibility to use, for
these generalized functions, only (near-)standard points (recall Thm\@.~\ref{thm:nearStdInfEquality}
and Thm.~\ref{thm:nearStdInfModerate}). This is proved in the following
result, which generalizes the aforementioned sheaf properties (see
e.g.~\cite{GKOS}). 
\begin{thm}
\noindent \label{thm:sheafFermatBalls}Let $X\subseteq\nrst{\left(\rti^{n}\right)}$,
$Y\subseteq\rcrho^{d}$ and let $f:X\ra Y$ be a set-theoretical map.
Suppose that $X\subseteq\bigcup_{x\in X}B_{r_{x}}(x)$, where $r_{x}\in\R_{>0}$
for all $x$, and that 
\[
f|_{B_{r_{x}}(x)\cap X}\in\gsf(B_{r_{x}}(x)\cap X,Y)
\]
for all $x\in X$. Then $f\in\gsf(X,Y)$. 
\end{thm}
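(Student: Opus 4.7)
The plan is to glue the local defining nets into a single representing net by means of a partition of unity in $\R^{n}$ adapted to the cover of standard parts. By Lem.~\ref{lem:fromOmega_epsToRn}, for each $x\in X$ pick a net $(f_{x,\eps})_{\eps}\in\cinfty(\R^{n},\R^{d})^{I}$ defining $f|_{B_{r_{x}}(x)\cap X}$. Because $x$ is near-standard and $r_{x}\in\R_{>0}$, any near-standard $y$ with $|\st y-\st x|\le r_{x}/3$ satisfies $|y-x|\le r_{x}/3+(\text{infinitesimals})<r_{x}$ in $\rcrho$ (via Lem.~\ref{lem:mayer}), so $y\in B_{r_{x}}(x)$. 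Consequently $(\Eball_{r_{x}/3}(\st x))_{x\in X}$ is an open cover of $\st X:=\{\st x\mid x\in X\}\subseteq\R^{n}$ by ordinary Euclidean balls; letting $U$ be its union, paracompactness yields a countable locally finite open refinement $(V_{k})_{k\in\N}$ with $V_{k}\subseteq\Eball_{r_{x_{k}}/3}(\st{x_{k}})$ for chosen $x_{k}\in X$, plus a subordinate smooth partition of unity $(\chi_{k})_{k}$ on $U$. Set $f_{\eps}(p):=\sum_{k\in\N}\chi_{k}(p)\,f_{x_{k},\eps}(p)$ for $p\in U$ (a locally finite smooth sum), and extend arbitrarily to $\R^{n}$.

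Let $y=[y_{\eps}]\in X$. Since $\st y\in V_{k_{0}}$ for some $k_{0}$, $y_{\eps}\in V_{k_{0}}$ for $\eps$ small, giving $X\subseteq\sint{U}$. Local finiteness of $(V_{k})$ at $\st y$ yields a neighborhood $W\subseteq U$ of $\st y$ meeting only finitely many $V_{k_{1}},\dots,V_{k_{N}}$; for $\eps$ small, $y_{\eps}\in W$, so $\chi_{k}(y_{\eps})=0$ for all $k$ outside this finite list. For each $k_{i}$ with $\chi_{k_{i}}(y_{\eps})>0$ at cofinally small $\eps$, we have $y_{\eps}\in V_{k_{i}}\subseteq\Eball_{r_{x_{k_{i}}}/3}(\st{x_{k_{i}}})$ cofinally; passing to the limit $\eps\to0^{+}$ yields $|\st y-\st{x_{k_{i}}}|\le r_{x_{k_{i}}}/3$, and the first paragraph's estimate gives $y\in B_{r_{x_{k_{i}}}}(x_{k_{i}})\cap X$.

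The verification then splits in two. For the representation identity, $f_{\eps}(y_{\eps})-f_{x_{k_{0}},\eps}(y_{\eps})=\sum_{i=1}^{N}\chi_{k_{i}}(y_{\eps})\bigl(f_{x_{k_{i}},\eps}(y_{\eps})-f_{x_{k_{0}},\eps}(y_{\eps})\bigr)$ is a finite sum with $[0,1]$-valued coefficients and $\rho$-negligible parenthesised differences (both nets represent the same $f(y)\in Y$ at the common point $y$), so $[f_{\eps}(y_{\eps})]=f(y)$. For moderateness (condition \ref{enu:partial-u-moderate} of Def.~\ref{def:netDefMap}), the Leibniz rule expresses $\partial^{\alpha}f_{\eps}(y_{\eps})$ as a finite sum of products $\partial^{\beta}\chi_{k_{i}}(y_{\eps})\cdot\partial^{\gamma}f_{x_{k_{i}},\eps}(y_{\eps})$, whose first factor is bounded by a real constant and whose second is $\rho$-moderate because $f|_{B_{r_{x_{k_{i}}}}(x_{k_{i}})\cap X}\in\gsf$; hence $(\partial^{\alpha}f_{\eps}(y_{\eps}))\in\R_{\rho}^{d}$, completing the proof that $f\in\gsf(X,Y)$.

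The main obstacle is reconciling the topological local finiteness of $(\chi_{k})$ on $\R^{n}$ with $\eps$-wise evaluation at the fluctuating argument $y_{\eps}$: one must argue that every index actively contributing to $f_{\eps}(y_{\eps})$ corresponds to a ball $B_{r_{x_{k_{i}}}}(x_{k_{i}})$ containing $y$, so that compatibility of the local sections applies. This is resolved by near-standardness of $y$ together with the reality of the radii $r_{x}$: a standard neighborhood of $\st y$ contains $y_{\eps}$ for $\eps$ small and localises the sum to finitely many terms, while invertibility of $r_{x}$ (Lem.~\ref{lem:mayer}) lets us push infinitesimal slack through the strict inequality $|y-x|<r_{x}$ that defines the ball.
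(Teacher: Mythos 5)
Your proof is correct and follows essentially the same route as the paper's: a smooth partition of unity subordinate to a countable locally finite refinement of the Euclidean cover $(\Eball_{r_x/2}(\st x))_{x\in X}$ of the standard parts, with near-standardness of the points and the reality (hence invertibility) of the radii $r_x$ localising the sum at each evaluation point and forcing every actively contributing index to correspond to a ball $B_{r_{x_k}}(x_k)$ containing that point, so that compatibility of the local sections makes the cross-differences negligible. Your explicit Leibniz-rule verification of the moderateness of all derivatives is a small completeness addition that the paper leaves implicit.
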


\begin{proof}
For every $x\in X$, let $f|_{B_{r_{x}}(x)}=:v^{x}\in\gsf(B_{r_{x}}(x)\cap X)$
and let $v^{x}$ be defined by the net $(v_{\eps}^{x})$ with $v_{\eps}^{x}\in{\mathcal{C}}^{\infty}(\R^{n},\R^{d})$.
Recall that by $x^{\circ}\in\R^{n}$ we denote the standard part of
any $x\in(\rcrho^{n})^{\bullet}$. Pick a countable, locally finite
open (in $\R^{n}$) refinement $(U_{i})_{i\in\N}$ of $(\Eball{}_{r_{x}/2}(x^{\circ}))_{x\in X}$
and let $(\chi_{i})_{i\in\N}$ be a partition of unity with $\supp\chi_{i}\comp U_{i}$
for all $i\in\N$. For any $i\in\N$ pick $x_{i}\in X$ such that
$U_{i}\subseteq\Eball_{r_{x_{i}}/2}(x_{i}^{\circ})$ and set 
\[
f_{\eps}:=\sum_{i\in\N}\chi_{i}v_{\eps}^{x_{i}}\in\cinfty(\R^{n},\R^{d}).
\]
Then the net $(f_{\eps})$ defines a GSF of the type $X\ra Y$: indeed,
we will show that $f(z)=v^{z}(z)=[f_{\eps}(z_{\eps})]$ for all $z=[z_{\eps}]\in X$:

\begin{align*}
f_{\eps}(z_{\eps})-v_{\eps}^{z}(z_{\eps}) & =\sum_{i\in\N}\chi_{i}(z_{\eps})(v_{\eps}^{x_{i}}(z_{\eps})-v_{\eps}^{z}(z_{\eps}))=\\
 & =\sum_{\{i\mid z^{\circ}\in B_{3r_{x_{i}}/4}(x_{i})\}}\chi_{i}(z_{\eps})(v_{\eps}^{x_{i}}(z_{\eps})-v_{\eps}^{z}(z_{\eps}))\ +\\
 & \phantom{=}+\sum_{\{i\mid z^{\circ}\not\in B_{3r_{x_{i}}/4}(x_{i})\}}\chi_{i}(z_{\eps})(v_{\eps}^{x_{i}}(z_{\eps})-v_{\eps}^{z}(z_{\eps}))=\\
 & =:A_{\eps}+B_{\eps}.
\end{align*}
Since $z_{\eps}\to z^{\circ}$, for small $\eps$ all $z_{\eps}$
remain in a compact set and since the supports of the $\chi_{i}$
form a locally finite family it follows that both $A_{\eps}$ and
$B_{\eps}$ are in fact finite sums for small $\eps$. To estimate
the summands in $A_{\eps}$, note that $z^{\circ}\in B_{3r_{x_{i}}/4}(x_{i})$
implies that $z\in B_{r_{x_{i}}}(x_{i})$, so $v^{x_{i}}(z)=f|_{B_{r_{x_{i}}}(x_{i})}(z)=f(z)=v^{z}(z)$.
Hence $\left[A_{\eps}\right]=0$. Concerning $B_{\eps}$, $z^{\circ}\not\in B_{3r_{x_{i}}/4}(x_{i})$
implies that $|z^{\circ}-x_{i}^{\circ}|>r_{x_{i}}/2$. On the other
hand, if $\chi_{i}(z_{\eps})\not=0$ then $z_{\eps}\in U_{i}\subseteq\Eball_{r_{x_{i}}/2}(x_{i}^{\circ})$,
implying $|z^{\circ}-x_{i}^{\circ}|\le r_{x_{i}}/2$. Hence $B_{\eps}=0$.
Consequently, $\left[f_{\eps}(z_{\eps})\right]=\left[v_{\eps}^{z}(z_{\eps})\right]$,
as claimed. 
\end{proof}
\noindent The following is the sheaf property for Fermat covers. 
\begin{cor}
\label{cor:sheafFermat}Let $X\subseteq\rti^{n}$, $Y\subseteq\rcrho^{d}$
and let $f:X\ra Y$ be a set-theoretical map. Suppose that $X\subseteq\bigcup_{j\in J}U_{j}$,
where each $U_{j}$ is a large open set, and that 
\[
f|_{U_{j}\cap X}\in\gsf(U_{j}\cap X,Y)
\]
for all $j\in J$. Then 
\begin{enumerate}
\item \label{enu:nearStX}If $X\subseteq\nrst{\left(\rti^{n}\right)}$,
then $f\in\gsf(X,Y)$. 
\item \label{enu:Xsubpoints}If $X$ contains its subpoints and all points
of $X$ are finite, then $f\in\gsf(X,Y)$. 
\end{enumerate}
\end{cor}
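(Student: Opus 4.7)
Part (i) is essentially a direct application of Theorem~\ref{thm:sheafFermatBalls}; the only task is to convert the given large‐open cover into a sharp‐ball cover with real radii. For each $x \in X$, I would fix $j(x) \in J$ with $x \in U_{j(x)}$ and, using that $U_{j(x)}$ is large open, pick $r = r(x) \in \R_{>0}$ with the Fermat ball $\Fball_r(x) \subseteq U_{j(x)}$. The key observation is that $B_{r/2}(x) \cap X \subseteq \Fball_r(x)$: since $X \subseteq \nrst{(\rti^{n})}$, every $y \in B_{r/2}(x) \cap X$ is near-standard, so $|y-x|$ is near-standard with $|y-x|^\circ \le r/2$; then $r - |y-x|$ differs from the positive real $r - |y-x|^\circ \ge r/2$ by an infinitesimal, giving $r - |y-x| \ge r/4 \in \R_{>0}$ and hence $|y-x| <_{\R} r$. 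Thus $f|_{B_{r/2}(x) \cap X}$ coincides with the GSF $f_{j(x)}|_{B_{r/2}(x) \cap X}$, and Theorem~\ref{thm:sheafFermatBalls} applied to the sharp cover $(B_{r(x)/2}(x))_{x \in X}$ concludes the proof of (i).

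For part (ii) the strategy is to restrict to near-standard points, apply (i) there, and then lift the resulting defining net back to all of $X$ via Theorem~\ref{thm:nearStdInfModerate}. Setting $X_{\mathrm{ns}} := X \cap \nrst{(\rti^{n})}$, the hypotheses of (i) transfer verbatim to $X_{\mathrm{ns}}$, so (i) yields $f|_{X_{\mathrm{ns}}} \in \gsf(X_{\mathrm{ns}}, Y)$; by Lemma~\ref{lem:fromOmega_epsToRn} I may fix a representing net $(f_\eps)_\eps$ with $f_\eps \in \cinfty(\R^n, \R^d)$. I would then show that this same net defines a GSF on all of $X$ coinciding with $f$ by invoking Theorem~\ref{thm:nearStdInfModerate}: the domain condition is trivial (take $\Omega_\eps = \R^n$), there are no infinite points in $X$ by the finiteness hypothesis, and the moderateness condition at near-standard $x \in X$ is inherited from $f|_{X_{\mathrm{ns}}}$. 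The whole matter is therefore reduced to verifying $[f_\eps(x_\eps)] = f(x)$ (and in particular $\in Y$) for every $x \in X$.

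The main obstacle is establishing this identity at a non–near-standard $x \in U_j \cap X$, and I would carry it out by contradiction in the style of Theorem~\ref{thm:nearStdInfEquality}. If the identity failed, then for some $m \in \N$ and some $\eps_k \downarrow 0$ one would have $|f_{\eps_k}(x_{\eps_k}) - f_{j,\eps_k}(x_{\eps_k})| > \rho_{\eps_k}^m$, where $(f_{j,\eps})$ is a defining net of $f_j$. Finiteness of $x$ lets me extract a subsubsequence $x_{\eps_{k_l}} \to y \in \R^n$ indexed by a co-final set $K \subzero I$, making $x|_K \in X|_K$ near-standard with standard part $y$. Reading ``$X$ contains its subpoints'' in the converging-subpoint sense used in the proofs of Theorems~\ref{thm:nearStdInfEquality} and~\ref{thm:nearStdInfModerate} furnishes a near-standard $\bar x \in X$ with $\bar x|_K = x|_K$ and standard part $y$; choosing $j' \in J$ with $\bar x \in U_{j'}$, the fact that $\bar x \in X_{\mathrm{ns}}$ gives $[f_\eps(\bar x_\eps)] = f(\bar x) = [f_{j',\eps}(\bar x_\eps)]$, which restricted to $K$ (where $\bar x_\eps = x_\eps$) yields $[f_\eps(x_\eps)]|_K = [f_{j',\eps}(x_\eps)]|_K$. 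Finally, the classical compatibility $f_j = f_{j'}$ on $U_j \cap U_{j'} \cap X$, combined with the interleaved point $z \in X$ agreeing with $x$ on $K$ and with $\bar x$ on $I \setminus K$ (produced by a second application of subpoint closure), identifies $[f_{j',\eps}(x_\eps)]|_K$ with $[f_{j,\eps}(x_\eps)]|_K$ and contradicts the chosen subsequence.

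The truly delicate step is this last one, namely producing an actual element $z \in U_j \cap U_{j'} \cap X$ from the interleaving of $x$ and $\bar x$ so that the pointwise compatibility $f_j(z) = f_{j'}(z)$ translates into a $K$-negligibility statement for $(f_{j,\eps} - f_{j',\eps})(x_\eps)$; everything else in part (ii) is bookkeeping around Theorems~\ref{thm:nearStdInfModerate} and~\ref{thm:nearStdInfEquality}, while part (i) is essentially a one-line reduction to Theorem~\ref{thm:sheafFermatBalls}.
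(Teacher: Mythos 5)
Part~(i) of your proposal is correct and is essentially the paper's own argument: each $x\in X$ lies in some $U_{j(x)}$, a Fermat ball $\Fball_{r}(x)\subseteq U_{j(x)}$ with $r\in\R_{>0}$ contains the sharp ball $B_{r/2}(x)$, and Thm.~\ref{thm:sheafFermatBalls} applies to the cover $\left(B_{r(x)/2}(x)\right)_{x\in X}$. (Your detour through standard parts to prove $B_{r/2}(x)\subseteq\Fball_{r}(x)$ is unnecessary: if $|y-x|<r/2$ in the sharp sense then $|y-x|\le r/2$, so $r-|y-x|\ge r/2\in\R_{>0}$; but this is harmless.) For part~(ii) you also follow the paper's route (restrict to the near-standard points, apply~(i), lift the defining net via Thm.~\ref{thm:nearStdInfModerate}), and you correctly isolate the one point that actually requires work: the identity $f(x)=[f_{\eps}(x_{\eps})]$ at finite points $x$ that are not near-standard.

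The step you yourself flag as delicate, however, does fail. First, since $\bar{x}|_{K}=x|_{K}$, the ``interleaved point agreeing with $x$ on $K$ and with $\bar{x}$ on $I\setminus K$'' is just $\bar{x}$ itself, so the second application of subpoint closure produces nothing new. Second, to invoke the compatibility $f_{j}=f_{j'}$ at such a point you need it to lie in $U_{j}\cap U_{j'}\cap X$, and nothing forces $\bar{x}\in U_{j}$: on $K^{c}$ the net $\bar{x}_{\eps}$ may be far from $x_{\eps}$, so $\bar{x}$ can fall outside every Fermat ball around $x$ contained in $U_{j}$. Third, and decisively, $U_{j}\cap U_{j'}\cap X$ may contain no point at all that agrees with $x$ on a co-final set, in which case the compatibility hypothesis places no constraint on $f(x)$. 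A concrete configuration: take $K\subzero I$ with $K^{c}\subzero I$, set $x:=e_{K^{c}}\in\rcrho$ and $X:=\{0,1,x\}$ (a set of finite points containing its subpoints), and cover it by the large open sets $U_{1}:=\Fball_{1/4}(0)$, $U_{2}:=\Fball_{1/4}(1)$, $U_{3}:=\Fball_{1/4}(x)$; these meet $X$ in three pairwise disjoint singletons, so $f(0):=f(1):=0$, $f(x):=1$ satisfies every hypothesis of~(ii), yet for any defining net the negligibility of $(f_{\eps}(0))$ and $(f_{\eps}(1))$ forces $[f_{\eps}(x_{\eps})]=0\ne f(x)$. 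So the identity at non-near-standard points cannot be extracted from the stated hypotheses by any rearrangement of your argument; some additional assumption tying the covering to interleavings (a condition of DCC type on $\left(f_{j}\right)_{j\in J}$, or closure of each $U_{j}\cap X$ under the relevant interleavings) is needed to close this step. Everything else in your write-up (the reduction via Thm.~\ref{thm:nearStdInfModerate}, the extraction of a convergent subsequence from a finite point, the use of subpoint closure to produce $\bar{x}$) is sound.
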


\begin{proof}
To prove property \ref{enu:nearStX}, let $s=\st{x}$, $x\in X$.
Then $x\in U_{j_{x}}$ for some $j_{x}\in J$, and hence $B_{r_{j_{x}}}(x)\subseteq U_{j_{x}}$
for some $r_{j_{x}}\in\R_{>0}$. Therefore $s=\st{x}\in\Eball_{r_{j_{x}}}(\st{x})$
and so $\st{X}\subseteq\bigcup_{x\in X}\Eball_{r_{j_{x}}}(\st{x})$.
Claim \ref{enu:nearStX} now follows directly from Cor.~\ref{thm:sheafFermatBalls}.

\noindent Property \ref{enu:Xsubpoints} follows by \ref{enu:nearStX}
and Thm.~\ref{thm:nearStdInfModerate} applied to $f|_{X'}$, where
$X':=\{x\in X\mid x\text{ is near-standard}\}$. 
\end{proof}
It is now natural to ask whether the sheaf property Thm.~\ref{thm:sheafFcnlCmpExh}
could be inscribed into the general notion of sheaf on a site. This
is one of the aims of the next Sec.~\ref{sec:Gtopos}.

\section{\textcolor{red}{\label{sec:Gtopos}}The Grothendieck topos of generalized
smooth functions}

As we argued in the introduction, function spaces and Cartesian closedness
are considered by many authors as important features for mathematics
and mathematical phy\-si\-cs. Even if Colombeau's theory of generalized
functions can be extended to any locally convex space $E$, on the
other hand, in \cite{KM} (p.\ 2) it is stated that: \emph{``locally
convex topology is not appropriate for non-linear questions in infinite
dimensions}'', and indeed\emph{ }a different approach to infinite
dimensional spaces is to embed smooth manifolds into a Cartesian closed
category $\mathcal{C}$ (see \cite{Gio10c} for a review of this type
of approaches). Similar lines of thought can be found in \cite{Ko-Re03,Ko-Re04},
but where generalized functions are seen as functionals, hence not
following Cauchy-Dirac's original conception but Schwartz' conception
instead. We first motivate and introduce the few notions of category
theory that we need in the present section. Indeed, only basic preliminaries
of category theory are needed to understand this section: definition
of category and basic examples, functors and natural transformation.
Our basic references for this section are \cite{MaMo,Joh02,Ba-Ho11}.
As it is customary, we write $D\in\mathbb{D}$ to denote that $D$
is an object of the category $\mathbb{D}$, we write $A\xra{f}B$
\emph{in} $\mathbb{D}$ to say that $f\in\mathbb{D}(A,B)$ and $\mathbb{D}^{\text{op}}$
for the opposite of $\mathbb{D}$ (see e.g.~\cite{Mac}). Only in
this section, we use both the notations $f\cdot g:=g\circ f$ for
arrows $X\xra{f}Y\xra{g}Z$ in some category, and the notation $\bar{\jmath}=\left(\bar{\jmath}_{\eps}\right)\in[J]$.

\subsection{Coverages, sheaves and sites}

The notion of coverage on a category allows one to define more abstractly
the concept of sheaf without being forced to consider a topological
space. Nevertheless, the classical example to keep in mind to have
a first understanding of the following definitions is a sheaf (e.g.~of
continuous functions) defined on the poset of open sets $\mathbb{D}=\mathbb{D}(X)$
in some topological space $X$.

We first define families with common codomain $D$: 
\begin{defn}
\label{def:famCommonCod}Let $\mathbb{D}$ be a category and let $D\in\mathbb{D}$.
Then we say that $\mathcal{F}\in\text{Fam}(D)$ is a \emph{family
with common codomain $D$} if there exist a set $J\in\Set$ and families
$\left(D_{j}\right)_{j\in J}$, $\left(i_{j}\right)_{j\in J}$ such
that: 
\begin{enumerate}
\item \label{enu:famCommonCodArrow}$D_{j}\xra{i_{j}}D$ in $\mathbb{D}$
for all $j\in J$. 
\item $\mathcal{F}=\left(D_{j}\xra{i_{j}}D\right)_{j\in J}$. 
\end{enumerate}
\end{defn}

\noindent A coverage is a class of families with a common codomain
that is closed with respect to pullback, in the precise sense stated
in the following 
\begin{defn}
\label{def:coverage}Let $\mathbb{D}$ be a category, then we say
that $\Gamma$ is a \emph{coverage on }$\mathbb{D}$ if: 
\begin{enumerate}
\item \label{enu:coverageArrow}$\Gamma:\text{Obj}(\mathbb{D})\ra\Set$,
where $\text{Obj}(\mathbb{D})$ is the class of objects of the category
$\mathbb{D}$. 
\item $\forall D\in\mathbb{D}:\ \Gamma(D)\subseteq\text{Fam}(D)$. Families
in $\Gamma(D)$ are called \emph{covering families of $D$}. 
\item \label{enu:covPullback}If $D\in\mathbb{D}$, $\left(D_{j}\xra{i_{j}}D\right)_{j\in J}\in\Gamma(D)$
is a covering family of $D$, and $C\xra{g}D$ is an arbitrary arrow
of $\mathbb{D}$, then there exists a covering family of $C$, $\left(C_{k}\xra{h_{k}}C\right)_{k\in K}\in\Gamma(C)$
such that 
\begin{equation}
\forall k\in K\,\exists j\in J\,\exists\bar{g}:\begin{array}{cc}
\xymatrix{C_{k}\ar[r]^{h_{k}}\ar[d]^{\bar{g}} & C\ar[d]^{g}\\
D_{j}\ar[r]^{i_{j}} & D
}
\end{array}\label{eq:pullbackCoverage}
\end{equation}
\item A pair $(\mathbb{D},\Gamma)$, of a category and a coverage on it,
is called a \emph{site}. 
\end{enumerate}
\end{defn}

\noindent For example, let $\Ogsf$ be the category of sharply open
sets $U\subseteq\rti^{u}$ (all possible dimensions $u\in\N$ are
included) and GSF. Let $\Gamma(U)$ contains open coverings and inclusions:
$\left(U_{j}\xra{i_{j}}U\right)_{j\in J}\in\Gamma(U)$ if and only
if $U_{j}\in\Ogsf$, $i_{j}:U_{j}\hookrightarrow U$ and $\bigcup_{j\in J}U_{j}=U$.
Then $(\Ogsf,\Gamma)$ is a site and property \eqref{eq:pullbackCoverage}
holds simply by taking $K=J$ and $C_{j}:=g^{-1}(U_{j})\in\Ogsf$
as covering family of $C$, and $\bar{g}:=g|_{C_{j}}$. Note that
these simple steps do not work in the category $\Sgsf$ of strongly
internal sets and GSF because in general $g^{-1}(U_{j})$ is not strongly
internal (only the inclusion $g^{-1}\left(\sint{A_{\eps}}\right)\subseteq\sint{g_{\eps}^{-1}(A_{\eps})}$
holds). In this case, a general method is to express the open set
$g^{-1}(U_{j})$ as a union of strongly internal sets. This implies
that we have to take a different index set $K$ for the covering family
$C_{k}\hookrightarrow C$.

Using the notion of coverage, we can define the notion of compatible
family: 
\begin{defn}
\label{def:compatible}Let $(\mathbb{D},\Gamma)$ be a site and $F:\mathbb{D}^{\text{op}}\ra\Set$
be a presheaf. Let $\mathcal{F}=\left(D_{j}\xra{i_{j}}D\right)_{j\in J}\in\Gamma(D)$
be a covering family of $D\in\mathbb{D}$. Then, we say that $(f_{j})_{j\in J}$
\emph{are compatible on $\mathcal{F}$ (rel.~$F$)} if the following
conditions hold: 
\begin{enumerate}
\item \label{enu:compSections}$f_{j}\in F(D_{j})$ for all $j\in J$. In
this case $f_{j}$ is called a \emph{section}. 
\item For all $g$, $c$ and $j$, $h\in J$, we have 
\begin{equation}
\begin{array}{cc}
\xymatrix{C\ar[r]^{c}\ar[d]^{g} & D_{h}\ar[d]^{i_{h}}\\
D_{j}\ar[r]^{i_{j}} & D
}
\end{array}\quad\Rightarrow\quad F(g)(f_{j})=F(c)(f_{h}).\label{eq:compInt}
\end{equation}
\end{enumerate}
\end{defn}

\noindent A typical way to apply \eqref{eq:compInt} is to construct
a sort of intersection object $C=D_{h}\cap D_{j}$ and to take as
$c$, $g$ the inclusions. Then, if $F=\mathbb{D}(-,Y)$, the equality
in \eqref{eq:compInt} reduces to the usual compatibility condition
$f_{j}|_{D_{h}\cap D_{j}}=f_{h}|_{D_{h}\cap D_{j}}$.

We can finally define the notion of sheaf on a site: 
\begin{defn}
\label{def:sheaf}Let $(\mathbb{D},\Gamma)$ be a site, then we say
that $F$ is a \emph{sheaf on }$(\mathbb{D},\Gamma)$, and we write
$F\in\text{Sh}(\mathbb{D},\Gamma)$ if 
\begin{enumerate}
\item $F:\mathbb{D}^{\text{op}}\ra\Set$ (i.e.~$F$ is a presheaf). 
\item If $\mathcal{F}=\left(D_{j}\xra{i_{j}}D\right)_{j\in J}\in\Gamma(D)$
is a covering family of $D\in\mathbb{D}$ and $(f_{j})_{j\in J}$
are compatible on $\mathcal{F}$ (rel.~$F$), then 
\begin{equation}
\exists!f\in F(D)\,\forall j\in J:\ F(i_{j})(f)=f_{j}.\label{eq:gluedSect}
\end{equation}
\end{enumerate}
\end{defn}

\noindent In the classical example of continuous functions on a topological
space, $F=\mathcal{C}^{0}(-,Y)$ and the equality in \eqref{eq:gluedSect}
becomes $f|_{D_{j}}=f_{j}$. Note that, even if the category of open
sets and GSF $\Ogsf$ is a site, example \ref{exa:i_revisited} shows
that $\Ogsf(-,Y)$ is not a sheaf.

\subsection{The category of glueable functions}

As we mentioned in the introduction to Sec.~\ref{sec:Sheaf-properties},
our main aim here is to show that the DCC is strictly related to the
aforementioned definition of sheaf on a site. The strategy we will
follow is: 
\begin{enumerate}[label=(\alph*)]
\item \label{enu:schemaBaseCat}Define a category $\gluable\supseteq\Sgsf$. 
\item \label{enu:schemaCov}Define a coverage on $\gluable$. 
\item \label{enu:schemaSheaf}Show that $\gluable(-,\mathcal{Y})$ is a
sheaf using Thm.~\ref{thm:sheafFcnlCmpExh} and hence the DCC. 
\end{enumerate}
\noindent Intuitively, we already think that a family of strongly
internal sets $\left(U_{j}\right)_{j\in J}$ is a coverage of the
strongly internal set $U$ if, simply, $U\subseteq\bigcup_{j\in J}U_{j}$;
we also intuitively think that $(f_{j})_{j\in J}$ are compatible
sections if DCC holds. The following definition reflects this intuition: 
\begin{defn}
\label{def:gluable}Let $\gluable$ be the category of \emph{glueable
families}, whose objects are non empty families $\left(U_{j}\right)_{j\in J}\in\gluable$
of strongly internal sets in some space $\rti^{u}$: 
\[
J\ne\emptyset,\ \exists u\in\N\,\forall j\in J:\ \rti^{u}\supseteq U_{j}\in\Sgsf.
\]
We say that 
\[
\mathcal{X}\xra{\phi}\mathcal{Y}\quad\text{in}\quad\gluable
\]
if $\mathcal{X}=\left(U_{j}\right)_{j\in J}$, $\mathcal{Y}=\left(V_{h}\right)_{h\in H}\in\gluable$
and $\phi=\left(\left(f_{j}\right)_{j\in J},\alpha\right)$, where: 
\begin{enumerate}
\item \label{enu:Gl-reparam}The map $\alpha\in\Set(J,H)$ is called a \emph{reparametrization}. 
\item \label{enu:Gl-DCC}The family of GSF $f_{j}\in\Sgsf(U_{j},V_{\alpha(j)})$,
$j\in J$, satisfies the DCC on $U:=\bigcup_{j\in J}U_{j}$. 
\end{enumerate}
\noindent To state condition \ref{enu:Gl-DCC} more explicitly, let
$u$, $v\in\N$ be the dimensions of $\left(U_{j}\right)_{j\in J}$
and $\left(V_{h}\right)_{h\in H}$ resp.~(i.e.~$\rti^{u}\supseteq U_{j}$
and $\rti^{v}\supseteq V_{h}$ for all $j$, $h$), and set $V:=\bigcup_{h\in H}V_{h}$.
Then \ref{enu:Gl-DCC} asks that there exists $\left(U_{j\eps}\right)_{\substack{j\in J\\
\eps\in I
}
}$ such that for all $K\fcmp U$ there exists $(f_{j\eps})_{\substack{j\in J\\
\eps\in I
}
}\in\mathcal{C}^{\infty}(\R^{u},\R^{v})$ such that: 
\begin{enumerate}[label=(ii.\alph*)]
\item \label{enu:Gl-reg1}$f_{j}=\left[f_{j\eps}(-)\right]|_{U_{j}}$ for
each $j\in J$. 
\item \label{enu:Gl-reg2}$\left[f_{\bar{\jmath}_{\eps},\eps}(-)\right]\in\gsf(U_{\bar{\jmath}}\cap K,V_{\bar{\jmath}\cdot\alpha}\cap V)$
for all $\bar{\jmath}=\left(\bar{\jmath}_{\eps}\right)\in[J]$, where
$U_{\bar{\jmath}}:=\sint{U_{\bar{\jmath}_{\eps},\eps}}$. Note that
$I\xra{\bar{\jmath}}J\xra{\alpha}H$ and hence $\bar{\jmath}\cdot\alpha=\alpha\circ\bar{\jmath}\in[H]$. 
\item \label{enu:Gl-glu}$\left[f_{\bar{\jmath}_{\eps},\eps}(-)\right]=\left[f_{\bar{h}_{\eps},\eps}(-)\right]$
on $U_{\bar{\jmath}}\cap U_{\bar{h}}\cap K$ for all $\bar{\jmath}$,
$\bar{h}\in[J]$. 
\end{enumerate}
Composition and identities in $\gluable$ are defined as follows:
Let 
\begin{equation}
\mathcal{X}\xra{\phi}\mathcal{Y}\xra{\psi}\mathcal{Z}\quad\text{in}\quad\gluable\label{eq:Gl-DefComp}
\end{equation}
and set $\mathcal{X}=\left(U_{j}\right)_{j\in J}$, $\mathcal{Y}=\left(V_{h}\right)_{h\in H}$,
$\mathcal{Z}=\left(W_{l}\right)_{l\in L}$, $\phi=\left(\left(f_{j}\right)_{j\in J},\alpha\right)$,
$\psi=\left(\left(g_{h}\right)_{h\in H},\beta\right)$. Then 
\begin{align*}
 & U_{j}\xra{f_{j}}V_{\alpha(j)}\xra{g_{\alpha(j)}}W_{\beta(\alpha(j))}\quad\forall j\in J\\
 & J\xra{\alpha}H\xra{\beta}L,
\end{align*}
and we hence set 
\begin{align*}
\phi\cdot\psi & :=\left(\left(f_{j}\cdot g_{\alpha(j)}\right)_{j\in J},\alpha\cdot\beta\right)\\
1_{\mathcal{X}} & :=\left(\left(1_{U_{j}}\right)_{j\in J},1_{J}\right).
\end{align*}
\end{defn}

\noindent The following lemma confirms the correctness of this definition. 
\begin{lem}
\label{lem:Gl-category}$\gluable$ is a category. 
\end{lem}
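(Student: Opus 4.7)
The verification of the category axioms comprises three parts: (a) the composite $\phi\cdot\psi$ in Def.~\ref{def:gluable} is again a morphism of $\gluable$; (b) associativity of composition; (c) $1_{\mathcal{X}}$ acts as a two-sided identity. Parts (b) and (c) are routine: the reparametrizations live in $\Set$, where associativity and the identity axioms are trivial, while the GSF components compose associatively and respect the identity GSF by Thm.~\ref{thm:GSFcategory}, since composition and identities of GSF are those of ordinary functions. The substantive content is part (a).

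Given $\phi=((f_j)_{j\in J},\alpha)$ and $\psi=((g_h)_{h\in H},\beta)$, each $f_j\cdot g_{\alpha(j)}$ lies in $\gsf(U_j,W_{\beta(\alpha(j))})$ directly by Thm.~\ref{thm:GSFcategory}, and $\alpha\cdot\beta$ is a well-defined function $J\to L$. The nontrivial point is to check that the family $(f_j\cdot g_{\alpha(j)})_{j\in J}$ satisfies the DCC on $U:=\bigcup_{j\in J}U_j$. Fix $K\fcmp U$. My plan is: (i) apply $\phi$'s DCC on $K$ to obtain $(U_{j\eps})_{j,\eps}$ and $(f_{j\eps})_{j,\eps}$; (ii) produce a functionally compact $K'\fcmp V:=\bigcup_{h\in H}V_h$ that contains every image $[f_{\bar{\jmath}_\eps,\eps}(x_\eps)]$ with $\bar{\jmath}\in[J]$ and $[x_\eps]\in U_{\bar{\jmath}}\cap K$; (iii) apply $\psi$'s DCC on $K'$ to get $(V_{h\eps})_{h,\eps}$ and $(g_{h\eps})_{h,\eps}$; and (iv) exhibit the composite net $(g_{\alpha(j)\eps}\circ f_{j\eps})_{j,\eps}$ as witness of the DCC for $\phi\cdot\psi$ on $K$.

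The hard part, and the main obstacle, is step (ii): there is no a~priori uniform bound on $|f_{\bar{\jmath}_\eps,\eps}(x_\eps)|$ as $\bar{\jmath}$ ranges over $[J]$, since moderateness in $\phi$'s condition (ii.b) is stated pointwise per $\bar{\jmath}$. I would establish uniform moderateness of $M_\eps:=\sup\{|f_{j\eps}(x)|:j\in J,\,x\in K_\eps\cap U_{j\eps}\}$ by a diagonal contradiction: if no $N\in\N$ satisfies $M_\eps\le\rho_\eps^{-N}$ eventually, one extracts $(\eps_k)\downarrow0$, $(j_k)\subseteq J$ and $x_k\in K_{\eps_k}\cap U_{j_k,\eps_k}$ with $|f_{j_k,\eps_k}(x_k)|>\rho_{\eps_k}^{-k}$; setting $\bar{\jmath}_{\eps_k}:=j_k$ and patching $x\in\rcrho^{u}$ using sharp boundedness of $(K_\eps)$ produces $x\in U_{\bar{\jmath}}\cap K$ along which $(f_{\bar{\jmath}_\eps,\eps}(x_\eps))$ fails to be $\rho$-moderate, contradicting $\phi$'s (ii.b). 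With $M_\eps\le\rho_\eps^{-N}$ secured, $K'$ is taken as the internal set generated by $K'_\eps:=\overline{\bigcup_{j\in J}f_{j\eps}(K_\eps\cap U_{j\eps})}\subseteq\overline{\Eball_{\rho_\eps^{-N}}(0)}$, combined with the closed-thickening construction of Lem.~\ref{Lem:OmegaEpsAroundBoundedAndCptSupp}.\ref{enu:neigh} to ensure $K'\subseteq V$.

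With this $K'$ in hand, the three DCC conditions for $\phi\cdot\psi$ on $K$ are verified in turn. Condition (ii.a) is tautological: $(g_{\alpha(j)}\circ f_j)(x)=[g_{\alpha(j)\eps}(f_{j\eps}(x_\eps))]$ on $U_j$, by Thm.~\ref{thm:GSFcategory} and the (ii.a) clauses of $\phi$ and $\psi$. Condition (ii.b) follows, for each $\bar{\jmath}\in[J]$, by factoring $[g_{\alpha(\bar{\jmath}_\eps),\eps}\circ f_{\bar{\jmath}_\eps,\eps}(-)]|_{U_{\bar{\jmath}}\cap K}$ through the two GSF $[f_{\bar{\jmath}_\eps,\eps}(-)]:U_{\bar{\jmath}}\cap K\to V_{\bar{\jmath}\cdot\alpha}\cap K'$ (from $\phi$'s (ii.b) and the choice of $K'$) and $[g_{\alpha(\bar{\jmath}_\eps),\eps}(-)]:V_{\bar{\jmath}\cdot\alpha}\cap K'\to W_{\bar{\jmath}\cdot\alpha\cdot\beta}\cap W$ (from $\psi$'s (ii.b) applied to $\bar{h}:=\alpha\circ\bar{\jmath}\in[H]$), then invoking Thm.~\ref{thm:GSFcategory}. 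For condition (ii.c), given $\bar{\jmath},\bar{h}\in[J]$ and $[x_\eps]\in U_{\bar{\jmath}}\cap U_{\bar{h}}\cap K$, $\phi$'s (ii.c) provides $y:=[f_{\bar{\jmath}_\eps,\eps}(x_\eps)]=[f_{\bar{h}_\eps,\eps}(x_\eps)]\in V_{\bar{\jmath}\cdot\alpha}\cap V_{\bar{h}\cdot\alpha}\cap K'$; representative-independence (Thm.~\ref{thm:indepRepr}) for the $\psi$-GSF $[g_{\alpha(\bar{\jmath}_\eps),\eps}(-)]|_{V_{\bar{\jmath}\cdot\alpha}\cap K'}$ gives $[g_{\alpha(\bar{\jmath}_\eps),\eps}(f_{\bar{\jmath}_\eps,\eps}(x_\eps))]=[g_{\alpha(\bar{\jmath}_\eps),\eps}(f_{\bar{h}_\eps,\eps}(x_\eps))]$, and $\psi$'s (ii.c) applied to $\alpha\circ\bar{\jmath},\alpha\circ\bar{h}\in[H]$ equates this value with $[g_{\alpha(\bar{h}_\eps),\eps}(f_{\bar{h}_\eps,\eps}(x_\eps))]$, completing the verification.
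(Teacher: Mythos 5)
Your overall architecture --- reducing everything to closure under composition, verifying conditions \ref{enu:Gl-reg1}--\ref{enu:Gl-glu} of Def.~\ref{def:gluable} for the composite nets $\left(g_{\alpha(j),\eps}\circ f_{j\eps}\right)$, and treating associativity and identities as inherited from $\Set$ and Thm.~\ref{thm:GSFcategory} --- is the paper's. The point of divergence is your step (ii): the paper never builds a single uniform $K'$, but instead, for each fixed $\bar{\jmath}\in[J]$, takes $\hat{K}:=f_{\bar{\jmath}}\left(\left[U_{\bar{\jmath}_{\eps},\eps}\cap K_{\eps}\right]\right)$, which is functionally compact because GSF images of functionally compact sets are functionally compact (see \cite{GK15}), and feeds that $\hat{K}$ into the DCC of $\psi$. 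Your uniformization attempt is where the argument breaks down.

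The gap is in the claimed moderateness of $M_{\eps}:=\sup\{|f_{j\eps}(x)|:j\in J,\ x\in K_{\eps}\cap U_{j\eps}\}$. In your diagonal argument you obtain $x_{k}\in K_{\eps_{k}}\cap U_{j_{k},\eps_{k}}$ and patch them into a net with $x_{\eps}\in K_{\eps}$, so indeed $[x_{\eps}]\in K$; but membership of $[x_{\eps}]$ in $U_{\bar{\jmath}}=\sint{U_{\bar{\jmath}_{\eps},\eps}}$ is \emph{strong} membership, which by Thm.~\ref{thm:strongMembershipAndDistanceComplement}.\ref{enu:stronglyIntSetsDistance} requires $d(x_{\eps},U_{\bar{\jmath}_{\eps},\eps}^{\text{c}})>\rho_{\eps}^{q}$ for some fixed $q$; the set-theoretic inclusion $x_{k}\in U_{j_{k},\eps_{k}}$ gives nothing of the sort. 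Hence the constructed point need not lie in $U_{\bar{\jmath}}\cap K$, condition \ref{enu:Gl-reg2} for $\phi$ is not contradicted, and no contradiction results. Worse, the bound you want is false in general: the DCC constrains $f_{j\eps}$ only at points whose distance to $U_{j\eps}^{\text{c}}$ is non-negligible, so $f_{j\eps}$ may carry a bump of height $\rho_{\eps}^{-1/\eps}$ located inside $K_{\eps}\cap U_{j\eps}$ at distance $\rho_{\eps}^{1/\eps}$ from $U_{j\eps}^{\text{c}}$ without violating any hypothesis, and then no $N$ with $M_{\eps}\le\rho_{\eps}^{-N}$ exists. To repair this you would have to restrict the supremum to $\{x\in K_{\eps}\mid d(x,U_{j\eps}^{\text{c}})\ge\rho_{\eps}^{q}\}$ and control the dependence on $q$ --- or, more simply, follow the paper and work with the per-$\bar{\jmath}$ image sets $\hat{K}$. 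The remainder of your verification of \ref{enu:Gl-reg1}--\ref{enu:Gl-glu}, granted a suitable functionally compact set in $V$, is sound and agrees with the paper's.
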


\begin{proof}
We essentially have to prove the closure with respect to composition,
i.e.~that \eqref{eq:Gl-DefComp} implies $\phi\cdot\psi:\mathcal{X}\ra\mathcal{Z}$
in $\gluable$. We implicitly use the notations of the previous definition.
For all $K=[K_{\eps}]\fcmp U$, we have $f_{\bar{\jmath}}:=\left[f_{\bar{\jmath}_{\eps},\eps}(-)\right]\in\gsf(U_{\bar{\jmath}}\cap K,V_{\bar{\jmath}\cdot\alpha}\cap V)$,
and hence for $\bar{\jmath}\in[J]$ we get $f_{\bar{\jmath}}\left(\left[U_{\bar{\jmath}_{\eps},\eps}\cap K_{\eps}\right]\right)=:\hat{K}\fcmp V_{\bar{\jmath}\cdot\alpha}\cap V$.
Using $\hat{K}$ and $\bar{\jmath}\cdot\alpha=:\bar{h}\in[H]$ with
the arrow $\psi$, we obtain $g_{\bar{h}}:=\left[g_{\bar{h}_{\eps},\eps}(-)\right]\in\gsf(V_{\bar{h}}\cap\hat{K},W_{\bar{h}\cdot\beta}\cap W)$
for some nets $\left(g_{h\eps}\right)_{h,\eps}$. Therefore 
\[
\left(f_{\bar{\jmath}}\cdot g_{\bar{\jmath}\cdot\alpha}\right)|_{U_{\bar{\jmath}}\cap K}=\left[g_{\alpha(\bar{\jmath}_{\eps}),\eps}\left(f_{\bar{\jmath}_{\eps},\eps}(-)\right)\right]\in\gsf(U_{\bar{\jmath}}\cap K,W_{\bar{\jmath}\cdot\alpha\cdot\beta}\cap W).
\]
This shows that condition \ref{enu:Gl-reg2} of Def.~\ref{def:gluable}
holds for $\phi\cdot\psi=\left(\left(f_{j}\cdot g_{\alpha(j)}\right)_{j\in J},\alpha\cdot\beta\right)$.
To prove condition \ref{enu:Gl-glu} take $x=[x_{\eps}]\in U_{\bar{\jmath}}\cap U_{\bar{l}}\cap K$,
then $f_{\bar{\jmath}}(x)=f_{\bar{l}}(x)\in V_{\bar{\jmath}\cdot\alpha}\cap V_{\bar{l}\cdot\alpha}\cap\hat{K}$
and hence $g_{\bar{\jmath}\cdot\alpha}\left(f_{\bar{\jmath}}(x)\right)=g_{\bar{l}\cdot\alpha}\left(f_{\bar{l}}(x)\right)$,
which is our conclusion. Properties of identities trivially hold. 
\end{proof}
Note that $\Sgsf\subseteq\gluable$ through the embedding: 
\begin{align*}
U & \in\Sgsf\mapsto\left(U\right)_{\bar{\mathbb{1}}}\in\gluable\\
f & \in\Sgsf(U,V)\mapsto\left(\left(f\right)_{\bar{\mathbb{1}}},\bar{\mathbb{1}}\ra\bar{\mathbb{1}}\right)\in\gluable\left(\left(U\right)_{\bar{\mathbb{1}}},\left(V\right)_{\bar{\mathbb{1}}}\right),
\end{align*}
where $\bar{\mathbb{1}}:=\{*\}$ is any singleton set. The converse
is also possible using the sheaf Thm.~\ref{thm:sheafFcnlCmpExh}:
In fact, if $\left(\left(f_{j}\right)_{j\in J},\alpha\right)\in\gluable\left(\left(U_{j}\right)_{j\in J},\left(V_{h}\right)_{h\in H}\right)$,
then the DCC holds and hence there exists a unique $f\in\Sgsf(U,V)$
such that $f|_{U_{j}}=f_{j}$ for all $j\in J$. If we set $\text{gl}\left(\left(f_{j}\right)_{j\in J},\alpha\right):=f$
then 
\begin{align*}
\text{gl}\left(\phi\cdot\psi\right) & =\text{gl}\left(\left(f_{j}\cdot g_{\alpha(j)}\right)_{j\in J},\alpha\cdot\beta\right)=\text{gl}\left(\phi\right)\cdot\text{gl}\left(\psi\right)
\end{align*}
because setting $\text{gl}(\phi)=:f$ and $\text{gl}(\psi)=:g$, we
have $\left(f\cdot g\right)|_{U_{j}}=f|_{U_{j}}\cdot g|_{V_{\alpha(j)}}=f_{j}\cdot g_{\alpha(j)}$,
i.e.~the unique GSF obtained by gluing $\left(f_{j}\cdot g_{\alpha(j)}\right)_{j\in J}$
is $f\cdot g$. Finally $\text{gl}\left(1_{\mathcal{X}}\right)=1_{U}=1_{\text{gl}(\mathcal{X})}$
and hence $\text{gl}:\gluable\ra\Sgsf$ is a (clearly non injective)
functor.

\subsection{Coverage of glueable functions}

We now introduce a coverage on the category $\gluable$ of glueable
families: 
\begin{defn}
\label{def:covGlu}Let $\mathcal{E}=\left(W_{e}\right)_{e\in E}\in\gluable$,
then we say that $\gamma\in\Gamma(\mathcal{E})$ if there exists a
non empty $J\in\Set$ such that: 
\begin{enumerate}
\item \label{enu:covGluArr}$\gamma=\left(\gamma_{j}\right)_{j\in J}$ and
$\gamma_{j}=\left(\left(i_{h}\right)_{h\in J},\delta\right)$ for
all $j\in J$. 
\item \label{enu:covGluRef}$J\xra{\delta}E$ is a surjective map. 
\item \label{enu:covGluIncl}$\xymatrix{\!\!\!\!i_{j}:D_{j}\ar@{^{(}->}[r] & W_{\delta(j)}}
$for all $j\in J$, where $\left(D_{j}\right)_{j\in J}\in\gluable$
. Because of this property, $\delta$ is called a \emph{refinement
map}. 
\item \label{enu:covGluProj}$W_{e}=\bigcup\left\{ D_{j}\mid\delta(j)=e,\ j\in J\right\} $
for all $e\in E$. 
\end{enumerate}
\end{defn}

\begin{rem}
\noindent \label{rem:covGlu} 
\begin{enumerate}
\item \label{enu:indSet}Note that the index set $J$ of $\gamma=\left(\gamma_{j}\right)_{j\in J}$
is the same used in $\left(i_{h}\right)_{h\in J}$. Moreover, the
two components of $\gamma_{j}$ do not depend on $j\in J$. This may
appear to be a strange property for a coverage, but note that our
intuition here is guided by viewing the inclusions$\xymatrix{(D_{j}\ar@{^{(}->}[r]^{i_{j}} & W)_{j\in J}}
$as a coverage of $W:=\bigcup_{e\in E}W_{e}$. On the other hand, note
that in Def.~\ref{def:gluable} of glueable families, in the DCC
\ref{enu:Gl-DCC} we need the whole family $\left(f_{j}\right)_{j\in J}$
and not only the single GSF $f_{j}$. Similarly, we need to consider
the entire family of inclusions $\left(i_{h}\right)_{h\in J}$ and
not the single $i_{h}$. For this reason, we cannot directly consider
the family $\xymatrix{(D_{j}\ar@{^{(}->}[r]^{i_{j}} & W)_{j\in J},}
$made of single inclusions, as a coverage. 
\item \label{enu:covGluCov}Condition \ref{enu:covGluProj} implies $W=\bigcup_{j\in J}D_{j}=\bigcup_{j\in J}W_{\delta(j)}$.
We will see more clearly later that this condition allows us to prove
the uniqueness part of \eqref{eq:gluedSect}. 
\item If $W_{e}\ne\emptyset$ for all $e\in E$, then \ref{enu:covGluProj}
directly implies that $\delta$ has to be a surjective map. 
\end{enumerate}
\end{rem}

\begin{thm}
\noindent \label{thm:covGlu}$\Gamma$ is a coverage on $\gluable$. 
\end{thm}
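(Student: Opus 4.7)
The first two conditions in Def.~\ref{def:coverage} are immediate from Def.~\ref{def:covGlu}; what remains is to verify the pullback-type condition~\ref{enu:covPullback}. Fix $\mathcal{E}=(W_e)_{e\in E}\in\gluable$ and a covering family $\gamma\in\Gamma(\mathcal{E})$ with data $J\xra{\delta}E$ and inclusions $i_j\colon D_j\hookrightarrow W_{\delta(j)}$ satisfying $W_e=\bigcup\{D_j\mid\delta(j)=e\}$, together with an arbitrary arrow $g=((g_m)_{m\in M},\alpha)\colon\mathcal{C}\to\mathcal{E}$ in $\gluable$, where $\mathcal{C}=(C_m)_{m\in M}$. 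The plan is to build a covering family of $\mathcal{C}$ by taking the sharp preimages $g_m^{-1}(D_j)$, which are sharply open in $C_m$ by Thm.~\ref{thm:GSF-continuity}.\ref{enu:GSF-cont}, and saturating them with strongly internal sharp balls, which by \eqref{eq:ballStrgInt} are exactly the strongly internal generators of the sharp topology.

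Concretely, set
\[
K:=\bigl\{(m,j,x,r)\bigm|m\in M,\,j\in J,\,\delta(j)=\alpha(m),\,x\in C_m,\,r\in\rcrho_{>0},\,\overline{B_r(x)}\subseteq g_m^{-1}(D_j)\bigr\},
\]
supplemented with a dummy index $(m,\ast)$ and $C'_{(m,\ast)}:=\emptyset$ whenever $C_m=\emptyset$ (so as to secure surjectivity of the refinement). For $k=(m,j,x,r)\in K$ define $C'_k:=B_r(x)$, $\delta'(k):=m$, $\bar\alpha(k):=j$, the inclusion $i'_k\colon C'_k\hookrightarrow C_m$, and the restriction $\bar g_k:=g_m|_{B_r(x)}\colon C'_k\to D_{\bar\alpha(k)}$. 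Because $W_{\alpha(m)}=\bigcup_{\delta(j)=\alpha(m)}D_j$, every $y\in C_m$ lies in some sharply open $g_m^{-1}(D_j)$, hence, by \eqref{eq:closureBall}, in a sharp ball $B_r(y)$ with $\overline{B_r(y)}\subseteq g_m^{-1}(D_j)$; this delivers condition~\ref{enu:covGluProj} of Def.~\ref{def:covGlu}. The family of inclusions $(i'_{k'})_{k'\in K}$ is trivially a morphism $(C'_{k'})_{k'\in K}\to\mathcal{C}$ of $\gluable$ (taking the identity nets on $\R^u$ as defining nets, the DCC is tautological), so $\gamma':=((i'_{k'})_{k'\in K},\delta')\in\Gamma(\mathcal{C})$.

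To close the pullback square~\eqref{eq:pullbackCoverage} with $\bar g:=((\bar g_k)_{k\in K},\bar\alpha)$, the remaining tasks are to confirm that $\bar g\in\gluable((C'_k)_{k\in K},(D_h)_{h\in J})$ and that $\gamma'\cdot g=\bar g\cdot\gamma$. Commutativity reduces, on each $k=(m,j,x,r)$, to the equality of two copies of the same GSF $g_m|_{B_r(x)}\colon B_r(x)\to W_{\alpha(m)}=W_{\delta(j)}$, while the two reparametrizations match since $(\delta'\cdot\alpha)(k)=\alpha(m)=\delta(j)=(\bar\alpha\cdot\delta)(k)$ by the defining condition on $K$. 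Each individual $\bar g_k$ is a GSF as the restriction of $g_m$ to the strongly internal subset $B_r(x)\subseteq C_m$.

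The delicate step, and what I expect to be the main obstacle, is the verification that the entire family $(\bar g_k)_{k\in K}$ satisfies the DCC of Def.~\ref{def:gluable}.\ref{enu:Gl-DCC}. The idea is to inherit defining nets from $g$, namely $\bar g_{k,\eps}:=g_{m,\eps}$ for $k=(m,j,x,r)$, with representatives $C'_{k,\eps}:=\Eball_{r^{(\eps)}}(x^{(\eps)})$ supplied by \eqref{eq:ballStrgInt}. The moderateness condition~\ref{enu:Gl-reg1} and the compatibility condition~\ref{enu:Gl-glu} then follow directly from the DCC that $g$ already satisfies on the relevant functionally compact sets. What is genuinely nontrivial is the codomain condition~\ref{enu:Gl-reg2}: for $\bar k\in[K]$ with $\bar k_\eps=(m_\eps,j_\eps,x_\eps,r_\eps)$, one must show that $[g_{m_\eps,\eps}(-)]$ actually takes values in the strongly internal set $D_{(j_\eps)}$ and not merely in $W_{(\alpha(m_\eps))}$. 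The key ingredients to unlock this are the built-in strong inclusion $\overline{B_{r_\eps}(x_\eps)}\subseteq g_{m_\eps}^{-1}(D_{j_\eps})$ encoded in $K$, and Lem.~\ref{Lem:OmegaEpsAroundBoundedAndCptSupp}.\ref{enu:2.10} applied to the closed sharply bounded set $\overline{B_{r_\eps}(x_\eps)}$, which yields a uniform exponent $S\in\N$ with $d(g_{m_\eps,\sigma}(y_\sigma),D_{j_\eps,\sigma}^{c})\ge\rho_\sigma^S$ for small $\sigma$; a standard diagonal/selection argument in $\eps$ and $\sigma$ then promotes this to the representative-level inclusion needed to conclude $\bar g\in\gluable$, thereby completing the verification that $\Gamma$ is a coverage.
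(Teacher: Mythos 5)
Your construction of the pullback cover is sound in outline and the verification of Def.~\ref{def:covGlu}.\ref{enu:covGluProj}, the surjectivity fix via dummy indices, and the commutativity of the square are all fine. Where you genuinely diverge from the paper is that the paper first glues the family $(g_m)_{m\in M}$ into a single $g\in\gsf(C,W)$ using Thm.~\ref{thm:sheafFcnlCmpExh} (this is exactly what the DCC in the definition of an arrow of $\gluable$ is for) and only then forms $g^{-1}(D_j)$, whereas you work componentwise with the $g_m^{-1}(D_j)$. The gluing is not cosmetic: with a single $g$ one may take one and the same defining net $(g_\eps)$ for every component of the pulled-back arrow, so the mixed-index compatibility condition \ref{enu:Gl-glu} becomes literally trivial and moderateness holds on every diagonal set because $g$ is a GSF on the whole union. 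In your version these conditions do \emph{not} follow ``directly'': they require $C'_{\bar k}\cap L\subseteq C_{\bar m}$ for the diagonal sets built from your ball representatives relative to the representatives $(C_{m\eps})$ supplied by the DCC of $g$, and Lem.~\ref{Lem:OmegaEpsAroundBoundedAndCptSupp}.\ref{enu:1.10} only gives $\overline{\Eball_{r_\eps}(x_\eps)}\subseteq C_{m\eps}$ for $\eps$ below a threshold depending on the fixed index $k$; one must redefine $C'_{k\eps}:=\emptyset$ above that threshold so that the inclusion holds $\eps$-wise and survives diagonalization. The same caveat applies to your claim that the DCC for the family of inclusions $(i'_k)_{k\in K}$ is ``tautological''.

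The genuine gap is in the step you yourself flag as delicate, the codomain condition \ref{enu:Gl-reg2}. Lem.~\ref{Lem:OmegaEpsAroundBoundedAndCptSupp}.\ref{enu:2.10}, applied to a \emph{fixed} $k=(m,j,x,r)$, yields an exponent $S(k)\in\N$ and a threshold $\eps_1(k)$ with $d(g_{m\eps}(y),D_{j\eps}^{\text{c}})\ge\rho_\eps^{S(k)}$ on $\overline{\Eball_{r_\eps}(x_\eps)}$ for $\eps\le\eps_1(k)$. On a diagonal $\bar k=(k_\eps)\in[K]$ you need a \emph{single} $q$ with $d(g_{m_\eps,\eps}(y_\eps),D_{j_\eps,\eps}^{\text{c}})>\rho_\eps^{q}$ for $\eps$ small, and nothing in your setup prevents $S(k_\eps)$ from being unbounded as $\eps\to0$ (the balls entering $K$ may sit with arbitrarily small polynomial margin inside $g_m^{-1}(D_j)$). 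A representative-modification/diagonal argument absorbs the thresholds $\eps_1(k_\eps)$ but cannot absorb unbounded exponents, so the ``standard diagonal/selection argument'' you invoke does not close this step: you would have to either prove that diagonals with unbounded $S(k_\eps)$ force $C'_{\bar k}\cap L$ to be empty, or build a uniform margin (a fixed power of $\diff{\rho}$) into the definition of the index set $K$. To be fair, the paper's own proof does not verify the DCC for its pulled-back arrow $\mu$ either; but its gluing step reduces the whole verification to this one codomain condition, whereas your route leaves all three DCC conditions to be established by hand, and the sketch provided does not establish the third.
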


\begin{proof}
From Def.~\ref{def:covGlu}.\ref{enu:covGluIncl} it follows that
$\gamma_{j}\in\gluable\left(\left(D_{j}\right)_{j\in J},\mathcal{E}\right)$,
i.e.~property Def.~\ref{def:famCommonCod}.\ref{enu:famCommonCodArrow}.
To prove the closure with respect to pullbacks, take $\eta\in\gluable(\mathcal{C},\mathcal{E})$,
where $\mathcal{C}=:\left(V_{c}\right)_{c\in C}$ and $\eta=:\left(\left(g_{c}\right)_{c\in C},\beta\right)$.
Since $\left(g_{c}\right)_{c\in C}$ satisfies the DCC (see Def.~\ref{def:gluable}.\ref{enu:Gl-DCC},
i.e.~Def.~\ref{def:gluable}.\ref{enu:Gl-reg1}, \ref{enu:Gl-reg2}),
we can use Thm.~\ref{thm:sheafFcnlCmpExh} to get 
\[
\exists!g\in\gsf(V,W)\,\forall c\in C:\ g|_{V_{c}}=g_{c},
\]
where $V:=\bigcup_{c\in C}V_{c}$. We can hence consider $g^{-1}(D_{j})$
and cover it with strongly internal sets: 
\[
\forall j\in J\,\exists H_{j}\ne\emptyset\,\exists\left(B_{jh}\right)_{h\in H_{j}}\forall h\in H_{j}:\ B_{jh}\in\Sgsf,\ g^{-1}(D_{j})=\bigcup_{h\in H_{j}}B_{jh}.
\]
Set $B_{jhc}:=B_{jh}\cap V_{c}\in\Sgsf$, $K:=\left\{ (j,h,c)\mid j\in J,\ h\in H_{j},\ c\in C\right\} $,
$\nu:(j,h,c)\in K\mapsto c\in C$,$\xymatrix{a_{k}:B_{k}\ar@{^{(}->}[r] & V_{\nu(k)},}
$ and $\alpha_{k}:=\left(\left(a_{k}\right)_{k\in K},\nu\right)$.
Then $K$ is non empty because $C$, $H_{j}$, $J\ne\emptyset$, and
we have 
\begin{align*}
B & =\bigcup_{k\in K}B_{k}=\bigcup_{c\in C}\bigcup_{j\in J}\bigcup_{h\in H_{j}}B_{jhc}=\bigcup_{c\in C}V_{c}\cap\bigcup_{j\in J}\bigcup_{h\in H_{j}}B_{jh}=V\cap\bigcup_{j\in J}g^{-1}(D_{j})=\\
 & =V\cap g^{-1}\left(\bigcup_{j\in J}D_{j}\right)=V\cap g^{-1}(D)=V\cap g^{-1}(W)=V.
\end{align*}
To prove property \ref{enu:covGluProj} of Def.~\ref{def:covGlu}
for the new covering family $\left(\alpha_{k}\right)_{k\in K}$, take
$x\in V_{c}\subseteq V$, so that $g(x)\in W$. Thereby, $x\in g^{-1}(D_{j})$
for some $j\in J$, and hence $x\in B_{jh}\subseteq B_{jhc}$ for
some $h\in H_{j}$. Setting $k:=(j,h,c)\in K$, we have $\nu(k)=c$
and $x\in B_{k}$. This shows that $\left(\alpha_{k}\right)_{k\in K}\in\Gamma(\mathcal{C})$.
Finally, let $\left(\delta\right)_{\text{l}}^{-1}$ be any left inverse
of $\delta$, i.e.~$\left(\delta\right)_{\text{l}}^{-1}\cdot\delta=\delta\circ\left(\delta\right)_{\text{l}}^{-1}=1_{E}$
(recall Def.~\ref{def:covGlu}.\ref{enu:covGluRef}), and setting
$\mathcal{B}_{k}:=\left(B_{k}\right)_{k\in K}$, $\mu:=\left(\left(g_{k}|_{B_{k}}\right)_{k\in K},\nu\cdot\beta\cdot\left(\delta\right)_{\text{l}}^{-1}\right)$,
we have 
\[
\forall k\in K\,\exists j\in J\,\exists\mu:\begin{array}{cc}
\xymatrix{\mathcal{B}_{k}\ar[r]^{\alpha_{k}}\ar[d]^{\mu} & \mathcal{C}\ar[d]^{\eta}\\
\mathcal{D}_{j}\ar[r]^{\gamma_{j}} & \mathcal{E}
}
\end{array}
\]
i.e.~the claim Def.~\ref{def:coverage}.\ref{enu:covPullback}. 
\end{proof}
\begin{defn}
\label{def:topos}The category of sheaves $\Tgsf:=\text{Sh}\left(\gluable,\Gamma\right)$
(and natural transformations as arrows) is called the \emph{Grothendieck
topos of generalized smooth functions} (see e.g.~\cite{MaMo,Joh02,Ba-Ho11}
and references therein). 
\end{defn}

\subsection{The sheaf of glueable functions}

We are now able to show that the DCC is the key property to prove
the following 
\begin{thm}
\label{thm:sheafGlueable}For each $\mathcal{Y}\in\gluable$, the
functor $\gluable(-,\mathcal{Y})$ is a sheaf on the site $\left(\gluable,\Gamma\right)$,
i.e.~it satisfies Def.~\ref{def:sheaf}: $\gluable(-,\mathcal{Y})\in\Tgsf\left(\gluable,\Gamma\right)$. 
\end{thm}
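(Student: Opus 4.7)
The plan is to unfold Def.~\ref{def:sheaf} for the presheaf $F = \gluable(-,\mathcal{Y})$ and reduce the sheaf condition to the gluing theorem Thm.~\ref{thm:sheafFcnlCmpExh} for GSF. Throughout, fix a covering family $\gamma = (\gamma_j)_{j\in J} \in \Gamma(\mathcal{E})$ with $\mathcal{E} = (W_e)_{e\in E}$ and $\gamma_j = ((i_h)_{h\in J}, \delta)$, $i_h\colon D_h \hookrightarrow W_{\delta(h)}$, and set $\mathcal{D} := (D_h)_{h\in J}$. Because the arrows $\gamma_j$ are literally equal as $j$ varies (Rem.~\ref{rem:covGlu}.\ref{enu:indSet}), applying the compatibility condition of Def.~\ref{def:compatible} with $\mathcal{C} = \mathcal{D}$ and $g = c = 1_\mathcal{D}$ immediately forces any compatible family $(f_j)_{j\in J}$ of sections $f_j \in \gluable(\mathcal{D}, \mathcal{Y})$ to collapse to a single map $f_0 = ((f_{0h})_{h\in J}, \alpha_0)$. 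The sheaf condition is then equivalent to the existence of a unique $f = ((f_e)_{e\in E}, \alpha) \in \gluable(\mathcal{E}, \mathcal{Y})$ satisfying $\alpha \circ \delta = \alpha_0$ and $f_{\delta(h)} \circ i_h = f_{0h}$ for every $h \in J$.

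Uniqueness is the easy direction: if $f, f'$ both fulfill these equalities, then $\alpha \circ \delta = \alpha' \circ \delta = \alpha_0$ together with surjectivity of $\delta$ (Def.~\ref{def:covGlu}.\ref{enu:covGluRef}) yields $\alpha = \alpha'$, and on each $W_e$ the GSF $f_e, f'_e$ agree with the common $f_{0h}$ on every $D_h$ with $\delta(h) = e$; since $W_e = \bigcup\{D_h : \delta(h) = e\}$ by Def.~\ref{def:covGlu}.\ref{enu:covGluProj}, sharp continuity of GSF (Thm.~\ref{thm:GSF-continuity}.\ref{enu:GSF-cont}) forces $f_e = f'_e$.

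For existence, I first set $\alpha(e) := \alpha_0(h)$ for any chosen $h \in \delta^{-1}(e)$ (nonempty by surjectivity of $\delta$); well-definedness of $\alpha$ is provided by the DCC clause \ref{enu:Gl-glu} for $f_0$, which forces the underlying maps $f_{0h}, f_{0h'}$ to coincide on intersections $D_h \cap D_{h'}$ as maps into $V = \bigcup_{k\in H} V_k$, and hence forces $\alpha_0(h) = \alpha_0(h')$ whenever $\delta(h) = \delta(h')$. Next, to define $f_e \in \gsf(W_e, V_{\alpha(e)})$, observe that the subfamily $(f_{0h})_{h \in \delta^{-1}(e)}$ inherits the DCC from $f_0$ restricted to the cover $\{D_h : \delta(h) = e\}$ of $W_e$; since $W_e$ is strongly internal it admits a functionally compact exhaustion as in \eqref{eq:K_q-1}, so Thm.~\ref{thm:sheafFcnlCmpExh} produces a unique such $f_e$ with $f_e|_{D_h} = f_{0h}$ for all $h \in \delta^{-1}(e)$. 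Finally, the full DCC of the family $(f_e)_{e\in E}$ on $\bigcup_{e\in E} W_e$ is obtained by reusing, as defining nets, the nets $f_{0h,\eps}$ witnessing the DCC for $f_0$ on $\bigcup_{h\in J} D_h \supseteq \bigcup_e W_e$, which shows $f = ((f_e)_e, \alpha) \in \gluable(\mathcal{E}, \mathcal{Y})$ and satisfies $\gamma \cdot f = f_0$ by construction.

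The principal obstacle is the interplay between the reparametrization $\alpha_0$ and the refinement $\delta$: well-definedness of $\alpha$ demands that $\alpha_0$ be constant on $\delta$-fibers, which requires a careful reading of clause \ref{enu:Gl-glu}, since a priori that clause only guarantees equality of underlying set-theoretic maps inside the total union $V$ rather than equality of the labelled target components. Once this is pinned down, a secondary technical point is to propagate the dynamical nets used piecewise on each $W_e$ into nets witnessing the DCC for $f$ globally on $\mathcal{E}$; this reduces, upon restriction to an arbitrary $K \fcmp \bigcup_e W_e$, to the DCC already in force for $f_0$ on $\mathcal{D}$, closing the argument.
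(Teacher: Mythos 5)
Your reduction of the compatible family to a single section $f_0=((f_{0h})_{h\in J},\alpha_0)$ via the trivial square $g=c=1_{\mathcal{D}}$ is fine, but it extracts too little from the compatibility condition, and that is exactly where the argument breaks. The step ``clause \ref{enu:Gl-glu} forces $\alpha_0(h)=\alpha_0(h')$ whenever $\delta(h)=\delta(h')$'' does not hold: that clause only asserts equality of the underlying set-theoretical maps on $U_{\bar{\jmath}}\cap U_{\bar{h}}\cap K$, so it says nothing at all when $D_h\cap D_{h'}=\emptyset$, and even on a nonempty intersection it cannot distinguish the labels $\alpha_0(h)$, $\alpha_0(h')$ when $V_{\alpha_0(h)}$ and $V_{\alpha_0(h')}$ overlap. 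You flag this yourself as ``the principal obstacle'', but the resolution you point to (a more careful reading of clause \ref{enu:Gl-glu}) is a dead end---the DCC is the wrong place to look. The constraint comes from Def.~\ref{def:compatible} itself, applied to a nontrivial square: take $C=(D_h\cap D_k)_{(h,k)\in J^2}$ with the two inclusions into $D_h$ and $D_k$ and the two projections $\nu_1,\nu_2:J^2\to J$ as reparametrizations. Equality of the second components of the two sides of \eqref{eq:compInt} then reads $\alpha^{j}(h)=\alpha^{i}(k)$ for \emph{all} $(h,k)\in J^2$---a purely formal identity, independent of whether any $D_h\cap D_k$ is nonempty---so $\alpha_0$ is globally constant, say $\alpha_{\mathrm{c}}\in L$, and all the $f_{0h}$ share the single codomain $V_{\alpha_{\mathrm{c}}}$. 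This is what the paper does, and it dissolves your well-definedness problem entirely.

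A second, smaller gap: your fiberwise gluing over each $W_e$ leaves the DCC for the resulting family $(f_e)_{e\in E}$ unverified, since the nets $(f_{0h,\eps})$ define the $f_{0h}$ only on the $D_h$ and not any single $f_e$ on all of $W_e$; ``reusing'' them does not directly produce defining nets indexed by $E$. Once $\alpha_0$ is known to be constant, the cleaner route is to glue all the $f_{0h}$ at once by Thm.~\ref{thm:sheafFcnlCmpExh} into one $f\in\gsf(D,V_{\alpha_{\mathrm{c}}})$ on $D=\bigcup_{h\in J}D_h=\bigcup_{e\in E}W_e$ and set $\phi:=\left(\left(f|_{W_e}\right)_{e\in E},\bar{\alpha}\right)$ with $\bar{\alpha}$ constant; a single defining net for $f$ then witnesses the DCC for the whole family. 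Your uniqueness argument is correct, though the appeal to sharp continuity is unnecessary: Def.~\ref{def:covGlu}.\ref{enu:covGluProj} already gives $\hat{f}_e=f|_{W_e}$ pointwise.
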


\begin{proof}
We use the notations of Def.~\ref{def:covGlu}. Let $\left(\gamma_{j}\right)_{j\in J}=\left(\left(i_{h}\right)_{h\in J},\delta\right)_{j\in J}\in\Gamma(\mathcal{E})$
be a covering family and let $\phi_{j}=\left(\left(f_{h}^{j}\right)_{h\in J},\alpha^{j}\right)\in\gluable(\mathcal{D}_{j},\mathcal{Y})$,
$j\in J$, be a compatible family of sections, where $\mathcal{D}_{j}:=\left(D_{h}\right)_{h\in J}=:\mathcal{D}_{0}$
(recall Rem.~\ref{rem:covGlu}.\ref{enu:indSet} about the independence
from $j\in J$). Note explicitly that by Def.~\ref{def:covGlu}.\ref{enu:covGluArr}
the covering family $\left(\gamma_{j}\right)_{j\in J}$ is indexed
by the same set $J$ as its inclusions $\left(i_{h}\right)_{h\in J}$;
moreover, the glueable family $\left(f_{h}^{j}\right)_{h\in J}$ is
also indexed by $J$ because by Def.~\ref{def:gluable}, any arrow
in the category $\gluable$ is indexed by the same set of its domain
which, in this case, is $\mathcal{D}_{0}=\left(D_{h}\right)_{h\in J}$.
Set $\mathcal{Y}=:\left(V_{l}\right)_{l\in L}$. We first want to
prove that the compatibility of sections $\left(\phi_{j}\right)_{j\in J}$
allows us to show that both $\left(f_{h}^{j}\right)_{h\in J}$ and
$\alpha^{j}$ do not actually depend on $j$. We therefore take $i\in J$
and define $\mathcal{D}_{0}\cap\mathcal{D}_{0}:=\left(D_{h}\cap D_{k}\right)_{(h,k)\in J^{2}}\in\gluable$,
$\xymatrix{i_{hk}:D_{h}\cap D_{k}\ar@{^{(}->}[r] & D_{h},}
$ $\xymatrix{i_{kh}:D_{h}\cap D_{k}\ar@{^{(}->}[r] & D_{k},}
$$\nu_{1}:(h,k)\in J^{2}\mapsto h\in J$, $\nu_{2}:(h,k)\in J^{2}\mapsto k\in J$,
$\iota_{1}:=\left(\left(i_{hk}\right)_{(h,k)\in J^{2}},\nu_{1}\right)$,
$\iota_{2}:=\left(\left(i_{kh}\right)_{(h,k)\in J^{2}},\nu_{2}\right)$.
The compatibility condition \eqref{eq:compInt} for the functor $\gluable(-,\mathcal{Y})$
yields 
\begin{align*}
\,\gluable(-,\mathcal{Y})(\iota_{1})(\phi_{j}) & =\gluable(-,\mathcal{Y})(\iota_{2})(\phi_{i})\\
\left(\left(i_{hk}\cdot f_{\nu_{1}(h,k)}^{j}\right)_{(h,k)\in J^{2}},\nu_{1}\cdot\alpha^{j}\right) & =\left(\left(i_{kh}\cdot f_{\nu_{2}(h,k)}^{i}\right)_{(h,k)\in J^{2}},\nu_{2}\cdot\alpha^{i}\right)\\
\left(\left(f_{h}^{j}|_{D_{h}\cap D_{k}}\right)_{hk},\nu_{1}\cdot\alpha^{j}\right) & =\left(\left(f_{k}^{i}|_{D_{h}\cap D_{k}}\right)_{hk},\nu_{2}\cdot\alpha^{i}\right).
\end{align*}
Thereby, for $h=k$ we get $f_{h}^{j}=f_{h}^{i}$ and, for arbitrary
$h$, $k\in J$, we also have $(\nu_{1}\cdot\alpha^{j})(h,k)=(\nu_{2}\cdot\alpha^{i})(h,k)$,
i.e.~$\alpha^{j}(h)=\alpha^{i}(k)$. This equality, since $J\ne\emptyset$,
implies that $\alpha^{i}=\alpha^{j}=:\alpha_{\text{c}}\in L$ is constant.
Therefore, as a consequence of the compatibility condition, we have
that both components of $\phi_{j}$ do not depend on $j\in J$: our
sections can hence be simply written as $\phi_{j}=:\left(\left(f_{h}\right)_{h\in J},\alpha_{\text{c}}\right)$.
Note also that all the GSF $f_{h}:D_{h}\ra V:=V_{\alpha_{\text{c}}}$
have the same codomain. The glueable family $\left(f_{h}\right)_{h\in J}$
satisfies the DCC because of Def.~\ref{def:gluable}.\ref{enu:Gl-DCC}
and we can hence apply Thm.~\ref{thm:sheafFcnlCmpExh} to obtain
a unique $f\in\gsf(D,V)$ such that $f|_{D_{j}}=f_{j}$ for each $j\in J$,
where $D=\bigcup_{j\in J}D_{j}$. We can finally set $\bar{\alpha}:e\in E\mapsto\alpha_{\text{c}}\in L$
and $\phi:=\left(\left(f|_{W_{e}}\right)_{e\in E},\bar{\alpha}\right)$
to obtain the existence part of the conclusion: 
\[
\gluable(-,\mathcal{Y})(\gamma_{j})(\phi)=\gamma_{j}\cdot\phi=\left(\left(f|_{W_{\delta(h)}\cap D_{h}}\right)_{h\in J},\delta\cdot\bar{\alpha}\right)=\left(\left(f|_{D_{h}}\right)_{h\in J},\alpha\right)=\phi_{j}.
\]
To prove the uniqueness of the glued section, assume that $\hat{\phi}=\left(\left(\hat{f}_{e}\right)_{e\in E},\hat{\alpha}\right)\in\gluable(\mathcal{E},\mathcal{Y})$
is another section such that $\gluable(-,\mathcal{Y})(\gamma_{j})(\hat{\phi})=\phi_{j}$
for all $j\in J$. This equality gives 
\begin{equation}
\left(\left(\hat{f}_{\delta(h)}|_{D_{h}}\right)_{h\in J},\delta\cdot\hat{\alpha}\right)=\left(\left(f_{h}\right)_{h\in J},\alpha\right)\label{eq:uniqueness}
\end{equation}
Take $e\in E$ and $x\in W_{e}$, then condition Def.~\ref{def:covGlu}.\ref{enu:covGluProj}
yields the existence of $h\in J$ such that $\delta(h)=e$ and $x\in D_{h}$.
Therefore, using \eqref{eq:uniqueness} we obtain 
\[
\hat{f}_{e}(x)=\hat{f}_{\delta(h)}|_{D_{h}\cap W_{e}}(x)=f_{h}|_{W_{e}}(x)=f|_{W_{e}}(x),
\]
i.e.~$\hat{f}_{e}=f|_{W_{e}}$ for all $e\in E$. Finally, \eqref{eq:uniqueness}
also gives $\hat{\alpha}=\left(\delta\right)_{\text{l}}^{-1}\cdot\alpha$. 
\end{proof}

\subsection{\label{subsec:Concrete-sites}Concrete sites and generalized diffeological
spaces}

In this final section, we want to sketch one of the many possibilities
that we can start to explore using the Grothendieck topos $\Tgsf$.
The idea is to show that the site $(\gluable,\Gamma)$ is a concrete
site. In this way, considering the space of concrete sheaves over
this site, we get a category of spaces that extends usual smooth manifolds
but it is closed with respect to operations such as: arbitrary subspaces,
products, sums, function spaces, etc.~(see \cite{Igl,Koc,Ba-Ho11,Gio09,Gio10c}
and references therein for similar approaches). As above, all the
necessary categorical notions will be introduced). 
\begin{defn}
\label{def:reprSubcan}Let $\mathbb{D}$ be a category and $F:\mathbb{D}^{\text{op}}\ra\Set$
be a functor. Then, we say that $F$ \emph{is representable} if $F\simeq\mathbb{D}(-,D)$
for some $D\in\mathbb{D}$. Moreover, if $(\mathbb{D},\Gamma)$ is
a site, then we say that $(\mathbb{D},\Gamma)$ \emph{is a subcanonical
site} if every representable functor $F$ is a sheaf $F\in\text{Sh}(\mathbb{D},\Gamma)$.

\noindent Since in Thm.~\ref{thm:sheafGlueable} we proved that $\gluable(-,\mathcal{Y})$
is a sheaf, directly from Def.~\ref{def:reprSubcan} it follows that
$\left(\gluable,\Gamma\right)$ is a subcanonical site. 
\end{defn}

\noindent A concrete site is a site whose objects can be thought of
as an underlying set with a structure. The idea is that if $\mathbb{1}\in\mathbb{D}$
is a terminal object, then $|D|:=\mathbb{D}(\mathbb{1},D)\in\Set$
is the underlying set of $D\in\mathbb{D}$ and if $f:C\ra D$ in $\mathbb{D}$,
then $|f|:=\mathbb{D}(\mathbb{1},f):x\in|C|\mapsto x\cdot f=f\circ x\in|D|$
is the set-theoretical map corresponding to the arrow $f$. These
maps have a natural relation with covering families of $\Gamma$,
as stated in the following 
\begin{defn}
\label{def:concreteSite}We say that $(\mathbb{D},\Gamma,\mathbb{1})$
\emph{is a concrete site} if: 
\begin{enumerate}
\item $(\mathbb{D},\Gamma)$ is a subcanonical site. 
\item $\mathbb{1}\in\mathbb{D}$ is a terminal object, i.e.~$\mathbb{1}\in\mathbb{D}$
and $\forall D\in\mathbb{D}\,\exists!t\in\mathbb{D}(D,\mathbb{1})$.
\end{enumerate}
$\mathbb{D}(\mathbb{1},D)=:|D|$ is called the \emph{underlying set
of $D\in\mathbb{D}$}. For $f\in\mathbb{D}(C,D)$, the map $|f|:=\mathbb{D}(\mathbb{1},f):|C|\ra|D|$
is called the \emph{function associated to the morphism $f$.} 
\begin{enumerate}[resume]
\item \label{enu:concSiteAssFnct}The functor $\mathbb{D}(\mathbb{1},-):\mathbb{D}\ra\Set$
is faithful, i.e.~for all $f$, $g\in\mathbb{D}(C,D)$, if $|f|=|g|$,
then $f=g$. 
\item \label{enu:concSiteCov}If $\left(D_{j}\xra{i_{j}}D\right)_{j\in J}\in\Gamma(D)$,
then the associated maps trivially cover $|D|$, i.e.: 
\begin{equation}
\bigcup_{j\in J}|i_{j}|\left(|D_{i}|\right)=|D|.\label{eq:trivCov}
\end{equation}
\end{enumerate}
\end{defn}

\noindent For example, let us define a terminal object in the category
$\gluable$ of glueable spaces as: 
\[
\mathbb{1}:=\left(\{0\}\right)_{\bar{\mathbb{1}}}\in\gluable
\]
where $\bar{\mathbb{1}}=\{*\}$. Note that, if we view $\R^{n}=\Set\left(\left\{ 1,\ldots,n\right\} ,\R\right)$,
then $\text{Card}\left(\R^{0}\right)=1$ and hence $\text{Card}\left(\rti^{0}\right)=\text{Card}\left(\left(\R^{0}\right)^{I}/\sim_{\rho}\right)=1$.
Therefore, $\rti^{0}=\{0\}$ is the trivial ring. It is also a strongly
internal set because $B_{1}(0)=\left\{ x\in\rti^{0}\mid|x-0|<1\right\} =\{0\}$.

\noindent What is $\phi\in\gluable(\mathbb{1},\mathcal{X})=|\mathcal{X}|$
in this case? Set $\phi=\left(\left(f\right)_{\bar{\mathbb{1}}},\alpha\right)$
and $\mathcal{X}=\left(U_{j}\right)_{j\in J}$, then $\alpha:\bar{\mathbb{1}}\ra J$
and $f:\{0\}\ra U_{\alpha(*)}$, which can be identified with the
pair $(f(0),\alpha(*))\in U_{\alpha(*)}\times\{\alpha(*)\}$. Therefore,
\[
|\mathcal{X}|=\gluable(\mathbb{1},\mathcal{X})\simeq\sum_{j\in J}U_{j}=\bigcup_{j\in J}U_{j}\times\{j\}.
\]
Similarly, $\psi=\left(\left(f_{j}\right)_{j\in J},\alpha\right)\in\gluable\left(\left(U_{j}\right)_{j\in J},\left(V_{l}\right)_{l\in L}\right)$
can be identified with the map: 
\[
|\psi|:(x,j)\in\sum_{j\in J}U_{j}\mapsto\left(f_{j}(x),\alpha(j)\right)\in\sum_{l\in L}V_{l},
\]
i.e.~with the map $(x,j)\mapsto\left(f(x),\alpha(j)\right)$, where
$f\in\gsf\left(\bigcup_{j\in J}U_{j},\bigcup_{l\in L}V_{l}\right)$
is obtained by gluing $\left(f_{j}\right)_{j\in J}$. This implies
condition Def.~\ref{def:concreteSite}.\ref{enu:concSiteAssFnct},
whereas Def.~\ref{def:concreteSite}.\ref{enu:concSiteCov} follows
from Rem.~\ref{rem:covGlu}.\ref{enu:covGluCov}: 
\begin{thm}
\label{thm:GluConcSite}$\left(\gluable,\Gamma\right)$ is a concrete
site. 
\end{thm}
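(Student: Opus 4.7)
The plan is to check the three remaining clauses of Definition~\ref{def:concreteSite}, since subcanonicity has already been obtained from Theorem~\ref{thm:sheafGlueable}. The preamble to the theorem already sketches the crucial identifications, so most of the work is bookkeeping with the indices and reparametrizations appearing in Definition~\ref{def:gluable}.

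First I would verify that $\mathbb{1}=(\{0\})_{\bar{\mathbb{1}}}$ is terminal. Given $\mathcal{X}=(U_{j})_{j\in J}\in\gluable$, the only candidate arrow $\mathcal{X}\ra\mathbb{1}$ is $t_{\mathcal{X}}:=((t_{j})_{j\in J},\alpha_{!})$ where $\alpha_{!}:J\ra\bar{\mathbb{1}}$ is the unique set-map and $t_{j}:U_{j}\ra\{0\}=\rti^{0}$ is the constant GSF. The DCC for $(t_{j})_{j\in J}$ holds trivially because every net $t_{j\eps}\equiv 0$ works simultaneously for every $\bar{\jmath}\in[J]$. Uniqueness at the level of $\alpha$ is forced by $\bar{\mathbb{1}}$ being a singleton, and uniqueness of the GSF components is forced by the codomain $\{0\}$ being a singleton.

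Next I would substantiate the identifications stated just before the theorem. An arrow $\phi=((f)_{\bar{\mathbb{1}}},\alpha)\in\gluable(\mathbb{1},\mathcal{X})$ is determined by $\alpha(\ast)=:j\in J$ and by the point $f(0)\in U_{j}$; the map $\phi\mapsto(f(0),j)$ provides the stated bijection $|\mathcal{X}|\simeq\sum_{j\in J}U_{j}$. A morphism $\psi=((f_{j})_{j\in J},\alpha)\in\gluable(\mathcal{X},\mathcal{Y})$, with $\mathcal{Y}=(V_{l})_{l\in L}$, induces $|\psi|:(x,j)\mapsto(f_{j}(x),\alpha(j))$, as computed via composition in $\gluable$. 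Faithfulness is then immediate: if $\psi'=((f'_{j})_{j\in J},\alpha')$ satisfies $|\psi|=|\psi'|$, then comparing second components over any $(x,j)\in\sum_{j\in J}U_{j}$ (which is nonempty since $J\ne\emptyset$ and each $U_{j}$ is nonempty when viewed as a strongly internal set, or else the corresponding fibre can be ignored) forces $\alpha=\alpha'$, and comparing first components pointwise gives $f_{j}=f'_{j}$ as set-theoretical maps, hence as GSF, for every $j\in J$.

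Finally, for the covering condition Def.~\ref{def:concreteSite}.\ref{enu:concSiteCov}, let $(\gamma_{j})_{j\in J}\in\Gamma(\mathcal{E})$ with $\mathcal{E}=(W_{e})_{e\in E}$, $\gamma_{j}=((i_{h})_{h\in J},\delta)$ and inclusions $i_{h}:D_{h}\hookrightarrow W_{\delta(h)}$. Under the identification of the previous step, $|\gamma_{j}|:\sum_{h\in J}D_{h}\ra\sum_{e\in E}W_{e}$ is the map $(x,h)\mapsto(x,\delta(h))$, independent of $j$. Consequently
\[
\bigcup_{j\in J}|\gamma_{j}|(|\mathcal{D}_{j}|)=\sum_{e\in E}\,\bigcup\{D_{h}\mid h\in J,\ \delta(h)=e\}=\sum_{e\in E}W_{e}=|\mathcal{E}|,
\]
where the middle equality is exactly Def.~\ref{def:covGlu}.\ref{enu:covGluProj}. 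The main (mild) obstacle is purely notational: one must keep straight the three layers of indices (the $J$ indexing the cover, the common source family $\mathcal{D}_{0}=(D_{h})_{h\in J}$ that is built into each $\gamma_{j}$, and the refinement map $\delta$), but no new analytic input beyond what is already encoded in Definitions~\ref{def:gluable} and~\ref{def:covGlu} is required.
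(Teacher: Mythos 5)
Your proof is correct and follows essentially the same route as the paper: the paper's argument is precisely the identification $|\mathcal{X}|\simeq\sum_{j\in J}U_{j}$ together with $|\psi|:(x,j)\mapsto(f_{j}(x),\alpha(j))$, from which faithfulness is read off, plus Rem.~\ref{rem:covGlu}.\ref{enu:covGluCov} for the covering condition, with subcanonicity already recorded after Def.~\ref{def:reprSubcan}. You are merely more explicit about terminality of $\mathbb{1}$ and the indexing bookkeeping, which the paper leaves implicit.
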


\noindent It is well known that a sheaf $F\in\text{Sh}(\mathbb{D},\Gamma)$
can be thought of as a generalized space defined by the information
$F(D)\in\Set$ associated to each test space $D\in\mathbb{D}$. The
idea of \emph{concrete sheaf} is that it is this kind of generalized
space defined by an underlying set of points $F(\mathbb{1})$. For
example, any $y\in\gluable(\mathbb{1},\mathcal{Y})$ can be identified
with the map $\sum_{0}\{0\}\times\{0\}=\{(0,0)\}\ra\sum_{l\in L}V_{l}$,
and hence $\gluable(\mathbb{1},\mathcal{Y})\simeq\sum_{l\in L}V_{l}$. 
\begin{defn}
\label{def:concSheaf}Let $(\mathbb{D},\Gamma,\mathbb{1})$ be a concrete
site. Then we say that $F$ \emph{is a concrete sheaf} (and we write
$F\in\text{CSh}(\mathbb{D},\Gamma,\mathbb{1})$) if: 
\begin{enumerate}
\item $F\in\text{Sh}(\mathbb{D},\Gamma)$. 
\item For all $s\in F(D)$, let $\text{\ensuremath{\underline{s}}}:p\in|D|\mapsto F(p)(s)\in F(\ensuremath{\mathbb{1})}$,
then we have 
\[
\forall D\in\mathbb{D}\,\forall s,t\in F(D):\ \underline{s}=\underline{t}\ \Rightarrow\ s=t.
\]
\end{enumerate}
\end{defn}

\noindent Similarly to what we did above, we can prove that $\gluable(-,\mathcal{Y})$
is a concrete sheaf.

\section{\label{sec:FuturePerspectives}Conclusions and future perspectives}

Sobolev and Schwartz solved the problem ``how to derive continuous
functions?''. Also Sebastiao e Silva (see \cite{Seb54}) solved the
same problem without relying on functional analysis at all, but instead
using only a formal approach and arriving at an isomorphic solution.
We solved the problem: ``how to derive continuous functions obtaining
set-theoretical functions, unrestrictedly composable, extending the
usual classical theorems of calculus and allowing for inifinitesimal
and infinite values?''. %such that, e.g., $\delta(0)$ is an infinite number?''. 
This second problem doesn't appear to have a trivial formal solution.

We have shown that GSF theory has features that closely resemble classical
smooth functions. In contrast, some differences have to be carefully
considered, such as the fact that the new ring of scalars $\rti$
is not a field, it is not totally ordered, it is not order complete,
so that its theory of supremum and infimum is more involved (see \cite{MTAG20}),
and its intervals are not connected in the sharp topology because
the set of all the infinitesimals is a clopen set. Almost all these
properties are necessarily shared by other non-Archimedean rings because
their opposites are incompatible with the existence of infinitesimal
numbers.

Conversely, the ring of Robinson-Colombeau generalized numbers $\rti$
is a framework where the use of infinitesimal and infinite quantities
is available, it is defined using elementary mathematics, and with
a strong connection with infinitesimal and infinite functions of classical
analysis. As proved in \cite{FGBL}, this leads to a better understanding
and opens the possibility to define new models of physical systems.
We can hence state that GSF theory is potentially a good framework
for mathematical physics.

As we started to see in Sec.~\ref{subsec:Concrete-sites}, the category
of concrete sheaves over the concrete site of gluable families contains
the category of strongly open sets and GSF and hence, also the category
of ordinary smooth functions on open sets. In future works, we will
build on this and show that it also contains the category of smooth
manifolds (more generally all diffeological spaces). This opens the
possibility to study singular differential geometry using non-Archimedean
methods and, as is typical of topos theory, interesting connections
with logic.

Finally, as we will see in the next two papers of this series (\cite{Lu-Gi16,GiLu}),
GSF theory is also an interesting non-Archimedian framework for the
mathematical analysis of singular non-linear ordinary and partial
differential equations.

\newpage{}

\end{document}